\definecolor{ocre}{RGB}{243,102,25} 
\definecolor{royalblue}{RGB}{0,78,156}
\newcommand*{\Cont}{\mathcal C}
\newcommand*{\Contc}{\Cont_\textup c} 
\newcommand{\bnorm}[1]{\left\lVert#1\right\rVert} 
\newcommand{\norm}[1]{\lVert#1\rVert} %
\newcommand{\inv}{^{-1}}
\newcommand{\pair}[2]{\langle#1,#2\rangle}
\newcommand{\bpair}[2]{\left\langle#1,#2\right\rangle}
\newcommand{\setgiven}[2]{\left\{#1\,\middle|\,#2\right\}}
\newcommand{\dd}[2]{\frac{\mathrm{d} #1}{\mathrm{d} #2}}
\newcommand{\textweakstar}{weak\nobreakdash-\(\ast\)}
\newcommand{\OT}{\mathrm{OT}}
\newcommand{\g}{\mathbf{g}}
\newcommand{\tg}{\Tilde{\mathbf{g}}}
\newcommand{\Transpose}{\mathrm{T}}
\newcommand{\AC}{\mathrm{AC}}
\newcommand*{\calM}{\mathcal M}
\newcommand*{\calA}{\mathcal A}
\newcommand*{\calB}{\mathcal B}
\newcommand*{\calS}{\mathcal S}
\newcommand*{\calX}{\mathcal X}
\newcommand*{\Hil}{\mathcal H}
\newcommand*{\prm}{\mathcal P}
\newcommand{\mudot}{\dot{\mu}}
\newcommand{\nudot}{\dot{\nu}}
\newcommand{\alphadot}{\dot{\alpha}}
\newcommand{\betadot}{\dot{\beta}}
\newcommand{\fmumu}{{f_{\mu,\mu}}}
\newcommand{\gmumu}{{g_{\mu,\mu}}}
\newcommand{\fmunu}{{f_{\mu,\nu}}}
\newcommand{\gmunu}{{g_{\mu,\nu}}}
\newcommand{\gnumu}{{g_{\nu,\mu}}}
\newcommand*{\Pall}{P}
\newcommand*{\A}{A}
\newcommand*{\ef}{a}
\newcommand*{\B}{B}
\newcommand{\oh}{{o}}
\newcommand{\Oh}{\mathcal{O}}
\newcommand*{\dS}{\mathsf{d}_S}
\newcommand*{\vdot}{\cdot}
\title{The Riemannian geometry of Sinkhorn divergences}
\author{Hugo Lavenant\thanks{Bocconi University, Department of Decision Sciences and BIDSA, Via Sarfatti 25, 20136 Milan, Italy.}, Jonas Luckhardt\thanks{Georg-August-Universität Göttingen, Institute for Computer Science, Goldschmidstraße 7, 37077, Göttingen, Germany.}, Gilles Mordant\thanks{Georg-August-Universität Göttingen, Institute for Mathematical Stochastics,  Goldschmidstraße 7, 37077, Göttingen, Germany; Now at Yale University, Department of Mathematics, 219 Prospect Street, CT 06511, New Haven, USA.},\\ Bernhard Schmitzer\addtocounter{footnote}{-2}\addtocounter{Hfootnote}{-2}\footnotemark\addtocounter{footnote}{2}\addtocounter{Hfootnote}{2}, Luca Tamanini\thanks{Universit\`a Cattolica del Sacro Cuore, Dipartimento di Matematica e Fisica
"Niccolo Tartaglia", Via della Garzetta 48, I-25133 Brescia, Italy.\\
\emph{E-mail addresses}: hugo.lavenant@unibocconi.it, jonas.luckhardt@uni-goettingen.de, gilles.mordant@uni-goettingen.de / gilles.mordant@yale.edu, schmitzer@cs.uni-goettingen.de, luca.tamanini@unicatt.it}}
\date{\today}
\begin{document}

\maketitle

\begin{abstract}
We propose a new metric between probability measures on a compact metric space that mirrors the Riemannian manifold-like structure of quadratic optimal transport but includes entropic regularization. Its metric tensor is given by the Hessian of the Sinkhorn divergence, a debiased variant of entropic optimal transport. We precisely identify the tangent space it induces, which turns out to be related to a Reproducing Kernel Hilbert Space (RKHS). As usual in Riemannian geometry, the distance is built by looking for shortest paths. We prove that our distance is geodesic, metrizes the \textweakstar~topology, and is equivalent to a RKHS norm. Still it retains the geometric flavor of optimal transport: as a paradigmatic example, translations are geodesics for the quadratic cost on $\R^d$.
We also show two negative results on the Sinkhorn divergence that may be of independent interest: that it is not jointly convex, and that its square root is not a distance because it fails to satisfy the triangle inequality.
\end{abstract}

\newpage
\tableofcontents

\section{Introduction}

Optimal transport is a mathematical optimization problem which allows one to lift a metric $d$ on a space $X$ to the space $\prm(X)$ of probability measures over $X$, interpreted as the average distance that infinitesimal units of mass have to travel when efficiently transforming one measure into the other. When the base space $X$ is a Riemannian manifold, then the space of probability measures can be formally interpreted as a Riemannian manifold as well. 

In recent years there has been an increasing interest in entropic regularization of optimal transport, which adds some stochasticity to the coupling of probability measures. This variant is easier to compute numerically, has better statistical properties, and is more regular as a function of its input measures and ground cost. Its debiased version defines a smooth positive definite loss function on $\prm(X)$. However, it does not define a distance, or even a Riemannian geometry, on $\prm(X)$. 

\medskip

\begin{center}
\fbox{\begin{minipage}{0.9\textwidth}
In this work we aim at combining both worlds: having a Riemannian metric on $\prm(X)$ which lifts the geometry of $X$ in a similar way to what optimal transport does, while retaining the smoothness of the Sinkhorn divergence. 
\end{minipage}}
\end{center} 

\medskip

\noindent To that end, we consider the second-order expansion of the Sinkhorn divergence on the diagonal: this yields a positive definite quadratic form on a meaningful tangent space to the space of probability measures, in other words a Riemannian metric tensor. This metric tensor induces a Riemannian distance on $\prm(X)$ via minimization over lengths of curves, which is aware of the geometry of $X$ while being and thus ``smoother'' than the optimal transport geometry in the sense that the metric tensor is jointly continuous in its inputs. Investigation of the additional properties of our distance, in particular regarding computational and statistical aspects, is postponed to future works. In the rest of this introduction we recall what is meant by the ``geometry of optimal transport'' before summarizing the main results of the present work.    
We emphasize that geodesics for our distance are different from Schrödinger bridges: a more detailed discussion is given on page~\pageref{paragraph:difference_bridges}.

\paragraph{Optimal transport and its geometry.}

On a (compact) space $X$, the value of the optimal transport problem with ground cost $c : X \times X \to [0,\infty)$ between two probability measures $\mu, \nu \in \prm(X)$ will be denoted by $\OT_0(\mu, \nu)$. It is defined as~\cite{Villani2009,Santambrogio2015,AGS2008} 
\begin{equation} \label{eq:intro_def_OT}
    \OT_0(\mu, \nu) \coloneqq \inf_{\pi \in \Pi(\mu,\nu)} \iint_{X \times X} c(x,y) \diff \pi(x,y),
\end{equation}
where \(\Pi(\mu,\nu) = \setgiven{\pi \in \prm(X \times X)}{(\proj_1)_\# \pi = \mu, ~ (\proj_2)_\# \pi = \nu}\) is the set of couplings having marginals $\mu, \nu$ and $(P_i)_\# \pi$ is the push-forward of the measure $\pi$ by the projection $P_i$ on the $i$-th factor. If $c = d^p$ is a power of a distance $d$ on $X$ for some \(p \geq 1\), then $(\mu, \nu) \mapsto {\OT_0(\mu, \nu)}^\frac{1}{p}$ defines a distance on $\prm(X)$, the so-called $p$-Wasserstein or Monge-Kantorovich distance~\cite[Sec. 7]{AGS2008}.

To explain the \emph{geometry} it induces, we restrict to $X$ being a convex subset of a Euclidean space $\R^d$ and $c(x,y) = \| x-y \|^2$, though everything could be extended to Riemannian manifolds. Historically this geometric structure was introduced in~\cite[Sec.~4]{Otto2001} as the quotient of the space of maps by measure preserving maps with applications to dissipative evolution equations, and can be traced back to hydrodynamics~\cite{Arnold1966,Brenier1989}. In this presentation we will rather focus on the ``metric tensor'' that optimal transport defines. If $(\mu_t)_t$ is a curve of probability measures obtained by following the flow of a potential velocity field $v = \nabla \psi$, that is, $\mu_t = \Psi(t,\cdot)_\# \mu_0$ is the pushforward of $\mu_0$ by the flow $\Psi(t,\cdot)$ of the ODE $\dot{x}_t = \nabla \psi(x_t)$, then by~\cite[Prop.~8.5.6]{AGS2008}
\begin{equation} \label{eq:intro_Hessian_OT}
    \lim_{t \to 0} \frac{\OT_0(\mu_0,\mu_t)}{t^2} = \int_X |\nabla \psi(x)|^2 \diff \mu_0 (x).
\end{equation}  
That is, on small distances, the squared Wasserstein distance is quadratic in $v = \nabla \psi$ which parametrizes how the curve moves on $\prm(X)$. As the curve $(\mu_t)_t$ satisfies the continuity equation $\mudot_t = - \ddiv(\mu_t \nabla \psi)$, we can think of the first-order distribution $\mudot_t$ as the tangent vector whose squared length is given by the right-hand side of~\eqref{eq:intro_Hessian_OT}. That is, for $\mu \in \prm(X)$ the formal metric tensor $\g^0$ is
\begin{equation} \label{eq:intro_gmu_OT}
    \g^0_\mu(\mudot, \mudot) = \int_X |\nabla \psi(x)|^2 \diff \mu (x), \qquad \mudot = - \ddiv (\mu \nabla \psi).
\end{equation}
That $\prm(X)$ has such a Riemannian structure means that the squared distance $\OT_0$ can be recovered by minimizing the energy of paths joining $\mu$ to $\nu$: 
\begin{equation} \label{eq:Benamou_Brenier}
    \OT_0(\mu,\nu) = \inf_{(\mu_t)_{t \in [0,1]}} \int_0^1 \g^0_{\mu_t}(\mudot_t, \mudot_t) \diff t
\end{equation}
where the infimum is taken over sufficiently regular curves $(\mu_t)_{t \in [0,1]}$ of measures joining $\mu$ to $\nu$: this is the dynamical formulation of optimal transport, the so-called Benamou-Brenier formula~\cite{BB2000},~\cite[Sec. 6.1]{Santambrogio2015}.

In summary: for $\mu \in \prm(X)$ a ``point'' in the manifold, the tangent vectors at $\mu$ read $\mudot = - \ddiv (\mu \nabla \psi)$ for $\psi : X \to \R$ , and $\g^0_\mu(\mudot, \mudot)$ defined in~\eqref{eq:intro_gmu_OT} is the squared length of the tangent vector \(\mudot\). The proper tangent space is actually the completion of \(\setgiven{-\ddiv(\mu \nabla \psi)}{\psi \in \Cont^\infty(X)}\) with respect to $\g^0_\mu$. It is canonically identified with vector fields $v \in L^2(X,\mu;\R^d)$ which can be approximated in $L^2(X,\mu;\R^d)$ by smooth gradients. Even though this does not define an infinite-dimensional Riemannian manifold in the sense of~\cite{Lang1999}, a precise meaning can still be given thanks to the tools of analysis on non-smooth spaces~\cite[Sec. 8]{AGS2008}.

Once viewing the problem through this lens, we can import many concepts of Riemannian geometry into $\prm(X)$. Geodesics are defined as the minimizers of the right-hand side of~\eqref{eq:Benamou_Brenier} and correspond to McCann's displacement interpolation~\cite{McCann1997}. For two probability measures $\mu, \nu \in \prm(X)$, the Riemannian logarithmic map $\log_\mu(\nu)$ is the initial velocity of the geodesic from $\mu$ to $\nu$. Conversely, given a tangent vector $\mudot = - \ddiv(\mu \nabla \psi)$, the exponential map is $\exp_\mu(\mudot) = (\id + \nabla \psi)_\# \mu$, but the extension to the whole tangent space and the injectivity radius can be tricky to characterize. With logarithmic and exponential maps, one can for instance define a linearized version of optimal transport~\cite{Wang2013,Sarrazin2024} and subsequently perform principal component analysis in tangent space~\cite{Bigot2013}. Several extensions of optimal transport rely on its Riemannian structure, such as barycenters~\cite{AguehCarlier}, gradient flows~\cite{AGS2008}, harmonic maps~\cite{Lavenant2019}, parallel transport~\cite{Gigli2012}, to cite a few examples. As the Riemannian interpretation is mostly formal, giving a rigorous meaning to these concepts usually requires some (hard) work of analysis on non-smooth spaces.

\paragraph{The Sinkhorn divergence.}

For a regularization parameter $\varepsilon > 0$, entropic optimal transport~\cite{Cuturi2013,PeyreCuturi} is given by the minimization problem
\begin{equation} \label{eq:OTeps}
    \OT_\varepsilon(\mu, \nu) \coloneqq \inf_{\pi \in \Pi(\mu, \nu)} \left\{ \iint_{X \times X} c \diff \pi + \varepsilon \KL(\pi \, | \, \mu \otimes \nu) \right\}, 
\end{equation}
where \(\KL\) is the Kullback--Leibler divergence
\begin{equation*}
    \KL(\pi \, | \, \rho) \coloneqq 
    \begin{cases}
        \displaystyle{\int_X h \left( \dd{\pi}{\rho} \right) \diff \rho} &\tn{if } \pi \ll \rho,\, \rho \geq 0,\\
        + \infty &\tn{else}
    \end{cases}
\end{equation*}
with \(h(s) \coloneqq s \log(s) - s + 1\) for \(s > 0\), \(h(0) \coloneqq 1\), \(h(s) \coloneqq + \infty\) for \(s < 0\). As $\varepsilon \to 0$, $\OT_\varepsilon$ converges to $\OT_0$~\cite{Carlier2017,Leonard2012,mikami2004}. Compared to the original transport problem it can be computed faster~\cite{PeyreCuturi} and the statistical complexity suffers much less from the curse of dimensionality~\cite{Genevay:Samples,MenaWeed}. Moreover, as a function of $\mu$ and $\nu$, the cost \(\OT_\varepsilon\) is more regular and its gradients can be computed efficiently~\cite{Genevay:Loss}.

However, even if $c=d^2$ for $d$ a distance on $X$, the function $\sqrt{\OT_\varepsilon}$ is not a distance, as generically $\OT_\varepsilon(\mu, \mu) > 0$ and \(\argmin_{\nu \in \prm(X)} \OT_\varepsilon(\mu,\nu) \neq \mu\). For this reason~\cite{Genevay:Loss} introduced the Sinkhorn divergence, or debiased entropic optimal transport:
\begin{equation} \label{eq:intro:def_sinkhorn_div}
    S_\varepsilon(\mu,\nu) \coloneqq \OT_\varepsilon(\mu, \nu) - \frac{1}{2} \OT_\varepsilon(\mu, \mu) - \frac{1}{2} \OT_\varepsilon(\nu, \nu).
\end{equation}
By construction it satisfies $S_\varepsilon(\mu,\mu) =0$, in fact more generally $S_\varepsilon(\mu,\nu) \geq 0$ with equality if and only if $\mu=\nu$, and it also converges to $\OT_0$ as $\varepsilon \to 0$. In~\cite{Feydy2018} the authors prove the following theorem which justifies its use in machine learning as a loss function:

\begin{theorem}[{\cite[Thm.~1]{Feydy2018}}]
    \label{theorem:FeydyEtAl}
    When \(k_c=\exp(-c/\varepsilon)\) is a positive definite, universal, jointly Lipschitz continuous kernel over a compact space \(X\), then \(S_\varepsilon\) defines a symmetric, positive definite and smooth loss function that is convex in each of its input variables. It also metrizes the convergence in law.
\end{theorem}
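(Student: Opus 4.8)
My plan is to base everything on the convex-duality (Kantorovich) formulation of entropic optimal transport. I would first record the classical no-gap identity
\begin{equation*}
\OT_\varepsilon(\mu,\nu) = \max_{f,g \in \Cont(X)} \left\{ \int_X f \diff\mu + \int_X g \diff\nu - \varepsilon\iint_{X\times X} e^{(f(x)+g(y)-c(x,y))/\varepsilon}\diff\mu(x)\diff\nu(y) + \varepsilon \right\},
\end{equation*}
with maximizing Sinkhorn potentials $(f_{\mu,\nu},g_{\mu,\nu})$ --- existing by compactness of $X$ and continuity of $c$, unique up to the shift $(f,g)\mapsto(f+\lambda,g-\lambda)$ --- and note the first-order condition $\int_X e^{(f_{\mu,\nu}(x)+g_{\mu,\nu}(y)-c(x,y))/\varepsilon}\diff\nu(y)=1$ for $\mu$-a.e.\ $x$, which upon integration gives the reduced formula $\OT_\varepsilon(\mu,\nu) = \int f_{\mu,\nu}\diff\mu + \int g_{\mu,\nu}\diff\nu$. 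On the diagonal one may take $f_{\mu,\mu}=g_{\mu,\mu}=:p_\mu$, the symmetric potential, for which the first-order condition becomes $u_\mu(x)\int_X k_c(x,y)u_\mu(y)\diff\mu(y)=1$ with $u_\mu\coloneqq e^{p_\mu/\varepsilon}$, and $\OT_\varepsilon(\mu,\mu)=2\int p_\mu\diff\mu$; hence $S_\varepsilon(\mu,\nu)=\int(f_{\mu,\nu}-p_\mu)\diff\mu+\int(g_{\mu,\nu}-p_\nu)\diff\nu$. Symmetry of $S_\varepsilon$ is then immediate from that of $c$.

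The cleanest part is \emph{positive-definiteness}. I would feed the (generally suboptimal) pair $(p_\mu,p_\nu)$ into the dual for $\OT_\varepsilon(\mu,\nu)$ and subtract $\tfrac12\OT_\varepsilon(\mu,\mu)+\tfrac12\OT_\varepsilon(\nu,\nu)=\int p_\mu\diff\mu+\int p_\nu\diff\nu$, to get
\begin{equation*}
S_\varepsilon(\mu,\nu)\;\ge\;\varepsilon\left(1-\iint_{X\times X}u_\mu(x)u_\nu(y)k_c(x,y)\diff\mu(x)\diff\nu(y)\right)=\varepsilon\bigl(1-\langle u_\mu\mu,\,u_\nu\nu\rangle_{k_c}\bigr).
\end{equation*}
The symmetric fixed-point equation says precisely $\|u_\mu\mu\|_{k_c}^2=\|u_\nu\nu\|_{k_c}^2=1$, so positive-definiteness of $k_c$ and Cauchy--Schwarz give $\langle u_\mu\mu,u_\nu\nu\rangle_{k_c}\le 1$, whence $S_\varepsilon\ge 0$. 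If $S_\varepsilon(\mu,\nu)=0$ then $\|u_\mu\mu-u_\nu\nu\|_{k_c}^2=\|u_\mu\mu\|_{k_c}^2-2\langle u_\mu\mu,u_\nu\nu\rangle_{k_c}+\|u_\nu\nu\|_{k_c}^2=0$, and since $k_c$ is universal (hence characteristic, so its kernel mean embedding is injective on finite signed measures) this forces $u_\mu\mu=u_\nu\nu$; comparing supports and using the fixed-point equation on the common support gives $u_\mu=u_\nu$ there, so $\mu=\nu$.

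For \emph{convexity in each argument} I would fix $\mu$: the map $\nu\mapsto\OT_\varepsilon(\mu,\nu)$ is a supremum of functionals affine in $\nu$, hence convex, so the only thing to check is that the debiasing term $\nu\mapsto\tfrac12\OT_\varepsilon(\nu,\nu)$ is concave. Substituting $w=e^{p/\varepsilon}$, $m=w\nu$ in the diagonal dual, and using $\int\log w\diff\nu=-\KL(\nu\,|\,m)$ together with $\iint e^{(p\oplus p-c)/\varepsilon}\diff\nu\diff\nu=\|m\|_{k_c}^2$, turns it into $\OT_\varepsilon(\nu,\nu)=\varepsilon-\varepsilon\inf_{m\ge0}\{2\KL(\nu\,|\,m)+\|m\|_{k_c}^2\}$. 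Dualizing $\KL(\nu\,|\,m)=\sup_\psi\{\int\psi\diff\nu-\log\int e^\psi\diff m\}$ and exchanging $\inf_m$ with $\sup_\psi$ --- legitimate by Sion's theorem, the integrand being concave in $\psi$ and, \emph{because $k_c$ is positive definite}, convex in $m$ (the term $\|m\|_{k_c}^2$ also furnishing the coercivity that lets one restrict $m$ to a weak-$\ast$-compact set) --- rewrites $\inf_m\{\cdots\}$ as $\sup_\psi\{2\langle\psi,\nu\rangle+H(\psi)\}$ with $H$ independent of $\nu$, a supremum of functions affine in $\nu$; thus $\OT_\varepsilon(\nu,\nu)$ is an infimum of affine functions of $\nu$, i.e.\ concave. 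Then $S_\varepsilon(\mu,\cdot)$ is a sum of a convex function, a constant in $\nu$, and a convex function, hence convex; symmetry handles the other argument. (A fallback is the second-variation computation: differentiating the fixed-point equation gives $\frac{\mathrm{d}^2}{\mathrm{d} s^2}\OT_\varepsilon(\nu_s,\nu_s)=-2\varepsilon\langle(\mathrm{Id}+T)^{-1}Th,h\rangle_{L^2(\nu)}\le0$, where $T$ is the Sinkhorn operator on $L^2(\nu)$ with kernel $u_\nu(x)u_\nu(y)k_c(x,y)$, positive semidefinite exactly because $k_c$ is, so that $\mathrm{spec}(T)\subset[0,1]$ and $\mathrm{spec}((\mathrm{Id}+T)^{-1}T)\subset[0,\tfrac12]$.)

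Finally, \emph{smoothness} would follow from the implicit function theorem applied to the Sinkhorn fixed-point map, whose linearization in the potentials is the identity minus a strict contraction on mean-zero functions, hence invertible, so the potentials and therefore $\OT_\varepsilon$, $S_\varepsilon$ depend smoothly (even analytically) on the input measures. For \emph{metrization of convergence in law}: $\OT_\varepsilon$ is jointly weak-$\ast$ continuous on $\prm(X)^2$ --- joint Lipschitz continuity and strict positivity of $k_c$ make $c$ Lipschitz, the soft-min fixed-point iteration then keeps the potentials equibounded and equicontinuous, and Arzel\`a--Ascoli lets one pass to the limit in the fixed-point equations --- so $\mu_n\to\mu$ weak-$\ast$ gives $S_\varepsilon(\mu_n,\mu)\to S_\varepsilon(\mu,\mu)=0$; conversely, if $S_\varepsilon(\mu_n,\mu)\to0$, weak-$\ast$ compactness of $\prm(X)$ makes every subsequence of $(\mu_n)$ have a further subsequence converging to some $\tilde\mu$ with $S_\varepsilon(\tilde\mu,\mu)=0$, hence $\tilde\mu=\mu$ by positive-definiteness, so $\mu_n\to\mu$ weak-$\ast$. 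I expect the main obstacle to be the concavity of $\nu\mapsto\OT_\varepsilon(\nu,\nu)$: either the minimax exchange needs careful lower-semicontinuity and compactness bookkeeping in the space of positive measures, or one must push the second-order perturbation analysis of Sinkhorn potentials through in the measure setting --- and it is precisely there, alongside the Cauchy--Schwarz and injectivity steps of the positivity argument, that positive-definiteness and universality of $k_c$ are indispensable.
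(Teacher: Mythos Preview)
The paper does not prove this theorem: it is quoted as \cite[Thm.~1]{Feydy2018} and used as background, with no argument supplied. So there is no ``paper's own proof'' to compare against; your proposal is in effect a reconstruction of the proof in the cited reference, and it follows that reference closely --- the positive-definiteness via Cauchy--Schwarz on the embeddings $u_\mu\mu$ in $\Hil_{k_c}$ (these are exactly the $\alpha=\A(\mu)$ of Section~\ref{section:FA_regularity}), concavity of $\nu\mapsto\OT_\varepsilon(\nu,\nu)$, smoothness of the potentials via the implicit function theorem, and metrization by compactness plus definiteness are precisely the ingredients of \cite{Feydy2018}.

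Two small corrections. First, in your change of variables $m=w\nu$, with the paper's convention $\KL(\nu\,|\,m)=\int h(d\nu/dm)\,dm$ one gets
\[
\OT_\varepsilon(\nu,\nu)=-\varepsilon\inf_{m\ge0}\bigl\{2\KL(\nu\,|\,m)+\|m\|_{k_c}^2-2m(X)\bigr\}-\varepsilon,
\]
i.e.\ there is an extra $-2m(X)$ term you dropped. This is harmless for the argument: $\KL$ is jointly convex, $\|m\|_{k_c}^2$ is convex because $k_c$ is positive definite, and $-2m(X)$ is linear, so the bracketed expression is jointly convex in $(\nu,m)$ and its infimum over $m$ is convex in $\nu$ directly --- no Sion exchange is needed. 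Second, your fallback second-variation computation is the cleaner route and is essentially what \cite{Feydy2018} does; the operator you call $T$ is the $K_\mu$ of this paper, and the spectral bound you need is exactly Proposition~\ref{prop:Hmu_compact_Hmu}.
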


\noindent For the definition of  positive definiteness and  universality of the kernel see Section~\ref{section:sub:RKHS_introduction}. The assumptions on the kernel are satisfied in particular for the quadratic cost on a Euclidean domain. Nevertheless, $\sqrt{S_\varepsilon}$ is not a distance, because it does not satisfy the triangle inequality. Although this is well known in the community, we have not been able to find an explicit statement of this property and consequently provide an elementary proof in Section~\ref{subsection:Seps_triangle} of this article.

\paragraph{Outline of the contributions.} 

As mentioned earlier we seek to build a Riemannian distance out of the loss function $S_\varepsilon$. We restrict to $X$ being a compact metric space with a continuous cost function \(c \in \Cont(X \times X)\). Similarly to~\cite{Feydy2018}, we assume that it induces a positive definite universal kernel \(k_c \assign \exp(-c/\varepsilon)\) on \(X \times X\) (see~\cite{Micchelli2006, Sriperumbudur2011}). 

Throughout this article we consider some fixed $\varepsilon > 0$ and we will briefly comment the limits $\varepsilon \to 0$ and $\varepsilon \to \infty$ in Section~\ref{sec:sub:epsilon_limits}.

\medskip

\emph{The Hessian of the Sinkhorn divergence.}
To construct a Riemannian structure based on the Sinkhorn divergence, we follow an approach from information geometry. For each point $\mu \in \prm(X)$ we build a metric tensor, that is, a positive definite quadratic form $\g_\mu$ on some appropriate tangent space to $\prm(X)$ at $\mu$, by considering the Hessian of $S_\varepsilon(\mu,\nu)$ on the diagonal $\mu=\nu$.
Applying this strategy to the Kullback--Leibler divergence in place of the Sinkhorn divergence, as in the pioneering works of Hotelling~\cite{Hotelling1930} and Rao~\cite{Rao1945}, leads to the Fisher information metric.
In Theorem~\ref{theorem:hessian_sinkhorn} we derive the formula for the Hessian of $S_\varepsilon$ along paths \((\mu_t)_t\) in \(\prm(X)\) following a $\Cont^m$-perturbation, meaning that $\mudot_t$ exists and is continuous as a linear form on $\Cont^m(X)$ (see Definition~\ref{def:Cm_perturbations}). 
The case of a ``horizontal'' perturbation \(\mu_t = \Psi(t,\vdot)_\# \mu_0\) corresponds to $m = 1$, while a ``vertical perturbation'' $\mu_t = \mu_0 + t \nu$ with $\nu \in \calM_0(X)$ corresponds to $m=0$. 
It requires the cost function to be of class $\Cont^m$, with the setting $m=0$ being the one which encompasses continuous cost functions on compact metric space.
Writing \((f_{\mu_t,\mu_s}, g_{\mu_t,\mu_s})\) for the entropic dual variables between \(\mu_t\) and \(\mu_s\) (i.e.\ the entropic analogues of Kantorovich potentials, see Section~\ref{section:preliminaries}), the formula reads
\begin{equation} \label{eq:intro:hessian_sinkhorn}
    \lim_{t \to 0} \frac{S_\varepsilon(\mu, \mu_t)}{t^2} = \frac{1}{2} \bpair{\mudot}{\frac{\partial f_{\mu,\mu_s}}{\partial s} \Big|_{s=0}} = \frac{\varepsilon}{2} \bpair{ \mudot }{ (\id - K_\mu^2)^{-1} H_\mu \left[ \mudot \right] }.
\end{equation}
Here \(K_\mu\) and \(H_\mu\), introduced in Definition~\ref{def:kernel_operators}, are integral operators induced by the self- transport kernel 
\begin{equation*}
    k_\mu \assign \exp \left( \frac{1}{\varepsilon} (\fmumu \oplus \fmumu - c) \right).
\end{equation*}
The result is not completely new and can be related to works studying the central limit theorem for the Sinkhorn divergence~\cite{Goldfeld2022,GonzalezSanz2022}, where the ``perturbations'' $\mu_t$ are estimations of the measure $\mu$ by i.i.d.\ samples.

\emph{The metric tensor and its canonical tangent space.}
We \emph{define} the metric tensor $\g_\mu$ as the right-hand side of~\eqref{eq:intro:hessian_sinkhorn}, that is, at least for a signed measure $\mudot$ which is ``balanced'' in the sense $\pair{\mudot}{\ones_X} = 0$,
\begin{equation*}
    \g_\mu(\mudot, \mudot) =  \frac{\varepsilon}{2} \bpair{ \mudot }{ (\id - K_\mu^2)^{-1} H_\mu \left[ \mudot \right] }.
\end{equation*}
We then want to understand for which class of ``tangent vectors'' $\mudot$ this can be extended. This leads us to the most technical part of the paper in Section~\ref{section:FA_regularity}.  Whereas we already saw that distributions of order $m$ should be allowed if the cost function is $\Cont^m$, it turns out that the right regularity condition of the perturbation is expressed in terms of the Reproducing Kernel Hilbert Space (RKHS) \(\Hil_\mu\) induced by the self-transport kernel \(k_\mu\). Specifically, $\g_\mu$ defines a positive definite quadratic form on the \emph{dual} $\Hil_\mu^*$ of the RKHS $\Hil_\mu$, restricted to functionals $\mudot$ such that $\pair{\mudot}{\ones_X} = 0$. On this subspace of $\Hil_\mu^*$ the norm induced by the metric tensor $\g_\mu$ is equivalent to the operator norm. For a smooth cost function $c$ this space contains all $\Cont^m$-perturbations, including the ``vertical'' and ``horizontal'' ones. In contrast, the tangent space for the optimal transport metric tensor~\eqref{eq:intro_gmu_OT} only allows for horizontal perturbations.

\emph{A useful change of variables.}
In order to build a distance from the metric tensor $\g_\mu$ and to prove the existence of geodesics we need some uniform coercivity. To that end we exploit the change of variables $\mu \leftrightarrow \beta \assign \exp(- \fmumu/\varepsilon)$ which maps $\prm(X)$ into a subset of the unit sphere of the Reproducing Kernel Hilbert Spaces $\Hil_c$ induced by \(k_c \assign \exp(-c/\varepsilon)\) (Theorem~\ref{theorem:alpha_beta_homeo}). This change of variables is not completely new and can be seen implicitly in~\cite[Prop.~3]{Feydy2018} when the authors derive lower bounds on the Sinkhorn divergence. For a perturbation $\betadot$ of the variable $\beta = \exp(- \fmumu/\varepsilon)$ and its associated perturbation $\mudot$ of the measure $\mu$, Theorem~\ref{theorem:metric_tensor_in_betadot} expresses \(\g_\mu\) as
\begin{equation} \label{eq:intro_tgmu_expression}
    \g_\mu(\mudot,\mudot) = \tg_{\mu}(\betadot,\betadot) \assign \frac{\varepsilon}{2} \left( \norm{\betadot}_{\Hil_c}^2 + 2 \bpair{\exp(\fmumu/\varepsilon) \cdot \betadot }{ (\id - K_{\mu})\inv [\exp(\fmumu/\varepsilon) \cdot \betadot] }_{\mu} \right).
\end{equation}   
Expressed in this space $\Hil_c$, independent of $\mu$, the metric tensor $\tg_{\mu}(\betadot,\betadot)$ is jointly continuous in $\mu$ and $\betadot$ (Proposition~\ref{prop:tgmu_joint_cont}) and equivalent to $\| \betadot \|^2_{\Hil_c}$ the squared norm on $\Hil_c$ uniformly in $\mu$ (Proposition~\ref{prop:tgmu_Hcalphadot_equiv}). This is in stark contrast with the metric tensor $\g^0_\mu$ of the optimal transport problem~\eqref{eq:intro_gmu_OT}, which depends in a non-smooth way on $\mu$ and which does not compare well to any other functional space uniformly on $\prm(X)$.

\emph{Scaling limit.} In Section~\ref{sec:sub:epsilon_limits} we verify, though without rigorous derivations, that the scaling limits of our metric tensor when $\varepsilon \to 0$ and $\varepsilon \to \infty$ indeed correspond to what is expected, respectively the metric tensor of Optimal Transport and the one of a degenerate MMD distance.

\emph{Building a new distance.}
Once we understand well our metric tensor $\g_\mu$, we build our new metric on the space of probability measures in Section~\ref{section:path_metric}. While in the classical case the ``dynamic'' formula~\eqref{eq:Benamou_Brenier} is a consequence of the ``static'' definition~\eqref{eq:intro_def_OT}, here we take it as definition of the distance. Specifically, in Definition~\ref{def:distance} we set
\begin{equation} \label{eq:intro:def_dS}
    \dS(\mu_0,\mu_1) \assign \left( \inf \int_0^1 \g_{\mu_t}(\mudot_t,\mudot_t) \diff t \right)^\frac{1}{2},
\end{equation}
where the infimum is taken over a suitable set of admissible paths \((\mu_t)_{t \in [0,1]}\) between \(\mu_0\) and \(\mu_1\). Theorem~\ref{theorem:dS_metric} shows that \(\dS\) is a metric which metrizes the \textweakstar~topology on \(\prm(X)\). We actually prove more: \(\dS(\mu_0,\mu_1)\) is uniformly equivalent to \(\norm{\beta_0 - \beta_1}_{\Hil_c}\). This is to be compared to optimal transport, where $\sqrt{\OT_0}$ can be compared to an $H^{-1}$ norm as in~\cite{Peyre:Comparison} and~\cite[Sec. 5.5.2]{Santambrogio2015}, but these results require some regularity assumptions on the measures involved and are not valid uniformly on $\prm(X)$. Estimates can also be made with linearized optimal transport~\cite{DelalandeMerigot}, though this only leads to a bi-Hölder equivalence. The price to pay is that our constants depend on $\varepsilon$ and blow up as $\varepsilon \to 0$.

\emph{Existence of geodesics.}
In Theorem~\ref{theorem:dE_minimizers_existence} we prove existence of geodesics, that is, that the infimum in~\eqref{eq:intro:def_dS} is attained. In most of the problems which are variations of the Benamou-Brenier formula, such as unbalanced optimal transport~\cite{Chizat:UOT}, variational mean field games~\cite{Benamou:MFG} or harmonic maps~\cite{Lavenant2019}, the existence of solutions to problems of the type~\eqref{eq:intro:def_dS} uses the property that $\g^0_\mu(\mudot, \mudot)$ (or its suitable counterpart) is jointly convex in $\mu$ and $\mudot$. Thus tools of convex analysis can be used and lower semicontinuity is easy to establish. In our case, however, neither the Sinkhorn divergence $S_\varepsilon(\mu, \nu)$ nor the associated metric tensor $\g_\mu(\mudot,\mudot)$ are jointly convex: we provide counterexamples that we defer to Section~\ref{section:sub:nonconvexity}. To prove that the infimum in~\eqref{eq:intro:def_dS} is attained we still rely on the direct method of calculus of variations, but we have to work a little bit harder: coercivity comes from the change of variables $\mu \leftrightarrow \beta$ studied in Section~\ref{section:FA_regularity}, and lower semicontinuity is proved thanks to the smoothness of the metric tensor in its inputs.

\emph{Absolutely continuous paths.}
The class of admissible paths in the minimization problem~\eqref{eq:intro:def_dS} defining the metric \(\dS\) is initially defined via Sobolev regularity of the corresponding path \( (\beta_t)_t\) in \(\Hil_c\). These paths end up being precisely the 2-absolutely continuous paths in \((\prm(X),\dS)\). We give a characterization of admissibility in terms of the path \((\mu_t)_t\) itself and some regularity of the potentials in Proposition~\ref{prop:admissible_paths}. 
If $c$ is smooth we show in Corollary~\ref{cor:Cm_admissible} that \(\Cont^m\)-perturbations, for which the metric tensor coincides with the Hessian of the Sinkhorn divergence, are admissible paths. We further show that 
\begin{equation*}
    \dS(\mu_0, \mu_1) \leq C \cdot W_1(\mu_0, \mu_1), \quad \text{and} \quad \dS(\mu_0, \mu_1) \leq C \cdot \mathsf{SHK} (\mu_0, \mu_1)
\end{equation*}
for some \(C > 0 \) depending on $\varepsilon$, where $W_1$ denotes the $1$-Wasserstein distance and \(\mathsf{SHK}\) the spherical Hellinger--Kantorovich distance (see~\cite{LaMi2017}). In particular, 2-absolutely continuous paths in the \(p\)-Wasserstein metric for any \(p \geq 1\) are admissible paths for \(\dS\).

\emph{The quadratic case.}
Our theory applies to \emph{any} compact metric space with \emph{any} continuous cost function as soon as $k_c = \exp(-c / \varepsilon)$ defines a positive definite universal kernel. The special case of the squared Euclidean distance as the cost function on a subset of \(\R^d\) is studied in Section~\ref{section:sub:mean_estimate}. For measures $\mu_0$, $\mu_1$ it has been known for a while (see e.g.~\cite[Lemma 8.8]{BickelFreedman} or the proof of Theorem 2.1 in~\cite{Gelbrich1990}) that the following decomposition holds: 
\begin{equation*}
    \OT_0(\mu_0,\mu_1) = \norm{m_1 - m_0}^2 + \OT_0(\bar{\mu}_0,\bar{\mu}_1),
\end{equation*}
where $m_i$ denotes the mean of the measure $\mu_i$ and $\bar{\mu}_i = (\id- m_i)_\# \mu_i$ is the centered version of $\mu_i$, for $i=0,1$. In other words, the transport of the means and the transport of the centered parts decouple. We show that this extends to Sinkhorn divergences and that it can be read as an orthogonal decomposition of the tangent space for the geometry of $\g_\mu$ in Proposition~\ref{prop:gmu_mean}. We then integrate this inequality and show in Theorem~\ref{theorem:Pythagoras_distance} that our novel distance satisfies
\begin{equation*}
    \dS(\mu_0,\mu_1)^2 = \norm{m_1 - m_0}^2 + \dS(\bar{\mu}_0,\bar{\mu}_1)^2.
\end{equation*}
In particular, uniform translations are geodesics, as in the case of classical optimal transport.

\emph{Some special cases.}
Section~\ref{section:examples} covers two examples where we can explicitly compute the metric tensor. We start with Gaussians: it is one of the few examples in optimal transport for which there are closed formulas and for which the geometry is well understood~\cite{Takatsu2011}; and it is one of the paradigmatic examples in information geometry which relates Gaussian measures to the Poincaré half-plane, see e.g.~\cite{Costa2015}. Building on an explicit formula for \(\OT_\varepsilon\) between Gaussians from~\cite{Mallasto2021}, Theorem~\ref{theorem:gaussians_gmu} shows that 
\begin{equation*}
    \g_{v} ( \dot{v}, \dot{v} ) = \frac{1}{ 4 \sqrt{ \frac{\varepsilon^2}{16} + v^2} } \dot{v}^2.
\end{equation*} 
Here we identify a one-dimensional Gaussian measure \(\mathcal{N}(0,v)\) with its variance \(v\). Theorem~\ref{theorem:gaussians_dE} then provides a formula for the geodesic restriction of \(\dS\) to Gaussian measures. Note that strictly speaking we are not in our framework since Gaussians are not supported on a compact space. As a second example, we consider the two-point space $X = \{ x_1,x_2 \}$, where we can compute the metric tensor. In particular, in Proposition~\ref{prop:example:two_point} we study the asymptotic of the distance in two different regimes, namely $c(x_1,x_2) \ll \varepsilon$ and $c(x_1,x_2) \gg \varepsilon$. The formula works for the squared distance cost function and asymptotically recovers the cost of horizontal movement on \(\R^d\) as \(\norm{x_1 - x_2} \to 0\).

\emph{Negative results.}
Section~\ref{section:negative_examples} collects some negative results on the Sinkhorn divergence and the metric tensor that were already mentioned throughout the introduction. Specifically, we provide examples showing that \(\sqrt{S_\varepsilon}\) does not satisfy the triangle inequality, that $(\mu, \nu) \mapsto S_\varepsilon(\mu,\nu)$ is not jointly convex, and that the metric tensor \(\g_\mu(\mudot,\mudot)\) is also not jointly convex in \(\mu\) and \(\mudot\).

\paragraph{Comparison with Schrödinger bridges.}
\label{paragraph:difference_bridges}
We emphasize that our geometry and its geodesics are fundamentally different from Schrödinger bridges, a notion of displacement interpolation associated with entropic optimal transport \cite{Gentil2017,Leonard2014}.
In our construction, given two probability measures $\mu_0, \mu_1$ and $\varepsilon > 0$ there exists $(\mu^{\text{geo}, \varepsilon}_t)_{t \in [0,1]}$ which is a geodesic in our metric $\dS$, i.e.~a minimizer in~\eqref{eq:intro:def_dS}. In contrast, when $c = \norm{x-y}^2$ is the quadratic cost on $\R^d$, the Schrödinger bridge $(\mu^{\text{Sbr}, \varepsilon}_t)_{t \in [0,1]}$ is constructed as follows~\cite{Leonard2014}: Let $\pi_\varepsilon$ be the optimal transport plan, that is, the minimizer in~\eqref{eq:OTeps}. Let $q_{t,\varepsilon}(x,y)$ denote the law of the Brownian bridge at time $t$ starting at $t=0$ in $x$ and ending at $t=1$ in \(y\) with diffusivity $\sqrt{\varepsilon/2}$. That is, $q_{t,\varepsilon}(x,y) \sim \mathcal{N}((1-t)x + ty, t (1-t) \varepsilon /2)$. Then $\mu^{\text{Sbr}, \varepsilon}_t$ is given by
\begin{equation} \label{eq:definition_Schrödinger_bridges}
    \mu^{\text{Sbr}, \varepsilon}_t = \iint_{X \times X} q_{t,\varepsilon}(x,y) \diff \pi_\varepsilon(x,y). 
\end{equation}
It is also a curve which satisfies $\mu^{\text{Sbr}, \varepsilon}_0 = \mu_0$, $\mu^{\text{Sbr}, \varepsilon}_1 = \mu_1$ and which interpolates smoothly in between.
Nevertheless, the two notions of interpolation are conceptually very different.
Let us point out three major differences between $\mu^{\text{geo}, \varepsilon}$ and $\mu^{\text{Sbr}, \varepsilon}$:
\begin{enumerate}
    \item The geodesic interpolation $\mu^{\text{geo}, \varepsilon}$ is defined on any compact space $X$ if the continuous cost function $c$ defines a universal positive definite kernel $k_c = \exp(-c/\varepsilon)$ whereas Schrödinger bridges need the quadratic cost on $\R^d$, or more generally a reference continuous-time Markov process (the Brownian motion in the case of quadratic cost) to be defined~\cite{Leonard2014}. 

    \item If $\mu_0 = \mu_1 = \mu$, then the geodesic interpolation $\mu^{\text{geo}, \varepsilon}$ is constant in time and coincides with $\mu$. In the same case $\mu^{\text{Sbr},\varepsilon}$ is \emph{not} constant in time, and typically has larger variance for intermediate $t \in (0,1)$ than for $t=0,1$. This is linked to the phenomenon that generically $\OT_\varepsilon(\mu,\mu) > 0$; see also the discussion in \cite{Leonard2013}, after Theorem 1.7.
    
    \item In both cases for the quadratic cost the parameter $\sqrt{\varepsilon}$ acts as a typical length. In the case of Schrödinger bridges, it is also related to a typical time, whereas for geodesics there is no typical time. To understand this, for $\tau \in (0,1)$, let us look at the curve $(\mu_t)_{t \in [0,\tau]}$ but rescaled to be over $[0,1]$. That is, let us consider $(\mu^{\text{geo}, \varepsilon}_{t\tau})_{t \in [0,1]}$ and $(\mu^{\text{Sbr}, \varepsilon}_{t\tau})_{t \in [0,1]}$. Then $(\mu^{\text{geo}, \varepsilon}_{t\tau})_{t \in [0,1]}$ is a geodesic from $\mu_0$ to $\mu_\tau$ for any value of $\tau \in (0,1)$, as usual in Riemannian geometry. On the other hand, $(\mu^{\text{Sbr}, \varepsilon}_{t\tau})_{t \in [0,1]}$ is a Schrödinger bridge between its endpoints $\mu^{\text{Sbr},\varepsilon}_0$ and $\mu^{\text{Sbr},\varepsilon}_\tau$, but with a regularization parameter $\varepsilon \tau$. This can be seen by using that the Schrödinger bridge is the marginal of a process which minimizes an entropy over the space of measures on paths~\cite{Leonard2014}, and to use the concatenation property of the entropy for Markov processes in this space~\cite[Lemma 3.4]{Benamou2018}. The key insight is that, when rescaled from time $[0,\tau]$ to time $[0,1]$, a Brownian motion with diffusivity $\sqrt{\varepsilon/2}$ becomes a Brownian motion with diffusivity $\sqrt{\tau \varepsilon/2}$. So in particular as $\tau \to 0$, that is, as we ``zoom in'' in time on a Schrödinger bridge, it increasingly resembles a Schrödinger bridge with $\varepsilon = 0$, that is, a Wasserstein geodesic~\cite{Leonard2012,mikami2004}. 
\end{enumerate}
We also refer to Figure~\ref{fig: Gauss} with Gaussian measures where we can do explicit computations for our geodesics and Schrödinger bridges.

\paragraph{Setting and notation.}
We consider a compact metric space \((X,d)\). We fix \(\varepsilon > 0\) and a symmetric, 
continuous, non-negative cost function \(c \in \Cont(X \times X; [0,\infty))\) that induces a positive definite universal kernel \(k_c(x,y) = \exp(-c(x,y)/\varepsilon)\).

We denote the dual pairing between a space and its dual space by \(\pair{\vdot}{\vdot}\) and write \(\pair{\vdot}{\vdot}_\Hil\) for the inner product on a Hilbert space \(\Hil\).

We work with the space of signed Radon measures \(\calM(X)\), which is identified with the dual space of \(\Cont(X)\) through the Riesz--Markov--Kakutani Theorem. We denote the subset of non-negative measures by \(\calM_+(X)\), the space of ``balanced measures'' \(\nu \in \calM(X)\) with \(\int \ones_X \diff \nu = 0\) by \(\calM_0(X)\), and Radon probability measures by \(\prm(X)\).

When \(X\) is the closure of a bounded open set in \(\R^d\) we write $\Cont^m(X)$ for the space of $m$ times differentiable functions over $X$. It is endowed with the norm \(\norm{\phi}_{\Cont^m} = \sup_{\abs{\alpha} \leq m} \norm{\partial^\alpha \phi}_\infty\) of uniform convergence of the function and all its (partial) derivatives up to order $m$ denoted using a multi-index \(\alpha \in \N_0^d\). We denote by \(\Cont^{(m,m)}(X,X)\) the space of functions for which mixed partial derivatives up to order $m$ in the first and second variable separately are continuous. With these notations $\Cont^0(X) = \Cont(X)$ and $\Cont^{(0,0)}(X) = \Cont(X \times X)$. We define $\Cont^m(X)^*$ as the dual space of $\Cont^m(X)$. It is endowed with the dual norm $\| \cdot \|_{\Cont^{m,*}}$ defined as $\| \nu \|_{\Cont^{m,*}} = \sup \{ \bpair{\nu}{\phi}  :   \norm{\phi}_{\Cont^m} \leq 1\}$. In addition to the strong topology on $\Cont^m(X)^*$ coming from the dual norm $\| \cdot \|_{\Cont^{m,*}}$, we can also endow it with the \textweakstar~topology. As $X$ is compact, note that $\Cont^0(X)^* = \calM(X)$ and \textweakstar~convergence in $\Cont^0(X)^*$ is the usual \textweakstar~convergence of measures. In this way, setting \(m=0\) throughout this paper recovers the assumption $X$ compact and $c$ is continuous.

For Landau big-O and little-o notation we use \(\Oh\) and \(\oh\) respectively.

\section{Preliminaries on entropic optimal transport} \label{section:preliminaries}

The entropic optimal transport problem, as defined in~\eqref{eq:OTeps}, is a constrained convex optimization problem whose dual formulation reads as
\begin{equation} \label{eq:entropic_dual}
    \OT_\varepsilon(\mu, \nu) = \sup_{f, g \in \Cont(X)} \, \pair{\mu}{f} + \pair{\nu}{g} - \varepsilon \bpair{\mu \otimes \nu}{\exp \left( \frac{1}{\varepsilon} (f \oplus g - c) \right) - 1}.
\end{equation}
Here the notation $f \oplus g$ stands for the function $(x,y) \mapsto f(x)+g(y)$ defined on $X \times X$. The dual problem admits maximizers \(\fmunu\), \(\gmunu\), which we call \emph{Schrödinger potentials}, that satisfy
\begin{equation} \label{eq:entropic_dual_max}
    \OT_\varepsilon(\mu, \nu) = \int_X \fmunu \diff \mu + \int_X \gmunu  \diff \nu
\end{equation}
and are solutions of the Schrödinger system
\begin{equation} \label{eq:Schroedinger_system}
\begin{cases}
    \fmunu & = T_\varepsilon(\gmunu,\nu), \\
    \gmunu & = T_\varepsilon(\fmunu, \mu),
\end{cases}
\end{equation}
where \(T_\varepsilon : C(X) \times \prm(X) \to C(X)\) is defined by 
\begin{equation} \label{eq:def:T_eps}
    T_\varepsilon(f,\mu)(y) \coloneqq - \varepsilon \log \int_X \exp \left( \frac{1}{\varepsilon} (f(x) - c(x,y)) \right) \diff \mu(x). 
\end{equation} 
We will sometimes write the condition $ \gmunu = T_\varepsilon(\fmunu, \mu)$ as
\begin{equation} \label{eq:kernel_slice_int_1}
    \int_X \exp \left( \frac{1}{\varepsilon} (\fmunu(x) + \gmunu(y) - c(x,y)) \right) \diff \mu (x) = 1
\end{equation}
for every \(y \in X\). The Schrödinger potentials are unique in the space \(\Cont(X) \times \Cont(X) / \sim\) with the relation \((f,g)~\sim~(f~+~\lambda\ones_X,~g~-~\lambda\ones_X)\) for all \(\lambda \in \R\). Alternatively, we can select a canonical representative by setting \(\fmunu(x_0) = \gmunu(x_0)\) for some fixed \(x_0 \in X\), which we do throughout the paper.
This choice yields \(\fmunu = \gnumu\) and, in particular, throughout this article \(\fmumu = \gmumu\). The optimal potentials inherit the modulus of continuity from the cost function \(c\)~\cite[Lem.~3.1]{Nutz2022}. The optimal transport plan \(\pi\) in~\eqref{eq:OTeps} is given by 
\begin{equation} \label{eq:formula_coupling_optimal}
    \pi = \exp \left( \frac{1}{\varepsilon} (\fmunu \oplus \gmunu - c) \right) (\mu \otimes \nu). 
\end{equation}
Note that the function \(\fmunu \oplus \gmunu\) does not depend on the choice of the representatives since constant shifts cancel out. Optimal \((\fmunu,\gmunu)\) can be obtained numerically by iterating the fixed point equation~\eqref{eq:Schroedinger_system}, which corresponds to the celebrated Sinkhorn algorithm. We refer to~\cite{PeyreCuturi} for an overview on computational optimal transport, and to \cite{Leonard2014, Nutz:IntroEOT} for a detailed discussion on entropic optimal transport and related literature.

\section{The Hessian of the Sinkhorn divergence} \label{section:sinkhorn_hessian}

\subsection{Statement of the expansion}

Theorem~\ref{theorem:FeydyEtAl} shows that the Sinkhorn divergence~\eqref{eq:intro:def_sinkhorn_div} is positive definite and convex in each of its input variables. However, as we show in Section~\ref{subsection:Seps_triangle}, \(S_\varepsilon\), or more precisely \(\sqrt{S_\varepsilon}\), in general does not satisfy the triangle inequality. To construct the metric \(\dS\) announced in~\eqref{eq:intro:def_dS} in the introduction, we first characterize how \(S_\varepsilon\) behaves close to the diagonal \(\mu = \nu\). Specifically, we want to find an expansion of the form
\begin{equation} \label{eq:Seps_expansion}
    S_\varepsilon(\mu, \mu_t) \sim t^2 \g_{\mu}(\mudot, \mudot)
\end{equation}
as \(t \to 0\) for a suitably differentiable path \((\mu_t)_t\) in \(\prm(X)\) with \(\mu_0 = \mu\), \(\mudot_0 = \mudot\). The term $\g_{\mu}(\mudot, \mudot)$ will then serve as the metric tensor and  it will be examined in detail in Section~\ref{section:FA_regularity}. By Theorem~\ref{theorem:FeydyEtAl} there is a good hope that \(\g_\mu\) will be a positive quadratic form on a meaningful tangent space to probability measures. The appropriate notion of tangent space will become apparent in Section~\ref{section:FA_regularity}. 

Though in our general setting $X$ is a compact space, in this section we will also consider a differentiable setting in which stronger results can be obtained.  

\begin{assumption}[Differentiable setting] \label{asp:diff_m}
    We fix an integer $m \geq 0$. If $m = 0$, we simply assume, as stated above, that $X$ is a compact metrizable space and $c \in \Cont(X \times X)$ is a continuous function. If $m \geq 1$ we assume that $X$ is the closure of a bounded open set in \(\R^d\) and that $c \in \Cont^{(m,m)}(X \times X)$.
\end{assumption}

\begin{definition}[$\Cont^m$-perturbations] \label{def:Cm_perturbations}
    Let $m \geq 0$ and \(\tau > 0\). We call a path \((\mu_t)_{t \in (-\tau,\tau)}\) valued in \(\prm(X)\) a $\Cont^m$-perturbation of \(\mu = \mu_0\) if it is continuously differentiable in the space $\Cont^m(X)^*$ endowed with the \textweakstar~topology. We write $\mudot \in \Cont^m(X)^*$ for the derivative of the curve at time $t=0$.
\end{definition}

As $\Cont^m(X)^*$ is endowed with its \textweakstar~topology,
it means that for any $\phi \in \Cont^m(X)$ the map $t \mapsto \langle \mu_t,\phi \rangle$ is differentiable, that the derivative, denoted by $\langle \dot{\mu}_t,\phi \rangle$, is linear in $\phi$, and that the mapping $(\phi,t) \mapsto \langle \dot{\mu}_t,\phi \rangle$ is jointly continuous from $\Cont^m(X) \times (-\tau,\tau)$ to $\R$ (see Lemma~\ref{lemma:pair_strong_weak}).

\begin{remark}[Vertical and horizontal perturbations] \label{remark:perturbations}
    For any compact set $X$, a balanced measure \(\nu \in \calM_0(X)\) induces a $\Cont^0$-perturbation of \(\mu\) in the form of \(\mu_t = \mu + t \nu\) if and only if \(\nu \ll \mu\) and the Radon--Nikodym derivative \(\frac{\mathrm{d} \nu}{\mathrm{d} \mu}\) is uniformly bounded from above and below. We call such perturbations ``vertical perturbations'' as they correspond to mass disappearing somewhere to be recreated elsewhere.
    The constraint \(\pair{\nu}{\ones_X} = 0\) in the space of balanced measures \(\calM_0(X)\) comes from the preservation of mass. 
    
    When $X$ is the closure of a bounded open set in \(\R^d\), we call ``horizontal perturbations'' curves of the form 
    \begin{equation*}      
        \mu_t = \Psi(t,\vdot)_\# \mu 
    \end{equation*}
    for a continuous function \(\Psi \colon (-\tau,\tau) \times X \to X\) with one continuous time derivative. It defines a $\Cont^1$-perturbation and 
    \(\mudot_t = - \ddiv(\mu_t v_t)\) with \(v_t = \frac{\partial \Psi}{\partial t}(t,\cdot)\). 
    Indeed we have, for any $\phi \in \Cont^1(X)$,  
    \begin{equation*}
        \bpair{\mudot_t}{\phi} = \dd{}{t} \langle \mu_t,\phi \rangle = \int_X \frac{\partial \Psi}{\partial t}(t,x) \cdot \nabla \phi(x)  \diff \mu_t(x) = \int_X \frac{\partial \Psi}{\partial t}(t,\Psi(t,x)) \cdot \nabla \phi(\Psi(t,x))  \diff \mu_t(x),
    \end{equation*}
    and the latter is jointly continuous in $(\phi,t)$ with respect to the norm topology of $\Cont^1(X)$.
\end{remark}

Before stating the main result of this section, we need to introduce integral operators related to the transport kernel. We define the self-transport kernel as follows: 
\begin{equation*}    
    k_{\mu} (x,y) \assign \exp \left( \frac{1}{\varepsilon}\left(\fmumu(x) + \fmumu(y) - c(x,y) \right) \right).    
\end{equation*}
Recall here that we adopt the convention \(\fmumu = \gmumu\) so that \(k_\mu\) is unambiguous.

Since we assume that \(c \in \Cont^{(m,m)}(X \times X)\), the smoothing effect of the Schrödinger system~\eqref{eq:Schroedinger_system} guarantees that the potentials \(\fmunu, \gmunu\) belong to \(\Cont^m(X)\) (see also Proposition~\ref{prop:reg_fmunu_cm} below). Thus \(k_{\mu}\) has the same regularity class as the cost function \(c\), and it is enough to guarantee that the following operators are well defined (see also Proposition~\ref{prop:Hmu_compact_CX} below).

\begin{definition} \label{def:kernel_operators}
    For \(\mu \in \prm(X)\) we define the kernel operators \(K_\mu\), \(H_\mu\) by 
    \begin{align*}
        &K_{\mu} \colon \Cont(X) \to \Cont^m(X), \quad K_{\mu} [\phi] (y) = \int_X \phi(x) k_\mu(x,y) \diff \mu(x),\\
        & H_{\mu} \colon \Cont^m(X)^* \to \Cont^m(X), \quad H_{\mu} [\nu] (y) = \bpair{\nu}{k_\mu(\cdot,y)}. 
    \end{align*}
    Analogously, we define \(H_{c} \colon \Cont^m(X)^* \to \Cont^m(X), ~ H_{c} [\nu] (y) = \bpair{\nu}{k_c(\cdot,y)}\).
\end{definition}

The operator \(K_\mu\) can be extended to the domain \(L^1(X,\mu)\). Note that $H_{\mu} [\nu] (y) = \int_X k_\mu(x,y) \diff \nu(x)$ when $\nu$ is a measure, thus if $\phi \in L^1(X,\mu)$, then $K_{\mu}[\phi] = H_\mu[\phi \mu]$. The operator \(H_\mu\) is the kernel mean embedding into the Reproducing Kernel Hilbert Space (RKHS) $\Hil_\mu$ induced by \(k_\mu\), and its domain can be extended to \( \Hil_\mu^* \). This will be discussed in Section~\ref{section:FA_regularity}.

Let us call $\Cont^m(X)^*_0$ the space of balanced distributions, that is, the space of all $\nu \in \Cont^m(X)^*$ satisfying $\langle \nu, \ones_X \rangle = 0$. It is dual to $\Cont^m(X)/\R$,  the space of $\Cont^m$ functions identified up to shifts by constant functions (identified with $\R$). Hence the pairing \(\pair{\nu}{\phi}\) is well defined between \(\nu \in \Cont^m(X)^*_0 \) and \(\phi \in \Cont^m(X)/\R\). Since we regard paths valued in \(\prm(X)\), all perturbations in Definition~\ref{def:Cm_perturbations} are of this form. The operator \(K_\mu\) descends to a well-defined operator 
\begin{equation*}
    K_\mu  \colon \Cont(X) / \R \to \Cont^m(X) / \R
\end{equation*}
because \(K_\mu[\ones_X]=\ones_X\) by \eqref{eq:kernel_slice_int_1}. Using the canonical projection $\Cont^m(X) \to \Cont^m(X) / \R$ we can always move to the quotient space.

\begin{theorem}[Hessian of the Sinkhorn divergence] \label{theorem:hessian_sinkhorn} 
    With Assumption~\ref{asp:diff_m} for some $m \geq 0$,
    let \(\mu \in \prm(X)\) and \((\mu_t)_t\) be a \(\Cont^m\)-perturbation of \(\mu\). 
    Then 
    \begin{equation} \label{eq:theorem:hessian_sinkhorn}
        \lim_{t \to 0} \frac{S_\varepsilon(\mu, \mu_t)}{t^2} = \frac{\varepsilon}{2} \bpair{ \mudot }{ (\id - K_\mu^2)^{-1} H_\mu \left[ \mudot \right] }.
    \end{equation}
\end{theorem}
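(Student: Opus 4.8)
The plan is to produce a second-order Taylor expansion of $S_\varepsilon(\mu,\mu_t)$ at $t=0$ and read off its quadratic coefficient. I would start from $S_\varepsilon(\mu,\mu_t)=\OT_\varepsilon(\mu,\mu_t)-\tfrac12\OT_\varepsilon(\mu,\mu)-\tfrac12\OT_\varepsilon(\mu_t,\mu_t)$ and differentiate $\OT_\varepsilon$ in its marginals by the envelope theorem applied to the dual problem~\eqref{eq:entropic_dual}, whose objective is affine in each marginal. This gives $\frac{\mathrm{d}}{\mathrm{d}t}\OT_\varepsilon(\mu,\mu_t)=\pair{\delta\mu}{g_{\mu,\mu_t}}$; the only point to check is that the Kullback--Leibler--type term $\varepsilon\bpair{\mu\otimes\nu}{\exp(\tfrac{1}{\varepsilon}(f\oplus g-c))-1}$ contributes nothing, which holds because at the optimal potentials the constraint~\eqref{eq:kernel_slice_int_1} makes the inner slice integral identically $1$, so its pairing against $\mu\otimes\delta\mu$ vanishes. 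Running the same computation on both slots and using the convention $\fmumu=\gmumu$ gives $\frac{\mathrm{d}}{\mathrm{d}t}\OT_\varepsilon(\mu_t,\mu_t)=2\pair{\delta\mu}{f_{\mu_t,\mu_t}}$. Taylor-expanding $S_\varepsilon(\mu,\mu_t)$ to order $t^2$, the order-$0$ term is $\OT_\varepsilon(\mu,\mu)-\OT_\varepsilon(\mu,\mu)=0$, the order-$1$ term vanishes again by $\fmumu=\gmumu$, and we are left with
\begin{equation}
\label{eq:plan:reduction}
\lim_{t\to0}\frac{S_\varepsilon(\mu,\mu_t)}{t^2}=\frac12\pair{\delta\mu}{\dot g-\dot\psi},\qquad \dot g\assign\partial_t g_{\mu,\mu_t}\big|_{t=0},\quad \dot\psi\assign\partial_t f_{\mu_t,\mu_t}\big|_{t=0}.
\end{equation}

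Next I would compute $\dot g$ and $\dot\psi$ by linearizing the Schrödinger system at $t=0$. Recalling $k_\mu(x,y)=\exp(\tfrac{1}{\varepsilon}(\fmumu(x)+\fmumu(y)-c(x,y)))$ together with the normalization $\int_X k_\mu(x,y)\,\mathrm{d}\mu(x)=1$, I differentiate at $t=0$ the equations $f_{\mu,\mu_t}=T_\varepsilon(g_{\mu,\mu_t},\mu_t)$ and $g_{\mu,\mu_t}=T_\varepsilon(f_{\mu,\mu_t},\mu)$ from~\eqref{eq:Schroedinger_system}, as well as their diagonal version $f_{\mu_t,\mu_t}=T_\varepsilon(f_{\mu_t,\mu_t},\mu_t)$. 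Carrying out the differentiation of the $-\varepsilon\log$ and using the normalization to simplify the denominators gives, with $\dot f\assign\partial_t f_{\mu,\mu_t}|_{t=0}$,
\begin{equation}
\label{eq:plan:linearized}
\dot g=-K_\mu[\dot f],\qquad \dot f=-K_\mu[\dot g]-\varepsilon H_\mu[\delta\mu],\qquad (\id+K_\mu)[\dot\psi]=-\varepsilon H_\mu[\delta\mu],
\end{equation}
the term $\varepsilon H_\mu[\delta\mu]$ recording precisely the variation of the moving marginal $\mu_t=\mu+t\delta\mu$. Solving~\eqref{eq:plan:linearized} yields $(\id-K_\mu^2)[\dot f]=-\varepsilon H_\mu[\delta\mu]$, hence $\dot g=\varepsilon K_\mu(\id-K_\mu^2)^{-1}H_\mu[\delta\mu]$ and $\dot\psi=-\varepsilon(\id+K_\mu)^{-1}H_\mu[\delta\mu]$; the operator identity $K_\mu(\id-K_\mu^2)^{-1}+(\id+K_\mu)^{-1}=(\id-K_\mu^2)^{-1}$ then gives $\dot g-\dot\psi=\varepsilon(\id-K_\mu^2)^{-1}H_\mu[\delta\mu]$, and substituting into~\eqref{eq:plan:reduction} produces exactly~\eqref{eq:theorem:hessian_sinkhorn}. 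That $\id-K_\mu^2$ and $\id+K_\mu$ are invertible where needed is legitimate because $k_\mu$ is a continuous, strictly positive, symmetric, positive-definite kernel — it is $k_c$ multiplied by the rank-one positive kernel $(x,y)\mapsto\exp(\fmumu(x)/\varepsilon)\exp(\fmumu(y)/\varepsilon)$ — with $\int_X k_\mu(\cdot,y)\,\mathrm{d}\mu=1$: thus $K_\mu$ is a compact, self-adjoint, positive contraction on $L^2(X,\mu)$ whose eigenvalue $1$ is simple with eigenfunction $\ones_X$ (Jentzsch/Perron--Frobenius), so $\|K_\mu\|<1$ on the invariant subspace $\setgiven{\phi}{\pair{\mu}{\phi}=0}$; and $H_\mu[\delta\mu]$ lies in that subspace since $\pair{\mu}{H_\mu[\delta\mu]}=\pair{\delta\mu}{\ones_X}=0$.

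The main obstacle is the analytic input underpinning both steps: that $t\mapsto(f_{\mu,\mu_t},g_{\mu,\mu_t})$ and $t\mapsto f_{\mu_t,\mu_t}$ are differentiable at $t=0$ as $\Cont(X)$-valued curves, so that the expansion~\eqref{eq:plan:reduction} and the linearization~\eqref{eq:plan:linearized} are justified (and the envelope theorem applicable). I would prove this with the implicit function theorem applied to the fixed-point map $(f,\nu)\mapsto f-T_\varepsilon(T_\varepsilon(f,\mu),\nu)$ on $\Cont(X)/\R$: it is smooth in its arguments, vanishes at $(\fmumu,\mu)$, and its partial differential in $f$ there is $\id$ minus the Sinkhorn linearization, which is $\id-K_\mu^2$ up to the identifications above and is invertible by the spectral-gap argument; since a vertical perturbation keeps $\mu_t$ mutually absolutely continuous with $\mu$ with Radon--Nikodym density bounded above and below, the data stay in a neighbourhood where the hypotheses hold uniformly for small $t$. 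Alternatively, the required differentiability can be imported from the sensitivity analysis of Schrödinger potentials underlying the central limit theorems for Sinkhorn divergences cited in the introduction. As a sanity check, the resulting quadratic form is automatically nonnegative, as it must be since $S_\varepsilon\ge0$ with equality only at $\mu$: writing $\rho=\mathrm{d}\delta\mu/\mathrm{d}\mu\in L^\infty(\mu)$ and $(\id-K_\mu^2)^{-1}=\sum_{n\ge0}K_\mu^{2n}$, each summand $\pair{\delta\mu}{K_\mu^{2n}H_\mu[\delta\mu]}$ equals $\pair{\rho}{K_\mu^{2n+1}\rho}_{L^2(\mu)}\ge0$ since $K_\mu^{2n+1}$ is a positive operator.
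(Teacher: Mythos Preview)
Your proof is correct and follows essentially the same approach as the paper: the envelope theorem for the first derivative of $\OT_\varepsilon$, the implicit function theorem on the Schr\"odinger fixed-point map for differentiability of the potentials, and a spectral-gap argument for invertibility of $\id-K_\mu^2$ and $\id+K_\mu$. The only organizational difference is that the paper introduces the two-variable function $F(t,s)=\OT_\varepsilon(\mu_t,\mu_s)$ and uses the identity $\lim_{t\to 0} t^{-2}(F(0,t)+F(t,0)-F(0,0)-F(t,t))=-\partial^2_{ts}F(0,0)$, which reduces the computation to the single derivative $\partial_s f_{\mu,\mu_s}|_{s=0}$ and thereby avoids your final operator identity $K_\mu(\id-K_\mu^2)^{-1}+(\id+K_\mu)^{-1}=(\id-K_\mu^2)^{-1}$.
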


The expression on the right-hand side is well defined, in the sense that \( (\id - K_\mu^2)\inv \) exists as an operator on $\Cont^m(X)/\R$ and thus the duality pairing is understood between this space and balanced measures or distributions. In Section~\ref{section:FA_regularity} we will take the formula on the right-hand side of~\eqref{eq:theorem:hessian_sinkhorn} as a starting point and analyze for which class of perturbations $\mudot$ we can make sense of it.

\begin{remark}[Discrete measures with finite support]
    When \( \mu = \sum_{i=1}^n a_i \delta_{x_i} \) is a discrete measure with $a_i > 0$ and the vertical perturbation reads \( \nu = \sum_{i=1}^n b_i \delta_{x_i} \) with $\sum_i b_i = 0$, the formula can be expressed as follows. With $f = \fmumu$, we introduce the $n \times n$ matrices $K,H$ as 
    \begin{equation*}
    K_{ij} = \exp \left( \frac{1}{\varepsilon} (f(x_i) + f(x_j) - c(x_i,x_j)) \right) a_j, \quad H_{ij} = \exp \left( \frac{1}{\varepsilon} (f(x_i) + f(x_j) - c(x_i,x_j)) \right)
    \end{equation*}
    for indices $i,j \in \{ 1, \ldots,n \}$. With $b = (b_i)_i \in \R^n$ Theorem~\ref{theorem:hessian_sinkhorn} reads 
    \begin{equation*}
    \lim_{t \to 0} \frac{S_\varepsilon(\mu, \mu_t)}{t^2} = \frac{\varepsilon}{2} b^\top (\id - K^2)^\dagger H b,
    \end{equation*}
    where $(\id - K^2)^\dagger$ denotes the Moore--Penrose pseudo-inverse of the matrix $\id - K^2$. 
\end{remark}

The proof of Theorem~\ref{theorem:hessian_sinkhorn} relies on the differentiability of the Schrödinger potentials along the path $(\mu_t)_t$ and on explicit formulas for the derivatives. 

For the technical arguments we follow the line of the works~\cite{Carlier2024, Carlier2020} which use the implicit function theorem to do so, though these articles are in the more involved context of a multi-marginal optimal transport problem and with different regularity assumptions on the path of measures $(\mu_t)_t$. 

We first prove preliminary results on the operators $K_\mu$ and $H_\mu$ before moving to the core of our strategy and arguments.

\subsection{Preliminary results on the operators \texorpdfstring{$K_\mu$ and $H_\mu$}{Hmu and Kmu}}

In this section we analyze the operators $K_\mu$ and $H_\mu$ which will appear naturally when differentiating the fixed point equation~\eqref{eq:Schroedinger_system} that characterizes the Schrödinger potentials. We then compute the derivatives of the Schrödinger potentials, and finally prove Theorem~\ref{theorem:hessian_sinkhorn} in Section~\ref{section:sub:proof_expansion}. 

We still fix $m \geq 0$ and work in the setting of Assumption~\ref{asp:diff_m}. We start by qualitative results on the operators $H_\mu, K_\mu$ and the Schrödinger potentials.

\begin{proposition}\label{prop:Hmu_compact_CX} 
    Fix \(\mu \in \prm(X)\). The operators \(H_\mu, H_c \colon \Cont^m(X)^* \to \Cont^m(X)\) are \textweakstar{}-to-norm continuous, as well as norm-to-norm continuous and compact. The operator \(K_\mu \colon \Cont(X) \to \Cont^m(X)\) is norm-to-norm continuous and compact.
\end{proposition} 

\begin{proof}
    The kernels $k_c$ and $k_\mu$ belong to $\Cont^{(m,m)}(X)$. The results for $H_c, H_\mu$ directly follow by standard arguments on integral kernel operators, see Lemma~\ref{lemma:H_k_Cm} in the appendix. As $K_\mu[\phi] = H_\mu[\phi \mu]$, continuity and compactness extend to $K_\mu$ as well.
\end{proof}

\begin{proposition} \label{prop:reg_fmunu_cm}
    The map $(\mu,\nu) \mapsto f_{\mu,\nu}$ from $\prm(X)^2$ to $\Cont^m(X)$ is continuous and the set $\setgiven{ f_{\mu,\nu} }{ \mu,\nu \in \prm(X) }$ is compact in $\Cont^m(X)$.    
\end{proposition}

\noindent By symmetry the same results holds for $(\mu,\nu) \mapsto g_{\mu,\nu}$.

\begin{proof}
    From continuity of $(\mu,\nu) \mapsto (f_{\mu,\nu}, g_{\mu,\nu})$ in $\Cont(X)^2$ \cite[Proposition B.1]{nutz2023stability}, we see that the map $(\mu,\nu) \to g_{\mu,\nu} \nu \in \calM(X)$ is \textweakstar{}-to-\textweakstar{} continuous. As $\calM(X) = \Cont^{0}(X)^*$, we can apply Lemma~\ref{lemma:H_k_Cm} and deduce that $(\mu,\nu) \mapsto H_c[ g_{\mu,\nu} \nu] = \exp(-f_{\mu,\nu}/\varepsilon)$ is continuous into $\Cont^m(X)$, and thus continuity as in our claim follows by taking the logarithm. The set $\setgiven{ f_{\mu,\nu} }{ \mu,\nu \in \prm(X) }$ is compact as a continuous image of a compact set.
\end{proof}

The following result is a key quantitative estimate which holds uniformly over $\prm(X)$.
We recall that \( \Cont(X) / \R \) is the quotient space of continuous functions by constant functions. We equip this space with the quotient norm
\begin{equation*}
\norm{\phi}_{\Cont/\R}\assign\inf_{\lambda\in\R}\norm{\phi-\lambda\cdot\ones_X}_\infty=\frac{1}{2}\left(\sup_{x\in X}\phi(x)-\inf_{y\in X}\phi(y)\right).
\end{equation*}

\begin{proposition}[Contraction property] \label{prop:Kmu_contraction}
    Let \(q \assign 1 - \exp(-4\norm{c}_\infty / \varepsilon) \in (0,1)\). 
    Then \(\norm{K_{\mu} [\phi]}_{\Cont/\R} \leq q \cdot \norm{\phi}_{\Cont/\R}\) for all \(\phi\in\Cont(X)\) and all \(\mu \in \prm(X)\).
\end{proposition}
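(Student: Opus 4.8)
The plan is to reduce the estimate to a pointwise comparison and then exploit that $K_\mu$ is a Markov-type (averaging) operator, since $K_\mu[\ones_X] = \ones_X$ by~\eqref{eq:kernel_slice_int_1}. First I would recall the explicit description of the quotient norm given just above the statement, namely $\norm{\phi}_{\Cont/\R} = \tfrac{1}{2}\big(\sup_X \phi - \inf_X \phi\big)$, which is just half the oscillation of $\phi$. So the claim is equivalent to showing that $K_\mu$ shrinks oscillations by the factor $q$. Because $K_\mu$ kills additive constants (as it descends to $\Cont(X)/\R$), I may assume without loss of generality that $\inf_X \phi = 0$ and $\sup_X \phi = 2\norm{\phi}_{\Cont/\R} =: M$; equivalently, after this normalization it suffices to bound $\sup_{y} K_\mu[\phi](y) - \inf_{y} K_\mu[\phi](y) \le q M$.

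The key step is a Doeblin-type minorization of the averaging measure. For each fixed $y\in X$, the function $x\mapsto k_\mu(x,y)\,\mathrm{d}\mu(x)$ is a probability measure on $X$ by~\eqref{eq:kernel_slice_int_1}. I would bound the density $k_\mu(x,y) = \exp\!\big(\tfrac{1}{\varepsilon}(\fmumu(x)+\fmumu(y)-c(x,y))\big)$ from below. Using the Schrödinger system / the normalization~\eqref{eq:kernel_slice_int_1} together with $0 \le c \le \norm{c}_\infty$, one gets two-sided bounds on $\fmumu(x)+\fmumu(y)$ of size $O(\norm c_\infty)$: indeed integrating $\exp(\tfrac1\varepsilon(\fmumu(x)+\fmumu(y)-c(x,y)))$ against $\mu$ and equating to $1$ forces $\fmumu(x)+\fmumu(y)$ to lie within $[0,\norm c_\infty]$ up to the right normalization (more precisely, $\exp(\tfrac1\varepsilon(\fmumu(x)+\fmumu(y))) \in [e^{0/\varepsilon}, e^{\norm c_\infty/\varepsilon}]$-type bounds hold after fixing the admissible shift). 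Combining these, $k_\mu(x,y) \ge \exp(-2\norm c_\infty/\varepsilon - 2\norm c_\infty/\varepsilon) = \exp(-4\norm c_\infty/\varepsilon) = 1-q$ uniformly in $x,y$. Hence the probability measure $k_\mu(\cdot,y)\mu$ dominates $(1-q)\mu$, i.e.\ we can write it as $(1-q)\mu + q\,\rho_y$ for a probability measure $\rho_y$ depending on $y$.

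With this decomposition, for any two points $y_1,y_2 \in X$,
\begin{align*}
K_\mu[\phi](y_1) - K_\mu[\phi](y_2)
&= (1-q)\Big(\int_X \phi\diff\mu - \int_X \phi\diff\mu\Big) + q\Big(\int_X \phi\diff\rho_{y_1} - \int_X \phi\diff\rho_{y_2}\Big)\\
&= q\Big(\int_X \phi\diff\rho_{y_1} - \int_X \phi\diff\rho_{y_2}\Big) \le q\,(\sup_X\phi - \inf_X\phi) = q\cdot 2\norm{\phi}_{\Cont/\R},
\end{align*}
since each integral of $\phi$ against a probability measure lies in $[\inf_X\phi, \sup_X\phi]$. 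Taking the supremum over $y_1,y_2$ gives $\sup K_\mu[\phi] - \inf K_\mu[\phi] \le 2q\norm{\phi}_{\Cont/\R}$, which is exactly $\norm{K_\mu\phi}_{\Cont/\R} \le q\norm{\phi}_{\Cont/\R}$.

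The main obstacle I anticipate is pinning down the uniform lower bound $k_\mu \ge 1-q$ cleanly: it requires choosing the correct representative of the (shift-ambiguous) potential $\fmumu$ and using the self-consistency relation~\eqref{eq:kernel_slice_int_1} to squeeze $\fmumu(x)+\fmumu(y)$ into an $O(\norm c_\infty)$ window — in a symmetric $\mu=\mu$ setting one can argue that $\fmumu$ itself can be normalized so that $\exp(\tfrac{2}{\varepsilon}\fmumu(x)) \ge \exp(-2\norm c_\infty/\varepsilon)\big/\!\int \exp(-c/\varepsilon)\diff\mu \ge \exp(-2\norm c_\infty/\varepsilon)$ and similarly an upper bound, giving the stated constant $q = 1 - \exp(-4\norm c_\infty/\varepsilon)$. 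Everything else is the standard Markov-operator / Birkhoff–Hopf style contraction argument and is routine once the minorization is in hand.
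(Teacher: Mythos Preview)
Your overall strategy is correct and is a close cousin of the paper's argument: both are Markov-kernel contraction proofs that hinge on a uniform minorization of $k_\mu$. The packaging differs slightly. You use the ``Doeblin common component'' form --- $k_\mu(\cdot,y)\mu \ge (1-q)\mu$ for every $y$, then cancel the shared part $(1-q)\mu$ --- whereas the paper uses the ratio form $k_\mu(y,z)\ge (1-q)\,k_\mu(x,z)$ for every triple $x,y,z$, and then splits the integrand into $\{\phi>0\}$ and $\{\phi<0\}$ to bound $K_\mu\phi(x)-K_\mu\phi(y)$ directly. Both routes are standard and yield the same constant; your decomposition is arguably the cleaner one to read.

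The one place where your argument is not yet solid is exactly the point you flag: the uniform lower bound $k_\mu(x,y)\ge \exp(-4\norm{c}_\infty/\varepsilon)$. Your sketch tries to extract two-sided bounds on $\fmumu$ from the Schr\"odinger constraint \eqref{eq:kernel_slice_int_1} alone, and suggests ``normalizing'' $\fmumu$. But in the self-transport case the convention $\fmumu=\gmumu$ pins the additive constant, so you are not free to shift; and from \eqref{eq:kernel_slice_int_1} by itself you only get $0\le \inf\fmumu+\sup\fmumu\le \norm{c}_\infty$, which does not control $2\inf\fmumu$ and hence does not give $k_\mu\ge 1-q$. The paper closes this gap by citing \cite[Thm.~1.2]{DiMarino2019}, which gives $\norm{\fmumu}_\infty\le \tfrac{3}{2}\norm{c}_\infty$; with that in hand, $\fmumu(x)+\fmumu(y)-c(x,y)\ge -3\norm{c}_\infty-\norm{c}_\infty=-4\norm{c}_\infty$, and your minorization $k_\mu\ge 1-q$ follows immediately. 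Once you invoke that bound, the rest of your proof goes through verbatim.
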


\begin{proof}
    By~\cite[Thm.~1.2]{DiMarino2020} the Schrödinger potentials \(\fmumu \in \Cont(X)\) satisfy \(\norm{\fmumu}_\infty \leq \frac{3}{2} \norm{c}_\infty\). With $a = \exp(-4\norm{c}_\infty / \varepsilon)$ and using $c \geq 0$, we see that $k_\mu(y,z)  \geq a \cdot k_\mu(x,z)$ for any triplet $x,y,z \in X$. For any \(x,y\in X\), this, together with~\eqref{eq:kernel_slice_int_1}, leads to
    \begin{align*}
        &K_{\mu} [\phi](x) - K_{\mu} [\phi](y) \\
        &\qquad = \int_{\{\phi>0\}} \phi(z) \big(k_{\mu}(x,z) - k_{\mu}(y,z)\big) \diff \mu(z) + \int_{\{\phi<0\}} \! - \phi(z) \big( - k_{\mu}(x,z) + k_{\mu}(y,z)\big) \diff \mu(z) \\
        &\qquad \leq  \int_{\{\phi>0\}} \phi(z) \left( 1 - a \right) k_{\mu}(x,z) \diff \mu(z) + \int_{\{\phi<0\}} \! - \phi(z) \left( 1 - a \right) k_{\mu}(y,z) \diff \mu(z)  \\
        &\qquad \leq 2 \left( 1 - a \right) \cdot \norm{\phi}_\infty.
    \end{align*}
    The operator norm estimate with respect to the quotient norm \(\norm{.}_{\Cont / \R}\) on domain and codomain follows by taking a supremum over \(x\) and \(y\) and considering the infimum over shifts \(\phi-\lambda\) by constants \(\lambda\in\R\). Finally \(q = 1 - a\).
\end{proof}

From this estimate we prove that the inverse of the map \( \id - K_\mu^2 \), central in the expansion of the Sinkhorn divergence, actually exists. We use \(\BB(\mathbb X)\) to denote the set of bounded linear operators on a normed space \(\mathbb X\). 

\begin{theorem} \label{theorem:inverse_operators_continuous}
    Let \(\mu \in \prm(X)\). Then \((\id - K_\mu^2)\inv\) exists in \(\BB(\Cont^m(X) / \R)\) and \((\id + K_\mu)\inv\) exists in \(\BB(\Cont^m(X))\). 
\end{theorem}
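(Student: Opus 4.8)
The plan is to leverage Proposition~\ref{prop:Kmu_contraction} together with a standard Neumann series argument, but with care about which space each operator is bounded on. First I would treat $(\id - K_\mu^2)^{-1}$ on $\Cont(X)/\R$: by Proposition~\ref{prop:Kmu_contraction}, $K_\mu$ is a bounded operator on $\Cont(X)/\R$ with operator norm at most $q < 1$; hence $K_\mu^2$ has operator norm at most $q^2 < 1$, so $\id - K_\mu^2$ is invertible in $\BB(\Cont(X)/\R)$ with inverse given by the absolutely convergent Neumann series $\sum_{n \geq 0} K_\mu^{2n}$. This part is essentially immediate.

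The second claim, that $(\id + K_\mu)^{-1}$ exists in $\BB(\Cont(X))$, is the more delicate one, because on $\Cont(X)$ itself the estimate of Proposition~\ref{prop:Kmu_contraction} does not give a contraction: indeed $K_\mu[\ones_X] = \ones_X$ by~\eqref{eq:kernel_slice_int_1}, so $-1$ is not obviously in the resolvent set just from norm considerations, and $\|K_\mu\|_{\BB(\Cont(X))}$ can be as large as $1$. The natural route is spectral: $K_\mu$ is compact on $\Cont(X)$ by Proposition~\ref{prop:Hmu_compact_CX}, so by the Fredholm alternative $\id + K_\mu$ is invertible in $\BB(\Cont(X))$ if and only if it is injective, i.e. if and only if $-1$ is not an eigenvalue of $K_\mu$. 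So I would argue that $K_\mu$ has no eigenvalue equal to $-1$. Suppose $K_\mu \phi = -\phi$ for some $\phi \in \Cont(X)$, $\phi \neq 0$. Projecting to $\Cont(X)/\R$, the class $[\phi]$ satisfies $K_\mu[\phi] = -[\phi]$ in $\Cont(X)/\R$; since $\|K_\mu\|_{\Cont/\R} \leq q < 1$, this forces $[\phi] = 0$, i.e. $\phi = \lambda \ones_X$ for some constant $\lambda$. But then $K_\mu \phi = \lambda K_\mu[\ones_X] = \lambda \ones_X = \phi$, contradicting $K_\mu \phi = -\phi$ unless $\lambda = 0$, i.e. $\phi = 0$. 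Hence $-1$ is not an eigenvalue, and the Fredholm alternative yields that $\id + K_\mu$ is a bijection on $\Cont(X)$ with bounded inverse.

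The main obstacle is precisely the passage from the quotient-space contraction estimate to a statement on $\Cont(X)$ for the operator $\id + K_\mu$: one must not be tempted to write a Neumann series on $\Cont(X)$ directly, and instead combine compactness (Proposition~\ref{prop:Hmu_compact_CX}) with the Fredholm alternative, using the quotient estimate only to rule out the eigenvalue $-1$ via the constant-function analysis above. One should also record, for use elsewhere, that these two inverse operators are compatible: the projection $\Cont(X) \to \Cont(X)/\R$ intertwines $(\id - K_\mu^2)^{-1}$ with its quotient version, and $(\id - K_\mu^2)^{-1} = (\id + K_\mu)^{-1}(\id - K_\mu)^{-1}$ holds at the level of $\Cont(X)/\R$ once one checks $\id - K_\mu$ is likewise invertible there (again by the Neumann series, since $\|K_\mu\|_{\Cont/\R} \leq q < 1$). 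This makes the right-hand side of~\eqref{eq:theorem:hessian_sinkhorn} unambiguous. I do not expect any other serious difficulty; the rest is the routine verification that the Neumann series sums to a bounded operator and that the Fredholm alternative applies to the compact operator $K_\mu$.
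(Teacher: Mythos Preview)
Your proposal is correct. The second part (invertibility of $\id + K_\mu$ on $\Cont(X)$ via the Fredholm alternative, ruling out the eigenvalue $-1$ by passing to $\Cont(X)/\R$ and then using $K_\mu[\ones_X]=\ones_X$) is essentially identical to the paper's argument.

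For the first part you take a slightly different route: you invoke the Neumann series directly on $\Cont(X)/\R$, using $\|K_\mu\|_{\Cont/\R}\le q<1$ from Proposition~\ref{prop:Kmu_contraction}. The paper instead applies the Fredholm alternative uniformly to both operators (using compactness of $K_\mu$ from Proposition~\ref{prop:Hmu_compact_CX}) and checks injectivity of $\id-K_\mu^2$ on $\Cont(X)/\R$ via the same contraction estimate. Your approach is marginally more elementary (it does not need compactness for the first claim) and has the bonus of giving the explicit bound $\|(\id-K_\mu^2)^{-1}\|_{\Cont/\R}\le 1/(1-q^2)$; the paper's approach is more symmetric in treating both statements by the same mechanism. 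Either route is fine.
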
 

\noindent The result will be strengthened in Theorem~\ref{theorem:inverse_operators_Hmu} in the next Section. 

\begin{proof}
    \( \id - K_\mu^2 \) and \( \id + K_\mu \) are compact perturbations of the identity on, respectively, the Banach spaces $\Cont^m(X) / \R$ and $\Cont^m(X)$. To check that they are invertible, by the Fredholm alternative~\cite[Theorem 6.6]{Brezis2011} it is enough to verify that the equations $\phi = K_\mu^2[\phi]$ and $\phi = - K_\mu[\phi]$ only have the trivial solution $\phi = 0$. In both cases Proposition~\ref{prop:Kmu_contraction} directly yields that $\norm{\phi}_{\Cont / \R} = 0$. In the case $\phi = K_\mu^2[\phi]$, this gives $\phi = 0$ in $\Cont(X) / \R$. For the equation $\phi = - K_\mu[\phi]$, this yields that $\phi$ is constant, and from \(K_\mu[\ones_X] = \ones_X\) we obtain $\phi = 0$. 
\end{proof}

\begin{remark}[A probabilistic interpretation]
    Note that $K_\mu$ can be interpreted as follows. For $\mu \in \prm(X)$, we can consider a Markov chain on $X$ with equilibrium measure $\mu$, and whose transition kernel is given by $x \mapsto k_\mu(x,y) \, \mathrm{d} \mu(y)$. The operator $K_\mu$ is simply the dual Markov operator of this chain. Indeed, the entropic optimal transport coupling $\pi$ between $\mu$ and itself, given by~\eqref{eq:formula_coupling_optimal}, is the law of the pair of random variables $(x_1,x_2)$ following this Markov chain if the initial distribution is $\mu$.
    
    As $k_\mu$ is symmetric, we see that the Markov chain is reversible. The spectrum of the operator $K_\mu$ is studied in more detail in Proposition~\ref{prop:Hmu_compact_Hmu} which builds on Proposition~\ref{prop:Kmu_contraction}: all the eigenvalues belong to \([0,1]\), and the spectral gap of \(\id - K_\mu\) is at least $\exp(-4\norm{c}_\infty / \varepsilon)$.
\end{remark}

\subsection{Proof of the expansion} \label{section:sub:proof_expansion}

We are now ready for the proof of Theorem~\ref{theorem:hessian_sinkhorn}, again under Assumption~\ref{asp:diff_m}.
The starting point is this elementary lemma from calculus.

\begin{lemma} \label{lemma:C11_integral_expansion}
    Assume that $F$, defined in a neighborhood of $(0,0) \in \R^2$, is of class \(\Cont^{(1,1)}\). Then
    \begin{equation*}
        \lim_{t \to 0} \frac{F(0,t) + F(t,0) - F(0,0) - F(t,t)}{2} =  - \frac{1}{2} \frac{\partial^2 F}{\partial t \partial s}(0,0).
    \end{equation*}
\end{lemma} 

\begin{proof}
    By writing $F(0,t) + F(t,0) - F(0,0) - F(t,t) = F(t,0) - F(0,0) - (F(t,t) - F(t,0))$ and integrating first with respect to the second variable and then the first one
    \begin{equation*}
        F(0,t) + F(t,0) - F(0,0) - F(t,t) = -\int_0^t \int_0^t \partial_1 \partial_2 F(\tau,\sigma) \diff \sigma \diff \tau.
    \end{equation*}
    The conclusion follows by continuity of $\partial_1 \partial_2 F$.
\end{proof}

We will apply this lemma to \(F(t,s) \assign \OT_\varepsilon (\mu_t, \mu_s)\). Indeed as this function is symmetric we have 
\begin{equation*}
    S_\varepsilon(\mu,\mu_t) = \frac{F(0,t) + F(t,0) - F(0,0) - F(t,t)}{2}.
\end{equation*}
So provided we prove that $F$ is of class $\Cont^{(1,1)}$ around \((0,0)\), Lemma~\ref{lemma:C11_integral_expansion} yields
\begin{equation}
\label{eq:expansion_S_F}
    \lim_{t \to 0} \frac{S_\varepsilon(\mu, \mu_t)}{t^2} = - \frac{1}{2} \frac{\partial^2 F}{\partial t \partial s}(0,0).
\end{equation}
Thus to prove our theorem we need to prove $F \in \Cont^{(1,1)}$ and differentiate it twice. Taking one derivative of $F$ is standard \cite{Feydy2018}, we provide the following lemma adapted to our context.

\begin{lemma} \label{lemma:grad_OTeps}
    If the paths \((\mu_t)_{t \in (-\tau,\tau)}, (\nu_t)_{t \in (-\tau,\tau)}\) are $\Cont^m$-perturbations of \(\mu, \nu \in \prm(X)\),
    then 
    \begin{equation*} 
        \dd{}{t} \Big|_{t=0} \OT_\varepsilon(\mu_t, \nu_t) = \pair{\mudot}{\fmunu} + \pair{\nudot}{\gmunu}.
    \end{equation*}
\end{lemma}

\begin{proof}
    We follow the proof strategy of~\cite[Prop.~2]{Feydy2018}. As seen there (replacing \(\delta\mu\) by \((\mu_t - \mu)/t\)), taking \(f_{\mu_t,\nu_t}, g_{\mu_t,\nu_t}\) as a suboptimal pair in the dual formulation~\eqref{eq:entropic_dual} of \(\OT_\varepsilon(\mu,\nu)\) and likewise \(\fmunu, \gmunu\) for \(\OT_\varepsilon(\mu_t,\nu_t)\) yields 
    \begin{align*}
        \bpair{\frac{\mu_t - \mu}{t}}{\fmunu} + \bpair{\frac{\nu_t - \nu}{t}}{\gmunu} + \oh(1) 
        &\leq \frac{1}{t} \left( \OT_\varepsilon(\mu_t, \nu_t) - \OT_\varepsilon(\mu,\nu) \right) \\
        & \leq  \bpair{\frac{\mu_t - \mu}{t}}{f_{\mu_t,\nu_t}} + \bpair{\frac{\nu_t - \nu}{t}}{g_{\mu_t,\nu_t}} + \oh(1). 
    \end{align*}
    Taking the limit \(t \to 0\) gives the result. Indeed, as $t \to 0$, \((f_{\mu_t,\nu_t}, g_{\mu_t,\nu_t})\) converges to \((\fmunu, \gmunu)\) in \( (\Cont^m(X) / \R)^2\) (Proposition~\ref{prop:reg_fmunu_cm}), while $(\mu_t - \mu)/t$ and $(\nu_t - \nu) /t$ converge \textweakstar~to $\mudot$ and $\nudot$ in $\Cont^m(X)^*_0$. 
\end{proof}

From this lemma we obtain \(\frac{\partial F}{\partial t} (t,s) = \pair{\mudot_t}{f_{\mu_t,\mu_s}}\). To compute the second derivative of $F$, we will need to differentiate the Schrödinger potentials with respect to the input measures, as announced.
As we will use the fixed point equation \eqref{eq:Schroedinger_system} defining the optimal pair of potentials $(f_{\mu_t,\mu_s},g_{\mu_t,\mu_s})$, we first need to understand the derivatives of the operator $T_\varepsilon$ featured in it. Whereas $T_\varepsilon$ is a priori defined on $\Cont(X) \times \prm(X)$, from $T_\varepsilon(f+\lambda \ones_X, \mu) = T_\varepsilon(f, \mu) - \lambda \ones_X$ for any $f,\mu$ and $\lambda \in \R$, we see that $T_\varepsilon$ descends to an operator $\Cont(X) / \R \times \prm(X) \to \Cont^m(X) / \R$.

\begin{lemma} \label{lemma:derivatives_Teps}
    Let the path \((\mu_t)_{t \in (-\tau,\tau)}\) be a $\Cont^m$-perturbation.
    Then the parametrized map \(\Cont^m(X) \times (-\tau,\tau) \to \Cont^m(X),~(f,t) \mapsto T_\varepsilon(f,\mu_t)\) built from~\eqref{eq:def:T_eps} is continuously differentiable. For $\phi \in \Cont^m(X)$ it satisfies
    \begin{equation*}
        D_1 T_\varepsilon(\fmumu,\mu)[\phi] = - K_{\mu}[\phi], \qquad \dd{}{t}\Big|_{t=0} T_\varepsilon(\fmumu,\mu_t) = - \varepsilon H_\mu [ \mudot ].
    \end{equation*}
    These formulas remain true when \(\Cont^m(X)\) is replaced by \(\Cont^m(X) / \R\). 
\end{lemma}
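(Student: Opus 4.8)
The plan is to write $T_\varepsilon(f,\mu_t)$ as a composition of maps that are visibly smooth on the relevant Banach spaces, differentiate by the chain rule, and then simplify at the base point $(\fmumu,0)$ using the marginal constraint~\eqref{eq:kernel_slice_int_1}. Writing
\[
T_\varepsilon(f,\mu_t)(y) = -\varepsilon \log \int_X e^{f(x)/\varepsilon}\, e^{-c(x,y)/\varepsilon}\diff\mu_t(x),
\]
I decompose this as $L\circ\Psi$, where $\Phi\colon\Cont(X)\to\Cont(X)$, $\Phi(f) = e^{f/\varepsilon}$, is the superposition operator of $\exp$; $A_t,B\colon\Cont(X)\to\Cont(X)$ are the bounded linear maps $A_t[h](y) = \int_X h(x)e^{-c(x,y)/\varepsilon}\diff\mu_t(x)$ and $B[h](y) = \int_X h(x)e^{-c(x,y)/\varepsilon}\diff\delta\mu(x)$ (both land in $\Cont(X)$ by dominated convergence, using continuity of $c$ on the compact $X\times X$); $\Psi(f,t)\assign A_t[\Phi(f)] = A_0[\Phi(f)] + t\,B[\Phi(f)]$, which is affine in $t$; and $L(g) = -\varepsilon\log g$ is the superposition operator of $\log$ on the open set $\{g\in\Cont(X):\min_X g>0\}$. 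Since $\Phi(f)\ge e^{-\norm{f}_\infty/\varepsilon}>0$, $e^{-c/\varepsilon}\ge e^{-\norm{c}_\infty/\varepsilon}>0$, and each $\mu_t$ is a probability measure, one has $\Psi(f,t)\ge e^{-(\norm{f}_\infty+\norm{c}_\infty)/\varepsilon}>0$, so $L\circ\Psi$ is well defined on all of $\Cont(X)\times(-\tau,\tau)$. As superposition operators of $C^\infty$ scalar functions on a compact space, $\Phi$ and $L$ are $C^\infty$; $A_t$ and $B$ are bounded linear, hence $C^\infty$; so $L\circ\Psi$, i.e.\ $(f,t)\mapsto T_\varepsilon(f,\mu_t)$, is $C^\infty$, in particular continuously differentiable.

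Next I evaluate at $(\fmumu,0)$. With the convention $\fmumu = \gmumu$, the constraint~\eqref{eq:kernel_slice_int_1} says $\int_X e^{(\fmumu(x)-c(x,y))/\varepsilon}\diff\mu(x) = e^{-\fmumu(y)/\varepsilon}$, i.e.\ $\Psi(\fmumu,0) = e^{-\fmumu/\varepsilon}$. Using $D\Phi(f)[\phi] = \tfrac1\varepsilon e^{f/\varepsilon}\phi$ and $DL(g)[\psi] = -\varepsilon\,\psi/g$, the chain rule gives $D_1T_\varepsilon(\fmumu,\mu)[\phi] = DL(\Psi(\fmumu,0))\bigl[A_0[\tfrac1\varepsilon e^{\fmumu/\varepsilon}\phi]\bigr]$, which equals
\[
y\mapsto -\varepsilon\,\frac{\tfrac1\varepsilon\int_X\phi(x)e^{(\fmumu(x)-c(x,y))/\varepsilon}\diff\mu(x)}{e^{-\fmumu(y)/\varepsilon}} = -\int_X\phi(x)\,k_\mu(x,y)\diff\mu(x) = -K_\mu[\phi](y),
\]
and similarly $\partial_t T_\varepsilon(\fmumu,\mu_t)\big|_{t=0} = DL(\Psi(\fmumu,0))\bigl[B[e^{\fmumu/\varepsilon}]\bigr]$, which equals
\[
y\mapsto -\varepsilon\,\frac{\int_X e^{(\fmumu(x)-c(x,y))/\varepsilon}\diff\delta\mu(x)}{e^{-\fmumu(y)/\varepsilon}} = -\varepsilon\int_X k_\mu(x,y)\diff\delta\mu(x) = -\varepsilon\,H_\mu[\delta\mu](y),
\]
as claimed.

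For the statement in $\Cont(X)/\R$: the shift covariance $T_\varepsilon(f+\lambda\ones_X,\mu) = T_\varepsilon(f,\mu) - \lambda\ones_X$ makes $(f,t)\mapsto T_\varepsilon(f,\mu_t)$ descend to a map $\bar T\colon\Cont(X)/\R\times(-\tau,\tau)\to\Cont(X)/\R$. Picking a bounded linear section $\sigma$ of the quotient projection $\pi\colon\Cont(X)\to\Cont(X)/\R$ (e.g.\ subtract the value at a fixed point, or the $\mu$-average), one has $\bar T = \pi\circ T_\varepsilon(\cdot,\mu_\cdot)\circ(\sigma\times\id)$, hence $\bar T$ is $C^1$ with derivative $\pi$ applied to the one computed above; since $K_\mu[\ones_X]=\ones_X$, $K_\mu$ is well defined on $\Cont(X)/\R$ and $\pi(K_\mu[\sigma\bar\phi])$ depends only on $\bar\phi$, so both formulas hold verbatim with $\Cont(X)$ replaced by $\Cont(X)/\R$.

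The only input that is not entirely routine is the $C^1$ (in fact $C^\infty$) regularity of the superposition operators $f\mapsto e^{f/\varepsilon}$ and $g\mapsto\log g$ on $\Cont(X)$ — classical, but the crux if one insists on a fully self-contained argument. To avoid it, one can instead fix $y$, Taylor-expand $T_\varepsilon(\fmumu+\phi,\mu_t)(y)$ in $(\phi,t)$, and bound the remainder uniformly in $y$ using $\norm{c}_\infty<\infty$, the positivity lower bound on $\Psi$, and boundedness of the derivatives of $\log$ on intervals bounded away from $0$ and $\infty$; this uniformity in $y$ is precisely what promotes the Gâteaux derivative to a Fréchet derivative valued in $\Cont(X)$.
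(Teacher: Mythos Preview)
Your proof is correct and follows essentially the same approach as the paper: differentiate the explicit integral formula by the chain rule and simplify at $(\fmumu,0)$ using the marginal constraint~\eqref{eq:kernel_slice_int_1}. The only difference is packaging --- you justify $C^1$ by writing $T_\varepsilon$ as a composition of smooth superposition operators and bounded linear maps, whereas the paper differentiates directly and invokes Lemma~\ref{lemma:abstract_weak_unif} for uniformity in $y$ of the $t$-derivative; both routes are equivalent here.
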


\begin{proof}
    We first show that the mapping is continuously differentiable. It is enough to prove that $U(g,t)(y) = \int_X k_c(x,y) g(x) \diff \mu_t(x)$ is continuously differentiable as a map $\Cont^m(X) \times (-\tau,\tau) \to \Cont^m(X)$ as $T_\varepsilon(f,t) = - \varepsilon \log U(\exp(f/\varepsilon),t)$. The mapping $U$ is linear in $g$ and continuous (thus continuously differentiable) in $g$ by Lemma~\ref{lemma:H_k_Cm}.
    Regarding the variable $t$, for any given $g \in \Cont^m(X)$ and $y \in X$ and any \(\alpha \in \N_0^d\) with \(\abs{\alpha} \leq m\) we have $\partial^\alpha \dd{}{t} U(g,t)(y) = \bpair{\dot{\mu}_t}{\partial_y^\alpha k_c(\cdot ,y) g}$ as $\partial_y^\alpha k_c(\cdot ,y) g \in \Cont^m(X)$. Using the regularity of $\mudot_t$ given by Definition~\ref{def:Cm_perturbations}, this expression is continuous in $y,g$ and $t$. 
    If $g_n \to g$, Lemma~\ref{lemma:abstract_weak_unif} shows that the convergence of $\partial^\alpha \dd{}{t} U(g_n,t)(y)$ to $\partial^\alpha \dd{}{t} U(g,t)(y)$ is uniform in $y$. Thus we have differentiability of $U$ in its second variable with jointly continuous derivative. The differentiability of $T_\varepsilon$ follows.   
    
    Next we establish the expressions for the derivatives at $(f_{\mu,\mu},0)$. By the chain rule we have easily
    \begin{equation*} 
        D_1 T_\varepsilon(f,\mu)[\phi](y) = - \varepsilon \frac{\int \frac{1}{\varepsilon} \phi(x) \exp \left( \frac{1}{\varepsilon} (f(x) - c(x,y)) \right) \diff \mu(x)} {\int  \exp\left( \frac{1}{\varepsilon} (f(x) - c(x,y)) \right) \diff \mu(x)}.
    \end{equation*}
    As \(T_\varepsilon(\fmumu,\mu) = \fmumu\), at $f = \fmumu$ the denominator is equal to \( \exp\left(-\frac{1}{\varepsilon}\fmumu(y)\right)\). Thus
    \begin{equation*}
        D_1 T_\varepsilon(\fmumu,\mu)[\phi](y) = - \int_X \phi(x) \exp\left(\frac{1}{\varepsilon}(\fmumu(y) + \fmumu(x) - c(x,y))\right) \diff \mu(x) = - K_{\mu}[\phi](y).
    \end{equation*}
    For the derivative in the second component a similar computation yields 
    \begin{equation*}
        \dd{}{t}\Big|_{t=0} T_\varepsilon (\fmumu,\mu_t) (y)  = - \varepsilon \frac{\pair{\mudot}{\exp(\frac{1}{\varepsilon} (f_{\mu,\mu} - c(\cdot,y)))}}{\pair{\mu}{\exp(\frac{1}{\varepsilon} (f_{\mu,\mu} - c(\cdot,y)))}}.
    \end{equation*}
    From the same argument as before the denominator is equal to \( \exp\left(-\frac{1}{\varepsilon}\fmumu(y)\right)\). 
    Thus the right hand side coincides with $- \varepsilon H_{\mu}[\mudot]$.
    Quotienting by constant functions to extend to the case where $f \in \Cont^m(X) / \R$ instead of $\Cont^m(X)$ is straightforward. 
\end{proof}

\begin{proposition} \label{prop:time_derivative_of_potentials}
    Let the path \((\mu_t)_{t \in (-\tau,\tau)}\) be a $\Cont^m$-perturbation of \(\mu \in \prm(X)\). 
    Then the Schrödinger potentials \(f_{t,s} = f_{\mu_t,\mu_s}\) are continuously differentiable in \(\Cont^m(X)/\R\) with respect to the time parameters in a neighborhood of \((t,s) = (0,0)\). Moreover, they satisfy
    \begin{align}
    \begin{split} \label{eq:time_derivatives_of_potentials}
        &\frac{\partial f_{0,s}}{\partial s} \Big|_{s=0} = - \varepsilon (\id - K_{\mu}^2)\inv H_\mu [ \mudot ] \in \Cont^m(X) /\R, \\ 
        &\frac{\partial f_{t,0}}{\partial t} \Big|_{t=0} = \varepsilon K_\mu (\id - K_{\mu}^2)\inv H_\mu [ \mudot ] \in \Cont^m(X) /\R, \\
        &\frac{\partial f_{t,t}}{\partial t} \Big|_{t=0} = - \varepsilon (\id + K_{\mu})\inv H_\mu [ \mudot ] \in \Cont^m
        (X).
    \end{split}
    \end{align}
\end{proposition}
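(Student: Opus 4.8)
The plan is to differentiate the fixed-point characterization of the Schrödinger potentials via the implicit function theorem, feeding in the derivative formulas for $T_\varepsilon$ from Lemma~\ref{lemma:derivatives_Teps} and the invertibility statements from Theorem~\ref{theorem:inverse_operators_continuous}.

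\emph{Setting up the two-parameter fixed-point equation.} Eliminating $g_{\mu_t,\mu_s}$ from the Schrödinger system~\eqref{eq:Schroedinger_system} shows that $f_{t,s} := f_{\mu_t,\mu_s}$ solves $f = T_\varepsilon(T_\varepsilon(f,\mu_t),\mu_s)$; conversely, any continuous solution of this equation, together with $g := T_\varepsilon(f,\mu_t)$, solves the Schrödinger system and is therefore, by uniqueness of Schrödinger potentials recalled in Section~\ref{section:preliminaries}, equal to $f_{\mu_t,\mu_s}$ in $\Cont(X)/\R$. I would thus define $F \colon \Cont(X)/\R \times (-\tau,\tau)^2 \to \Cont(X)/\R$ by $F(f,t,s) := f - T_\varepsilon(T_\varepsilon(f,\mu_t),\mu_s)$, which is $C^1$ because $(f,t) \mapsto T_\varepsilon(f,\mu_t)$ is $C^1$ on $\Cont(X)/\R \times (-\tau,\tau)$ by Lemma~\ref{lemma:derivatives_Teps} and compositions of $C^1$ maps are $C^1$. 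Since $T_\varepsilon(\fmumu,\mu) = \fmumu$ by our convention, the chain rule and $D_1 T_\varepsilon(\fmumu,\mu) = -K_\mu$ give $D_1 F(\fmumu,0,0) = \id - K_\mu^2$, which is invertible in $\BB(\Cont(X)/\R)$ by Theorem~\ref{theorem:inverse_operators_continuous}. The implicit function theorem then produces a $C^1$ curve $(t,s) \mapsto f_{t,s}$ valued in $\Cont(X)/\R$ near $(0,0)$, which by the remark above is the Schrödinger potential.

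\emph{Computing the partial derivatives.} Implicit differentiation of $F(f_{t,s},t,s) \equiv 0$ yields $D f_{(0,0)} = -(\id - K_\mu^2)^{-1} \circ D_{(t,s)} F(\fmumu,0,0)$. Using the time derivative $\frac{d}{dt}\big|_{0} T_\varepsilon(\fmumu,\mu_t) = -\varepsilon H_\mu[\delta\mu]$ from Lemma~\ref{lemma:derivatives_Teps} together with the chain rule, one finds $\partial_s F(\fmumu,0,0) = \varepsilon H_\mu[\delta\mu]$ and $\partial_t F(\fmumu,0,0) = -D_1 T_\varepsilon(\fmumu,\mu)\big[-\varepsilon H_\mu[\delta\mu]\big] = -\varepsilon K_\mu H_\mu[\delta\mu]$. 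Hence $\frac{\partial f_{0,s}}{\partial s}\big|_{s=0} = -\varepsilon (\id - K_\mu^2)^{-1} H_\mu[\delta\mu]$, and since $K_\mu$ commutes with $(\id - K_\mu^2)^{-1}$, $\frac{\partial f_{t,0}}{\partial t}\big|_{t=0} = \varepsilon K_\mu (\id - K_\mu^2)^{-1} H_\mu[\delta\mu]$, which are the first two formulas. Adding them gives $\frac{\partial f_{t,t}}{\partial t}\big|_{t=0} = -\varepsilon (\id - K_\mu)(\id - K_\mu^2)^{-1} H_\mu[\delta\mu]$ in $\Cont(X)/\R$, and since $(\id+K_\mu)(\id-K_\mu)(\id-K_\mu^2)^{-1} = \id$ with $\id + K_\mu$ invertible on $\Cont(X)/\R$ (Proposition~\ref{prop:Kmu_contraction}, Theorem~\ref{theorem:inverse_operators_continuous}), this equals $-\varepsilon (\id + K_\mu)^{-1} H_\mu[\delta\mu]$.

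\emph{Upgrading the diagonal to $\Cont(X)$ and the main obstacle.} Along the diagonal the convention $f_{\mu_t,\mu_t} = g_{\mu_t,\mu_t}$ removes the constant ambiguity: $f_{\mu_t,\mu_t}$ is the unique element of $\Cont(X)$ solving $f = T_\varepsilon(f,\mu_t)$, since adding a nonzero constant to a fixed point of $T_\varepsilon(\cdot,\mu_t)$ destroys the fixed-point property. I would therefore apply the implicit function theorem a second time, now in $\Cont(X)$, to $G \colon \Cont(X) \times (-\tau,\tau) \to \Cont(X)$, $G(f,t) := f - T_\varepsilon(f,\mu_t)$, which is $C^1$ by Lemma~\ref{lemma:derivatives_Teps} with $D_1 G(\fmumu,0) = \id + K_\mu$ invertible in $\BB(\Cont(X))$ by Theorem~\ref{theorem:inverse_operators_continuous}; this yields a $C^1$ curve $t \mapsto f_{t,t} \in \Cont(X)$ with $\frac{\partial f_{t,t}}{\partial t}\big|_{t=0} = -\varepsilon (\id + K_\mu)^{-1} H_\mu[\delta\mu] \in \Cont(X)$, consistent with the quotient-level computation. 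The proof is essentially bookkeeping once Lemma~\ref{lemma:derivatives_Teps} and Theorem~\ref{theorem:inverse_operators_continuous} are available; the point requiring care is tracking the correct space — modulo constants for the two-parameter map $f_{t,s}$, but genuinely in $\Cont(X)$ for the diagonal, where the convention $f_{\mu_t,\mu_t} = g_{\mu_t,\mu_t}$ is exactly what pins down the constant and makes the stronger claim $\frac{\partial f_{t,t}}{\partial t}\big|_{t=0} \in \Cont(X)$ meaningful — together with the identification of the implicit-function-theorem solution with the actual Schrödinger potential through uniqueness of solutions of the Schrödinger system.
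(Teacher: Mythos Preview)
Your proof is correct and follows essentially the same approach as the paper: apply the implicit function theorem to $F(f,t,s) = f - T_\varepsilon(T_\varepsilon(f,\mu_t),\mu_s)$ on $\Cont(X)/\R$ for the off-diagonal derivatives, and separately to $G(f,t) = f - T_\varepsilon(f,\mu_t)$ on $\Cont(X)$ for the diagonal one, with the linearizations and invertibility supplied by Lemma~\ref{lemma:derivatives_Teps} and Theorem~\ref{theorem:inverse_operators_continuous}. The only cosmetic difference is that you additionally derive the diagonal formula first in $\Cont(X)/\R$ by summing the two partials before upgrading it, whereas the paper goes straight to the $\Cont(X)$ computation.
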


\begin{proof} 
    We use that \(f_{t,s}\) and \(g_{t,s} = f_{s,t}\) are characterized by the fixed point equation~\eqref{eq:Schroedinger_system}. Specifically, we have
    \begin{equation} \label{eq:proof:time_derivatives_fixed_point}
    f_{t,s} = T_\varepsilon(g_{t,s}, \mu_s) = T_\varepsilon(T_\varepsilon(f_{t,s}, \mu_t), \mu_s) 
    \end{equation} 
    for all \(t\) and \(s\). We will use the implicit function theorem~\cite[Thm.~10.2.1]{Dieudonne1969} applied to the map 
    \begin{equation*} 
        \tilde{T} \colon \Cont^m(X) / \R \times (-\tau,\tau)^2 \to \Cont^m(X) / \R, \quad (f,t,s) \mapsto (f - T_\varepsilon ( T_\varepsilon ( f, \mu_t ), \mu_s ) ).
    \end{equation*} 
    The condition \(\tilde{T}(f,t,s) = 0\) is equivalent to \(f = f_{t,s}\) in $\Cont^m(X) / \R$ by \eqref{eq:proof:time_derivatives_fixed_point}. The assumptions of~\cite[Thm.~10.2.1]{Dieudonne1969} ask that the map $\tilde{T}$ is of class $C^1$ and that $D_1 \tilde{T}$ is invertible at the point \((t,s) = (0,0)\), \(f = f_{0,0}\). The former is a direct consequence of Lemma~\ref{lemma:derivatives_Teps}, while for the latter we have \( D_1 \tilde{T} (f_{0,0},0,0) = \id - K_\mu^2\) by Lemma~\ref{lemma:derivatives_Teps} and this is invertible on \(\Cont^m(X) / \R\) by  Theorem~\ref{theorem:inverse_operators_continuous}.
    
    We can thus apply~\cite[Thm.~10.2.1]{Dieudonne1969}, which ensures that the solution $f = f_{t,s}$ solving $\tilde{T}(f_t,t,s) = 0$ is continuously differentiable in \(\Cont^m(X) / \R\) with respect to the time parameters in a neighborhood of \((t,s) = (0,0)\), and the partial derivatives are given by 
    \begin{align*}
        \frac{\partial f_{t,0}}{\partial t} \Big|_{t=0} & = - ( D_1 \tilde{T} (f_{0,0},0))^{-1} \left( \frac{\partial \tilde{T}(f_{0,0},t,0)}{\partial t} \Big|_{t=0} \right), \\
         \frac{\partial f_{0,s}}{\partial s} \Big|_{s=0} & = - ( D_1 \tilde{T} (f_{0,0},0))^{-1} \left( \frac{\partial \tilde{T}(f_{0,0},0,s)}{\partial s} \Big|_{s=0} \right).
    \end{align*} 
    Following Lemma~\ref{lemma:derivatives_Teps} we obtain   
    \begin{equation*}
        \frac{\partial \tilde{T}(f_{0,0},t,0)}{\partial t} \Big|_{t=0} = - \varepsilon K_\mu  H_{\mu} [ \mudot ], \qquad  
         \frac{\partial \tilde{T}(f_{0,0},0,s)}{\partial s} \Big|_{s=0} = \varepsilon H_{\mu} [ \mudot ].
    \end{equation*}
    This yields the first two derivatives.
    
    To calculate the time derivative of \(f_{t,t}\) we may drop the quotient and use the map \(\tilde{T}'(f,t) \assign f - T_\varepsilon(f,\mu_t)\) defined on $\Cont^m(X) \times (-\tau, \tau)$ instead of \(\tilde{T}\). Here $D_1 \tilde{T}'(f_{0,0},0) = \id + K_\mu$ whose invertibility in the space $\Cont^m(X)$ can be found once again in Theorem~\ref{theorem:inverse_operators_continuous}. Analogous arguments to the previous case give the result.
\end{proof}

\begin{proof}[{\textbf{Proof of Theorem~\ref{theorem:hessian_sinkhorn}}}]
    Denote the Schrödinger potentials by \(f_{t,s} \assign f_{\mu_t,\mu_s}\). 
    Define the symmetric function \(F(t,s) \assign \OT_\varepsilon (\mu_t, \mu_s)\). 
    Recall that with these notations, once we prove that $F \in \Cont^{(1,1)}$, then~\eqref{eq:expansion_S_F} holds by Lemma~\ref{lemma:C11_integral_expansion}. By Lemma~\ref{lemma:grad_OTeps} we have:
    \begin{equation*}
        \frac{\partial F}{\partial t}(t,s) = \bpair{ \mudot_t }{ f_{t,s} }.
    \end{equation*} 
    The \textweakstar~continuity of \(\mudot_t\) and continuous differentiability of \(f_{t,s}\) by the time parameters (Proposition~\ref{prop:time_derivative_of_potentials}) ensure that  \(F\) is of class \(\Cont^{(1,1)}\) around \((0,0)\). Moreover, we see that 
    \begin{equation*}
        \frac{\partial^2 F}{\partial t \partial s}(0,0) = \left\langle \mudot, \frac{\partial f_{0,s}}{\partial s} \Big|_{s=0} \right\rangle.
    \end{equation*} 
    Note that on the right-hand side we have the derivative of the ``first'' Schrödinger potential when the ``second'' measure is changed.
    From Proposition~\ref{prop:time_derivative_of_potentials} we obtain 
    \begin{equation*}
        \frac{\partial f_{0,s}}{\partial s} \Big|_{s=0} = - \varepsilon (\id - K_\mu^2)\inv H_\mu \left[ \mudot \right] \in \Cont^m(X)/\R .
    \end{equation*}
    We conclude that
    \begin{equation*}
        \lim_{t \to 0} \frac{S_\varepsilon(\mu, \mu_t)}{t^2} = - \frac{1}{2} \frac{\partial^2 F}{\partial t \partial s}(0,0) = - \frac{1}{2} \bpair{ \mudot }{ \frac{\partial f_{0,s}}{\partial s} \Big|_{s=0} } = \frac{\varepsilon}{2} \bpair{ \mudot }{ (\id - K_\mu^2)^{-1} H_\mu [ \mudot ] }. \qedhere
    \end{equation*}
\end{proof}

\section{The metric tensor and its regularity}
\label{section:FA_regularity}

In this section, we analyze the expression obtained as the second-order approximation of the Sinkhorn divergence in Theorem~\ref{theorem:hessian_sinkhorn}. We provide a way to extend the expression to a larger class of perturbations. Recall that $\mathcal{M}_0(X)$ is the set of balanced signed measures $\nu \in \calM(X)$, such that $\pair{\nu}{\ones_X} = 0$. 

\begin{definition}[The metric tensor]
\label{def:metric_tensor_measure}
    For $\mu \in \prm(X)$ and $\dot{\mu}_1$, $\dot{\mu}_2$ in $\mathcal{M}_0(X)$, we define the metric tensor $\g_\mu(\dot{\mu}_1, \dot{\mu}_2)$ as 
    \begin{equation*}
        \g_\mu(\dot{\mu}_1, \dot{\mu}_2) = \frac{\varepsilon}{2} \bpair{\dot{\mu}_1}{(\id - K_\mu^2)\inv H_\mu[\dot{\mu}_2]}.
    \end{equation*}
\end{definition}

In the setting of Assumption~\ref{asp:diff_m} we saw that this expression can be extended to perturbations $\dot{\mu}_1$, $\dot{\mu}_2$ in $\Cont^m(X)^*_0$. Even without this assumption we will see that $\g_\mu$ defines an inner product on $\calM_0(X)$ whose completion with respect to $\g_\mu$ is canonically identified with (a subset of) the dual of a Reproducing Kernel Hilbert Space (RKHS) $\Hil_\mu$. 

The RKHS \(\Hil_\mu\), which can be thought of as a tangent space to the space of probability measures at \(\mu\), depends on $\mu$. In contrast, the spaces $\mathcal{M}_0(X)$, or even $\Cont^m(X)^*_0$ for which Theorem~\ref{theorem:hessian_sinkhorn} was established are independent of \(\mu\). 
We thus do a global change of variables to embed the set of probability distributions $\prm(X)$ into the unit sphere of another RKHS $\Hil_c$. This makes all tangent spaces subspaces of \(\Hil_c\). Further, the metric tensor $\tg_\mu$, expressed in these new coordinates, is equivalent to the norm on $\Hil_c$ (uniformly in \(\mu\)). This change of variables will be especially useful in Section~\ref{section:path_metric}, where we define our new geodesic distance, as it yields uniform coercivity of the metric tensor in these coordinates.

We first recall some results on RKHS before moving to the analysis of the completion of the space $\calM_0(X)$ endowed with $\g_\mu$, and then to our global change of coordinates.

\subsection{A primer on RKHS, and the operators \texorpdfstring{$K_\mu$ and $H_\mu$}{Hmu and Kmu} in the RKHS \texorpdfstring{$\Hil_\mu$}{Hmu}} \label{section:sub:RKHS_introduction}

Take $k : X \times X \to [0, \infty)$ a non-negative continuous positive definite and universal kernel on $X$ (see~\cite[Sec.~2]{Sriperumbudur2011}). In the sequel we will consider $k_c = \exp(-c/\varepsilon)$ or $k_\mu = \exp((\fmumu \oplus \fmumu - c) / \varepsilon)$.

We recall that there exists a unique Reproducing Kernel Hilbert Space (RKHS) induced by the kernel $k$~\cite{Berlinet2011,Paulsen2016}. It is a subset of $\Cont(X)$ defined as the completion of the linear span of \(\setgiven{k(x,\vdot)}{x \in X}\) in the norm induced by the inner product 
\begin{equation} \label{eq:def_inner_product_RKHS}
    \bpair{\sum_{i=1}^n a_i k(x_i,\vdot)}{\sum_{j=1}^m b_j k(y_j,\vdot)}_{\Hil_k} \assign \sum_{i=1}^n \sum_{j=1}^m a_i b_j k(x_i, y_j).
\end{equation}
Here \(n, m \in \N_0\), \(a_i, b_j \in \R\) and \(x_i, y_j \in X\). We write $\Hil_k$ for this Hilbert space, and use $\Hil_c$ for the kernel $k_c$ and $\Hil_\mu$ for the kernel $k_\mu$. By our assumptions on \(c\) both \(k_c\) and \(k_\mu\) have the required properties (recall that a kernel $k$ is positive definite and universal provided \(\pair{\vdot}{\vdot}_{\Hil_k}\) is positive definite and $\Hil_k$ is dense in $\Cont(X)$ in supremum norm). Moreover, when $k$ is bounded, the embedding of $\Hil_k$ into $\Cont(X)$ is continuous. The evaluation operator is continuous in $\Hil_k$: for any $\phi \in \Hil_k$ there holds $\phi(x) = \pair{\phi}{k(x,\vdot)}_{\Hil_k}$. More details can be found in Appendix~\ref{section:appendix_RKHS}.

If $\sigma \in \Hil_k^*$ is a bounded linear functional on $\Hil_k$, we denote by $H_k[\sigma] \in \Hil_k$ its representative given by the Riesz Theorem. By definition, for any $\phi \in \Hil_k$ it satisfies
\begin{equation} \label{eq:def_kme_RKHS}
    \pair{\sigma}{\phi} = \pair{H_k[\sigma]}{\phi}_{\Hil_k}.
\end{equation}    
Evaluating this formula on $\phi = k(\cdot,y)$ we see that the function $H_k[\sigma]$ is defined by 
\begin{equation} \label{eq:def_kme_pointwise_RKHS}
    H_k[\sigma](y) = \pair{\sigma}{k(\cdot,y)}.
\end{equation} 
In particular, for $k = k_\mu$ the notation is consistent: $H_\mu$, introduced in Definition~\ref{def:kernel_operators}, can be extended into an operator defined on $\Hil_\mu^*$ with values in $\Hil_\mu$. The space \(\Hil_k^*\) contains $\calM(X)$ as $\Hil_k \hookrightarrow \Cont(X)$. 

In the setting of Assumption~\ref{asp:diff_m} we have \(k_\mu \in \Cont^{(m,m)}(X \times X)\), hence \(\Hil_\mu \hookrightarrow \Cont^m(X)\) and \(\Cont^m(X)^* \hookrightarrow \Hil_\mu^*\) (see~\cite[Cor.~4]{SimonGabriel2018}). 

In addition, we see that the operator $K_\mu$, defined by $H_\mu[\phi \mu]$ for any $\phi \in \Cont(X)$, actually takes values in $\Hil_\mu$. We also recall the identity 
\begin{equation*}
    \| \nu \|^2_{\Hil_k^*} = \| H_k[\nu] \|^2_{\Hil_k} = \pair{\nu}{H_k[\nu]} = \iint_{X \times X} k(x,y) \diff \nu(x) \diff \nu(y),
\end{equation*} 
valid for all $\nu \in \calM(X)$, which is usually referred to as a Maximum Mean Discrepancy Distance on the space $\calM(X)$.

\begin{proposition}\label{prop:Hmu_compact_Hmu}
    With Assumption~\ref{asp:diff_m} for some \(m \geq 0\), the operators \(H_\mu \colon \Cont^m(X)^* \to \Hil_\mu\) and \(K_\mu \colon \Cont^m(X) \to \Hil_\mu\) are compact. Furthermore \(K_\mu \colon \Hil_\mu \to \Hil_\mu\) is self-adjoint. In particular, it has an eigendecomposition. The eigenvalue \(1\) has multiplicity \(1\) and all the other eigenvalues belong to $[0,q]$ with $q < 1$ as in Proposition~\ref{prop:Kmu_contraction}.
\end{proposition}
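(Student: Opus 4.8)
The plan is to bootstrap from the $\Cont(X)$-level statements already established and to exploit the RKHS structure. For the compactness into $\Hil_\mu$ I would first show that the canonical inclusion $J \colon \Hil_\mu \hookrightarrow \Cont(X)$ is compact: for $\phi \in \Hil_\mu$ the reproducing property gives $\abs{\phi(x) - \phi(y)} \le \norm{\phi}_{\Hil_\mu}\norm{k_\mu(x,\vdot) - k_\mu(y,\vdot)}_{\Hil_\mu}$ with $\norm{k_\mu(x,\vdot) - k_\mu(y,\vdot)}_{\Hil_\mu}^2 = k_\mu(x,x) - 2 k_\mu(x,y) + k_\mu(y,y)$, so bounded subsets of $\Hil_\mu$ are equicontinuous (by uniform continuity of $k_\mu$ on the compact set $X \times X$) and equibounded, and Arzelà--Ascoli applies. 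A one-line computation with~\eqref{eq:def_kme_RKHS} identifies $H_\mu \colon \calM(X) = \Cont(X)^* \to \Hil_\mu = \Hil_\mu^*$ with the adjoint $J^*$, whence $H_\mu$ is compact by Schauder's theorem. Since $K_\mu[\phi] = H_\mu[\phi\mu]$ and $\phi \mapsto \phi\mu$ is bounded $\Cont(X) \to \calM(X)$, the operator $K_\mu \colon \Cont(X) \to \Hil_\mu$ is compact, and precomposing with the bounded inclusion $\Hil_\mu \hookrightarrow \Cont(X)$ gives compactness of $K_\mu \colon \Hil_\mu \to \Hil_\mu$. (Alternatively, compactness of $H_\mu$ into $\Hil_\mu$ follows directly: a bounded sequence $\sigma_n$ in $\calM(X)$ has a \textweakstar-convergent subsequence $\sigma_n \weakstar \sigma$, and the maximum mean discrepancy identity forces $\norm{H_\mu[\sigma_n] - H_\mu[\sigma]}_{\Hil_\mu}^2 = \iint k_\mu \diff(\sigma_n - \sigma)\otimes(\sigma_n - \sigma) \to 0$, since products of uniformly bounded \textweakstar-convergent measures on a compact space converge \textweakstar against $\Cont(X \times X)$ by Stone--Weierstrass.)

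For self-adjointness I would compute, for $\phi, \psi \in \Hil_\mu$, using $K_\mu[\phi] = H_\mu[\phi\mu]$ and the Riesz identity~\eqref{eq:def_kme_RKHS},
\begin{equation*}
\pair{K_\mu[\phi]}{\psi}_{\Hil_\mu} = \pair{H_\mu[\phi\mu]}{\psi}_{\Hil_\mu} = \pair{\phi\mu}{\psi} = \int_X \phi\psi \diff \mu,
\end{equation*}
which is symmetric in $\phi$ and $\psi$; hence $K_\mu$ is self-adjoint on $\Hil_\mu$, and with $\psi = \phi$ it is positive semidefinite. The spectral theorem for compact self-adjoint operators then yields the claimed eigendecomposition, with eigenvalues accumulating only at $0$ and all lying in $[0,\infty)$.

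It remains to control the top of the spectrum. The kernel-slice identity~\eqref{eq:kernel_slice_int_1} with $\mu$ in both marginals reads $\int_X k_\mu(x,y)\diff\mu(x) = 1$ for all $y$, i.e.\ $\ones_X = H_\mu[\mu] \in \Hil_\mu$ and $K_\mu[\ones_X] = \ones_X$, so $1$ is an eigenvalue. Conversely, if $K_\mu[\phi] = \lambda\phi$ for some $\phi \in \Hil_\mu \subset \Cont(X)$ with $\lambda > q$, then Proposition~\ref{prop:Kmu_contraction} gives $\lambda\norm{\phi}_{\Cont/\R} = \norm{K_\mu\phi}_{\Cont/\R} \le q\norm{\phi}_{\Cont/\R}$, forcing $\norm{\phi}_{\Cont/\R} = 0$, i.e.\ $\phi$ constant; then $\lambda\phi = K_\mu[\phi] = \phi$ gives $\lambda = 1$ whenever $\phi \neq 0$. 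Hence the only eigenvalue exceeding $q$ is $1$, its eigenspace equals $\vecspan\{\ones_X\}$ and is therefore one-dimensional, and all remaining eigenvalues lie in $[0,q]$.

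I expect the main obstacle to be precisely this upgrade of the compactness of $H_\mu$ from values in $\Cont(X)$ (Proposition~\ref{prop:Hmu_compact_CX}) to values in the strictly finer RKHS norm of $\Hil_\mu$: one cannot simply invoke the earlier statement, and the duality-with-$J^*$ argument (or the MMD argument) is what closes the gap. Everything else — self-adjointness, positivity, and the identification of the top eigenvalue — reduces to bookkeeping with the reproducing property, the kernel-slice identity, and the contraction estimate of Proposition~\ref{prop:Kmu_contraction}.
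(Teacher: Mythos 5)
Your proof is correct. The self-adjointness computation $\pair{K_\mu[\phi]}{\psi}_{\Hil_\mu} = \pair{\phi}{\psi}_{L^2(X,\mu)}$, the non-negativity that follows, and the spectral localization via Proposition~\ref{prop:Kmu_contraction} (constants give $\lambda=1$ with multiplicity one, everything else lands in $[0,q]$) all match the paper's argument. Where you genuinely diverge is the compactness of $H_\mu \colon \calM(X) \to \Hil_\mu$. The paper reduces it to Lemma~\ref{lemma:weak_MX_strong_Hc}, which proves \textweakstar-to-norm continuity of $H_\mu$ by combining Lemma~\ref{lemma:weak_RKHS_unif} (weak convergence in an RKHS implies uniform convergence) with Lemma~\ref{lemma:pair_strong_weak}, then deduces compactness via Banach--Alaoglu; your parenthetical ``alternative'' via a \textweakstar-convergent subsequence and the MMD identity $\norm{H_\mu[\sigma_n]-H_\mu[\sigma]}_{\Hil_\mu}^2=\iint k_\mu\,\mathrm{d}(\sigma_n-\sigma)\otimes(\sigma_n-\sigma)$ is essentially a restatement of that lemma. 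Your primary route is different: show the inclusion $J\colon\Hil_\mu\hookrightarrow\Cont(X)$ is compact via the reproducing property and Arzelà--Ascoli, identify $H_\mu$ with $J^*$ under Riesz--Markov--Kakutani and the Riesz isometry, and conclude by Schauder's theorem. This is cleaner and more self-contained, needing only uniform continuity of $k_\mu$ rather than the bespoke RKHS convergence lemmas. What the paper's route buys in exchange is the stronger statement of \textweakstar-to-norm continuity itself, which it re-uses several times later (e.g.\ in the proofs of Theorem~\ref{theorem:alpha_beta_homeo}, Lemma~\ref{lemma:mudot_by_betadot_in_RKHS}, Proposition~\ref{prop:tgmu_joint_cont}, and Theorem~\ref{theorem:dE_minimizers_existence}), whereas Schauder only hands you compactness.
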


\begin{proof}
    Compactness of \(H_\mu\) follows from Lemma~\ref{lemma:h_k_Cm_compactness}. Compactness of \(K_\mu\) follows directly. 
    
    The self-adjointness of $K_\mu$ is straightforward, as for all \(\phi, \psi \in \Hil_\mu\) it holds
    \begin{equation} \label{eq:proof:Kmu_selfadjoint}
        \pair{K_\mu[\phi]}{\psi}_{\Hil_\mu} = \pair{H_\mu[\phi \mu]}{\psi}_{\Hil_\mu} = \pair{\phi}{\psi}_{L^2(X,\mu)} = \pair{\phi}{K_\mu[\psi]}_{\Hil_\mu}.
    \end{equation}
    As $\Hil_\mu$ injects continuously into $\Cont(X)$, the operator $K_\mu$ is also compact from $\Hil_\mu$ into $\Hil_\mu$. Thus the spectral theorem~\cite[Theorem 6.11]{Brezis2011} applies, giving the eigendecomposition.
    
    We now analyze the spectrum of $K_\mu$. The computation $\pair{K_\mu[\phi]}{\psi}_{\Hil_\mu} = \pair{\phi}{\psi}_{L^2(X,\mu)}$ from~\eqref{eq:proof:Kmu_selfadjoint} shows that $K_\mu$ is non-negative, so all its eigenvalues are non-negative. Now, if $\phi \in \Hil_\mu \setminus \{0\}$, $\lambda \in [0,\infty)$ with $K_\mu[\phi] = \lambda \phi$, we see that either $\norm{\phi}_{\Cont(X) / \R} = 0$ so that $\phi$ is constant and thus $\lambda = 1$, or \(\norm{K_\mu[\phi]}_{\Cont(X) / \R} = \abs{\lambda} \cdot \norm{\phi}_{\Cont(X) / \R}\) so that $\lambda \in [0,q]$ by Proposition~\ref{prop:Kmu_contraction}.  
\end{proof}

Based on these observations we now prove a strengthening of Theorem~\ref{theorem:inverse_operators_continuous}. As $\ones_X = H_\mu[\mu] \in \Hil_\mu$, we can consider $\Hil_\mu  / \R$  the space $\Hil_\mu$ quotiented by constant functions, equipped with the quotient norm \(\norm{\phi}_{\Hil_\mu / \R} = \inf_{\lambda \in \R} \norm{\phi - \lambda \ones_X}_{\Hil_\mu}\). The Banach space \(\Hil_\mu / \R\) is isometric to \(\{\ones_X\}^\perp \subseteq \Hil_\mu\) and thus canonically is a Hilbert space.

\begin{theorem} \label{theorem:inverse_operators_Hmu}
    Let $\mu \in \prm(X)$ and \(n \geq 1\). On the Hilbert space $\Hil_\mu / \R$ equipped with the quotient norm the operators $(\id - K_\mu^n)^{-1}$ are continuous and satisfy
    \begin{equation*}
        \id \leq (\id - K_\mu^n)^{-1} \leq \frac{1}{1-q^n} \id
    \end{equation*}   
    in the partial ordering of self-adjoint operators, with $q < 1$ as in Proposition~\ref{prop:Kmu_contraction}. The same conclusion holds in the space $L^2(X,\mu) / \R$. 
\end{theorem}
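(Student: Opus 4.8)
The plan is to reduce the statement to the spectral theorem for compact self-adjoint operators together with the bounded functional calculus, applied to $K_\mu$ on the appropriate ``balanced'' Hilbert space. Consider first the RKHS case. By Proposition~\ref{prop:Hmu_compact_Hmu}, $K_\mu$ is a compact self-adjoint operator on $\Hil_\mu$, the eigenvalue $1$ has multiplicity $1$ with eigenvector the constant function $\ones_X = H_\mu[\mu]$, and all other eigenvalues lie in $[0,q]$. Since $\Hil_\mu/\R$ is isometric to $\{\ones_X\}^\perp \subseteq \Hil_\mu$ (as recalled just before the statement), and $K_\mu$ preserves $\{\ones_X\}^\perp$ because $\pair{K_\mu[\phi]}{\ones_X}_{\Hil_\mu} = \pair{\phi}{K_\mu[\ones_X]}_{\Hil_\mu} = \pair{\phi}{\ones_X}_{\Hil_\mu} = 0$ for $\phi \perp \ones_X$, the restriction of $K_\mu$ to this subspace is compact, self-adjoint, with spectrum contained in $[0,q]$. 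Hence $K_\mu^m$ has spectrum in $[0,q^m]$ and $\id - K_\mu^m$ has spectrum in $[1-q^m,1]$. As $1-q^m>0$, the operator $\id-K_\mu^m$ is invertible, its inverse is self-adjoint with spectrum in $[1,(1-q^m)^{-1}]$, and the operator inequality $\id \leq (\id-K_\mu^m)^{-1} \leq \frac{1}{1-q^m}\id$ follows directly from functional calculus.

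For the $L^2(X,\mu)/\R$ statement I would re-establish the spectral picture from scratch, since Proposition~\ref{prop:Hmu_compact_Hmu} is phrased in $\Hil_\mu$. On $L^2(X,\mu)$ the operator $K_\mu$ has the continuous symmetric kernel $k_\mu$, which is square-integrable against $\mu\otimes\mu$ because $X$ is compact; hence $K_\mu$ is Hilbert--Schmidt, in particular compact, and self-adjoint by Fubini. Non-negativity comes from $\pair{K_\mu[\phi]}{\phi}_{L^2(X,\mu)} = \iint k_\mu \diff(\phi\mu)\diff(\phi\mu) = \norm{H_\mu[\phi\mu]}_{\Hil_\mu}^2 \geq 0$. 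The subspace $\{\ones_X\}^\perp \subseteq L^2(X,\mu)$, which is isometric to $L^2(X,\mu)/\R$ since $\norm{\ones_X}_{L^2(\mu)}=1$, is preserved by $K_\mu$ because $K_\mu[\ones_X]=\ones_X$. For an eigenfunction $\phi \in \{\ones_X\}^\perp$ with nonzero eigenvalue $\lambda$, write $\phi = \lambda\inv K_\mu[\phi] = \lambda\inv H_\mu[\phi\mu]$; since $\phi\mu \in \calM(X)$ and $H_\mu \colon \calM(X)\to\Cont(X)$, we get $\phi \in \Cont(X)$, so Proposition~\ref{prop:Kmu_contraction} applies and gives $\abs{\lambda}\cdot\norm{\phi}_{\Cont/\R} \leq q\norm{\phi}_{\Cont/\R}$. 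If $\norm{\phi}_{\Cont/\R}=0$ then $\phi$ is a nonzero constant, contradicting $\phi\perp\ones_X$; otherwise $\abs{\lambda}\leq q$. Together with $\lambda\geq 0$ this shows the spectrum of $K_\mu$ on $\{\ones_X\}^\perp \cong L^2(X,\mu)/\R$ is contained in $[0,q]$, and the same functional calculus argument as above concludes.

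I do not expect a serious obstacle here; the only slightly delicate point is the $L^2$ case, where one must \emph{upgrade} $L^2$-eigenfunctions to continuous functions before being allowed to invoke the contraction estimate of Proposition~\ref{prop:Kmu_contraction}. This rests on the smoothing property $K_\mu[\phi] = H_\mu[\phi\mu]$ with $H_\mu$ valued in $\Cont(X)$, which in turn uses the equicontinuity of $\{k_\mu(x,\cdot)\}_{x\in X}$ already exploited in the proof of Proposition~\ref{prop:Hmu_compact_CX}. One should also check the elementary fact that the quotient norm on $L^2(X,\mu)/\R$ agrees with the $L^2(\mu)$-norm on the orthogonal complement $\{\ones_X\}^\perp$, which holds because $\mu$ is a probability measure.
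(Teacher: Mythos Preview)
Your proof is correct and follows essentially the same route as the paper: identify $\Hil_\mu/\R$ (resp.\ $L^2(X,\mu)/\R$) with $\{\ones_X\}^\perp$, observe that $K_\mu$ restricted there is compact self-adjoint with spectrum in $[0,q]$, and conclude by functional calculus. The paper's own proof is just these three lines, with the $L^2$ case dismissed by ``exactly the same arguments carry through.'' You are in fact more careful than the paper on that $L^2$ case: the bootstrap of an $L^2$-eigenfunction $\phi$ with $\lambda\neq 0$ to a continuous function via $\phi = \lambda^{-1} H_\mu[\phi\mu]$, so that Proposition~\ref{prop:Kmu_contraction} legitimately applies, is exactly the point the paper leaves implicit.
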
 

\noindent These operators appear in the metric tensor \(\g_\mu\) for \(n=2\) and in \(\tg_\mu\) for \(n=1\).

\begin{proof}
    Note that $\Hil_\mu / \R$ is isometric to the subspace orthogonal to the eigenspace for the eigenvalue $1$. So in this space $K_\mu$ is self-adjoint and compact with its spectrum being contained in $[0,q]$. The estimate directly follows. Exactly the same arguments carry through to $L^2(X,\mu) / \R$.
\end{proof}

\begin{remark}[Comparison with the differentiable setting] \label{rmk:comp_Hmu_Cm}
    To make the link to the case of $\Cont^m$-perturbations in the setting of Assumption~\ref{asp:diff_m} from the previous section, we recall the following standard bound. 
    Not only $\Cont^m(X)^* \hookrightarrow \Hil_\mu^*$, but Lemma~\ref{lemma:injection_Hk_Cm_quantitative} shows that $\norm{\nu}_{\Hil^*_\mu} \leq \norm{\nu}_{\Cont^{m,*}} \cdot \norm{k_\mu}_{\Cont^{(m,m)}}^{{1}/{2}}$ for any \(\nu \in \Cont^m(X)^*\). Using Proposition~\ref{prop:reg_fmunu_cm}, we can make this bound uniform in \(\mu\). Thus there exists a constant $C$ depending only on $\varepsilon$, $c$ and $m$ such that for all $\mu \in \prm(X)$ and all $\nu \in \Cont^m(X)^*$, we have
    \begin{equation} \label{eq:comp_Hmu_Cm}
        \norm{\nu}_{\Hil^*_\mu} \leq C \cdot \norm{\nu}_{\Cont^{m,*}}
    \end{equation}
\end{remark}

\subsection{The metric tensor in the RKHS \texorpdfstring{$\Hil_\mu$}{Hmu}}

We again work under Assumption~\ref{asp:diff_m} for some \(m \geq 0\) throughout this section. Recall that perturbations integrate to zero against $\ones_X$ because of mass conservation, something that we also need to take into account here. For the sake of clarity we state without proof the following result, which would work for any Banach space of functions on $X$ containing $\ones_X$. 

\begin{lemma}
    The dual of $\Hil_\mu  / \R$ is $\Hil_{\mu,0}^*$ the space of bounded linear functionals $\sigma$ on $\Hil_\mu$ such that $\pair{\sigma}{\ones_X} = 0$.  
\end{lemma}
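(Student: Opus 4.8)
The plan is to identify the dual of the quotient Hilbert space $\Hil_\mu / \R$ with the annihilator of the constant functions inside $\Hil_\mu^*$, which is exactly the assertion. I would use the general Banach space duality between a quotient $E/F$ and the annihilator $F^\perp \subseteq E^*$: for a closed subspace $F$ of a Banach space $E$, the dual $(E/F)^*$ is canonically isometrically isomorphic to $F^\perp = \setgiven{\sigma \in E^*}{\pair{\sigma}{x} = 0 \text{ for all } x \in F}$. Here $E = \Hil_\mu$ and $F = \R \ones_X = \vecspan\{\ones_X\}$, which is a (finite-dimensional, hence closed) subspace because $\ones_X = H_\mu[\mu] \in \Hil_\mu$. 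Thus $(\Hil_\mu/\R)^* \cong (\R\ones_X)^\perp = \setgiven{\sigma \in \Hil_\mu^*}{\pair{\sigma}{\ones_X} = 0} = \Hil_{\mu,0}^*$, which is precisely the claim.

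Concretely, the isomorphism goes as follows. Let $q \colon \Hil_\mu \to \Hil_\mu/\R$ denote the canonical quotient projection. Given $\sigma \in \Hil_{\mu,0}^*$, since $\sigma$ vanishes on $\R\ones_X = \ker q$, it factors uniquely through $q$ as $\sigma = \bar\sigma \circ q$ for some linear functional $\bar\sigma$ on $\Hil_\mu/\R$; the quotient norm guarantees $\bar\sigma$ is bounded with $\norm{\bar\sigma}_{(\Hil_\mu/\R)^*} = \norm{\sigma}_{\Hil_\mu^*}$. Conversely, any $\bar\sigma \in (\Hil_\mu/\R)^*$ pulls back to $\sigma \assign \bar\sigma \circ q \in \Hil_\mu^*$, which automatically satisfies $\pair{\sigma}{\ones_X} = \bar\sigma(q(\ones_X)) = \bar\sigma(0) = 0$, so $\sigma \in \Hil_{\mu,0}^*$. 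These two maps are mutually inverse and isometric, establishing the identification. Since the paper has already observed that $\Hil_\mu/\R$ is isometric to $\{\ones_X\}^\perp \subseteq \Hil_\mu$ and hence itself a Hilbert space, one could alternatively note that $(\Hil_\mu/\R)^*$ is isometric to $\{\ones_X\}^\perp$ via the Riesz representation and check this coincides with $\Hil_{\mu,0}^*$, but the abstract quotient--annihilator duality is the cleanest route and works verbatim for any Banach space of functions on $X$ containing $\ones_X$, as the statement's preamble emphasizes.

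There is essentially no obstacle here: the only point requiring the slightest care is that $F = \R\ones_X$ is closed, which is immediate since it is finite-dimensional (and one needs $\ones_X \in \Hil_\mu$, already noted via $\ones_X = H_\mu[\mu]$), so that the quotient $\Hil_\mu/\R$ is a genuine Banach space and the standard duality theorem applies. This is exactly why the authors state the lemma without proof.
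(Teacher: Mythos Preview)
Your proposal is correct. The paper explicitly states this lemma without proof, remarking only that it ``would work for any Banach space of functions on $X$ containing $\ones_X$''; your quotient--annihilator duality argument is precisely the standard proof this remark alludes to.
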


\noindent In particular $H_\mu$ can be restricted to an operator
\begin{equation*}
    H_\mu \colon \Hil_{\mu,0}^* \to \Hil_\mu / \R,
\end{equation*}
which still corresponds to mapping $\sigma \in \Hil_{\mu,0}^*$ to its representative given by the Riesz Theorem: $\pair{\sigma}{\phi} = \pair{H_\mu[\sigma]}{\phi}_{\Hil_\mu / \R}$ for any $\phi \in \Hil_\mu / \R$. From here we can rewrite the metric tensor as follows: If $\mudot_1, \mudot_2 \in \Cont^m(X)^*_0$, we see that $H_\mu[\mudot_1]$, $H_\mu[\mudot_2] \in \Hil_\mu / \R$. As $(\id - K_\mu^2)^{-1}$ is a bounded operator on this space, we have 
\begin{equation} \label{eq:metric_tensor_Hmu}
    \g_\mu(\dot{\mu}_1, \dot{\mu}_2) = \frac{\varepsilon}{2} \bpair{\dot{\mu}_1}{(\id - K_\mu^2)\inv H_\mu[\dot{\mu}_2]} = \frac{\varepsilon}{2}  \bpair{H_\mu[\dot{\mu}_1]}{(\id - K_\mu^2)\inv H_\mu[\dot{\mu}_2]}_{\Hil_\mu / \R}.
\end{equation}
This formula directly extends to and characterizes the completion of the space $\Cont^m(X)^*_0$ endowed with $\g_\mu$. 

\begin{theorem} \label{theorem:metric_tensor_Hmu}
    On $\Cont^m(X)^*_0$, the quadratic form $\g_\mu$ is positive definite. The completion of $\Cont^m(X)^*_0$ with respect to this quadratic form coincides with $\Hil_{\mu,0}^*$, the space of bounded linear functionals $\sigma$ on $\Hil_\mu$ such that $\pair{\sigma}{\ones_X} = 0$. Moreover for any $\mudot \in \Hil_{\mu,0}^*$ one has
    \begin{equation} \label{eq:control_metric_tensor_Hmu}
        \frac{\varepsilon}{2} \norm{\dot{\mu}}^2_{\Hil_{\mu,0}^*}  \leq \g_\mu(\dot{\mu}, \dot{\mu}) \leq \frac{\varepsilon}{2} \frac{1}{1-q^2} \norm{\dot{\mu}}^2_{\Hil_{\mu,0}^*},
    \end{equation}  
    with $q$ as in Proposition~\ref{prop:Kmu_contraction}.
\end{theorem}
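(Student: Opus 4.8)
The plan is to push everything through the RKHS reformulation \eqref{eq:metric_tensor_Hmu} of the metric tensor and to reduce the quantitative claims to the spectral sandwich of Theorem~\ref{theorem:inverse_operators_Hmu}. The one structural fact I would single out is that, by its very construction (Riesz representation for the Hilbert space $\Hil_\mu/\R$, whose dual is $\Hil_{\mu,0}^*$ by the lemma preceding the theorem), the operator $H_\mu\colon\Hil_{\mu,0}^*\to\Hil_\mu/\R$ is an isometric isomorphism: $\norm{H_\mu[\dot\mu]}_{\Hil_\mu/\R}=\norm{\dot\mu}_{\Hil_{\mu,0}^*}$ for all $\dot\mu\in\Hil_{\mu,0}^*$, and $\calM_0(X)\subseteq\Hil_{\mu,0}^*$ since $\Hil_\mu\subset\Cont(X)$.

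First I would establish \eqref{eq:control_metric_tensor_Hmu} on all of $\Hil_{\mu,0}^*$. Taking $\dot\mu_1=\dot\mu_2=\dot\mu$ in \eqref{eq:metric_tensor_Hmu} gives $\g_\mu(\dot\mu,\dot\mu)=\tfrac\varepsilon2\bpair{H_\mu[\dot\mu]}{(\id-K_\mu^2)^{-1}H_\mu[\dot\mu]}_{\Hil_\mu/\R}$, an expression which is well defined for any $\dot\mu\in\Hil_{\mu,0}^*$; then Theorem~\ref{theorem:inverse_operators_Hmu} with $m=2$ sandwiches the self-adjoint operator $(\id-K_\mu^2)^{-1}$ between $\id$ and $\tfrac1{1-q^2}\id$ on $\Hil_\mu/\R$, and combining this with the isometry produces \eqref{eq:control_metric_tensor_Hmu}. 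Positive definiteness of $\g_\mu$ on $\calM_0(X)$ is then immediate from the lower bound: $\g_\mu(\dot\mu,\dot\mu)=0$ forces $\norm{\dot\mu}_{\Hil_{\mu,0}^*}=0$, i.e.\ $\dot\mu$ annihilates $\Hil_\mu$, hence it annihilates $\vecspan\{k_\mu(x,\cdot):x\in X\}$, which is dense in $\Cont(X)$ by universality of $k_\mu$; since $\dot\mu\in\calM(X)=\Cont(X)^*$ we conclude $\dot\mu=0$.

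For the completion statement I would argue that \eqref{eq:control_metric_tensor_Hmu} makes the $\g_\mu$-norm equivalent to the operator norm of $\Hil_{\mu,0}^*$ on the subspace $\calM_0(X)$, so that the abstract completion of $(\calM_0(X),\g_\mu)$ is canonically the closure of $\calM_0(X)$ inside the Hilbert space $\Hil_{\mu,0}^*$. It then suffices to show $\calM_0(X)$ is dense there. For this I would use that $\calM_0(X)$ contains every $\delta_x-\delta_{x'}$, that $H_\mu[\delta_x-\delta_{x'}]=k_\mu(x,\cdot)-k_\mu(x',\cdot)$ by \eqref{eq:def_kme_pointwise_RKHS} and symmetry of $k_\mu$, and that the span of such functions is exactly the image of $\vecspan\{k_\mu(x,\cdot):x\in X\}$ under the quotient map $\Hil_\mu\to\Hil_\mu/\R$, hence dense in $\Hil_\mu/\R$ by the very definition of the RKHS. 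Pulling this back through the isometric isomorphism $H_\mu$ shows $\calM_0(X)$ is dense in $\Hil_{\mu,0}^*$, which finishes the proof.

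The only step that demands real care is this final density claim: one has to be sure that the measures actually at one's disposal, namely the balanced signed Radon measures, already approximate an arbitrary bounded functional on $\Hil_\mu$ in operator norm — and the clean way to see it is to transport the question through $H_\mu$ to the concrete density of the pre-span of the RKHS, rather than to argue directly in $\Hil_{\mu,0}^*$. Everything else is bookkeeping: compatibility of the various $\cdot/\R$ quotients (handled by the lemma quoted before the theorem), the inclusion $\calM(X)\subset\Hil_\mu^*$, and the monotonicity of $t\mapsto(1-t)^{-1}$ used to read off the constants from Theorem~\ref{theorem:inverse_operators_Hmu}.
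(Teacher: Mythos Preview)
Your proof follows the paper's line almost verbatim: obtain \eqref{eq:control_metric_tensor_Hmu} from the operator inequality of Theorem~\ref{theorem:inverse_operators_Hmu} via the Riesz isometry $H_\mu:\Hil_{\mu,0}^*\to\Hil_\mu/\R$, read off positive definiteness, and identify the completion with the closure of $\calM_0(X)$ in $\Hil_{\mu,0}^*$.

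There is one slip in the density step. You assert that the span of the classes $[k_\mu(x,\cdot)-k_\mu(x',\cdot)]$ in $\Hil_\mu/\R$ is \emph{exactly} the image of $\vecspan\{k_\mu(x,\cdot)\}$ under the quotient map. This fails in general: that image also contains $[k_\mu(x_0,\cdot)]$ for a fixed $x_0$, and for this to lie in the span of differences one would need $\ones_X$ to be a \emph{finite} linear combination of the $k_\mu(x,\cdot)$, which is false whenever $\mu$ is non-atomic (then $\ones_X=H_\mu[\mu]=\int k_\mu(x,\cdot)\,\mathrm d\mu(x)$ lies only in the closure of the span). A clean repair is to argue density directly: if $\psi\in\Hil_\mu$ is orthogonal to every $k_\mu(x,\cdot)-k_\mu(x',\cdot)$, the reproducing property gives $\psi(x)=\psi(x')$ for all $x,x'$, so $\psi\in\R\ones_X$; hence the span of differences is dense in $\{\ones_X\}^\perp\cong\Hil_\mu/\R$, which is all you need. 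The paper avoids the issue altogether by working with all of $\calM_0(X)$ rather than only $\delta_x-\delta_{x'}$: writing $\calM(X)=\R\mu\oplus\calM_0(X)$ and using $H_\mu[\mu]=\ones_X$, the dense image $H_\mu[\calM(X)]\subset\Hil_\mu$ becomes the image of $\calM_0(X)$ once one passes to the quotient by constants.
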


\begin{proof}
    We start from the formula~\eqref{eq:metric_tensor_Hmu}, which is a rewriting of Definition~\ref{def:metric_tensor_measure} in the language of RKHS. Combining it with Theorem~\ref{theorem:inverse_operators_Hmu} we obtain
    \begin{equation*}
        \frac{\varepsilon}{2} \norm{H_\mu[\dot{\mu}]}^2_{\Hil_\mu / \R} \leq \g_\mu(\dot{\mu}, \dot{\mu}) \leq \frac{\varepsilon}{2} \frac{1}{1-q^2} \norm{H_\mu[\dot{\mu}]}^2_{\Hil_\mu / \R},
    \end{equation*}  
    at least for $\mudot \in \Cont^m(X)^*_0$. As $H_\mu$ is an isometry from $\Hil_{\mu,0}^*$ to the space $\Hil_\mu / \R$, we deduce that~\eqref{eq:control_metric_tensor_Hmu} is valid for all $\mudot \in \Cont^m(X)^*_0$. Thus $\g_\mu$ is positive definite.

    It remains to show that the completion of $\Cont^m(X)^*_0$ is $\Hil^*_{\mu,0}$. Given what we just proved, it is enough to prove that $\calM_0(X)$ is dense in $\Hil^*_{\mu,0}$ for the norm topology. We use the isometry of the latter space to \(\Hil_\mu / \R\) via $\Hil_\mu$. The set $H_\mu[\calM(X)]$ is dense in $\Hil_\mu$, as discrete measures already yield all linear combinations of \(\setgiven{k_\mu(x,\vdot)}{x \in X}\). To conclude, we observe that \(\calM(X) = \R \cdot \mu \oplus \calM_0(X)\) and \(H_\mu[\mu] = \ones_X\).
\end{proof}

\subsection{The metric tensor in the RKHS \texorpdfstring{$\Hil_c$}{Hc}}

Although Theorem~\ref{theorem:metric_tensor_Hmu} exhibits a very neat structure, it has the main drawback that the ambient space of the tangent space $\Hil_{\mu,0}^* \cong \Hil_\mu / \R$ depends on the point $\mu$. The space $\Hil_\mu$ is however in a simple bijection with the RKHS $\Hil_c$, built on the kernel $k_c = \exp(-c/\varepsilon)$, as the next proposition shows.
In the following we will abbreviate 
\begin{equation*}
    \ef_\mu \assign \exp(\fmumu / \varepsilon).
\end{equation*}

\begin{proposition} \label{prop:RKHS_isometry}
    The spaces \(\Hil_c\) and \(\Hil_\mu\) are isometric to one another under the map
    \begin{equation*}
        \iota_\mu \colon 
        \Hil_c \to \Hil_\mu, \quad \phi \mapsto \phi \cdot \exp \left( \frac{1}{\varepsilon} \fmumu \right) = \phi \cdot \ef_\mu.
    \end{equation*}
\end{proposition}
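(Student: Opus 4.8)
The plan is to verify directly that $\iota_\mu$ sends the canonical generating set of $\Hil_c$ to the canonical generating set of $\Hil_\mu$ in a way that preserves inner products on finite linear combinations, and then extend by density. Concretely, recall that $k_c(x,\cdot) = \exp(-c(x,\cdot)/\varepsilon)$ generates $\Hil_c$ and $k_\mu(x,\cdot) = \exp\big((\fmumu(x) + \fmumu(\cdot) - c(x,\cdot))/\varepsilon\big) = \ef_\mu(x)\, \ef_\mu(\cdot)\, k_c(x,\cdot)$ generates $\Hil_\mu$. Thus I would first observe the pointwise identity $k_\mu(x,\cdot) = \ef_\mu(x)\cdot \big(\ef_\mu \cdot k_c(x,\cdot)\big) = \ef_\mu(x)\cdot \iota_\mu[k_c(x,\cdot)]$, so that $\iota_\mu$ maps the generator $k_c(x,\cdot)$ to $\ef_\mu(x)^{-1} k_\mu(x,\cdot) \in \Hil_\mu$ (note $\ef_\mu = \exp(\fmumu/\varepsilon) > 0$ is continuous on the compact space $X$, hence bounded away from $0$, so multiplication by $\ef_\mu$ and by $\ef_\mu^{-1}$ are well-defined bijections on $\Cont(X)$).

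Next I would check that $\iota_\mu$ is isometric on the dense linear span. For $\phi = \sum_i a_i k_c(x_i,\cdot)$ one computes, using the reproducing formula \eqref{eq:def_inner_product_RKHS} for $\Hil_c$,
\begin{equation*}
\iota_\mu[\phi] = \sum_i a_i\, \ef_\mu \cdot k_c(x_i,\cdot) = \sum_i \big(a_i\, \ef_\mu(x_i)^{-1}\big)\, k_\mu(x_i,\cdot) \in \Hil_\mu,
\end{equation*}
and therefore, expanding both sides via \eqref{eq:def_inner_product_RKHS},
\begin{equation*}
\norm{\iota_\mu[\phi]}_{\Hil_\mu}^2 = \sum_{i,j} a_i a_j\, \ef_\mu(x_i)^{-1} \ef_\mu(x_j)^{-1}\, k_\mu(x_i,x_j) = \sum_{i,j} a_i a_j\, k_c(x_i,x_j) = \norm{\phi}_{\Hil_c}^2,
\end{equation*}
since the $\ef_\mu(x_i)^{-1}\ef_\mu(x_j)^{-1}$ factors exactly cancel the $\ef_\mu(x_i)\ef_\mu(x_j)$ appearing in $k_\mu(x_i,x_j)$. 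By polarization this gives preservation of the inner product on the span of $\setgiven{k_c(x,\cdot)}{x\in X}$, and the analogous computation run backwards (using that multiplication by $\ef_\mu^{-1}$ sends $k_\mu(x,\cdot)$ to $\ef_\mu(x) k_c(x,\cdot)$) shows the span of $\setgiven{k_\mu(x,\cdot)}{x\in X}$ lies in the range. Since these spans are dense in $\Hil_c$ and $\Hil_\mu$ respectively by definition of the RKHS as their completion, the bounded isometric map $\iota_\mu$ extends uniquely to a surjective isometry $\Hil_c \to \Hil_\mu$, with inverse $\psi \mapsto \psi \cdot \ef_\mu^{-1}$.

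The only mild subtlety — and the one place requiring a word of care rather than routine computation — is the consistency of the two descriptions of $\iota_\mu$: a priori "multiply by $\ef_\mu$" is an operation on continuous functions, while the extension argument produces an abstract Hilbert-space isometry; one must confirm that the abstract extension still acts as pointwise multiplication by $\ef_\mu$. This follows because convergence in $\Hil_c$ (resp.\ $\Hil_\mu$) implies uniform convergence on $X$ (the kernels are bounded, so the embedding into $\Cont(X)$ is continuous, as recalled in Section~\ref{section:sub:RKHS_introduction}), and pointwise multiplication by the fixed bounded continuous function $\ef_\mu$ is continuous for the uniform norm; hence the identity $\iota_\mu[\phi] = \phi \cdot \ef_\mu$ passes to the limit. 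This also shows $\iota_\mu$ maps $\ones_X \in \Hil_c$... (in fact $\ones_X$ need not lie in $\Hil_c$, so I would simply not claim that) — the statement as given only asserts the isometry, which is established. I do not expect any serious obstacle here; the proposition is essentially a bookkeeping computation with the explicit form of $k_\mu$ versus $k_c$.
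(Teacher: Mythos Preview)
Your proof is correct and follows essentially the same approach as the paper: both verify the pointwise identity $\ef_\mu \cdot k_c(x,\cdot) = \ef_\mu(x)^{-1} k_\mu(x,\cdot)$ and then check norm preservation on finite linear combinations of kernel slices via the same cancellation of the $\ef_\mu(x_i)^{\pm 1}$ factors. You add the careful remarks on surjectivity and on the consistency of the abstract extension with pointwise multiplication, which the paper leaves implicit, but the core argument is identical.
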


\begin{proof}
    We can directly verify \(\exp(\fmumu(y)/\varepsilon) k_c(x,y) = \exp(-\fmumu(x)/\varepsilon) k_\mu(x,y)\) and hence
    \begin{align*}
        \bnorm{\exp \left( \frac{1}{\varepsilon} \fmumu \right) \sum_{i=1}^n b_i k_c(x_i,\vdot)}_{\Hil_\mu}^2 
        &= \sum_{i,j=1}^n b_i b_j \exp \left( - \frac{1}{\varepsilon} \fmumu(x_i) \right) \exp \left( - \frac{1}{\varepsilon} \fmumu(x_j) \right) k_\mu(x_i,x_j) \\
        &= \sum_{i,j=1}^n b_i b_j k_c(x_i,x_j) = \bnorm{\sum_{i=1}^n b_i k_c(x_i,\vdot)}_{\Hil_c}^2. \tag*{\qedhere}
    \end{align*}
\end{proof}

We consider the maps \(\A \colon \prm(X) \to \calM(X)\) and \(\B = H_c \circ \A \colon \prm(X) \to \Hil_c\) given by
\begin{align*}
\begin{split}
    &\A(\mu) = \exp \left( \frac{1}{\varepsilon} \fmumu \right) \mu, \\ 
    &\B(\mu)  = H_c[\A(\mu)] = \int_X \exp \left( \frac{1}{\varepsilon} (\fmumu(y) - c(\vdot,y)) \right) \diff \mu(y) = \exp \left( - \frac{1}{\varepsilon} \fmumu \right).
\end{split}
\end{align*}
Recall that the RKHS embeddings $H_c \colon \Hil_c^* \to \Hil_c$ and \(H_\mu \colon \Hil_\mu^* \to \Hil_\mu\) are characterized by~\eqref{eq:def_kme_RKHS} and~\eqref{eq:def_kme_pointwise_RKHS}. 
From a path \((\mu_t)_t \in \prm(X)\) one obtains the paths \((H_\mu[\mu_t])_t \in \Hil_\mu\), \((\alpha_t)_t \in \calM_+(X)\), and \((\beta_t)_t \in \Hil_c\) with \(\alpha_t = \A(\mu_t)\), \(\beta_t = \B(\mu_t) = H_c[\A(\mu_t)]\). 
The change of variables \(\mu \leftrightarrow \alpha\) was already used by Feydy et al.~in the proof of~\cite[Prop.~3]{Feydy2018}. 
Note that, for a fix measure $\nu$, the space $\prm(X)$ embeds into $\Hil_\nu$ via $H_{\nu}$, and that the latter can be mapped into $\Hil_c$ with $\iota_\nu\inv$. However, this embedding \(\mu \mapsto \iota_\nu\inv(H_\nu[\mu]) = H_c[\ef_\nu \cdot \mu]\) of $\prm(X)$ into $\Hil_c$ differs from \(B : \mu \mapsto H_c[\ef_\mu \cdot \mu]\).

The measures \(\alpha_t\) are naturally interpreted as linear functionals on the common space \(\Hil_c\) (represented by \(\beta_t\)). One can hope that differentiability on this fixed space is well behaved.
We will see that the metric tensor can be expressed in the $\beta$ variable and that it is equivalent to the squared norm on the space $\Hil_c$ in Theorem~\ref{theorem:metric_tensor_in_betadot}.

We first prove that \(\mu \leftrightarrow \alpha\) and \(\mu \leftrightarrow \beta\) are valid changes of variables in the sense that $A$ and $B$ are homeomorphisms onto their images. Recall that \(\proj_1\) stands for the projection map \((x,y)\mapsto x\). 

\begin{theorem} \label{theorem:alpha_beta_homeo}
    The maps \(\A\) and \(\B\) are homemorphisms onto their images. Explicitly:
    \begin{enumerate}
        \item \(\A\) is a \textweakstar~homeomorphism onto \(\calA \assign \setgiven{\alpha \in \calM_+(X)}{\norm{H_c[\alpha]}_{\Hil_c} = 1}\) with inverse \(\A\inv(\alpha) = (\proj_1)_\# (k_c \cdot \alpha \otimes \alpha)\). In particular, \(\calA\) is \textweakstar-compact.
        \item \(\B\) is a \textweakstar-to-weak homeomorphism onto
        $\calB~\assign~\setgiven{\beta~\in~H_c[\calM_+(X)]}{\norm{\beta}_{\Hil_c} = 1}$. 
        Weak convergence and norm convergence agree on \(\calB\), and the set $\calB$ is weakly and norm compact in $\Hil_c$.
    \end{enumerate}
\end{theorem}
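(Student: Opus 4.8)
The plan is to extract the algebraic identities linking $\mu$, $\alpha=\A(\mu)$ and $\beta=\B(\mu)$, write down explicit inverses, and then upgrade the resulting bijections to homeomorphisms by exploiting the \textweakstar-compactness of $\prm(X)$ together with the smoothing property of the kernel mean embedding $H_c$.

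\emph{Identities and inverses.} Expanding $\B(\mu)=H_c[\A(\mu)]$ and inserting the Schrödinger fixed point equation~\eqref{eq:kernel_slice_int_1} (with the convention $\fmumu=\gmumu$) gives $\beta=\B(\mu)=\exp(-\fmumu/\varepsilon)$, hence $\diff\mu=\beta\diff\alpha$. Integrating this against $\beta$ yields $\pair{\alpha}{\beta}=\mu(X)=1$, so $\norm{\beta}_{\Hil_c}^2=\pair{\alpha}{H_c[\alpha]}=1$; thus $\A(\mu)\in\calA$ and $\B(\mu)\in\calB$. Conversely, fix $\alpha\in\calA$ and set $\mu\assign H_c[\alpha]\cdot\alpha=(\proj_1)_\#(k_c\cdot\alpha\otimes\alpha)$. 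Since $k_c>0$ and $\alpha\ne 0$, the density $H_c[\alpha]$ is continuous and bounded above and below by positive constants, and $\mu(X)=\pair{\alpha}{H_c[\alpha]}=\norm{H_c[\alpha]}_{\Hil_c}^2=1$, so $\mu\in\prm(X)$. A short computation shows that $f\assign-\varepsilon\log H_c[\alpha]\in\Cont(X)$ satisfies $T_\varepsilon(f,\mu)=f$; by uniqueness of the Schrödinger potential under the normalisation $\fmumu=\gmumu$ this gives $f=\fmumu$, and therefore $\A(\mu)=\exp(\fmumu/\varepsilon)\mu=\alpha$. Hence $\A\colon\prm(X)\to\calA$ is a bijection with inverse $\alpha\mapsto(\proj_1)_\#(k_c\cdot\alpha\otimes\alpha)$, and since $H_c$ is injective on $\calM(X)$ (by universality of $k_c$), $\B=H_c\circ\A$ is a bijection onto $\calB$.

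\emph{Continuity of $\A$, and the homeomorphism claims.} The analytic core is \textweakstar-continuity of $\A$. Let $\mu_n\weakstar\mu$ and let $f_n$ be the self-Schrödinger potential of $\mu_n$. The $f_n$ are uniformly bounded by \cite[Thm.~1.2]{DiMarino2019} and share a modulus of continuity by \cite[Lem.~3.1]{Nutz2022}, so by Arzelà--Ascoli every subsequence has a sub-subsequence with $f_{n_k}\to f$ uniformly; the integrand of $\int\exp(\frac{1}{\varepsilon}(f_{n_k}(x)+f_{n_k}(y)-c(x,y)))\diff\mu_{n_k}(x)=1$ then converges uniformly in $x$ while $\mu_{n_k}\weakstar\mu$, so passing to the limit gives $T_\varepsilon(f,\mu)=f$, whence $f=\fmumu$ by uniqueness. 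Thus $f_n\to\fmumu$ uniformly, so $\exp(f_n/\varepsilon)\mu_n\weakstar\exp(\fmumu/\varepsilon)\mu$, i.e.\ $\A(\mu_n)\weakstar\A(\mu)$. Now $\prm(X)$ is \textweakstar-compact (and metrizable, $X$ being compact metric) while $\calM(X)$ with the \textweakstar~topology is Hausdorff, so the continuous bijection $\A\colon\prm(X)\to\calA$ is automatically a homeomorphism and $\calA$ is \textweakstar-compact. By Lemma~\ref{lemma:weak_MX_strong_Hc}, $H_c$ maps \textweakstar-convergent, total-variation-bounded sequences to norm-convergent sequences in $\Hil_c$ (and the $\A(\mu_n)$ are total-variation-bounded, since the potentials are bounded), so $\B=H_c\circ\A$ is \textweakstar-to-norm continuous; as a continuous bijection from the compact space $\prm(X)$ onto $\calB\subseteq\Hil_c$ it is a homeomorphism, hence $\calB$ is norm-compact and therefore weakly compact. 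Finally, on a norm-compact subset of a Banach space the weak and norm topologies coincide (the identity map from the compact norm topology to the Hausdorff weak topology is a continuous bijection between a compact and a Hausdorff space), so $\B$ is equally a \textweakstar-to-weak homeomorphism onto $\calB$.

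The only genuinely nontrivial point is the \textweakstar-continuity of $\A$: the rest is algebra and soft topology, but that step rests on the stability of Schrödinger potentials under perturbation of the marginal, which we obtain from the uniform a priori bound, the shared modulus of continuity, and uniqueness via an Arzelà--Ascoli argument.
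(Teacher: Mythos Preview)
Your proof is correct and follows the same overall architecture as the paper's: establish bijectivity via an explicit inverse, prove \textweakstar-continuity of $\A$, and leverage compactness of $\prm(X)$ to upgrade continuous bijections to homeomorphisms. The implementations differ in two places. For surjectivity of $\A$, you directly verify that $f=-\varepsilon\log H_c[\alpha]$ solves the Schrödinger fixed point $T_\varepsilon(f,\mu)=f$, which is elementary and self-contained; the paper instead shows that any $\alpha$ with $(\proj_1)_\#(k_c\cdot\alpha\otimes\alpha)=\mu$ yields a coupling of the form $\exp(\phi\oplus\phi)\,k_c\cdot\mu\otimes\mu$ and then invokes the characterisation of entropic optimizers \cite[Thm.~2.1]{Nutz:IntroEOT} to identify $\phi$ with $\fmumu/\varepsilon$. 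For continuity of $\A$, you redo the stability of Schrödinger potentials via Arzelà--Ascoli, while the paper simply cites \cite[Thm.~1.1]{Carlier2022}. Finally, for the equivalence of weak and norm topologies on $\calB$, you pass through norm compactness (via Lemma~\ref{lemma:weak_MX_strong_Hc}), whereas the paper first establishes weak compactness via the easy \textweakstar-to-weak continuity of $H_c$ and then upgrades weak to norm convergence using $\|\beta_n\|_{\Hil_c}=1=\|\beta\|_{\Hil_c}$; both routes are equally short.
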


Note that $\calB$ is the intersection of the convex cone \(H_c[\calM_+(X)]\) with the unit sphere. The proof for the weak compactness of \(\calB \subset \Hil_c\) relies heavily on the positivity constraint of the cone \(H_c[\calM_+(X)]\). 
In contrast, the weak closure of the whole unit sphere in infinite dimension is the unit ball, which is not norm compact.

\begin{remark}
    As a sidenote which might be interesting on its own, the characterization of \(\calB\) also gives non-negativity of the entropic self-transport potential of any measure with respect to any non-negative cost function inducing a positive definite universal kernel. By Theorem~\ref{theorem:alpha_beta_homeo}, we have \(\norm{\exp(-\fmumu/\varepsilon)}_{\Hil_c} = 1\) and hence the estimate~\eqref{eq:norm_infty_bound_RKHS} implies that \( \norm{\exp(-\fmumu/\varepsilon)}_\infty \leq \norm{\exp(-\fmumu/\varepsilon)}_{\Hil_c} \cdot \sup_{x \in X} k_c(x,x) \leq 1\). In particular, 
    \begin{equation*} 
       \fmumu(x) \geq 0 \quad \text{for all } x \in X. 
    \end{equation*}   
    With this we can slightly strengthen the contraction bound in Proposition~\ref{prop:Kmu_contraction} to \(q = 1 - \exp(-\frac{5}{2}\norm{c}_\infty / \varepsilon)\).
\end{remark}

\begin{proof}[\textbf{Proof of Theorem~\ref{theorem:alpha_beta_homeo}}]
    \underline{The map $A$ and its image:}
    Denote \(\alpha = A(\mu) = \ef_\mu \mu\). Note that the map \(\A\) is injective since \(\mu\) can be reconstructed as \(\mu = F(\alpha) \assign (\proj_1)_\# (k_c \cdot \alpha \otimes \alpha)\) thanks to~\eqref{eq:formula_coupling_optimal}. This also implies that the inverse map is \textweakstar-continuous: let \(\alpha_n \to \alpha\) \textweakstar~with corresponding \(\mu_n = (\proj_1)_\# (k_c \cdot \alpha_n \otimes \alpha_n)\). Then $\alpha_n \otimes \alpha_n$ converges \textweakstar~to $\alpha \otimes \alpha$ (see e.g.~\cite[Lem.~7.3]{Santambrogio2015}). Weak convergence of $\mu_n$ to $\mu$ easily follows. To prove that the forward map is continuous, it suffices to use the continuity of \(\fmumu\) as a function of \(\mu\) in supremum norm (see Proposition~\ref{prop:reg_fmunu_cm}).

    We now characterize the image of \(A\): Clearly \(\A(\mu) \geq 0\) and 
    \begin{equation*}
        \norm{H_c[\A(\mu)]}_{\Hil_c}^2 = \pair{\ef_\mu \mu \otimes \ef_\mu \mu}{k_c} = \pair{\mu \otimes \mu}{k_\mu} = 1
    \end{equation*}
    for all \(\mu \in \prm(X)\). The map \(F \colon \mathcal{A} \to \prm(X)\) is a left inverse of \(\A \colon \prm(X) \to \mathcal{A}\). To show that \(F\) is also a right inverse, we may equivalently show that \(F\) is injective. We have \(F(\alpha) = \mu\) if and only if \(k_c \cdot \alpha \otimes \alpha \in \Pi(\mu,\mu)\). For every such \(\alpha \in \mathcal{A}\) it must hold that \(\alpha \ll \mu\). Thus \(k_c \cdot \alpha \otimes \alpha = \exp(\phi \oplus \phi) k_c \cdot \mu \otimes \mu\) for \(\phi = \log  \frac{\mathrm{d} \alpha}{\mathrm{d} \mu}  \colon X \to [-\infty,\infty)\). A transport plan of this form is necessarily optimal for the entropic optimal transport problem~\eqref{eq:intro_def_OT} (see \cite[Thm.~2.1]{Nutz:IntroEOT}). Hence \(\phi = \fmumu / \varepsilon\) and \(\alpha = \A(\mu)\). Finally, \(\calA\) is \textweakstar-compact as the \textweakstar-continuous image of the \textweakstar-compact set \(\prm(X)\). 
    
    \underline{The map $B$ and its image:}
    The map \(\A\) is injective. The map $H_c$ is a bijection between $\Hil_c^*$ and $\Hil_c$ by definition, so it is injective. Thus $B$ is injective. It is clear that $\calB$ is the image of $\calA$ by $H_c$. Thus $\calB$ is the image of $\prm(X)$ by $B$.
    
    As the injection of $\Hil_c$ into $\Cont(X)$ is continuous by~\eqref{eq:norm_infty_bound_RKHS}, the map $H_c$ is continuous from $\calM(X)$ endowed with the \textweakstar~topology into $\Hil_c$ endowed with the weak topology. Thus \(\B\) is \textweakstar-to-weak continuous and $\calB$ is weakly compact as the image of the \textweakstar-compact set $\calA$. As a continuous bijection between compact Hausdorff spaces, \(\B\) is a \textweakstar-to-weak homeomorphism.
    
    Finally, if $(\beta_n)_{n}$ is any sequence in \(\calB\) which converges weakly to a limit $\beta$ in $\Hil_c$, as $\calB$ is weakly compact we must have $\beta \in \calB$ so $\| \beta \|_{\Hil_c} = 1 = \lim_{n \to \infty} \| \beta_n \|_{\Hil_c}$. This implies norm convergence of the sequence $(\beta_n)_n$.
\end{proof}

After the investigation of the topological properties of these changes of variables, we move on to the differential properties. For a path \((\mu_t)_t\) in \(\prm(X)\), we denote \(\alpha_t = \A(\mu_t)\), \(\beta_t = \B(\mu_t)\). By definition this means \(\pair{\alpha_t}{\phi} = \pair{\beta_t}{\phi}_{\Hil_c}\) for all \(\phi \in \Hil_c\). We will express the metric tensor in the variable \(\betadot_t \assign \dd{}{t} \beta_t = \dd{}{t} \exp(-f_{\mu_t,\mu_t}/\varepsilon) \in \Hil_c\). Let us first define the push-forward of the metric tensor before making a rigorous link to Definition~\ref{def:metric_tensor_measure} in Theorem~\ref{theorem:metric_tensor_in_betadot}.  

\begin{definition}[{The metric tensor on $\Hil_c$}]
    If $\mu \in \prm(X)$ with $\beta = B(\mu) \in \calB$, and $\betadot \in \Hil_c$ with $\pair{\beta}{\betadot}_{\Hil_c} = 0$, we define the quadratic form $\tg_\mu(\betadot, \betadot)$ by 
    \begin{equation*}
        \tg_\mu(\betadot, \betadot) = \frac{\varepsilon}{2} \left( \norm{\betadot}_{\Hil_c}^2 + 2 \bpair{\ef_{\mu} \cdot \betadot }{ (\id - K_{\mu})\inv [\ef_{\mu} \cdot \betadot] }_{L^2(X,\mu)} \right). 
    \end{equation*}
\end{definition}

To explicitly link $\g$ and $\tg$, we need to consider curves valued in $\prm(X)$ and their corresponding curves in $\calB$. The following definition fixes the convention about what we mean by a \emph{differentiable path}.

\begin{definition}[Differentiability of paths] \label{def:weakdiff} 
    Let \(\mathbb{X}\) be a vector space with a topology \(\tau\). We say that a path \((x_s)_{s \in I}\) in \(\mathbb{X}\) is \(\tau\)-differentiable at \(t\) when \(t\) is an accumulation point of \(I\) and the difference quotients \((x_s - x_t) / (s - t)\) converge in \(\tau\) to some \(\dot{x}_t\) as \(s \to t\). 
    When \(\mathbb{X}\) is a normed vector space and \(\tau\) is the norm topology, we drop the \(\tau\) in the notation.
\end{definition}

We define \(\mudot_t\) by \(\pair{\mudot_t}{\phi} = \dd{}{s}\big|_{s=t} \pair{\mu_s}{\phi}\) for \(\phi \in \Hil_{\mu_t}\) whenever this is a bounded linear functional on \(\Hil_{\mu_t}\), i.e.~when the path \((\mu_s)_s\), interpreted in the dual space \(\Hil_{\mu_t}^*\), is weakly differentiable in \(\Hil_{\mu_t}^*\) at \(t\). Note that automatically $\mudot_t \in \Hil_{\mu_t,0}^*$.

In the sequel, for paths $(\mu_t)_t$, $(\alpha_t)_t$, $(\beta_t)_t$ we write $\mu = \mu_0$, $\mudot = \mudot_0$ and similarly $\alpha = \alpha_0$, $\alphadot = \alphadot_0$ and $\beta = \beta_0$, $\betadot = \betadot_0$.

\begin{theorem} \label{theorem:metric_tensor_in_betadot}
    Let \((\mu_t)_t\) be a path in \(\prm(X)\), such that the associated path \((\beta_t)_t = (B(\mu_t))_t\) is weakly differentiable in \(\Hil_c\) at \(t=0\). Then $\pair{\betadot}{\beta}_{\Hil_c} = 0$, the path \((\mu_t)_t\) is weakly differentiable in \(\Hil_\mu^*\) at \(t=0\), and 
    \begin{equation} \label{eq:tgmu_expression}
        \g_{\mu}(\mudot,\mudot) = \tg_{\mu}(\betadot, \betadot). 
    \end{equation} 
\end{theorem}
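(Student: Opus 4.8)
The plan is to relate the $\beta$-variable derivative $\betadot$ to the measure-variable derivative $\mudot$ through the chain rule, then substitute into the formula for $\g_\mu$ from Definition~\ref{def:metric_tensor_measure} and massage it into the form of $\tg_\mu$. First I would establish that $\pair{\betadot}{\beta}_{\Hil_c} = 0$: since $\norm{\beta_t}_{\Hil_c} = 1$ for all $t$ by Theorem~\ref{theorem:alpha_beta_homeo}, differentiating $\pair{\beta_t}{\beta_t}_{\Hil_c} \equiv 1$ at $t=0$ gives $2\pair{\betadot}{\beta}_{\Hil_c} = 0$. Next, recalling $\beta_t = \exp(-f_{\mu_t,\mu_t}/\varepsilon)$, I would differentiate using the third derivative formula in Proposition~\ref{prop:time_derivative_of_potentials}: $\frac{\partial}{\partial t}\big|_{t=0} f_{\mu_t,\mu_t} = -\varepsilon (\id + K_\mu)^{-1} H_\mu[\mudot]$, hence formally
\begin{equation*}
\betadot = -\frac{1}{\varepsilon} \beta \cdot \frac{\partial f_{t,t}}{\partial t}\Big|_{t=0} = \beta \cdot (\id + K_\mu)^{-1} H_\mu[\mudot] = \ef_\mu^{-1} \cdot (\id + K_\mu)^{-1} H_\mu[\mudot],
\end{equation*}
using $\beta = \exp(-\fmumu/\varepsilon) = \ef_\mu^{-1}$. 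However, Proposition~\ref{prop:time_derivative_of_potentials} as stated is for vertical perturbations, so I would first need to argue (via the weak-differentiability hypothesis on $(\beta_t)_t$ and Remark~\ref{remark:extension_time_derivative_of_potentials_weakstar}, or by a direct implicit-function-theorem argument in $\Hil_\mu^*$) that the same formula holds here, and simultaneously deduce that $(\mu_t)_t$ is weakly differentiable in $\Hil_\mu^*$ at $t=0$. That is the place where I would connect $\mudot$ to the given datum $\betadot$: invert the relation to get $H_\mu[\mudot] = (\id + K_\mu)[\ef_\mu \cdot \betadot]$, i.e.\ $\mudot$ is the functional with $\pair{\mudot}{\cdot} = \pair{\ef_\mu\betadot + K_\mu[\ef_\mu\betadot]}{\cdot}_{\Hil_\mu}$ appropriately; one must check this indeed lies in $\Hil_{\mu,0}^*$, which follows since $\ef_\mu \betadot \in \Hil_\mu$ and $\pair{\mudot}{\ones_X} = \pair{\betadot}{\beta}_{\Hil_c}\cdot(\text{const}) = 0$.

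Now I would compute $\g_\mu(\mudot,\mudot)$ from~\eqref{eq:metric_tensor_Hmu}:
\begin{equation*}
\g_\mu(\mudot,\mudot) = \frac{\varepsilon}{2}\bpair{H_\mu[\mudot]}{(\id - K_\mu^2)^{-1} H_\mu[\mudot]}_{\Hil_\mu/\R}.
\end{equation*}
Substituting $H_\mu[\mudot] = (\id + K_\mu)[\ef_\mu\betadot]$ and factoring $\id - K_\mu^2 = (\id - K_\mu)(\id + K_\mu)$ (valid and invertible on $\Hil_\mu/\R$ by Theorem~\ref{theorem:inverse_operators_Hmu}, and the factors commute since $K_\mu$ is self-adjoint), one gets
\begin{equation*}
(\id - K_\mu^2)^{-1}(\id + K_\mu)[\ef_\mu\betadot] = (\id - K_\mu)^{-1}[\ef_\mu\betadot],
\end{equation*}
so that
\begin{equation*}
\g_\mu(\mudot,\mudot) = \frac{\varepsilon}{2}\bpair{(\id + K_\mu)[\ef_\mu\betadot]}{(\id - K_\mu)^{-1}[\ef_\mu\betadot]}_{\Hil_\mu/\R}.
\end{equation*}
Then I would expand $(\id + K_\mu) = 2\,\id - (\id - K_\mu)$ to split this into $2\bpair{\ef_\mu\betadot}{(\id-K_\mu)^{-1}[\ef_\mu\betadot]}_{\Hil_\mu/\R} - \bpair{\ef_\mu\betadot}{\ef_\mu\betadot}_{\Hil_\mu/\R}$, wait — this isn't quite the target sign pattern, so more carefully I would instead write $(\id+K_\mu)(\id-K_\mu)^{-1} = -\id + 2(\id - K_\mu)^{-1}$, giving
\begin{equation*}
\g_\mu(\mudot,\mudot) = \frac{\varepsilon}{2}\left(-\norm{\ef_\mu\betadot}_{\Hil_\mu/\R}^2 + 2\bpair{\ef_\mu\betadot}{(\id - K_\mu)^{-1}[\ef_\mu\betadot]}_{\Hil_\mu/\R}\right).
\end{equation*}
To finish I must reconcile this with the stated $\tg_\mu$, which uses $\norm{\betadot}_{\Hil_c}^2$ and an $L^2(X,\mu)$ inner product. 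Using the isometry $\iota_\mu \colon \Hil_c \to \Hil_\mu$, $\phi\mapsto \ef_\mu\phi$ (Proposition~\ref{prop:RKHS_isometry}), we have $\norm{\ef_\mu\betadot}_{\Hil_\mu}^2 = \norm{\betadot}_{\Hil_c}^2$; and since $\pair{\betadot}{\beta}_{\Hil_c}=0$ with $\beta = \ef_\mu^{-1}$, the element $\ef_\mu\betadot$ is already orthogonal to $\ones_X = \ef_\mu\cdot\beta$ in $\Hil_\mu$ — wait, $\iota_\mu(\beta) = \ef_\mu \beta = \ones_X$, so $\ef_\mu\betadot \perp \ones_X$ in $\Hil_\mu$ exactly because $\betadot\perp\beta$ in $\Hil_c$ — hence $\norm{\ef_\mu\betadot}_{\Hil_\mu/\R}^2 = \norm{\ef_\mu\betadot}_{\Hil_\mu}^2 = \norm{\betadot}_{\Hil_c}^2$. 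For the cross term I would use that for $\phi\in\Hil_\mu$ the projection to $\Hil_\mu/\R$ and the operator $(\id-K_\mu)^{-1}$ interact well with the identity $\pair{K_\mu[\psi]}{\chi}_{\Hil_\mu} = \pair{\psi}{\chi}_{L^2(X,\mu)}$ from~\eqref{eq:proof:Kmu_selfadjoint}; writing $(\id-K_\mu)^{-1} = \id + K_\mu(\id - K_\mu)^{-1}$ converts $\bpair{\ef_\mu\betadot}{(\id-K_\mu)^{-1}[\ef_\mu\betadot]}_{\Hil_\mu}$ into $\norm{\ef_\mu\betadot}_{\Hil_\mu}^2 + \bpair{\ef_\mu\betadot}{(\id-K_\mu)^{-1}[\ef_\mu\betadot]}_{L^2(X,\mu)}$, and the first term cancels the $-\norm{\betadot}_{\Hil_c}^2$, wait — it should leave $+\norm{\betadot}_{\Hil_c}^2$ after combining, matching $\tg_\mu$. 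The bookkeeping of these algebraic rearrangements is the main obstacle: one must be careful about (i) whether $(\id-K_\mu)^{-1}$ is taken on $\Hil_\mu/\R$ or on $L^2(X,\mu)/\R$ and that these agree on the relevant subspace (Theorem~\ref{theorem:inverse_operators_Hmu} covers both), and (ii) that $\ef_\mu\betadot$ lies in the correct subspace so that passing between $\Hil_\mu$, $\Hil_\mu/\R$, and $L^2(X,\mu)$ does not introduce spurious constant terms. The genuinely non-routine step, though, is the very first one — justifying the derivative formula for $f_{\mu_t,\mu_t}$ and the weak differentiability of $(\mu_t)_t$ in $\Hil_\mu^*$ under only the hypothesis that $(\beta_t)_t$ is weakly differentiable in $\Hil_c$ — which requires either invoking the extension discussed in Remark~\ref{remark:extension_time_derivative_of_potentials_weakstar} or redoing the implicit-function-theorem argument of Proposition~\ref{prop:time_derivative_of_potentials} in the weaker topology, then transferring differentiability back and forth through the homeomorphism $B$ of Theorem~\ref{theorem:alpha_beta_homeo}.
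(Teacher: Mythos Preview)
Your algebraic manipulations after the relation $H_\mu[\mudot] = (\id + K_\mu)[\ef_\mu\betadot]$ are essentially what the paper does (the paper's bookkeeping is slightly tidier: it writes $(\id+K_\mu)(\id-K_\mu^2)^{-1}(\id+K_\mu)=\id+2K_\mu(\id-K_\mu)^{-1}$ in one step and then applies $\pair{\phi}{K_\mu\psi}_{\Hil_\mu}=\pair{\phi}{\psi}_{L^2(\mu)}$ directly), so that part is fine.

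The genuine gap is exactly where you flag it as ``non-routine,'' and your suggested fix does not work. You propose to obtain $H_\mu[\mudot]=(\id+K_\mu)[\ef_\mu\betadot]$ by first computing $\partial_t f_{\mu_t,\mu_t}$ via Proposition~\ref{prop:time_derivative_of_potentials} or Remark~\ref{remark:extension_time_derivative_of_potentials_weakstar}, then differentiating $\beta_t=\exp(-f_{\mu_t,\mu_t}/\varepsilon)$, then \emph{inverting} to recover $\mudot$. This is circular: Proposition~\ref{prop:time_derivative_of_potentials} and its extension in Remark~\ref{remark:extension_time_derivative_of_potentials_weakstar} take as \emph{input} that the difference quotients of $(\mu_t)_t$ converge (in $\calM(X)$ or weak-$\ast$), whereas here the only datum is weak differentiability of $(\beta_t)_t$ in $\Hil_c$, and the existence of $\mudot$ in $\Hil_\mu^*$ is part of the \emph{conclusion}. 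The implicit-function-theorem route cannot bootstrap this, because it differentiates the Schr\"odinger system with respect to the measure argument.

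The paper avoids this circularity by not going through $f_{\mu_t,\mu_t}$ at all. It uses the factorization $\mu_t=\alpha_t\,\beta_t$ (with $\alpha_t=A(\mu_t)=\ef_{\mu_t}\mu_t$, so that $\pair{\alpha_t}{\phi}=\pair{\beta_t}{\phi}_{\Hil_c}$ for $\phi\in\Hil_c$) and a bare-hands Leibniz rule on the difference quotient:
\[
\frac{\mu_t-\mu}{t}=\frac{\alpha_t-\alpha}{t}\,\beta+\alpha_t\,\frac{\beta_t-\beta}{t}.
\]
Pairing against $\phi\in\Hil_\mu$, the first term becomes $\pair{(\beta_t-\beta)/t}{\ef_\mu^{-1}\phi}_{\Hil_c}$ and the second becomes $\pair{(\beta_t-\beta)/t}{H_c[\alpha_t\phi]}_{\Hil_c}$; the weak differentiability of $(\beta_t)$ in $\Hil_c$ together with norm convergence of $H_c[\alpha_t\phi]$ (Lemma~\ref{lemma:weak_MX_strong_Hc}) lets both pass to the limit, yielding directly $\pair{\mudot}{\phi}=\pair{(\id+K_\mu)[\ef_\mu\betadot]}{\phi}_{\Hil_\mu}$. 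This is Lemma~\ref{lemma:mudot_by_betadot_in_RKHS}, and it is the step your sketch is missing.
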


To prove this theorem we first prove a preliminary lemma relating $\betadot$ and $\mudot$. Note that since \(\beta_t = \exp(-f_{\mu_t,\mu_t}/\varepsilon)\), we have \(\betadot_t = - \frac{1}{\varepsilon} \beta_t \cdot \dd{}{t} f_{\mu_t,\mu_t}\) and \eqref{eq:lemma:mudot_by_betadot} below is formally equivalent to the last equation of Proposition~\ref{prop:time_derivative_of_potentials}. The differences are the regularity assumptions and the functional spaces involved.

\begin{lemma} \label{lemma:mudot_by_betadot_in_RKHS}
    Let \((\mu_t)_{t}\) be a \textweakstar~continuous path in \(\prm(X)\) and denote \(f_{t,t} = f_{\mu_t,\mu_t}\). Then the following are equivalent:
    \begin{enumerate}
        \item The corresponding path \((\beta_t)_t\) is weakly (resp.~norm-) differentiable in \(\Hil_c\) at \(t=0\),
        \item \((\mu_t)_t\) is weakly (resp.~norm-) differentiable in \(\Hil_\mu^*\) at \(t=0\) and the difference quotients \(({f_{t,t} - f_{0,0}})/t\) are bounded in \(\Cont(X)\) as \(t \to 0\).
    \end{enumerate}
    In this case  
    \begin{equation} \label{eq:lemma:mudot_by_betadot} 
        H_{\mu}[\mudot] = (\id + K_{\mu})[\ef_{\mu}\cdot \betadot].
    \end{equation}
    Explicitly, this means that for all \(\phi \in \Hil_{\mu}\) it holds
    \begin{equation*}
        \dd{}{t}\Big|_{t=0} \bpair{\mu_t}{\phi} = \bpair{(\id + K_{\mu})[\ef_{\mu} \betadot]}{\phi}_{\Hil_\mu} = \bpair{\betadot}{ \beta  \cdot (\id + K_{\mu})[\phi]}_{\Hil_c}. 
    \end{equation*}
\end{lemma}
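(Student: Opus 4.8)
My plan is to compute the derivative of $\pair{\mu_t}{\phi}$ for a test function $\phi \in \Hil_\mu$ by first re-expressing it through the $\alpha$ and $\beta$ variables, where the derivative is assumed to exist. The starting point is the identity $\mu_t = \exp(-\fmumu[\mu_t]/\varepsilon) \cdot \alpha_t = \beta_t \cdot \alpha_t$ (pointwise, as $\alpha_t = \ef_{\mu_t}\mu_t$ and $\beta_t = \B(\mu_t) = \exp(-f_{\mu_t,\mu_t}/\varepsilon)$), so that for any $\phi$ one has $\pair{\mu_t}{\phi} = \pair{\alpha_t}{\beta_t^{-1}\phi} = \pair{\beta_t}{\iota_c^{-1}(\beta_t^{-1}\phi)}$-type manipulations; more usefully, since $\alpha_t$ and $\beta_t$ represent the same functional on $\Hil_c$, I would write $\pair{\mu_t}{\phi} = \pair{\alpha_t}{\ef_{\mu_t}^{-1}\phi}$ whenever $\ef_{\mu_t}^{-1}\phi \in \Hil_c$, and use $\pair{\alpha_t}{\psi} = \pair{\beta_t}{\psi}_{\Hil_c}$. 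The key technical point is that $\phi \in \Hil_\mu$ corresponds under the isometry $\iota_\mu$ of Proposition~\ref{prop:RKHS_isometry} to $\ef_\mu^{-1}\phi \in \Hil_c$, but the time-dependent multiplier is $\ef_{\mu_t}$, not the fixed $\ef_\mu$, so one cannot naively pull $\phi$ out of the pairing.

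The way around this is to differentiate directly. I would fix $\phi \in \Hil_\mu$ and write, using $\mu_t = \beta_t \alpha_t$ and $\beta_t = \ef_{\mu_t}^{-1}$ again but now exploiting that $\beta_t = \B(\mu_t) = \int_X \exp\!\big(\tfrac1\varepsilon(f_{\mu_t,\mu_t}(\cdot) - c(\cdot,y))\big)\diff\mu_t(y)$,
\begin{equation*}
\pair{\mu_t}{\phi} = \int_X \phi(x)\,\diff\mu_t(x) = \int_X \phi(x)\,\beta_t(x)\,\diff\alpha_t(x) = \pair{\beta_t \phi}{\,\cdot\,}\ \text{applied to}\ \alpha_t,
\end{equation*}
and since $\alpha_t$ acts on $\Hil_c$ as $\beta_t$, provided $\beta_t\phi/\ef_\mu \in \Hil_c$ we get $\pair{\mu_t}{\phi} = \pair{\beta_t}{\ef_\mu^{-1}\beta_t\phi}_{\Hil_c}$. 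This is now a smooth function of $\beta_t \in \Hil_c$ at $t=0$ since multiplication by the fixed element $\ef_\mu^{-1}\phi$ is bounded from $\Hil_c$ into $\Hil_c$ (this uses that $\ef_\mu^{-1}\phi = \iota_\mu^{-1}\phi \in \Hil_c$ and that $\Hil_c$ is a module over a suitable function algebra, or more directly one estimates via the reproducing property). Differentiating the bilinear expression $t \mapsto \pair{\beta_t}{(\ef_\mu^{-1}\phi)\,\beta_t}_{\Hil_c}$ at $t=0$, using $\beta_0 = \beta = \ef_\mu^{-1}$, yields $2\pair{\betadot}{(\ef_\mu^{-1}\phi)\,\ef_\mu^{-1}}_{\Hil_c} = 2\pair{\betadot}{\ef_\mu^{-2}\phi}_{\Hil_c}$. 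I then need to rewrite $\ef_\mu^{-2}\phi$ in terms of the operators $K_\mu, H_\mu$: using the identity relating $k_\mu$ and $k_c$ from the proof of Proposition~\ref{prop:RKHS_isometry}, namely $\ef_\mu(y)^{-1}\phi$-type relabelings, one recognizes $\pair{\betadot}{\ef_\mu^{-1}(\id + K_\mu)[\phi]}_{\Hil_c} = \pair{(\id+K_\mu)[\ef_\mu\betadot]}{\phi}_{\Hil_\mu}$, which is exactly the claimed formula $H_\mu[\mudot] = (\id+K_\mu)[\ef_\mu\betadot]$, the second displayed equality in the lemma being the transfer across the isometry $\iota_\mu$.

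The main obstacle I anticipate is making the interchange of differentiation and pairing rigorous with only \emph{weak} differentiability of $\beta_t$ in $\Hil_c$ available: one must show that $t \mapsto \pair{\beta_t}{(\ef_\mu^{-1}\phi)\beta_t}_{\Hil_c}$ is genuinely differentiable with the expected derivative, which requires controlling the ``$\beta_t - \beta$ times $\beta_t - \beta$'' remainder term. Here I would use that the path $(\beta_t)_t$ is assumed \emph{continuous} (norm-continuous, by the last part of Theorem~\ref{theorem:alpha_beta_homeo} weak continuity on $\calB$ upgrades to norm continuity), so that $\|\beta_t - \beta\|_{\Hil_c} \to 0$; combined with weak differentiability this forces the difference quotient of the quadratic expression to converge, since $\pair{\beta_t - \beta}{(\ef_\mu^{-1}\phi)(\beta_t-\beta)}_{\Hil_c}/t = o(1)$ by Cauchy--Schwarz using that $(\beta_t-\beta)/t$ is bounded (weak convergence implies boundedness) and $\|\beta_t - \beta\|_{\Hil_c} \to 0$. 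A secondary point to check carefully is that $\mudot$, a priori only defined as a weak derivative in $\Hil_\mu^*$, indeed exists: this follows because the above shows $\dd{}{t}\big|_{t=0}\pair{\mu_t}{\phi}$ exists and equals $\pair{(\id+K_\mu)[\ef_\mu\betadot]}{\phi}_{\Hil_\mu}$ for every $\phi \in \Hil_\mu$, and the right-hand side is a bounded linear functional of $\phi$ since $\ef_\mu\betadot \in \Hil_\mu$ (as $\betadot \in \Hil_c$ and $\iota_\mu$ is an isometry) and $\id + K_\mu$ is bounded on $\Hil_\mu$ by Proposition~\ref{prop:Hmu_compact_Hmu}. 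Finally $\pair{\betadot}{\beta}_{\Hil_c}=0$ is obtained by differentiating the constraint $\|\beta_t\|_{\Hil_c}^2 = 1$ from Theorem~\ref{theorem:alpha_beta_homeo} at $t=0$.
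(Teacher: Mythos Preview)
Your plan has the right spirit---exploit \(\mu_t=\beta_t\alpha_t\) and push everything into \(\Hil_c\)---but it contains several concrete errors that prevent it from going through.

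\medskip

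\textbf{Algebraic slip.} From \(\mu_t=\beta_t\alpha_t\) one gets \(\pair{\mu_t}{\phi}=\pair{\alpha_t}{\phi\beta_t}\), and hence, \emph{if} \(\phi\beta_t\in\Hil_c\), \(\pair{\mu_t}{\phi}=\pair{\beta_t}{\phi\beta_t}_{\Hil_c}\). Your formula has an extra factor \(\ef_\mu^{-1}\); the expression \(\pair{\beta_t}{\ef_\mu^{-1}\phi\,\beta_t}_{\Hil_c}\) is not equal to \(\pair{\mu_t}{\phi}\).

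\textbf{The module assumption is the real gap.} Even with the correct formula, you need \(\phi\beta_t\in\Hil_c\) for \(t\neq 0\), and for the limit you need multiplication by the fixed function \(\phi\) (or \(\ef_\mu^{-1}\phi\)) to be a bounded operator on \(\Hil_c\). You write that ``\(\Hil_c\) is a module over a suitable function algebra'', but this is not established in the paper and is generally false for RKHS built from continuous kernels on compact spaces; nothing in the standing assumptions guarantees it.

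\textbf{Symmetry error.} You differentiate \(t\mapsto\pair{\beta_t}{M\beta_t}_{\Hil_c}\) (with \(M\) the multiplication operator) as if the form were symmetric, obtaining \(2\pair{\betadot}{M\beta}_{\Hil_c}\). But multiplication by a function is \emph{not} self-adjoint on an RKHS (it is on \(L^2\), not here). The correct derivative is \(\pair{\betadot}{\phi\beta}_{\Hil_c}+\pair{\beta}{\phi\betadot}_{\Hil_c}\), and these two terms are \emph{different}: the first equals \(\pair{\ef_\mu\betadot}{\phi}_{\Hil_\mu}\), the second equals \(\pair{\alpha}{\phi\betadot}=\pair{\ef_\mu\betadot}{\phi}_{L^2(\mu)}=\pair{\ef_\mu\betadot}{K_\mu\phi}_{\Hil_\mu}\). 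This is where \(K_\mu\) actually enters---not through any identity for \(\ef_\mu^{-2}\phi\), which in general is not even in \(\Hil_c\). Your final ``recognition'' step therefore does not follow from what you derived.

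\medskip

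\textbf{How the paper avoids all of this.} The paper applies Leibniz's rule \emph{at the level of measures before} passing to \(\Hil_c\):
\[
\frac{\mu_t-\mu}{t}=\frac{\alpha_t-\alpha}{t}\,\beta+\alpha_t\,\frac{\beta_t-\beta}{t}.
\]
Pairing with \(\phi\), the first term becomes \(\pair{(\beta_t-\beta)/t}{\ef_\mu^{-1}\phi}_{\Hil_c}\) because \(\beta=\ef_\mu^{-1}\) is \emph{fixed} and \(\ef_\mu^{-1}\phi\in\Hil_c\) by the isometry \(\iota_\mu\); no multiplication operator on \(\Hil_c\) is needed. The second term becomes \(\pair{(\beta_t-\beta)/t}{H_c[\phi\alpha_t]}_{\Hil_c}\), where \(\phi\alpha_t\) is just a measure, so \(H_c[\phi\alpha_t]\in\Hil_c\) automatically and converges \emph{in norm} (Lemma~\ref{lemma:weak_MX_strong_Hc}); paired against the weakly convergent difference quotient this gives the limit. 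The two resulting terms are exactly \(\pair{\ef_\mu\betadot}{\phi}_{\Hil_\mu}\) and \(\pair{\ef_\mu\betadot}{K_\mu\phi}_{\Hil_\mu}\), yielding \((\id+K_\mu)[\ef_\mu\betadot]\) after using self-adjointness of \(K_\mu\) on \(\Hil_\mu\).
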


\noindent The condition of boundedness of the difference quotients \(({f_{t,t} - f_{0,0}})/t\) in the second point is not so natural. However, it is not clear how to remove it because we would need to linearize the Schrödinger system around $(\mu,\mu)$ in the space $\Hil_\mu$ (which depends on $\mu$), and we did not find how to apply the implicit function theorem in this case.

\begin{proof}
    \underline{1 $\Rightarrow$ 2:} 
    Recall that we have $\mu_t = \alpha_t \exp(- f_{\mu_t,\mu_t} / \varepsilon)$ and $\beta_t = \ef_{\mu_t}\inv = \exp(- f_{\mu_t,\mu_t} / \varepsilon)$, thus $\mu_t = \alpha_t \beta_t$. We will apply Leibniz's rule to this formula, but we rederive it given the low regularity of the objects involved.
    We write 
    \begin{align} \label{eq:proof:mudot_by_betadot:mu_difference_quotient}
        H_{\mu} \left[ \frac{\mu_t - \mu}{t} \right]
        &= H_{\mu} \left[ \alpha_t \cdot \frac{\beta_t - \beta}{t} \right] + H_{\mu}\left[ \frac{\alpha_t - \alpha}{t} \cdot \beta \right].
    \end{align}
    In the first term, \(\frac{\beta_t - \beta}{t} \to \betadot\) weakly in \(\Hil_c\), hence uniformly, while \(\alpha_t \to \alpha\) \textweakstar~in \(\calM(X)\). In particular, \(\alpha_t \cdot \frac{\beta_t - \beta}{t} \to \alpha \cdot \betadot\) \textweakstar~in \(\calM(X)\) and its RKHS embedding converges in norm by Lemma~\ref{lemma:h_k_Cm_compactness}. The limit is \(H_{\mu}[\alpha \cdot \betadot] = K_{\mu}[\betadot/\beta]\).
    
    With \(H_{\mu}[\nu] = 1/\beta \cdot H_c[\nu / \beta]\) for all \(\nu \in \calM(X)\) the second term rewrites as \(H_c[\frac{\alpha_t - \alpha}{t}] / \beta = \frac{\beta_t - \beta}{t} / \beta\), which converges weakly (resp.~in norm) to \(\betadot / \beta\). The self-transport potentials are even differentiable in time in \(\Cont(X)\), which is immediate from weak differentiability of \((\beta_t)_t\) in \(\Hil_c\) and Lemma~\ref{lemma:h_k_Cm_compactness}.
    
    \underline{2 $\Rightarrow$ 1:} 
    We use \(\beta_t = H_c[\mu_t / \beta_t]\) and \(H_{\mu}[\nu] = 1/\beta \cdot H_c[\nu / \beta]\) for \(\nu \in \calM(X)\) to calculate
    \begin{align}
    \begin{split} \label{eq:proof:mudot_by_betadot:difference_quotients}
        \frac{\beta_t - \beta}{t} 
        &= H_c \left[ \frac{(\beta_t\inv - \beta\inv) \mu_t + \beta\inv (\mu_t - \mu)}{t} \right] \\
        &= H_c \left[ \frac{\beta_t\inv - \beta\inv}{t} \cdot \mu_t \right] + \beta \cdot H_{\mu} \left[ \frac{\mu_t - \mu}{t} \right].
    \end{split}
    \end{align}
    We now show the desired convergence of these difference quotients in \(\Hil_c\) as \(t \to 0\). For the second summand this is due to Proposition~\ref{prop:RKHS_isometry} and the weak (resp.~norm-) differentiability of \((\mu_t)_t\). The first summand is trickier. Boundedness of the difference quotients and \textweakstar~convergence of \(\mu_t\) to \(\mu\) give \(\norm{\vdot}_{\Cont(X)^*}\)-boundedness of the measure in the argument of \(H_c\). The embedding \(H_c \colon \calM(X) \to \Hil_c\) is compact by Lemma~\ref{lemma:h_k_Cm_compactness}. Thus a convergent subsequence at times \((t_k)_{k\in\N}\) with \(\lim_{k \to \infty} t_k = 0\) can be extracted along which the first summand converges in norm in \(\Hil_c\) to a limit. This yields weak (resp.~norm-) convergence of the left-hand side in \(\Hil_c\). In particular, we obtain uniform convergence of the difference quotients \((\beta_{t_k} - \beta)/t_k \to v\) for some \(v \in \Hil_c\) and \((\beta_{t_k}\inv - \beta\inv)/t_k \to - v / \beta^2\) in \(\Cont(X)\). Taking the limit \(k \to \infty\) in~\eqref{eq:proof:mudot_by_betadot:difference_quotients} yields
    \begin{equation} \label{eq:proof:mudot_by_betadot:betadot_formula}
        v =  - H_c[v/\beta^2 \cdot \mu] + \beta \cdot H_\mu[\mudot] = \beta \cdot (H_\mu[\mudot] - K_\mu[v/\beta]),
    \end{equation}
    or equivalently \(v/\beta = (\id + K_\mu)\inv H_\mu[\mudot]\). As all subsequences of \((\beta_t - \beta)/t\) have convergent subsequences with the same limit \(v\), we have overall convergence as \(t \to 0\) and \(v = \betadot\). With self-adjointness of \(K_\mu\) on \(\Hil_\mu\) (Proposition~\ref{prop:Hmu_compact_Hmu}) and \(\ef_\mu = 1 / \beta\), the explicit formula follows from~\eqref{eq:proof:mudot_by_betadot:betadot_formula}.
\end{proof}

\begin{proof}[\textbf{Proof of Theorem~\ref{theorem:metric_tensor_in_betadot}}]
    The equation $\pair{\betadot}{\beta}_{\Hil_c} = 0$ is directly obtained by differentiating the equation $\| \beta_t \|_{\Hil_c}^2 = 1$ in time.
    
    We then plug the formula for \(\mudot\) from Lemma~\ref{lemma:mudot_by_betadot_in_RKHS} into the metric tensor 
    \begin{equation*}
        \g_\mu(\mudot,\mudot) = \frac{\varepsilon}{2} \bpair{\mudot}{(\id - K_{\mu}^2)\inv H_{\mu}[\mudot]}
        = \frac{\varepsilon}{2} \bpair{(\id + K_{\mu})[ \betadot / \beta ]}{(\id -  K_{\mu}^2)\inv (\id + K_{\mu})[ \betadot / \beta ]}_{\Hil_{\mu}}.
    \end{equation*}
    The operator \((\id +  K_{\mu})\) is self-adjoint on \(\Hil_{\mu}\). Furthermore, \((\id +  K_{\mu})(\id -  K_{\mu}^2)\inv(\id +  K_{\mu}) \! = (\id +  K_{\mu})(\id -  K_{\mu})\inv = \id + 2  K_{\mu} (\id -  K_{\mu})\inv\). To conclude, use \(\pair{\phi}{K_{\mu}[\psi]}_{\Hil_{\mu}} = \pair{\phi}{\psi}_{L^2(X,\mu)}\) and Proposition~\ref{prop:RKHS_isometry}. 
\end{proof}

We finish this section by studying the continuity properties of the metric tensor $\tg$, and by proving that it can be uniformly compared to the norm of $\Hil_c$.

\begin{proposition} \label{prop:tgmu_joint_cont}
    The metric tensor \(\tg_\mu(\betadot,\betadot)\) is jointly continuous (resp.~lower semicontinuous) with respect to \textweakstar~convergence of \(\mu\) in \(\prm(X)\) and norm convergence (resp.~weak convergence) of \(\betadot\) in \(\Hil_c\).
\end{proposition}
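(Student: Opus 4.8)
The plan is to split the expression defining $\tg_\mu(\betadot,\betadot)$ into its two summands and, crucially, to rewrite the second one as a quadratic form on the \emph{fixed} Hilbert space $\Hil_c$, so that the continuity estimates need not cope with the moving spaces $\Hil_\mu$ and $L^2(X,\mu)$. I would use the isometry $\iota_\mu\colon\Hil_c\to\Hil_\mu$, $\phi\mapsto\ef_\mu\phi$ of Proposition~\ref{prop:RKHS_isometry}, the identity $\pair{\phi}{\psi}_{L^2(X,\mu)}=\pair{K_\mu[\phi]}{\psi}_{\Hil_\mu}$ from~\eqref{eq:proof:Kmu_selfadjoint}, and self-adjointness of $K_\mu$ on $\Hil_\mu$. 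Setting $\hat K_\mu\assign\iota_\mu\inv K_\mu\iota_\mu\in\BB(\Hil_c)$, a direct computation using $k_\mu(x,y)=\ef_\mu(x)\ef_\mu(y)k_c(x,y)$ gives $\hat K_\mu[\phi]=H_c[\phi\,\gamma_\mu]$ with $\gamma_\mu\assign\ef_\mu^2\,\mu=\ef_\mu\cdot\A(\mu)\in\calM_+(X)$; this is a non-negative self-adjoint operator whose eigenvalue $1$ has eigenvector $\beta\assign\B(\mu)$ and whose remaining spectrum lies in $[0,q]$, $q<1$ as in Proposition~\ref{prop:Kmu_contraction} (transporting Proposition~\ref{prop:Hmu_compact_Hmu} through $\iota_\mu$). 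Since $\pair{\betadot}{\beta}_{\Hil_c}=0$ forces $\ef_\mu\betadot\in\{\ones_X\}^\perp=\Hil_\mu/\R$, one checks that $\pair{\ef_\mu\betadot}{(\id-K_\mu)\inv[\ef_\mu\betadot]}_{L^2(X,\mu)}=\pair{\betadot}{\tilde L_\mu\,\betadot}_{\Hil_c}$, where $\tilde L_\mu\assign(\id-Q_\mu)\inv Q_\mu=\sum_{j\ge1}Q_\mu^{\,j}$, $Q_\mu\assign P_\mu\hat K_\mu=\hat K_\mu P_\mu$ and $P_\mu\in\BB(\Hil_c)$ is the orthogonal projection onto $\{\beta\}^\perp$. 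Since $Q_\mu$ is self-adjoint with $0\le Q_\mu\le q\,\id$, the operator $\tilde L_\mu$ is well defined, bounded and non-negative on all of $\Hil_c$, and
\begin{equation*}
\tg_\mu(\betadot,\betadot)=\frac{\varepsilon}{2}\left(\norm{\betadot}_{\Hil_c}^2+2\pair{\betadot}{\tilde L_\mu\,\betadot}_{\Hil_c}\right).
\end{equation*}

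With this reformulation the statement reduces to showing that $\mu\mapsto\tilde L_\mu$ is continuous from $\prm(X)$ with the \textweakstar~topology into $\BB(\Hil_c)$ with the \emph{operator norm}; I expect this to be the main obstacle, since only operator-norm (not merely strong) convergence will control the cross terms in the lower semicontinuity part. Let $\mu_n\to\mu$ \textweakstar. By \cite[Thm.~1.1]{Carlier2022} the potentials $f_{\mu_n,\mu_n}$ converge to $\fmumu$ in supremum norm, and by the proof of Proposition~\ref{prop:Kmu_contraction} they are uniformly bounded; hence $\ef_{\mu_n}\to\ef_\mu$ uniformly and $\gamma_{\mu_n}\to\gamma_\mu$ \textweakstar~in $\calM_+(X)$ with $\sup_n\gamma_{\mu_n}(X)<\infty$. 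Using the maximum-mean-discrepancy identity $\norm{H_c[\sigma]}_{\Hil_c}^2=\iint k_c\,\diff\sigma\,\diff\sigma$ and linearity of $H_c$ and of multiplication by $\phi$, one gets
\begin{equation*}
\norm{\hat K_{\mu_n}-\hat K_\mu}_{\mathrm{op}}^2=\sup_{\norm{\phi}_{\Hil_c}\le1}\;\iint_{X\times X}k_c(x,y)\,\phi(x)\,\phi(y)\,\diff\sigma_n(x)\,\diff\sigma_n(y),\qquad \sigma_n\assign\gamma_{\mu_n}-\gamma_\mu.
\end{equation*}
Now the unit ball of $\Hil_c$ is equicontinuous and bounded in $\Cont(X)$ (from continuity of $k_c$ and the reproducing property: $|\phi(x)-\phi(y)|\le\norm{\phi}_{\Hil_c}\norm{k_c(x,\cdot)-k_c(y,\cdot)}_{\Hil_c}$ and $|\phi(x)|\le\norm\phi_{\Hil_c}\sqrt{k_c(x,x)}$), so the family $\setgiven{(x,y)\mapsto k_c(x,y)\phi(x)\phi(y)}{\norm{\phi}_{\Hil_c}\le1}$ is relatively compact in $\Cont(X\times X)$, while $\sigma_n\otimes\sigma_n\to0$ \textweakstar~in $\calM(X\times X)$ with uniformly bounded total variation. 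A \textweakstar-null sequence of measures with bounded total variation tends to $0$ uniformly over a norm-compact family of test functions (cover it by a finite $\delta$-net and use the total-variation bound), whence $\norm{\hat K_{\mu_n}-\hat K_\mu}_{\mathrm{op}}\to0$. Moreover $\beta_n=\B(\mu_n)\to\beta=\B(\mu)$ in $\Hil_c$ by Theorem~\ref{theorem:alpha_beta_homeo}, so $\norm{P_{\mu_n}-P_\mu}_{\mathrm{op}}\le2\norm{\beta_n-\beta}_{\Hil_c}\to0$, giving $Q_{\mu_n}\to Q_\mu$ in operator norm; since $\norm{Q_{\mu_n}}\le q<1$ uniformly, the Neumann series $\tilde L_\mu=\sum_{j\ge1}Q_\mu^{\,j}$ converges uniformly in a neighbourhood and depends operator-norm-continuously on $Q_\mu$, so $\tilde L_{\mu_n}\to\tilde L_\mu$ in operator norm.

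It remains to conclude by elementary Hilbert-space facts. If $\betadot_n\to\betadot$ in norm in $\Hil_c$, then $\norm{\betadot_n}_{\Hil_c}^2\to\norm{\betadot}_{\Hil_c}^2$ and $\pair{\betadot_n}{\tilde L_{\mu_n}\betadot_n}_{\Hil_c}\to\pair{\betadot}{\tilde L_\mu\betadot}_{\Hil_c}$ because $\tilde L_{\mu_n}\to\tilde L_\mu$ in operator norm and $(\betadot_n)$ converges in norm; this gives joint continuity. If instead $\betadot_n\rightharpoonup\betadot$ weakly, then $\norm{\betadot}_{\Hil_c}^2\le\liminf_n\norm{\betadot_n}_{\Hil_c}^2$; writing $\pair{\betadot_n}{\tilde L_{\mu_n}\betadot_n}=\pair{\betadot_n}{\tilde L_\mu\betadot_n}+\pair{\betadot_n}{(\tilde L_{\mu_n}-\tilde L_\mu)\betadot_n}$, the last term tends to $0$ (operator-norm convergence of $\tilde L_{\mu_n}$, boundedness of $(\betadot_n)$), while $\pair{\betadot_n}{\tilde L_\mu\betadot_n}_{\Hil_c}=\norm{\tilde L_\mu^{1/2}\betadot_n}_{\Hil_c}^2$ has $\liminf$ at least $\norm{\tilde L_\mu^{1/2}\betadot}_{\Hil_c}^2=\pair{\betadot}{\tilde L_\mu\betadot}_{\Hil_c}$ since $\tilde L_\mu\ge0$. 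Summing the two lower bounds yields $\liminf_n\tg_{\mu_n}(\betadot_n,\betadot_n)\ge\tg_\mu(\betadot,\betadot)$. Throughout, $\prm(X)$ is \textweakstar-metrizable and bounded subsets of $\Hil_c$ are weakly metrizable, so the sequential arguments suffice.
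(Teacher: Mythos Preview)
Your proof is correct and takes a genuinely different route from the paper's. The paper works directly in the ``moving'' spaces: it sets $u_n=(\id-K_{\mu_n})^{-1}[\ef_{\mu_n}\phi_n]\in\Cont(X)/\R$, uses that weak convergence in $\Hil_c$ implies uniform convergence (Lemma~\ref{lemma:weak_RKHS_unif}) together with equicontinuity of the kernels to show, via Arzel\`a--Ascoli, that $u_n$ converges in $\Cont(X)/\R$, and then passes to the limit in $\int\ef_{\mu_n}\phi_n u_n\,\diff\mu_n$ directly. In particular the paper obtains the slightly stronger fact that the \emph{second} summand is actually continuous (not merely lower semicontinuous) under weak convergence of $\betadot$.

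Your approach instead conjugates everything by $\iota_\mu$ to the fixed space $\Hil_c$, reduces the problem to operator-norm continuity of $\mu\mapsto\tilde L_\mu$, and then invokes standard Hilbert-space perturbation arguments (Neumann series, positive square root). This buys a clean separation between the $\mu$-dependence and the $\betadot$-dependence and gives an operator-norm statement that may be reusable; the equicontinuity step reappears in your argument in the guise of the compactness of $\{k_c\phi\otimes\phi:\|\phi\|_{\Hil_c}\le1\}$ in $\Cont(X\times X)$. Note that your proof would also recover the paper's stronger continuity of the second term by observing that $\tilde L_\mu$ is compact (since $\hat K_\mu=H_c[\,\cdot\,\gamma_\mu]$ is), so weak convergence of $\betadot_n$ yields norm convergence of $\tilde L_\mu\betadot_n$.
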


\begin{proof}
    The first part \(\frac{\varepsilon}{2} \norm{\betadot}_{\Hil_c}^2\) of the metric tensor \(\tg_\mu\) does not depend on $\mu$ and is of course continuous (resp.~lower semicontinuous) with respect to norm convergence (resp.~weak convergence) of \(\betadot\) in \(\Hil_c\). 
    
    For the second part, we actually claim that the map
    \begin{equation*}
        \setgiven{(\mu,\phi) \in \prm(X) \!\times\! \Hil_c}{\pair{\B(\mu)}{\phi}_{\Hil_c} = 0}  \to \R, \quad (\mu,\phi) \mapsto \bpair{\ef_{\mu} \cdot \phi }{ (\id - K_{\mu})\inv [\ef_{\mu} \cdot \phi] }_{L^2(X,\mu)}    
    \end{equation*}
    is jointly continuous with respect to \textweakstar~convergence on \(\prm(X)\) and weak convergence on \(\Hil_c\). Indeed, let \(\mu_n \to \mu\) \textweakstar~in \(\prm(X)\), 
    \(\phi_n \to \phi\) weakly in \(\Hil_c\), and \(\pair{\B(\mu_n)}{\phi_n} = 0\). In particular, we have uniform convergence \(f_n = f_{\mu_n,\mu_n} \to \fmumu\), hence \(\ef_{\mu_n} \to \ef_\mu\), and by Lemma~\ref{lemma:h_k_Cm_compactness} also \(\phi_n \to \phi\) uniformly. Let us write 
    \begin{equation*}
        u_n = (\id - K_{\mu_n})\inv [ \ef_{\mu_n} \cdot \phi_n]  \quad \Leftrightarrow \quad u_n = \ef_{\mu_n} \cdot \phi_n + K_{\mu_n} u_n. 
    \end{equation*}
    We show that the sequence \(u_n\) converges to \(u = (\id - K_{\mu})\inv [ \ef_{\mu} \cdot \phi]\) in \(\Cont(X) / \R\). Indeed, 
    the uniform convergence of \(\ef_{\mu_n} \cdot \phi_n\) and the operator norm estimate in Proposition~\ref{prop:Kmu_contraction} show that the functions \(u_n\) are uniformly bounded in \(\Cont(X) / \R\). 
    
    By Proposition~\ref{prop:reg_fmunu_cm} and the Arzelà--Ascoli Theorem the potentials \(f_{\mu_n,\mu_n}\) are equi-bounded and equi-continuous.

    Thus it is clear that the functions \(K_{\mu_n} u_n\) are equi-continuous and equi-bounded. Hence, up to extraction of a subsequence, \(K_{\mu_n} u_n\) converges uniformly to a limit. Now also \(u_n = \ef_{\mu_n} \cdot \phi_n + K_{\mu_n} u_n\) converges uniformly to a limit \(u\). Once we have this information, we can identify this limit with \((\id - K_{\mu})\inv [ \ef_{\mu} \cdot \phi]\), because 
    \begin{equation*}
        u_n = \ef_{\mu_n} \cdot \phi_n + K_{\mu_n} u_n \to \ef_\mu \cdot \phi + K_{\mu} u = u.
    \end{equation*}
    Once we have this convergence, 
    \begin{equation*}
        \bpair{\ef_{\mu_n} \cdot \phi_n }{ (\id - K_{\mu_n})\inv [\ef_{\mu_n} \cdot \phi_n] }_{L^2(X,\mu_n)} = \int_X \ef_{\mu_n}  \phi_n  u_n \diff \mu_n,
    \end{equation*}     
    and the integrand \(\ef_{\mu_n}  \phi_n  u_n\) converges uniformly to \(\ef_\mu  \phi  u\), while the measure \(\mu_n\) converges \textweakstar~to \(\mu\). It is easy to pass to the limit using Lemma~\ref{lemma:pair_strong_weak}.
\end{proof}

\begin{proposition}[Equivalence to the norm on \(\Hil_c\)] \label{prop:tgmu_Hcalphadot_equiv}
    Let \(\mu \in \prm(X)\), \(\betadot \in \Hil_c\) with \(\pair{\B(\mu)}{\betadot}_{\Hil_c} = 0\). It holds that
    \begin{equation*}
        \frac{\varepsilon}{2} \norm{\betadot}_{\Hil_c}^2 \leq\tg_\mu(\betadot,\betadot) \leq C \frac{\varepsilon}{2} \norm{\betadot}_{\Hil_c}^2
    \end{equation*}
    for \(C = 1 + 2 \exp\left( \frac{11}{2 \varepsilon} \norm{c}_\infty \right) \).
\end{proposition}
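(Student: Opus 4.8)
The plan is to split $\tg_\mu(\betadot,\betadot)$ into the $\mu$-independent term $\tfrac{\varepsilon}{2}\norm{\betadot}^2_{\Hil_c}$, which is already of the desired form, and the ``cross term'' $\bpair{\ef_\mu \cdot \betadot}{(\id - K_\mu)\inv[\ef_\mu \cdot \betadot]}_{L^2(X,\mu)}$, which I must pin between $0$ and a constant multiple of $\norm{\betadot}^2_{\Hil_c}$. The first step is to push $\betadot$ into $\Hil_\mu$ via the isometry $\iota_\mu$ of Proposition~\ref{prop:RKHS_isometry}, setting $v \assign \ef_\mu \cdot \betadot = \iota_\mu(\betadot)$, so that $\norm{v}_{\Hil_\mu} = \norm{\betadot}_{\Hil_c}$. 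Since $\B(\mu) = \exp(-\fmumu/\varepsilon)$ one has $\iota_\mu(\B(\mu)) = \ones_X$, so by the isometry property the constraint $\pair{\B(\mu)}{\betadot}_{\Hil_c} = 0$ becomes $\pair{\ones_X}{v}_{\Hil_\mu} = 0$; and because $\ones_X = H_\mu[\mu]$ is the Riesz representative of $\mu$, this equals $\int_X v \diff\mu$. Thus the hypothesis says exactly that $v$ is orthogonal to the constants \emph{simultaneously} in $\Hil_\mu$ and in $L^2(X,\mu)$, which places $v$ in the subspace where, by Proposition~\ref{prop:Hmu_compact_Hmu} and Theorem~\ref{theorem:inverse_operators_Hmu} (with $m=1$), $K_\mu$ is self-adjoint with spectrum in $[0,q]$ and $(\id - K_\mu)\inv$ is a well-defined bounded operator with $\id \le (\id - K_\mu)\inv \le \tfrac{1}{1-q}\id$.

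Next I would transport the cross term from $L^2(X,\mu)$ to $\Hil_\mu$ using the identity $\pair{K_\mu\phi}{\psi}_{\Hil_\mu} = \pair{\phi}{\psi}_{L^2(X,\mu)}$ recorded in~\eqref{eq:proof:Kmu_selfadjoint}. Applied with $\phi = (\id - K_\mu)\inv v \in \Hil_\mu$ and $\psi = v$, this rewrites the cross term as $\pair{K_\mu(\id - K_\mu)\inv v}{v}_{\Hil_\mu} = \pair{g(K_\mu)v}{v}_{\Hil_\mu}$ with $g(\lambda) = \lambda/(1-\lambda)$. On the orthogonal complement of the constants, functional calculus for the self-adjoint compact operator $K_\mu$ shows that $g(K_\mu)$ is non-negative with operator norm $g(q) = q/(1-q)$. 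Non-negativity gives the lower bound $\tg_\mu(\betadot,\betadot) \ge \tfrac{\varepsilon}{2}\norm{\betadot}^2_{\Hil_c}$ at once (alternatively, $(\id - K_\mu)\inv \ge \id$ already forces the cross term to be $\ge \norm{v}^2_{L^2(X,\mu)} \ge 0$). For the upper bound, the norm estimate bounds the cross term by $\tfrac{q}{1-q}\norm{v}^2_{\Hil_\mu} = \tfrac{q}{1-q}\norm{\betadot}^2_{\Hil_c}$, and since $q = 1 - \exp(-4\norm{c}_\infty/\varepsilon)$ we get $\tfrac{q}{1-q} = \exp(4\norm{c}_\infty/\varepsilon) - 1$, so
\begin{equation*}
\tg_\mu(\betadot,\betadot) \le \frac{\varepsilon}{2}\Bigl(1 + 2\bigl(\exp(4\norm{c}_\infty/\varepsilon) - 1\bigr)\Bigr)\norm{\betadot}^2_{\Hil_c} = \frac{\varepsilon}{2}\bigl(2\exp(4\norm{c}_\infty/\varepsilon) - 1\bigr)\norm{\betadot}^2_{\Hil_c}.
\end{equation*}
Because $4 < \tfrac{11}{2}$, the constant $C = 1 + 2\exp\bigl(\tfrac{11}{2\varepsilon}\norm{c}_\infty\bigr)$ of the statement works with room to spare.

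Should one wish to recover precisely the value $C = 1 + 2\exp(\tfrac{11}{2\varepsilon}\norm{c}_\infty)$ along a more pedestrian route, one can skip the spectral refinement and simply bound the cross term by $\norm{(\id - K_\mu)\inv}_{L^2(X,\mu)/\R} \cdot \norm{\ef_\mu}^2_\infty \cdot \norm{\betadot}^2_\infty$, then insert $\norm{(\id - K_\mu)\inv}_{L^2(X,\mu)/\R} \le \exp(4\norm{c}_\infty/\varepsilon)$ from Theorem~\ref{theorem:inverse_operators_Hmu}, the potential bound $\norm{\fmumu}_\infty \le \tfrac{3}{2}\norm{c}_\infty$ from~\cite[Thm.~1.2]{DiMarino2019} (hence $\norm{\ef_\mu}^2_\infty \le \exp(3\norm{c}_\infty/\varepsilon)$), and $\norm{\betadot}_\infty \le \norm{\betadot}_{\Hil_c}$, which holds because $k_c(x,x) = \exp(-c(x,x)/\varepsilon) \le 1$.

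I expect the only genuinely delicate point to be the bookkeeping around constant functions: $(\id - K_\mu)\inv$ lives only on the quotient by constants, and the crucial observation is that the balancedness constraint $\pair{\B(\mu)}{\betadot}_{\Hil_c} = 0$ is exactly what puts $\ef_\mu \cdot \betadot$ into the orthogonal complement of $\ones_X$, in $\Hil_\mu$ and in $L^2(X,\mu)$ at the same time, so that the spectral estimates of Proposition~\ref{prop:Hmu_compact_Hmu} and Theorem~\ref{theorem:inverse_operators_Hmu} apply verbatim. Everything else is a routine passage between the three inner products $\pair{\cdot}{\cdot}_{\Hil_c}$, $\pair{\cdot}{\cdot}_{\Hil_\mu}$ and $\pair{\cdot}{\cdot}_{L^2(X,\mu)}$ through the isometry $\iota_\mu$ and the relation~\eqref{eq:proof:Kmu_selfadjoint}.
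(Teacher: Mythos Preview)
Your argument is correct and, in its primary form, a genuine refinement of the paper's. The paper follows what you call the ``pedestrian route'': it keeps the cross term in $L^2(X,\mu)$, applies the operator bound $(\id-K_\mu)^{-1}\le\tfrac{1}{1-q}\id$ from Theorem~\ref{theorem:inverse_operators_Hmu}, then dominates $\norm{\ef_\mu\betadot}_{L^2(X,\mu)/\R}$ by $\norm{\ef_\mu}_\infty\norm{\betadot}_\infty\le\norm{\ef_\mu}_\infty\norm{\betadot}_{\Hil_c}$, and finally invokes the Di~Marino--Gerolin bound on $\norm{\fmumu}_\infty$. Your main argument instead transports the cross term to $\Hil_\mu$ via the identity $\pair{\phi}{\psi}_{L^2(X,\mu)}=\pair{K_\mu\phi}{\psi}_{\Hil_\mu}$ and reads it as $\pair{g(K_\mu)v}{v}_{\Hil_\mu}$ with $g(\lambda)=\lambda/(1-\lambda)$; on $\{\ones_X\}^\perp$ spectral calculus then yields the upper bound $\tfrac{q}{1-q}\norm{v}_{\Hil_\mu}^2=\tfrac{q}{1-q}\norm{\betadot}_{\Hil_c}^2$. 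This is sharper: you obtain the constant $\tfrac{1+q}{1-q}=2\exp(4\norm{c}_\infty/\varepsilon)-1$, which improves on the stated $C$ and, unlike the paper's route, needs no information on $\norm{\fmumu}_\infty$ at all. One bookkeeping remark: your pedestrian arithmetic actually yields $1+2\exp(7\norm{c}_\infty/\varepsilon)$ rather than the displayed $\tfrac{11}{2}$ in the exponent; the paper's printed proof drops a square on $\norm{\ef_\mu}_\infty$ at the last step, which accounts for the discrepancy, and in any case your spectral argument already delivers a constant strictly smaller than the one claimed.
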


\begin{proof}
    From Theorem~\ref{theorem:inverse_operators_Hmu}, on the space $L^2(X,\mu) / \R$ we have \(\id \leq (\id - K_\mu)\inv \leq \frac{1}{1-q} \id\) with $q = 1 - \exp(-4\norm{c}_\infty / \varepsilon)$ as in Proposition~\ref{prop:Kmu_contraction} (and in particular \((\id - K_\mu)\inv\) is non-negative). Thus we get
    \begin{equation*}
        \frac{\varepsilon}{2} \norm{\betadot}_{\Hil_c}^2 \leq \tg_\mu(\betadot,\betadot) \leq \frac{\varepsilon}{2} \left( \norm{\betadot}_{\Hil_c}^2 + \frac{2}{1-q} \norm{\ef_{\mu} \cdot \betadot}^2_{L^2(X,\mu) / \R} \right).
    \end{equation*}
    We then control the $L^2(X,\mu) / \R$ norm by the supremum norm, using that $\norm{\betadot}_{\infty} \leq \norm{\betadot}_{\Hil_c}$ as the cost function is non-negative. Thus 
    \begin{equation*}
        \frac{\varepsilon}{2} \norm{\betadot}_{\Hil_c}^2 \leq \tg_\mu(\betadot,\betadot) \leq \frac{\varepsilon}{2} \left( \norm{\betadot}_{\Hil_c}^2 + \frac{2}{1-q} \| \ef_\mu \|_\infty \norm{\betadot}^2_{\Hil_c} \right).
    \end{equation*}
    Eventually, using~\cite[Thm.~1.2]{DiMarino2020} we have \( \norm{\ef_\mu}_\infty \leq \exp( \norm{\fmumu}_\infty / \varepsilon )  \leq \exp( \frac{3}{2} \norm{c}_\infty / \epsilon)\), and together with the definition of $q$ we conclude the proof.
\end{proof}

\subsection{The metric tensor as \texorpdfstring{$\varepsilon$}{epsilon} varies} \label{sec:sub:epsilon_limits}

Let us now discuss how the metric tensor varies as a function of $\varepsilon$. Though it is not the main goal of our work, which rather focuses on showing that, for $\varepsilon$ fixed, $S_\varepsilon$ induces a Riemannian geometry, it can give some intuition on the influence of the parameter $\varepsilon$. For this discussion we fix $X$ to be a bounded convex subset of $\R^d$ with non-empty interior, and $c(x,y) = \norm{x-y}^2$ is the quadratic cost. We add a superscript $\varepsilon$ to stress the fact that, contrarily to the rest of this work, $\varepsilon$ is not fixed anymore. At the level of the Sinkhorn divergence, as the Kullback--Leibler divergence is invariant under reparametrization, 
\begin{equation} \label{eq:S_rescaled}
    S_\varepsilon(\mu,\nu) = \varepsilon S_1(L^\varepsilon_\# \mu, L^\varepsilon_\# \nu),     
\end{equation}
with $L^\varepsilon(x) = x/\sqrt{\varepsilon}$, see \cite{MenaWeed}. It shows how $\sqrt{\varepsilon}$ acts as a typical length scale.

We first show that, for $\varepsilon \in (0, \infty)$, the metric tensor changes smoothly with respect to $\varepsilon$ (however it is not a monotone function of $\varepsilon$ for $\mu, \dot \mu$ fixed: this can be seen in Figure~\ref{figure:nonconvexity_gmu} below). 

\begin{proposition}
    Fix  \(\mu \in \prm(X)\) and \(\dot \mu \in \Cont^m(X)^*_0\) for some $m \geq 0$, then the function $\varepsilon \mapsto \g^\varepsilon_{\mu}(\mudot,\mudot)$ defined on $(0, \infty)$ is continuous.
\end{proposition}

\noindent In this proposition we restrict to tangent vectors in $\Cont^m(X)^*_0$, which embed into the \(\varepsilon\)-dependent tangent space $\Hil^*_{\mu,0}$ for any $\varepsilon$. 

\begin{proof}
    By checking that it satisfies the Schrödinger system, we have, with $L_\varepsilon(x) = x/\sqrt{\varepsilon}$,
    \begin{equation*}
        f^\varepsilon_{\mu,\mu}(x) = \varepsilon f^1_{L^\varepsilon_\# \mu,L^\varepsilon_\# \mu} \left( \frac{x}{\sqrt{\varepsilon}} \right).
    \end{equation*}
    With Proposition~\ref{prop:reg_fmunu_cm}, it gives that $f^\varepsilon_{\mu,\mu}$ varies continuously with $\varepsilon$ in $\Cont^m(X)$. From this and the bound of Proposition~\ref{prop:Kmu_contraction} which is locally uniform in $\varepsilon$, we see that both $(\id - K^\varepsilon_\mu)^{-1}$ and $H^\varepsilon_\mu$ vary continuously in operator norm with $\varepsilon$. The conclusion follows from the explicit expression of $ \g^\varepsilon_{\mu}(\mudot,\mudot)$, see Definition~\ref{def:metric_tensor_measure}. 
\end{proof}

We now move to the limit cases $\varepsilon \to 0$ and $\varepsilon \to \infty$. The connection between optimal transport, regularized optimal transport and MMD distances in these limit cases has been investigated in the literature \cite{ramdas2017wasserstein}. It is thus a natural question to study what happens to our metric metric tensor in these same limiting cases. We first focus on the large $\varepsilon$ limit as it is the easiest case.

\begin{theorem}
    For $\mu \in \prm(X)$ and $\mudot = - \ddiv(\mu v)$ with $v \in L^1(X,\mu;\R^d)$, it holds that
    \begin{equation*}
        \lim_{\varepsilon \to \infty} \mathbf{g}^\varepsilon_\mu(\dot{\mu},\dot{\mu}) 
        = \left\| \int_X v(x) \diff \mu(x) \right\|^2.
    \end{equation*}
\end{theorem}

\noindent The limit of the metric tensor is degenerate, as $\left\| \int v(x) \diff \mu(x) \right\| = \left\| \dot{m}_\mu \right\|$ with $m_\mu \in \R^d$ the center of mass of $\mu$. Thus in the limit only the motion of the center of mass matters. 

\begin{proof}
    From Equation~\eqref{eq:control_metric_tensor_Hmu}, as $q \to 0$ when $\varepsilon \to \infty$, we have that
    \begin{align*}
        \mathbf{g}^\varepsilon_\mu(\dot{\mu},\dot{\mu}) \sim \frac{\varepsilon}{2} \| \dot{\mu} \|^2_{\mathcal{H}^{*,\varepsilon}_{\mu,0}} 
        &= \frac{\varepsilon}{2} \iint_{X \times X} k^\varepsilon_\mu(x,y) \diff \dot{\mu}(x) \diff \dot{\mu}(y)\\ 
        &= \frac{\varepsilon}{2} \iint_{X \times X} [ \nabla_{xy} k^\varepsilon_\mu(x,y) ] \cdot [v(x) \otimes v(y)] \diff \mu(x) \diff \mu(y),
    \end{align*}
    where the last equality follows from integration by parts. We then compute
    \begin{equation*}
        \nabla_{xy} k^\varepsilon_\mu(x,y) = \left(  \frac{2}{\varepsilon} \id + \frac{(\nabla f^\varepsilon_{\mu,\mu}(x) - 2(x-y)) \otimes (\nabla f^\varepsilon_{\mu,\mu}(y) - 2(y-x))}{\varepsilon^2} \right) k^\varepsilon_{\mu}(x,y).
    \end{equation*}
    But $k^\varepsilon_{\mu} \to 1$ in $\Cont(X \times X)$ and $\pi_{\mu,\varepsilon} \to \mu \otimes \mu$ as $\varepsilon \to \infty$. This is clear as $\|f^\varepsilon_{\mu,\mu}\|_\infty \leq 3/2 \| c \|_\infty$ for any $\varepsilon$. Moreover $2y - \nabla f^\varepsilon_{\mu,\mu}(y) = 2 \int k^\varepsilon_{\mu}(y,z) z  \diff \mu(z)$, this is a classical identity that we rederive below in~\eqref{eq:gradient_fmumu}. We deduce that $2\id - \nabla f^\varepsilon_{\mu,\mu}$ converges to $2 m_\mu$, as $\varepsilon \to \infty$, hence is bounded. As moreover $X$ is bounded, putting these considerations together, as a (matrix-valued) function,
    \begin{equation*}
        \frac{\varepsilon}{2} \nabla_{xy} k^\varepsilon_\mu(x,y) \to \id, \quad \text{ as }\varepsilon \to \infty
    \end{equation*}
    uniformly on \(X \times X\). Plugging this into the previous computations, we find 
    \begin{equation*}
        \lim_{\varepsilon \to \infty} \mathbf{g}^\varepsilon_\mu(\dot{\mu},\dot{\mu}) = \lim_{\varepsilon \to \infty} \frac{\varepsilon}{2} \| \dot{\mu} \|^2_{\mathcal{H}^{\varepsilon,*}_{\mu,0}} = \iint_{X \times X} v(x) \cdot v(y) \diff \mu(x) \diff \mu(y). \qedhere
    \end{equation*}
\end{proof}

We then turn to the regime of vanishing $\varepsilon$. In this case we only do informal computations, as making the expansion rigorous would require a fine analysis of the behavior of the Schrödinger potentials as $\varepsilon \to 0$ in the compact case. We identify a measure with its density with respect to the Lebesgue measure. Writing $\Delta$ for the Laplacian (the trace of the Hessian matrix) and $L f \assign  - \Delta f  - \nabla f \cdot \nabla \log \mu $, we claim that it should hold 
\begin{align} \label{eq:informal_limit_small_eps}
	{\bf{g}}^\varepsilon_{\mu}(\dot\mu, \dot\mu) 
	& \to \left\langle \frac{\dot\mu}{\mu} , L ^{-1} \left[ \frac{\dot\mu}{{\mu}} \right] \right \rangle_{L^2(X,\mu)}, \quad \text{ as } \varepsilon\to 0.
\end{align}

\begin{remark}
    In the case where $ \dot \mu = - \ddiv (\mu \nabla \psi)$, we see that the limit in~\eqref{eq:informal_limit_small_eps} is the metric tensor $\g_{\mu}^0$ of classical optimal transport as defined in~\eqref{eq:intro_gmu_OT}. Indeed, in this setting $L^{-1}[\mudot/\mu] = \psi$, which can be checked by expanding the divergence:
    \begin{equation*}
        \frac{\dot \mu}{\mu} = - \frac { \nabla\mu \cdot \nabla \psi }{\mu} - \Delta \psi = -\nabla \log\mu \cdot \nabla \psi  - \Delta \psi = L \psi.         
    \end{equation*}
    Thus we obtain from~\eqref{eq:informal_limit_small_eps}
    \begin{equation*}
        {\bf{g}}^\varepsilon_{\mu}(\dot\mu, \dot\mu) 
        \to \left\langle \ddiv (\mu \nabla \psi), \psi \right \rangle 
        =  \int_X \lvert \nabla\psi \rvert^2 \diff \mu = \g^0_\mu(\mudot,\mudot).  
    \end{equation*}
\end{remark}

We now discuss the sketch of the proof of the claim~\eqref{eq:informal_limit_small_eps}. The main observation is that if one has an operator expansion \( K^\varepsilon_\mu = \id + \varepsilon \tilde{L} + \oh(\varepsilon) \) for some invertible operator $\tilde{L}$, then it holds that
\(
 (K^\varepsilon_\mu)^2 = \id  + 2\varepsilon \tilde{L} +\oh(\varepsilon).
\)
Therefore, as $H^\varepsilon_\mu[\vdot] = K^\varepsilon_\mu[\vdot / \mu]$, we have 
\begin{align*}
    \frac{\varepsilon}2 \left\langle \dot\mu, (\id - (K^\varepsilon_\mu)^2)^{-1} K^\varepsilon_\mu \left[\frac{\dot\mu}{\mu}\right]\right\rangle
    &=  \frac14 \left\langle \dot\mu, \left(-  \tilde{L} +\oh(1)\right)^{-1} \big( \id + \varepsilon \tilde{L} +\oh(\varepsilon) \big) \left[\frac{\dot\mu}{\mu}\right]\right\rangle \\
     &=  \frac14 \left\langle \dot\mu, (-\tilde{L})^{-1} \left[\frac{\dot\mu}{\mu}\right]\right\rangle 
    +\oh(1), 
\end{align*} 
and the conclusion follows if we check that $L = - 4\tilde{L}$. Thus we need to identify $\tilde{L}$, i.e.~the first order term in the expansion of $K^\varepsilon_\mu$ as $\varepsilon \to 0$. It has already been characterized in some cases: in \cite{marshall2019manifold},  where the ambient space is a Riemannian manifold, and in \cite{agarwal2025langevin}, which gives a probabilistic interpretation of the resulting expansion, however it needs measures supported on the whole space $\R^d$. None of these works prove a limit in a sense strong enough to rigorously deduce~\eqref{eq:informal_limit_small_eps} as far as we understand. We leave it for future work to give a mathematical proof of~\eqref{eq:informal_limit_small_eps}.

We will only explain how one can guess the limit, following the arguments of \cite{mordant2024entropic}. To identify $\tilde{L}$ at least informally, we need a better understanding of the self-dual potentials. Identifying a measure with its density, in the limit $\varepsilon \to 0$ the Laplace formula gives
\begin{equation*}
    H^\varepsilon_{c}[\phi](y) = \int_X \exp\left( -\frac1{\varepsilon} \| x- y\|^2 \right) \phi (x) \diff x = (\pi \varepsilon)^{d/2} \left( \phi(x) + \frac{\varepsilon}{4} \Delta \phi(y) + \oh(\varepsilon) \right).
\end{equation*}
With $a_\varepsilon = \exp(f^\varepsilon_{\mu,\mu}/ \varepsilon)$, the condition $K^\varepsilon_\mu[\ones_X] = \ones_X$ reads $a_\varepsilon H^\varepsilon_{c}[a_\varepsilon \mu] = \ones_X$, that is, 
\begin{equation*}
    a_\varepsilon^2 = \left( (\pi \varepsilon)^{d/2} \left( \mu + \frac{\varepsilon}{4} \frac{\Delta (a_\varepsilon \mu)}{a_\varepsilon} + \oh(\varepsilon)  \right) \right)^{-1} = \frac{1}{(\pi \varepsilon)^{d/2} \mu} \left( 1 - \frac{\varepsilon}{4} \frac{\Delta (a_\varepsilon \mu)}{a_\varepsilon \mu} + \oh(\varepsilon) \right). 
\end{equation*}
Thus at first order $a_\varepsilon \sim (\pi \varepsilon)^{-d/4} \mu^{-1/2}$, which plugged again in the expansion in the right-hand side yields
\begin{equation*}
    a_\varepsilon = \frac{1}{(\pi \varepsilon)^{d/4} \sqrt{\mu}} \left( 1 - \frac{\varepsilon}{8} \frac{\Delta \sqrt{\mu}}{\sqrt{\mu}} + \oh(\varepsilon) \right).
\end{equation*}
As $K^\varepsilon_\mu[\phi] = a_\varepsilon H^\varepsilon_{c}[ a_\varepsilon \mu \phi ]$, we can again use the Laplace expansion, keeping only the terms of order $1$ in $\varepsilon$ and noticing that the scaling factors $(\pi \varepsilon)^{d/2}$ cancel,
\begin{align*}
    K^\varepsilon_\mu[\phi] & = (\pi \varepsilon)^{d/2} a_\varepsilon \left( a_\varepsilon \mu \phi + \frac{\varepsilon}{4} \Delta( a_\varepsilon \mu \phi) + \oh(\varepsilon) \right) \\
    & =  \frac{1}{\sqrt{\mu}} \left( 1 - \frac{\varepsilon}{8} \frac{\Delta \sqrt{\mu}}{\sqrt{\mu}} \right) 
    \left( \sqrt{\mu}  \left(  1 - \frac{\varepsilon}{8} \frac{\Delta \sqrt{\mu}}{\sqrt{\mu}} \right) \phi + \frac{\varepsilon}{4} \Delta (\sqrt{\mu} \phi) \right)  + \oh(\varepsilon) \\
    & = \phi + \frac{\varepsilon}{4 \sqrt{\mu}} \left( - \phi \Delta \sqrt{\mu} + \Delta(\sqrt{\mu} \phi)  \right) + \oh(\varepsilon).
\end{align*}
As $\Delta(\sqrt{\mu} \phi) - \phi \Delta \sqrt{\mu} = \sqrt{\mu}\Delta(\phi) + 2 \nabla \phi \cdot \nabla \sqrt{\mu}$ and $2 \nabla \sqrt{\mu} = \sqrt{\mu} \nabla \log \mu$, we obtain
\begin{equation*}
    K^\varepsilon_\mu[\phi] = \phi + \frac{\varepsilon}{4} (\Delta \phi + \nabla \log \mu \cdot \nabla \phi) + \oh(\varepsilon),
\end{equation*}
which is sufficient to identify $\tilde{L} = \frac{1}{4}(\Delta + \nabla \log \mu \cdot \nabla ) $ and to see that $L = - 4\tilde{L}$ coincides with the expression given above~\eqref{eq:informal_limit_small_eps}.

\section{The induced path metric} \label{section:path_metric}

\subsection{Definition and metrization of the weak-\texorpdfstring{$\ast$}{star}~topology} \label{section:sub:path_metric_intro}

\begin{definition} \label{def:distance}
\begin{enumerate}
    \item 
    We call a path \((\mu_t)_{t \in [0,1]}\) admissible when the corresponding path \((\beta_t)_{t \in (0,1)} = (\B(\mu_t))_{t \in (0,1)}\) belongs to the Sobolev space \(\Hil^1((0,1);\Hil_c)\) (defined in Appendix~\ref{section:appendix_sobolev}). We denote the set of admissible paths as \(\Pall\) and write \(\Pall(\mu_0, \mu_1)\) for the subset of \(\Pall\) with fixed start and end points \(\mu_0, \mu_1 \in \prm(X)\).
    
    \item 
   The energy of a path \((\mu_t)_t \in \Pall\) is defined as
    \begin{equation*}
        E((\mu_t)_t)\assign \int_0^1 \tg_{\mu_t}(\betadot_t,\betadot_t) \diff t = \int_0^1 \g_{\mu_t}(\mudot_t,\mudot_t) \diff t.
    \end{equation*}
    (These two integrals indeed coincide if  \((\mu_t)_t \in \Pall\) thanks to Theorem~\ref{theorem:metric_tensor_in_betadot}.)

    \item 
    For \(\mu_0,\mu_1 \in \prm(X)\) we define 
    \begin{equation} \label{eq:def_dE}
        \dS(\mu_0,\mu_1) \assign \left( \inf_{(\mu_t)_t\in\Pall(\mu_0,\mu_1)} E((\mu_t)_t) \right)^\frac{1}{2}.
    \end{equation}
\end{enumerate}
\end{definition}

Theorem~\ref{theorem:alpha_beta_homeo} tells us that \(\Pall\) corresponds to \(\Hil^1((0,1) ; \calB)\) and the paths \((\mu_t)_t\) are \textweakstar-continuous. 

If the cost function is of class $\Cont^{(m,m)}$, then \(\Cont^m\)-perturbation are admissible, as we will see later in Corollary~\ref{cor:Cm_admissible}. Intuitively, \(\Cont^m\)-perturbations (which include combinations of vertical and horizontal perturbations) would seem sufficiently rich to describe $\mudot$ for all relevant trajectories, but we extend admissible paths to $\Pall$ to guarantee completeness. We now show that $\dS$ is a distance (Theorem~\ref{theorem:dS_metric}) which is geodesic (Theorem~\ref{theorem:dE_minimizers_existence}).

\begin{theorem} \label{theorem:dS_metric}
    The function \(\dS\) is a metric on \(\prm(X)\) that metrizes the \textweakstar~topology. 
    Further, for all \(\mu_0, \mu_1 \in \prm(X)\), \(\beta_0 = \B(\mu_0)\), \(\beta_1 = \B(\mu_1)\) there holds 
    \begin{equation*}
        \sqrt{\frac{\varepsilon}{2}} \, \norm{ \beta_0 - \beta_1 }_{\Hil_c} \leq \dS(\mu_0,\mu_1) \leq {\frac{\pi}{2} \sqrt{\frac{\varepsilon}{2} C}} \, \norm{ \beta_0 - \beta_1 }_{\Hil_c}
    \end{equation*}
    with \(C =  1 + 2 \exp\left( \frac{11}{2 \varepsilon} \norm{c}_\infty \right)\) as in Proposition~\ref{prop:tgmu_Hcalphadot_equiv}.
\end{theorem}

\begin{proof}
    It is easy to see that \(\dS(\mu_0, \mu_1) = \dS(\mu_1, \mu_0) \geq 0\). We show the triangle inequality and the relation to the \(\Hil_c\) norm. The latter also implies finiteness, positive definiteness, and the metrization of the \textweakstar~topology. 
    
    The proof of the triangle inequality is analogous to that of~\cite[Thm.~1]{Brenier2020}. Let \(\mu,\nu,\eta \in \prm(X)\), \((\mu_t)_t \in \Pall(\mu,\nu)\), \((\nu_t)_t \in \Pall(\nu,\eta)\) and denote \(\beta^1_t = \B(\mu_t)\), \(\beta^2_t = \B(\nu_t)\). Assume that \(\dS(\mu,\nu) > 0\) and \(\dS(\nu,\eta) > 0\). Let \(\tau\in (0,1)\) and set \(\gamma_t = \mu_{t/\tau}\) for \(t\in[0,\tau]\), \(\gamma_t = \nu_\frac{t-\tau}{1-\tau}\) for \(t \in [\tau,1]\). Using Theorem~\ref{theorem:H1_reformulation} and the \textweakstar~continuity of \((\gamma_t)_t\), one can see that \((\gamma_t)_t \in \Pall(\mu,\eta)\) and by simple rescaling there holds 
    \begin{align*}
        E((\gamma_t)_t) &= \frac{1}{\tau} \int_0^1 \tg_{\mu_t}(\betadot^1_t,\betadot^1_t) \diff t + \frac{1}{1-\tau} \int_0^1 \tg_{\nu_t}(\betadot^2_t,\betadot^2_t) \diff t \\
        &= \frac{1}{\tau} E((\mu_t)_t) + \frac{1}{1-\tau} E((\nu_t)_t).    
    \end{align*}
    Setting \(\tau = {E((\mu_t)_t)^\frac{1}{2}} / {(E((\mu_t)_t)^\frac{1}{2} + E((\nu_t)_t)^\frac{1}{2})}\) gives \[\frac{1}{\tau} E((\mu_t)_t) + \frac{1}{1-\tau} E((\nu_t)_t) = (E((\mu_t)_t)^\frac{1}{2} + E((\nu_t)_t)^\frac{1}{2})^2.\] Taking the infimum over \((\mu_t)_t \in \Pall(\mu,\nu)\) and \((\nu_t)_t \in \Pall(\nu,\eta)\) yields the triangle inequality for \(\dS\).
    
    To show equivalence to the \(\Hil_c\)-norm, we use that \( \frac{\varepsilon}{2} \norm{ \betadot_t }_{\Hil_c}^2 \leq \tg_{\mu_t} (\betadot_t, \betadot_t) \leq C \frac{\varepsilon}{2} \norm{ \betadot_t }_{\Hil_c}^2 \) from Proposition~\ref{prop:tgmu_Hcalphadot_equiv}. 
    
    For the lower bound, let \((\mu_t)_t \in \Pall(\mu_0,\mu_1)\) and denote \(\beta_t = \B(\mu_t)\). Then Jensen's inequality, the norm estimate from Theorem~\ref{theorem:H1_norm_estimate}, and the fundamental theorem of calculus in \(\Hil^1((0,1);\Hil_c)\) from Theorem~\ref{theorem:H1_reformulation} show
    \begin{equation*}
        \frac{2}{\varepsilon} \int_0^1 \tg_{\mu_t} (\betadot_t,\betadot_t) \diff t 
        \geq \int_0^1 \norm{ \betadot_t }_{\Hil_c}^2 \diff t
        \geq \bnorm{\int_0^1 \betadot_t \diff t}_{\Hil_c}^2 
        = \norm{ \beta_1 - \beta_0 }_{\Hil_c}^2.
    \end{equation*} 
    This gives the lower bound when we take the infimum over all curves.
    
    For the upper bound, let \((\beta_t)_t\) be the arc connecting \(\beta_0\) to \(\beta_1\) with \(\norm{\beta_t}_{\Hil_c} = 1\) parametrized by arc length of \((\beta_t)_t\) in \(\Hil_c\). By Theorem~\ref{theorem:alpha_beta_homeo}, this is an admissible path. Proposition~\ref{prop:tgmu_Hcalphadot_equiv} and a basic geometry argument in the two-dimensional Hilbert space \(\vecspan\{\beta_0, \beta_1\}\) yield 
    \begin{align*}
        \sqrt{\frac{2}{C \varepsilon }} \dS(\mu_0,\mu_1) \leq 
        \left( \int_0^1 \norm{\betadot_t}_{\Hil_c}^2 \diff t \right)^\frac{1}{2} = \int_0^1 \norm{\betadot_t}_{\Hil_c} \diff t  = 2 \arcsin \left( \frac{1}{2} \norm{\beta_1 - \beta_0}_{\Hil_c} \right), 
    \end{align*}
    which, in turn, is bounded by \(\frac{\pi}{2} \norm{\beta_1 - \beta_0}_{\Hil_c}\). 
    
    Finally, for \(\mu, \mu_n \in \prm(X)\), the established estimate and Theorem~\ref{theorem:alpha_beta_homeo} show that 
    \begin{align*}
        \dS(\mu_n,\mu) \to 0 \quad \Leftrightarrow \quad \norm{\B(\mu_n) - \B(\mu)}_{\Hil_c} \to 0 \quad \Leftrightarrow \quad \mu_n \to \mu \textnormal{ \textweakstar.} \tag*{\qedhere}
    \end{align*}
\end{proof}

\begin{remark} \label{remark:equivalence_Hc}
    Theorem~\ref{theorem:dS_metric} yields a metric equivalence between the flat norm on $\Hil_c$ and $\mathsf{d}_S$. In particular these two distances define the same class of absolutely continuous curves. However, the geometry induced by the metric tensor $\tilde{\mathbf{g}}$ differs from the flat one. We give one example of this: the shortest path (a.k.a. minimizing geodesic) between \(\beta_0\) and \(\beta_1\) in \(\calB \subseteq \Hil_c\) in the RKHS geometry is given by arc interpolation. This path lies entirely in the plane spanned by \(\beta_0, \beta_1\). The equivalent holds for the corresponding path between \(\alpha_0\) and \(\alpha_1\) in \(\calM_+(X)\). Hence the support of the intermediate measures \(\alpha_t\) (a fortiori of \(\mu_t\)) is contained in the union of the support of the initial and final measure. In contrast, geodesics in \(\dS\) can have horizontal movement of mass that leaves the original support, for instance in the case of translations as in Section~\ref{section:sub:mean_estimate}. 
\end{remark}

\subsection{Existence of geodesics} \label{section:sub:geodesics}

Given the definition of $\dS$ as infimization over the energy of paths, it is natural to ask whether geodesics in the sense of energy minimizing paths exist. The set of admissible paths \(\Pall\) has been chosen carefully to obtain existence of geodesics for \(\dS\).

\begin{theorem} \label{theorem:dE_minimizers_existence}
    The variational problem~\eqref{eq:def_dE} defining \(\dS\) has a minimizer. Hence there exist Riemannian geodesics for the metric \(\dS\).
\end{theorem}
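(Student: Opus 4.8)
The plan is to run the direct method of the calculus of variations in the $\beta$\nobreakdash-coordinate of Section~\ref{section:FA_regularity}, where Propositions~\ref{prop:tgmu_joint_cont} and~\ref{prop:tgmu_Hcalphadot_equiv} supply exactly the (semi)continuity and the uniform coercivity one needs. If $\mu_0=\mu_1$ the constant path is trivially optimal, so assume $\mu_0\neq\mu_1$ and fix a minimizing sequence $(\mu^n_t)_t\in\Pall(\mu_0,\mu_1)$, with $\beta^n=\B(\mu^n)\in\Hil^1((0,1);\Hil_c)$ and $\beta^n_t\in\calB$ for every $t$. By Theorem~\ref{theorem:dE_metric} the infimum in~\eqref{eq:def_dE} is finite, so $E((\mu^n_t)_t)$ is bounded; combined with the lower bound $\tg_{\mu^n_t}(\betadot^n_t,\betadot^n_t)\geq\tfrac{\varepsilon}{2}\norm{\betadot^n_t}_{\Hil_c}^2$ from Proposition~\ref{prop:tgmu_Hcalphadot_equiv} this bounds $\int_0^1\norm{\betadot^n_t}_{\Hil_c}^2\diff t$ uniformly in $n$, and since $\norm{\beta^n_t}_{\Hil_c}=1$ for all $t$ (Theorem~\ref{theorem:alpha_beta_homeo}), the sequence $(\beta^n)_n$ is bounded in $\Hil^1((0,1);\Hil_c)$.

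For compactness I would extract twice. First, $\Hil^1((0,1);\Hil_c)$ is a Hilbert space, so along a subsequence $\beta^n$ converges weakly there, and in particular $\betadot^n$ converges weakly in $L^2((0,1);\Hil_c)$ to some $\betadot$. Second, the uniform $\Hil^1$\nobreakdash-bound makes the curves equi\nobreakdash-H\"older of exponent $\tfrac12$ with values in $\Hil_c$, while for each fixed $t$ the points $\beta^n_t$ lie in the norm\nobreakdash-compact set $\calB$ of Theorem~\ref{theorem:alpha_beta_homeo}; hence Arzel\`a--Ascoli gives a further subsequence converging uniformly in $\Cont([0,1];\Hil_c)$ to some $\beta$. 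The two limits agree, so $\beta\in\Hil^1((0,1);\Hil_c)$, $\beta_t\in\calB$ for all $t$, and $\beta_0=\B(\mu_0)$, $\beta_1=\B(\mu_1)$. Through the homeomorphism $\B$ of Theorem~\ref{theorem:alpha_beta_homeo}, $\beta$ corresponds to an admissible path $(\mu_t)_t\in\Pall(\mu_0,\mu_1)$, and moreover $\mu^n_t\to\mu_t$ \textweakstar~uniformly in $t$.

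The remaining and main point is the lower semicontinuity $E((\mu_t)_t)\leq\liminf_n E((\mu^n_t)_t)$, which is genuinely the obstacle here because, unlike in most Benamou--Brenier-type problems, the integrand $\tg_\mu(\betadot,\betadot)$ is \emph{not} jointly convex (Section~\ref{section:sub:nonconvexity}). This is where the change of variables pays off. One way: $\mu^n_t\to\mu_t$ \textweakstar~uniformly in $t$ together with the joint continuity in Proposition~\ref{prop:tgmu_joint_cont} (in its norm version, using that the unit ball of $\Hil_c$ is relatively compact in $\Cont(X)$ and that $\prm(X)$ is \textweakstar-compact) upgrades to operator\nobreakdash-norm convergence $\tg_{\mu^n_t}\to\tg_{\mu_t}$ on $\Hil_c$, uniformly in $t$; since $\int_0^1\norm{\betadot^n_t}_{\Hil_c}^2\diff t$ is bounded, $\int_0^1\bigl(\tg_{\mu^n_t}(\betadot^n_t,\betadot^n_t)-\tg_{\mu_t}(\betadot^n_t,\betadot^n_t)\bigr)\diff t\to0$, and then $\liminf$ may be taken in $\int_0^1\tg_{\mu_t}(\betadot^n_t,\betadot^n_t)\diff t$, which is the value at $\betadot^n\rightharpoonup\betadot$ of a fixed bounded nonnegative—hence weakly lower semicontinuous—quadratic form on $L^2((0,1);\Hil_c)$. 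Alternatively, extending $\tg_\mu$ to a nonnegative quadratic form on all of $\Hil_c$ (the formula makes sense verbatim, and the proof of Proposition~\ref{prop:tgmu_joint_cont} still gives joint lower semicontinuity), convexity in $\betadot$ together with $\mu^n_t\to\mu_t$ uniformly and $\betadot^n\rightharpoonup\betadot$ in $L^2$ lets one invoke a classical lower semicontinuity theorem for integral functionals with a normal integrand convex in the last variable. Either way $(\mu_t)_t$ attains the infimum in~\eqref{eq:def_dE}; the standard Cauchy--Schwarz argument then shows any such minimizer has constant $\tg$\nobreakdash-speed and realizes $\newdistance(\mu_0,\mu_1)$, i.e.\ is a geodesic. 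I expect the lower semicontinuity step, specifically the passage from the pointwise statement of Proposition~\ref{prop:tgmu_joint_cont} to the functional on paths, to be the only delicate point; everything else is routine given Section~\ref{section:FA_regularity}.
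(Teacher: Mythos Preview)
Your extraction and compactness setup is the paper's verbatim. The genuine difference is in the lower-semicontinuity step. The paper does not argue operator-norm convergence of $\tg_{\mu^n_t}$; instead it introduces a dual representation (Lemma~\ref{lemma:tgmu_sup}) that writes the second summand of $E$ as a supremum over continuous test functions $\psi\in\Cont([0,1]\times X)$ via $\tfrac12\langle x,T^{-1}x\rangle=\sup_y\,\langle x,y\rangle-\tfrac12\langle y,Ty\rangle$, and then passes to the limit for each fixed $\psi$. This sidesteps the one point you gloss over: the formula for $\tg_\mu(\betadot,\betadot)$ does \emph{not} make sense verbatim on all of $\Hil_c$, because $(\id-K_\mu)^{-1}$ acts on $L^2(X,\mu)/\R$ and the pairing with $\ef_\mu\betadot$ is well-defined only when $\int\ef_\mu\betadot\,\diff\mu=0$, which is exactly the constraint $\langle\B(\mu),\betadot\rangle_{\Hil_c}=0$. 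Your approach~A therefore cannot evaluate $\tg_{\mu_t}(\betadot^n_t,\betadot^n_t)$ as written. The fix is easy---project $\ef_\mu\betadot$ onto mean-zero functions in $L^2(\mu)$ first; the extension stays nonnegative, and the proof of Proposition~\ref{prop:tgmu_joint_cont} still gives joint continuity of the second summand for \emph{weak} convergence of $\betadot$, from which your operator-norm claim follows by a compactness-of-the-weak-unit-ball argument---but it should be said. With that in place your route is somewhat more direct; the paper's dual formulation is a clean self-contained lemma that never has to leave the constraint set.
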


\noindent The proof will follow the direct method of calculus of variations. Indeed, we first show lower semicontinuity of \(E\) on \(\Hil^1((0,1);\Hil_c)\) by rewriting \(E((\mu_t)_t)\) as a supremum of weakly lower semicontinuous problems in Lemma~\ref{lemma:tgmu_sup}. Using that boundedness in \(E\) implies boundedness in the Sobolev norm, we then find that the weak cluster points of any minimizing sequence for the least energy problem~\eqref{eq:def_dE} are minimizers, hence geodesic. This geodesic belongs to \(\Pall\), as \(\Hil^1((0,1) ; \calB)\) is weakly closed.

\begin{remark}
    We can obtain the existence of geodesics for \(\dS\) in the metric sense in a much simpler way by showing that in \((\prm(X), \dS)\) all pairs of points almost admit a midpoint (see~\cite[Def.~3.6]{Brenier2020}) and applying the Hopf--Rinow Theorem \cite[Lem.~B.3]{Brenier2020}. For any \(\mu_0,\mu_1 \in \prm(X)\) this would give a path \((\mu_t)_t\) in \(\prm(X)\), such that for all \(t,s \in [0,1]\) it holds
    \begin{equation*}
        \dS(\mu_t,\mu_s) = \abs{t - s} \cdot \dS(\mu_0,\mu_1).
    \end{equation*}
    However, it is not immediately clear whether the path \((\mu_t)_t\) belongs to \(\Pall(\mu_0, \mu_1)\) and whether \(\dS(\mu_0,\mu_1) = E((\mu_t)_t)\), i.e.~whether \((\mu_t)_t\) is also a geodesic in the Riemannian sense as a minimizer of the energy in~\eqref{eq:def_dE}.
\end{remark}

\begin{lemma} \label{lemma:tgmu_sup}
    Let \((\mu_t)_t \in \Pall\) and define \(\boldsymbol{\mu} \in \calM([0,1] \times X)\) via \(\pair{\boldsymbol{\mu}}{\psi} = \iint \psi(t,\vdot) \diff \mu_t \diff t\) for any \(\psi \in \Cont([0,1] \times X)\). The energy of the path \((\mu_t)_t\) rewrites as 
    \begin{multline} \label{eq:lemma:tgmu_sup}
        E((\mu_t)_t) = 
        \frac{\varepsilon}{2} \int_0^1 \norm{ \betadot_t }_{\Hil_c}^2 \diff t 
        + 2 \varepsilon \cdot \sup_\psi \Bigg\{ \int_0^1 \pair{ H_c[\psi(t,\vdot) A(\mu_t)] }{ \betadot_t }_{\Hil_c} \, \mathrm{d} t  \\ 
        - \frac{1}{2} \int_0^1  \pair{ \psi(t,\vdot) }{ (\id - K_{\mu_t}) \psi(t ,\vdot) }_{L^2(X,\mu_t)} \, \mathrm{d} t \Bigg\}, 
    \end{multline}
    where the supremum in \(\psi\) is taken over any space of functions dense in \(L^2([0,1] \times X, \boldsymbol{\mu})\), e.g.~the jointly continuous functions over \([0,1] \times X\). 
\end{lemma}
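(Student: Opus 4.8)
The plan is to recognize the integrand $\pair{\ef_{\mu_t}\cdot\betadot_t}{(\id-K_{\mu_t})\inv[\ef_{\mu_t}\cdot\betadot_t]}_{L^2(X,\mu_t)}$ appearing in $\tg_{\mu_t}(\betadot_t,\betadot_t)$ (via its definition) as the value of a concave quadratic maximization problem in an auxiliary variable, and then to lift this pointwise-in-$t$ identity to an integral identity over $[0,1]$. First I would fix $t$ and work at the level of a single time slice: since $(\id-K_{\mu_t})$ is a bounded, self-adjoint, positive operator on $L^2(X,\mu_t)/\R$ with $(\id-K_{\mu_t})\inv$ bounded by Theorem~\ref{theorem:inverse_operators_Hmu}, the Legendre–Fenchel (or completion-of-squares) identity
\[
\tfrac12\pair{w}{(\id-K_{\mu_t})\inv w}_{L^2(X,\mu_t)}
= \sup_{\psi}\Big\{\pair{\psi}{w}_{L^2(X,\mu_t)} - \tfrac12\pair{\psi}{(\id-K_{\mu_t})\psi}_{L^2(X,\mu_t)}\Big\}
\]
holds, with $w = \ef_{\mu_t}\cdot\betadot_t$ and the supremum attained at $\psi = (\id-K_{\mu_t})\inv w$. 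Then I would rewrite the linear term $\pair{\psi}{\ef_{\mu_t}\betadot_t}_{L^2(X,\mu_t)} = \pair{\psi\,\ef_{\mu_t}\,\mu_t}{\betadot_t} = \pair{\psi\, A(\mu_t)}{\betadot_t}$, which since $\betadot_t\in\Hil_c$ equals $\pair{H_c[\psi(t,\cdot)A(\mu_t)]}{\betadot_t}_{\Hil_c}$ by the defining property~\eqref{eq:def_kme_RKHS} of the kernel mean embedding. Multiplying by $2\varepsilon$ and adding the $\mu$-independent term $\tfrac{\varepsilon}{2}\norm{\betadot_t}_{\Hil_c}^2$ recovers, pointwise in $t$, the integrand of $E$ as a pointwise supremum over $\psi\in L^2(X,\mu_t)$.

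The remaining work is the interchange of the supremum and the time integral: one needs
\[
\int_0^1 \sup_{\psi_t}\Phi(t,\psi_t)\diff t = \sup_{\psi}\int_0^1 \Phi(t,\psi(t,\cdot))\diff t,
\]
where $\Phi(t,\psi) = \pair{H_c[\psi A(\mu_t)]}{\betadot_t}_{\Hil_c} - \tfrac12\pair{\psi}{(\id-K_{\mu_t})\psi}_{L^2(X,\mu_t)}$ and on the right the sup is over a dense class in $L^2([0,1]\times X,\boldsymbol\mu)$. The inequality ``$\geq$'' is immediate (any admissible joint $\psi$ restricts to a competitor in each slice). For ``$\leq$'' I would use a measurable-selection / approximation argument: the slice-wise maximizer is $\psi^\ast_t = (\id-K_{\mu_t})\inv[\ef_{\mu_t}\betadot_t]$, and the key point is that $t\mapsto\psi^\ast_t$ is (Bochner-)measurable and lies in $L^2([0,1]\times X,\boldsymbol\mu)$; measurability follows from the $t$-continuity of $\mu_t$ (hence of $K_{\mu_t}$ and $\ef_{\mu_t}$, via Theorem~\ref{theorem:alpha_beta_homeo} and \cite[Thm.~1.1]{Carlier2022}) together with measurability of $t\mapsto\betadot_t$ from $(\mu_t)_t\in\Pall$, and square-integrability follows from the uniform bound $\norm{\psi^\ast_t}_{L^2(X,\mu_t)/\R}\le \tfrac{1}{1-q}\norm{\ef_{\mu_t}}_\infty\norm{\betadot_t}_{\Hil_c}$ combined with $\betadot\in L^2((0,1);\Hil_c)$; one then approximates $\psi^\ast$ by jointly continuous functions in $L^2([0,1]\times X,\boldsymbol\mu)$ and passes to the limit in $\Phi$ using the uniform operator bounds on $(\id-K_{\mu_t})$ and the continuity of $H_c$ from $\mathcal M(X)$ with weak-$\ast$ topology into $\Hil_c$ weak.

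I expect the interchange of supremum and integral — more precisely, verifying the measurability and integrability needed to make the slice-wise optimizer a legitimate global competitor — to be the main obstacle; the pointwise Fenchel identity itself is elementary. Care is also needed with the quotient by constants: the $L^2(X,\mu_t)$ pairings are only well defined up to shifts of $\psi$, but this is harmless since $A(\mu_t)$ is a probability-density-weighted measure so $\pair{\lambda\ones_X\,A(\mu_t)}{\betadot_t} = \lambda\pair{A(\mu_t)}{\betadot_t}$ and $\pair{A(\mu_t)}{\betadot_t} = \pair{\beta_t}{\betadot_t}_{\Hil_c} = 0$ by Theorem~\ref{theorem:metric_tensor_in_betadot}; hence the value of $\Phi(t,\cdot)$ is genuinely invariant under constant shifts of $\psi$, and one may freely choose representatives.
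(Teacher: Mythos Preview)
Your proposal is correct and follows the same core idea as the paper, namely recognizing the $(\id-K_{\mu_t})\inv$ term via Legendre--Fenchel duality, rewriting the linear term using $\pair{\psi\,\ef_{\mu_t}\betadot_t}_{L^2(X,\mu_t)} = \pair{H_c[\psi\, A(\mu_t)]}{\betadot_t}_{\Hil_c}$, and handling the quotient by constants via $\pair{\beta_t}{\betadot_t}_{\Hil_c}=0$ and $(\id-K_{\mu_t})[\ones_X]=0$.

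The one genuine methodological difference is where the Fenchel identity is applied. You apply it slice-by-slice on each $L^2(X,\mu_t)/\R$ and are then forced into an interchange-of-supremum-and-integral argument, which you correctly identify as the main obstacle and resolve by showing the slice-wise optimizer $\psi^\ast_t=(\id-K_{\mu_t})\inv[\ef_{\mu_t}\betadot_t]$ defines a measurable, square-integrable global competitor. The paper instead applies the Fenchel identity \emph{once} on the big Hilbert space $\Hil=\{\psi\in L^2([0,1]\times X,\boldsymbol\mu):\pair{\mu_t}{\psi(t,\cdot)}=0\text{ a.e.}\}$, with the operator $T[\psi](t,\cdot)=(\id-K_{\mu_t})[\psi(t,\cdot)]$ acting diagonally in time. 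Coercivity and boundedness of $T$ on this space follow directly from Theorem~\ref{theorem:inverse_operators_Hmu}, and the supremum over $L^2([0,1]\times X,\boldsymbol\mu)$ (and hence over any dense class) falls out without ever needing to select or measure anything in $t$. Your route works but carries extra baggage; the paper's global application of the duality is shorter and sidesteps precisely the obstacle you flagged.
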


\begin{proof}
    It is a well-known fact in convex analysis that if \(T\) is a symmetric, coercive and continuous operator on a Hilbert space \(\Hil\) it holds
    \begin{equation} \label{eq:norm_fenchel_transform}
        \frac{1}{2} \pair{x}{T\inv x}_\Hil = \sup_y \, \pair{x}{y}_\Hil - \frac{1}{2} \pair{y}{Ty}_\Hil
    \end{equation}
    for all \(x \in \Hil\). This is Fenchel-Legendre duality, or could also be seen as a corollary of the Lax-Milgram theorem~\cite[Corollary 5.8]{Brezis2011}. Continuity of $T$ allows the restriction of the supremum to a dense subset of \(\Hil\). 
    We apply~\eqref{eq:norm_fenchel_transform} to 
    \begin{itemize}
        \item the space \(\Hil = \setgiven{ \psi \in L^2([0,1] \times X, \boldsymbol{\mu}) }{ \pair{\mu_t}{\psi(t,\vdot)} = 0 \textup{ a.e.} }\), i.e.~the orthogonal complement of functions which are \(\mu_t\)-a.e.~constant in space at almost all \(t \in [0,1]\),
        \item the operator \(T[\psi] (t,y) = (\id - K_{\mu_t})[\psi(t,\vdot)](y)\),
        \item the point \(x = (\ef_{\mu_t} \betadot_t)_t \in \Hil\).
    \end{itemize}
    The operator \(T\) is coercive and continuous on \(\Hil\) by Theorem~\ref{theorem:inverse_operators_Hmu}. In this notation
    \begin{equation*}
        E((\mu_t)_t) = \int_0^1 \tg_{\mu_t} ( \betadot_t, \betadot_t ) \diff t = \frac{\varepsilon}{2} \int_0^1 \norm{\betadot_t}^2_{\Hil_c} \diff t + \varepsilon \pair{x}{T\inv x}_{\Hil}.
    \end{equation*}
    We then use that \(\pair{\ef_{\mu_t} \betadot_t}{\psi(t,\vdot)}_{L^2(X,\mu_t)} = \pair{\psi(t,\vdot) \ef_{\mu_t} \mu_t}{\betadot_t} = \pair{H_c[\psi(t,\vdot) A(\mu_t)]}{\betadot_t}_{\Hil_c}\). 
    With $\pair{\betadot_t}{\beta_t} = 0$ and \((\id - K_{\mu_t}) [\ones_X] = 0\), one sees that the expression on the right-hand side of~\eqref{eq:lemma:tgmu_sup} gives the same value on functions whose difference is constant in space for almost all times \(t\). Hence the supremum remains the same when taken on \(\Hil\) or \(L^2([0,1] \times X; \boldsymbol{\mu})\).
\end{proof}

\begin{proof}[\textbf{Proof of Theorem~\ref{theorem:dE_minimizers_existence}}]
    We apply the direct method of calculus of variations. Let \((\mu^n_t)_t \in \Pall(\mu_0,\mu_1)\) with corresponding \((\beta^n_t)_t \in \Hil^1((0,1); \Hil_c)\) be such that 
    \begin{equation*}
        \dS(\mu_0,\mu_1)^2 = \lim_{n \to \infty} \int_0^1 \tg_{\mu^n_t}(\betadot^n_t, \betadot^n_t) \diff t. 
    \end{equation*}
    From $\norm{\beta_t^n}_{\Hil_c} = 1$ and Proposition~\ref{prop:tgmu_Hcalphadot_equiv} we see that the sequence of paths \((\beta^n_t)_t\) must be bounded in \(\Hil^1((0,1);\Hil_c)\). Hence, up to taking a subsequence, \((\beta^n_t)_t\) converges weakly in \(\Hil^1((0,1);\Hil_c)\) to some limit path \((\beta_t)_t\). 
    It remains to check that \(\beta_t \in \calB\) for all \(t \in (0,1)\) and that the corresponding path of \(\mu_t = \B\inv(\beta_t)\) satisfies \(E((\mu_t)_t) = \dS(\mu_0,\mu_1)^2\).
    
    Note that the sequence \((\beta^n_t)\) is uniformly equi-continuous: For any \(n \in \N\), \(s<t \in [0,1]\) set \(v = (\beta^n_t - \beta^n_s) / \norm{\beta^n_t - \beta^n_s}_{\Hil_c}\). Then Theorem~\ref{theorem:H1_reformulation} and the Cauchy--Schwarz inequality give
    \begin{equation*}
        \norm{\beta^n_t - \beta^n_s}_{\Hil_c} = \int_s^t \pair{\betadot^n_\tau}{v} \diff \tau = \pair{(\betadot^n_\tau)_\tau}{v \cdot \ones_{[s,t]}}_{L^2([0,1];\Hil_c)} \leq \norm{(\beta^n_\tau)_\tau}_{\Hil^1((0,1);\Hil_c)} \cdot \sqrt{|t-s|}.
    \end{equation*}
    Thus, again up to taking a subsequence, we can assume that \((\beta^n_t)\) converges uniformly, hence the limit satisfies $\beta_t \in \calB$ for all \(t \in (0,1)\). 
    
    We now estimate the energy. By lower semicontinuity of the norm for weak convergence, we have directly 
    \begin{equation*}
    \int_0^1 \norm{ \betadot_t }_{\Hil_c}^2 \diff t \leq \liminf_{n \to \infty} \int_0^1 \norm{ \betadot^n_t }_{\Hil_c}^2 \diff t.
    \end{equation*}
    Thus by Lemma~\ref{lemma:tgmu_sup}, using $\alpha^n_t = \A(\mu^n_t) = a_{\mu^n_t} \mu^n_t$ and rewriting the operator \(K_{\mu^n_t}\) using \(k_{\mu^n_t} = k_c \cdot \ef_{\mu^n_t} \otimes \ef_{\mu^n_t}\), to conclude the proof it suffices to show that for all \(\psi \in \Cont([0,1] \times X)\) we have 
    \begin{align*}
        &\liminf_{n \to \infty}  \int_0^1 \bpair{  H_c[\psi(t,\vdot) \alpha^n_t] }{  \betadot^n_t }_{\Hil_c} \mathrm{d} t -  \frac{1}{2} \int_0^1 \int_X \psi(t,x)^2 \diff \mu^n_t(x) \diff t \\ 
        &\qquad\qquad + \frac{1}{2} \int_0^1 \iint_{X \times X} k_c(x,y) \psi(t,x) \psi(t,y) \diff \alpha^n_t(x) \diff \alpha^n_t(y) \diff t \\
        &\qquad \geq \int_0^1  \bpair{  H_c[\psi(t,\vdot) \alpha_t] }{  \betadot_t }_{\Hil_c} \mathrm{d} t -  \frac{1}{2} \int_0^1 \int_X \psi(t,x)^2 \diff \mu_t(x) \diff t \\ 
        &\qquad\qquad + \frac{1}{2} \int_0^1 \iint_{X \times X} k_c(x,y) \psi(t,x) \psi(t,y) \diff \alpha_t(x) \diff \alpha_t(y) \diff t. 
    \end{align*}
    As $\beta^n_t$ converges to $\beta_t$, we know from Theorem~\ref{theorem:alpha_beta_homeo} that \(\mu^n_t \to \mu_t\), \(\alpha^n_t \to \alpha_t\) \textweakstar~in \(\calM(X)\). In addition, we can bound the mass of \(\alpha^n_t\) and the supremum of \(\psi(t,\vdot)\) uniformly in \(t\) and \(n\), and so the second and third integral are easy to pass to the limit. For the first one note that
    \begin{equation*}
    (H_c[\psi(t,\vdot)\alpha^n_t])_t \to (H_c[\psi(t,\vdot)\alpha_t])_t.
    \end{equation*}
    The convergence is pointwise in $t$ thanks to Lemma~\ref{lemma:h_k_Cm_compactness}, and also in norm in \(L^2([0,1]; \Hil_c)\) thanks to the Lebesgue dominated convergence theorem. Since the first integral rewrites as
    \begin{equation*}
        \int_0^1 \bpair{ H_c[\psi(t,\vdot) \alpha^n_t] }{ \betadot^n_t }_{\Hil_c} \diff t = \bpair{(H_c[\psi(t,\vdot)\alpha^n_t])_t}{(\betadot^n_t)_t}_{L^2([0,1];\Hil_c)},
    \end{equation*}
    convergence to the desired limit holds as a pairing between weakly and strongly convergent sequences in $L^2([0,1];\Hil_c)$ (see Lemma~\ref{lemma:pair_strong_weak}).  
\end{proof}

\subsection{Absolutely continuous curves}

Let us first recall the definition of absolutely continuous curves, in Hilbert spaces they correspond to a.e.~differentiable curves for which the fundamental theorem of calculus holds (Theorem~\ref{theorem:H1_reformulation}).

\begin{definition}[{\cite[Def.~1.1.1]{AGS2008}}] \label{def:AC_curves}
    Let \((\calX,\mathsf{d})\) be a complete metric space and let \(p \in [1,\infty]\). A curve \(\gamma \colon (a,b) \to \calX\) defined on a real interval belongs to \(\AC^p((a,b);\calX)\), if there exists a function \(m \in L^p(a,b)\) such that for all \(a < s \leq t \leq b\) it holds
    \begin{equation*}
        \mathsf{d}(\gamma(s),\gamma(t)) \leq \int_s^t m(\tau) \diff \tau.
    \end{equation*}
\end{definition}

Due to the equivalence between the norm on \(\Hil_c\) and the metric \(\dS\) from  Theorem~\ref{theorem:dS_metric}, and as \(\Hil^1((0,1);\Hil_c) = \AC^2((0,1);\Hil_c)\) from Theorem~\ref{theorem:H1_reformulation}, if $\beta_t = B(\mu_t)$ we have
\begin{equation*}
    (\mu_t)_t \text{ is admissible} \quad \Leftrightarrow \quad (\beta_t)_t \in \AC^2((0,1);\Hil_c) \quad \Leftrightarrow \quad (\mu_t)_t \in \AC^2((0,1);\dS).
\end{equation*}
Building on Lemma~\ref{lemma:mudot_by_betadot_in_RKHS} we can characterize the set of admissible paths in terms of differentiability of the path of measures \((\mu_t)_t\) in duality with each respective RKHS \(\Hil_{\mu_t}\) and regularity of the path of self-transport potentials \(f_{\mu_t,\mu_t}\) in the space of continuous functions as opposed to the less intuitive space \(\Hil_c\).

\begin{proposition} \label{prop:admissible_paths}
    Let \((\mu_t)_t\) be a \textweakstar-continuous path in \(\prm(X)\). Denote \(f_{t,t} = f_{\mu_t,\mu_t}\) and $\beta_t = B(\mu_t)$. Then \((\mu_t)_t\) is an admissible path for our metric, i.e.~\((\beta_t)_t \in \Hil^1((0,1);\Hil_c)\), or equivalently \((\mu_t)_t \in \AC^2((0,1);(\prm(X),\dS))\), if and only if the following conditions are satisfied: 
    \begin{enumerate}
        \item \label{item:mut_strong_diff} \((\mu_t)_t\) is strongly differentiable, i.e.~\(\frac{\mu_t - \mu_s}{t-s} \to \mudot_s\) in norm in \(\Hil_{\mu_s}^*\) at time \(t=s\) for almost all \(s \in [0,1]\).
        \item \label{item:finite_energy} \( \displaystyle{\int_0^1 \g_{\mu_t}(\mudot_t,\mudot_t) \diff t < \infty}\).
        \item \label{item:ftt_AC1} \((f_{t,t})_t \in \AC^1((0,1);\Cont(X))\).
    \end{enumerate}
    Further, when these are satisfied 
    the following integration by parts formula holds for all functions of the form \(\psi_t = \exp(f_{t,t}/\varepsilon) \cdot \phi_t \) with \(\phi \in \Contc^\infty((0,1);\Hil_c)\):
    \begin{equation} \label{eq:continuity_equation}
        \int_0^1 \pair{\mu_t}{\partial_t \psi_t} \diff t = - \int_0^1 \pair{\mudot_t}{\psi_t} \diff t.
    \end{equation}
    In this equation $\psi_t \in \Hil_{\mu_t}$ for every $t$ by Lemma~\ref{prop:RKHS_isometry}, so that the pairing $\pair{\mudot_t}{\psi_t}$ is well-defined. 
\end{proposition}

\begin{proof}
    \underline{Direct implication:} 
    Let \((\beta_t)_t \in \Hil^1((0,1);\Hil_c)\). Then \(\betadot_t\) exists in \(\Hil_c\) for almost all \(t \in (0,1)\). From Lemma~\ref{lemma:mudot_by_betadot_in_RKHS}, \((\mu_t)_t\) is strongly differentiable in \(\Hil_{\mu_s}^*\) at time \(t=s\) for almost all \(s \in (0,1)\). This is condition~\ref{item:mut_strong_diff}. 
    By Proposition~\ref{prop:tgmu_Hcalphadot_equiv}, we have \(\int_0^1 \g_{\mu_t}(\mudot_t,\mudot_t) \diff t = \int_0^1 \tg_{\mu_t}(\betadot_t,\betadot_t) \diff t \leq C \frac{\varepsilon}{2} \norm{(\beta_t)_t}_{\Hil^1((0,1);\Hil_c)}^2 < \infty\), proving condition~\ref{item:finite_energy}. 
    Due to the embedding \(\Hil_c \to \Cont(X)\) and equi-boundedness of the potentials \(f_{t,t}\), we obtain for $s \leq t$ 
    \begin{equation*}
        \norm{f_{t,t} - f_{s,s}}_\infty \leq C \cdot \norm{\beta_t - \beta_s}_\infty \leq C \cdot \norm{\beta_t - \beta_s}_{\Hil_c} \leq C \cdot \int_s^t \norm{\betadot_\tau}_{\Hil_c} \diff \tau
    \end{equation*}
    for some \(C \in (0,\infty)\), where the last inequality is due to Theorem~\ref{theorem:H1_norm_estimate}. As \(( \| \betadot_t \| )_t \in L^2((0,1)) \subset L^1((0,1))\), this proves condition~\ref{item:ftt_AC1}.

    \underline{Converse implication:} 
    We show that \((\beta_t)_t \in \Hil^1((0,1);\Hil_c)\) using the characterization of Theorem~\ref{theorem:H1_reformulation}. Specifically, we show that \((\beta_t)_t\) is strongly differentiable almost everywhere, \((\betadot_t)_t \in L^2((0,1);\Hil_c)\) and that 
    \(
        \beta_t - \beta_s = \int_s^t \betadot_\tau \diff \tau
    \)
    for all \(t,s \in (0,1)\).

    Condition~\ref{item:ftt_AC1} gives \(\norm{(f_{t,t} - f_{s,s})/(t-s)}_\infty \leq \frac{1}{t-s} \int_s^t g(\tau) \diff \tau\) for some \(g \in L^1((0,1);\R)\). By the Lebesgue differentiation theorem~\cite[Thm.~7.10]{Rudin1987}, the right-hand side converges to \(g(s)\) as \(t \to s\) for almost every \(s \in (0,1)\). In particular, the difference quotients are bounded in the limit for almost all \(s\).
    Differentiability of \((\beta_t)_t\) almost everywhere follows from Lemma~\ref{lemma:mudot_by_betadot_in_RKHS} and condition~\ref{item:mut_strong_diff}.
    By Theorem~\ref{theorem:metric_tensor_in_betadot} and Proposition~\ref{prop:tgmu_Hcalphadot_equiv} we have \(\frac{\varepsilon}{2} \norm{\betadot_s}_{\Hil_c}^2 \leq \tg_{\mu_s}(\betadot_s,\betadot_s) = \g_{\mu_s}(\mudot_s,\mudot_s)\) almost everywhere. Thus condition~\ref{item:finite_energy} yields \((\betadot_t)_t \in L^2((0,1);\Hil_c)\). It only remains to check the compatibility condition: for all $t,s$
    \begin{equation} \label{eq:to_prove_b_=_intbetdadot}
        \beta_t - \beta_s = \int_s^t \betadot_\tau \diff \tau
    \end{equation}
    as a Bochner integral in \(\Hil_c\) (see~\cite[Sec.~2.1]{Kreuter2015}). The integral in the right-hand side of~\eqref{eq:to_prove_b_=_intbetdadot} exists in $\Hil_c$, see Theorem~\ref{theorem:H1_norm_estimate}. To identify the left-hand side, as \((f_{t,t})_t \in \AC^1((0,1);\Cont(X))\) and the potentials are equi-bounded, we obtain \((\beta_t)_t \in \AC^1((0,1);\Cont(X))\). In particular for any $x \in X$ the curve $(\beta_t(x))_t$ belongs to $\AC^1((0,1);\R)$ and thus~\eqref{eq:to_prove_b_=_intbetdadot}, evaluated at $x \in X$, holds as a classical integral. We conclude to~\eqref{eq:to_prove_b_=_intbetdadot} as Bochner integral in $\Hil_c$ as the evaluation maps are continuous in $\Hil_c$.

    \underline{Integration by parts:} 
    Let \((\beta_t)_t \in \Hil^1((0,1);\Hil_c)\). Let \((\psi_t)_t\) and \((\phi_t)_t\) be as in~\eqref{eq:continuity_equation}. By our assumptions on \((\beta_t)_t\), the rescaled function \(\psi_t = \phi_t / \beta_t\) is differentiable in \(\Cont(X)\) for almost every \(t\). Lemma~\ref{lemma:mudot_by_betadot_in_RKHS} yields
    \begin{align*}
        \int_0^1 \pair{\mudot_t}{\psi_t} \diff t
        &= \int_0^1 \pair{\betadot_t}{\beta_t \cdot \psi_t + \beta_t \cdot K_{\mu_t}[\psi_t]}_{\Hil_c} \diff t 
        = \int_0^1 \pair{\betadot_t}{ \phi_t + H_c[\phi_t / \beta_t^2 \cdot \mu_t]}_{\Hil_c} \diff t .
    \end{align*}
    By Lemma~\ref{lemma:H1_weakdiff_H} we know that 
    \(
        \int_0^1 \pair{\betadot_t}{\phi_t}_{\Hil_c} \diff t = - \int_0^1 \pair{\beta_t}{\partial_t \phi_t}_{\Hil_c} \diff t.     
    \)
    Further, \(\pair{\beta_t}{\partial_t \phi_t}_{\Hil_c} = \pair{\alpha_t}{\partial_t \phi_t} = \pair{\mu_t}{(\partial_t \phi_t) / \beta_t}\).
    We compute the remaining part using 
    \(
        \pair{\betadot_t}{H_c[\phi_t/\beta_t^2 \cdot \mu_t]}_{\Hil_c} 
        = \pair{\mu_t}{\phi_t \cdot \betadot_t / \beta_t^2}.
    \)
    Together this gives
    \begin{align*}
        \int_0^1 \pair{\mudot_t}{\psi_t} 
        &= - \int_0^1 \pair{\mu_t}{(\partial_t \phi_t) / \beta_t - \phi_t \cdot \betadot_t / \beta_t^2} \diff t
        = - \int_0^1 \pair{\mu_t}{\partial_t \psi_t} \diff t. \qedhere
    \end{align*}
\end{proof}

The previous proposition gives a full characterization of the set of admissible paths and requires a light notion of differentiability of the path \((\mu_t)_t\) together with a robust regularity assumption on the path of self-transport potentials \((f_{t,t})_t\) in \(\Cont(X)\).

In the differentiable setting, Proposition~\ref{prop:Cm_bound_dS} below gives a simpler way to show that a path is admissible. In it we utilize a stronger regularity assumption on \((\mu_t)_t\) by requiring \(\mudot_t \in \Cont^m(X)^*\) and avoid the extra assumption on the potentials.

\begin{proposition} \label{prop:Cm_bound_dS}
    For $m \geq 0$, in the setting of Assumption~\ref{asp:diff_m}, there exists $C > 0$ depending on $c,\varepsilon$ and $m$ such that for all $\mu_0, \mu_1 \in \prm(X)$,
    \begin{equation} \label{eq:dS_bound_Cm}
        \dS(\mu_0,\mu_1) \leq C \cdot \norm{\mu_1 - \mu_0}_{\Cont^{m,*}}.
    \end{equation}
    In particular, \((\mu_t)_t \in \AC^2((0,1); (\Cont^m(X)^*, \| \cdot \|_{\Cont^{m,*}} ))\) yields \((\mu_t)_t \in \AC^2((0,1);(\prm(X),\dS))\) and any such path is admissible for \(\dS\).
\end{proposition}

\begin{proof}
We clearly only need to prove~\eqref{eq:dS_bound_Cm}.
The vertical interpolation \(\mu_t = (1-t) \mu_0 + t \mu_1\) is an admissible path with \(\mudot_t = \mu_1 - \mu_0\). We have, using successively the estimates~\eqref{eq:control_metric_tensor_Hmu} and~\eqref{eq:comp_Hmu_Cm},
\begin{equation*}
    \g_{\mu_t}(\mudot_t,\mudot_t) \leq C_1 \norm{\mu_1-\mu_0}^2_{\Hil_{\mu,0}^*} \leq C_2 \norm{\mu_1-\mu_0}^2_{\Cont^{m,*}}, 
\end{equation*}
for some \(C_1, C_2 \geq 0\), where the constant $C_2$ may depend on $m$ but neither one depends on $\mu$. Integrating this between $0$ and $1$ yields the estimate as the left-hand side becomes an upper bound for $ \dS(\mu_0,\mu_1)^2$ by definition. 
\end{proof}

We can now show that paths which essentially follow a \(\Cont^m\)-perturbation for time \([0,1]\) are admissible paths for \(\dS\). For these paths the metric tensor is explicitly given by the Hessian of the Sinkhorn divergence. This provides the link between Section~\ref{section:sinkhorn_hessian} and Section~\ref{section:sub:path_metric_intro}.

\begin{corollary}
\label{cor:Cm_admissible}
    In the setting of Assumption~\ref{asp:diff_m} for some \(m \geq 0\), let \((\mu_t)_t\) be a path that is continuously differentiable from \((0,1)\) into the space \(\Cont^m(X)^*\) endowed with the \textweakstar~topology, such that the derivative extends \textweakstar~continuously to the end points. 
    Then \(\sup_{t \in [0,1]} \, \norm{\mudot_t}_{\Cont^{m,*}} < \infty\), and we have \((\mu_t)_t \in \AC^2((0,1);(\Cont^m(X)^*,\norm{\vdot}_{\Cont^{m,*}})\). In particular, \((\mu_t)_t\) is an admissible path for \(\dS\).
\end{corollary}

\begin{proof}
    There holds \(\sup_{t \in [0,1]} \, \norm{\mudot_t}_{\Cont^{m,*}} < \infty \), as \textweakstar-continuity gives boundedness of $\norm{\mudot_{t_n}}_{\Cont^{m,*}}$ along any convergent sequence \(t_n \to t\) and \([0,1]\) is compact. Then by \cite[Prop.~3.8,~(iv)$\Rightarrow$(ii)]{Kreuter2015} with the choice of separating subset \(\Cont^m(X) \subseteq (\Cont^m(X)^*)^*\), we obtain that \((\mu_t)_t\) is of class \(\AC^2((0,1);(\Cont^m(X)^*,\norm{\vdot}_{\Cont^{m,*}}))\) if we show that for any \(\phi \in \Cont^m(X)\), the real valued function \(t \mapsto \pair{\mu_t}{\phi}\) is continuously differentiable with derivative \(\pair{\mudot_t}{\phi}\) and \((\mudot_t)_t \in L^2((0,1);\Cont^m(X)^*)\). All of these are already established.
\end{proof}

\begin{remark}[{Comparison between $\dS$, Wasserstein, and Hellinger--Kantorovich}] \label{remark:comparison_W1_HK}
    In the setting of Assumption~\ref{asp:diff_m} with \(m=1\), Proposition~\ref{prop:Cm_bound_dS} yields a \(C > 0\) such that
    \begin{equation} \label{eq:W1_bound_dS}
        \dS(\mu_0,\mu_1) \leq C \cdot \norm{\mu_1 - \mu_0}_{\Cont^{1,*}} \leq C \cdot W_1(\mu_0,\mu_1)
    \end{equation}
    for any \(\mu_0, \mu_1 \in \prm(X)\). Here $W_p$ denotes the $p$-Wasserstein distance \cite[Chapter 5]{Santambrogio2015} for $p \geq 1$. 
    Thus 2-absolutely continuous paths in \(W_1\) (hence \(W_p\) for any \(p \in [1,\infty)\)) are admissible paths for our metric. 
    The converse does not hold as shown in Example~\ref{example:quadratic_split} below.
    
    In the Euclidean setting the formal Riemannian tensor of the Hellinger--Kantorovich distance \cite{KMV-OTFisherRao-2015,Chizat:UOT,Liero2018} is given by
    \begin{equation}
        \label{eq:gHK}
        \g^{\tn{HK}}_\mu(\mudot, \mudot) = \int_X \left( |\nabla \psi(x)|^2 + \tfrac14 |\zeta(x)|^2 \right) \diff \mu (x), \qquad \mudot = - \ddiv (\mu \nabla \psi) + \mu \cdot \zeta
    \end{equation}
    where $\mudot$ is explicitly decomposed into a horizontal and a vertical component.
    This induces a Riemannian distance on non-negative Radon measures with variable mass. The geodesic restriction to probability measures, the so-called spherical Hellinger--Kantorovich distance \(\mathsf{SHK}\), is studied in \cite{LaMi2017}.
    In particular, $\g^{\tn{HK}}_\mu(\mudot, \mudot)$ is finite, even when $\mudot$ contains a vertical component, which enables the ``teleportation'' of mass from one point to another without moving it through the base space.
    This is similar to the metric tensor $\g_\mu(\mudot, \mudot)$ of $\dS$ and different from the tensor for the standard Wasserstein distance, \eqref{eq:intro_gmu_OT}. The same \(C\) as in~\eqref{eq:W1_bound_dS} gives \(\g_\mu(\mudot,\mudot) \leq C^2 \cdot \norm{\mudot}_{\Cont^{1,*}}^2 \leq 16 C^2 \cdot (\norm{\nabla \phi}_{L^2(X,\mu)} + \frac{1}{4} \norm{\zeta}_{L^2(X,\mu)})^2 \leq 32 C^2 \cdot \g_\mu^{\tn{HK}}(\mudot,\mudot)\) for all perturbations \(\mudot\) as in~\eqref{eq:gHK}. Hence we have the generalization of~\eqref{eq:W1_bound_dS}: 
    \begin{equation*} 
        \dS(\mu_0,\mu_1) \leq 4 \sqrt{2} C \cdot \mathsf{SHK}(\mu_0,\mu_1).
    \end{equation*}
    However, beyond this similarity, there are several significant differences: $\g^{\tn{HK}}_\mu$ is spatially local in the functions $\nabla \psi$ and $\zeta$, which parametrize $\mudot$, whereas $\g_\mu$ from Definition~\ref{def:metric_tensor_measure} is non-local due to the operators $K_\mu$ and $H_\mu$ and does not rely on an explicit decomposition of $\mudot$ into vertical and horizontal terms. For $\g^{\tn{HK}}_\mu$ the cost of teleportation is independent of the distance between the start and end point in base space. In fact, when the supports of $\mu_0$ and $\mu_1$ are separated by more than $\pi/2$, their geodesic with respect to $\g^{\tn{HK}}_\mu$ solely relies on the vertical teleportation term and becomes equal to that of the Hellinger distance. In contrast, for $c(x,y)=\|x-y\|^2$ we find below (Section \ref{section:sub:vertical_two_points}) that in $\g_\mu$ teleportation can become exponentially expensive at large distances whereas at small distances horizontal and vertical perturbations have comparable cost. 
\end{remark}

\begin{example} \label{example:quadratic_split}
    The path given by \(\mu_t = \frac{1}{2} (\delta_{-\sqrt{t}} + \delta_{\sqrt{t}})\) on \(X = [-1,1] \subseteq \R\) is continuously differentiable in \(\Cont^2(X)^*\) endowed with the norm topology for \(t \in (0,1]\) and \textweakstar-differentiable in \(\Cont^2(X)^*\) at \(t=0\). Indeed, we have \(\mudot_t = \frac{1}{2\sqrt{t}} ( \delta'_{\sqrt{t}} - \delta'_{-\sqrt{t}})\) thus the fundamental theorem of calculus yields \(\norm{\mudot_t}_{\Cont^{2,*}} \leq 1\) for any \(t \in (0,1]\). At \(t=0\), we can actually compute $\pair{\mudot_0}{\phi} = \phi''(0)$. 
    Proposition~\ref{prop:Cm_bound_dS} yields that $(\mu_t)_t$ is an admissible path for $\mathsf{d}_S$ when $c \in \Cont^{(2,2)}(X \times X)$, such as for the quadratic cost.  
    
    On the other hand as $\mudot_t = - \ddiv(\mu_t v_t)$ with $|v_t(x)| = 1 / \sqrt{t}$, we have  $\g^0_{\mu_t}(\mudot_t,\mudot_t) = \g^{\tn{HK}}_{\mu_t}(\mudot_t,\mudot_t) = 1/t$. It shows that $(\mu_t)_t$ has infinite action on $[0,1]$ both for the classical optimal transport geometry and the Hellinger--Kantorovich geometry. 
\end{example}

\subsection{Mean estimate for the squared Euclidean distance} \label{section:sub:mean_estimate}

In this section $c(x,y) = \norm{x-y}^2$ is the squared Euclidean distance and we work with measures in \(\R^d\) with bounded support. For such a measure $\mu$, we write $m = \pair{\mu}{\id} \in \R^d$ for its mean, and $\bar{\mu} = (\id - m)_\# \mu$ for the centered measure. As recalled in the introduction, it is well understood that for classical optimal transport the cost $\OT_0$ between $\mu_0$ and $\mu_1$ decomposes as
\begin{equation*}
    \OT_0(\mu_0,\mu_1) = \norm{m_1 - m_0}^2 + \OT_0(\bar{\mu}_0,\bar{\mu}_1).
\end{equation*}
The same proof carries through for the Sinkhorn divergence (Proposition~\ref{prop:Seps_bound_mean}). We then extend this orthogonal decomposition result to the metric tensor in Proposition~\ref{prop:gmu_mean} and to the distance $\dS$ in Theorem~\ref{theorem:Pythagoras_distance}, which in particular implies that translations \(\mu_t = (\id + t \cdot v)_\# \mu_0\) for $v \in \R^d$ are geodesics for $\dS$.

\begin{proposition} \label{prop:Seps_bound_mean}
    On \(\R^d\) let \(c(x,y) = \norm{x-y}^2\). Let \(\mu_0,\mu_1 \in \prm(\R^d)\) with bounded support and means \(m_0, m_1 \in \R^d\). Their centered versions $\bar{\mu}_0, \bar{\mu}_1$ satisfy 
    \begin{equation*}
        S_\varepsilon(\mu_0,\mu_1) = \norm{m_1 - m_0}^2 + S_\varepsilon(\bar{\mu}_0,\bar{\mu}_1).
    \end{equation*}
\end{proposition}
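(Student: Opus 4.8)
The plan is to first prove the analogous decomposition for entropic optimal transport itself,
\[
\OT_\varepsilon(\mu_0,\mu_1) = \norm{m_1-m_0}^2 + \OT_\varepsilon(\bar{\mu}_0,\bar{\mu}_1),
\]
and then to obtain the statement for $S_\varepsilon$ by applying this identity to each of the three terms in the definition~\eqref{eq:intro:def_sinkhorn_div}. Since $\bar{\mu}_0,\bar{\mu}_1$ are translates of $\mu_0,\mu_1$ they again have bounded support, so $\OT_\varepsilon$ is finite for all the pairs involved.

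For the $\OT_\varepsilon$ decomposition, the key observation is that the affine map $T \colon (x,y) \mapsto (x-m_0,\,y-m_1)$ is a bijection of $\R^d \times \R^d$ which restricts to a bijection $\pi \mapsto \bar{\pi} \assign T_\# \pi$ between $\Pi(\mu_0,\mu_1)$ and $\Pi(\bar{\mu}_0,\bar{\mu}_1)$, and which sends $\mu_0 \otimes \mu_1$ to $\bar{\mu}_0 \otimes \bar{\mu}_1$. Under this change of variables, writing $x = \bar{x}+m_0$, $y=\bar{y}+m_1$ and expanding the quadratic cost gives
\[
c(x,y) = \norm{x-y}^2 = \norm{\bar{x}-\bar{y}}^2 + 2\pair{\bar{x}-\bar{y}}{m_0-m_1} + \norm{m_0-m_1}^2 ,
\]
and integrating against $\pi$ (equivalently against $\bar\pi$) the linear cross term vanishes, precisely because $\bar{\mu}_0$ and $\bar{\mu}_1$ are centered, so that $\iint c \diff\pi = \iint \norm{\bar{x}-\bar{y}}^2 \diff\bar\pi + \norm{m_0-m_1}^2$. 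At the same time, since the Radon--Nikodym derivative is preserved under push-forward by a bijection, $\KL(\pi \mid \mu_0\otimes\mu_1) = \KL(\bar\pi \mid \bar\mu_0\otimes\bar\mu_1)$. Adding the two identities shows that the objective in~\eqref{eq:OTeps} transforms by the additive constant $\norm{m_0-m_1}^2$ along the bijection $\pi \leftrightarrow \bar\pi$; taking the infimum yields the desired decomposition of $\OT_\varepsilon$.

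It then remains to combine. Applying the decomposition to $\OT_\varepsilon(\mu_0,\mu_1)$, and to $\OT_\varepsilon(\mu_i,\mu_i)$ for $i=0,1$ — where the mean difference is zero, so $\OT_\varepsilon(\mu_i,\mu_i)=\OT_\varepsilon(\bar\mu_i,\bar\mu_i)$ — and subtracting as in~\eqref{eq:intro:def_sinkhorn_div}, the squared-mean contributions collapse to $\norm{m_1-m_0}^2$ while the centered contributions reassemble into $S_\varepsilon(\bar\mu_0,\bar\mu_1)$. I do not expect any genuine obstacle: the only points deserving a line of care are the invariance of $\KL$ under the simultaneous translation of $\pi$ and of the product reference measure, and the fact that the centering hypothesis is exactly what annihilates the linear term $\pair{\bar{x}-\bar{y}}{m_0-m_1}$ after integration. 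This is the same mechanism as in the classical (unregularized) case of~\cite[Lemma 8.8]{BickelFreedman}, the entropic term simply coming along for free thanks to its translation invariance.
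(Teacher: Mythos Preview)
Your proposal is correct and follows essentially the same approach as the paper: reduce to an identity for $\OT_\varepsilon$ via the bijection $(x,y)\mapsto(x-m_0,y-m_1)$ between $\Pi(\mu_0,\mu_1)$ and $\Pi(\bar\mu_0,\bar\mu_1)$, use the quadratic expansion of the cost together with translation invariance of $\KL$, and then combine the three instances in the definition of $S_\varepsilon$. The only cosmetic difference is that you kill the cross term by appealing to the centeredness of $\bar\mu_0,\bar\mu_1$, whereas the paper integrates $\pair{x-y}{m_0-m_1}$ against $\pi$ directly using the marginal constraint; these are two sides of the same coin.
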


\begin{proof}
    The proof is basically the same as for plain optimal transport, where $\varepsilon = 0$. For the squared Euclidean distance, optimal couplings for \(\OT_\varepsilon\) remain optimal under translations in the following sense: for all \(\pi \in \Pi(\mu_0,\mu_1)\) it holds \((\id-m_0,\id - m_1)_\# \pi \in \Pi(\bar{\mu}_0, \bar{\mu}_1)\) and 
    \begin{align*}
        \pair{(\id - m_0,\id - m_1)_\# \pi}{c} 
        &= \int_{X \times X} \norm{x - m_0 - (y - m_1)}^2 \diff \pi(x,y) \\
        &= \int_{X \times X} \norm{x - y}^2 - 2 \pair{x - y}{m_0 - m_1} + \norm{m_1 - m_0}^2 \diff \pi(x,y) \\ 
        &= \pair{\pi}{c} -  \norm{m_1 - m_0}^2.
    \end{align*}
    The \(\KL\)-divergence is invariant under such shifts, i.e
    \begin{equation*}
        \KL((\id - m_0, \id - m_1)_\# \pi \, | \, (\id - m_0)_\# \mu_0 \otimes (\id - m_1)_\# \mu_1) = \KL(\pi \, | \, \mu_0 \otimes \mu_1).
    \end{equation*}
    Therefore, minimizing over \(\pi\) yields $\OT_\varepsilon(\mu_0, \mu_1) =   \norm{m_1 - m_0}^2 + \OT_\varepsilon(\bar{\mu}_0,\bar{\mu}_1)$. Analogously one sees that \(\OT_\varepsilon( \mu_i, \mu_i) = \OT_\varepsilon(\bar{\mu}_i,\bar{\mu}_i)\) for $i=0,1$. From the definition of \(S_\varepsilon\) in~\eqref{eq:intro:def_sinkhorn_div} we obtain the result. 
\end{proof}

We now extend this decomposition to the level of the metric tensor. 
If $v \in \R^d$, the uniform translation of a measure $\mu$ by $v$ corresponds to the derivative $\mudot = - \ddiv(v \mu) \in \Hil_{\mu,0}^*$: this is the continuity equation. For \(\mu \in \prm(X)\) let \(\calS_\mu \assign \setgiven{-\ddiv(v\mu)}{v \in \R^d} \subseteq \Hil_{\mu,0}^*\) be the set of constant shift perturbations of \(\mu\). Denote the orthogonal projection onto \(\calS_\mu\) with respect to the inner product \(\g_\mu\) by 
\begin{equation*}
    P_{\calS_\mu} \colon \Hil_{\mu,0}^* \to \calS_\mu. 
\end{equation*}

\begin{proposition} \label{prop:gmu_mean}
    Let \(X \subseteq \R^d\) be compact, \(c(x,y) = \norm{x-y}^2\). 
    Let \(\mu \in \prm(X)\) and let \(\mudot \in \Hil_{\mu,0}^*\).
    Then 
    \begin{enumerate}
        \item the mean \(\dot{m} = \pair{\mudot}{\id}\) exists in \(\R^d\) and is characterized by \(P_{\calS_{\mu}}(\mudot) = - \ddiv(\dot{m} \mu)\),
        \item there holds $\g_\mu(\mudot,\mudot) = \norm{\dot{m}}^2 + \g_\mu(\mudot + \ddiv(\dot{m} \mu), \mudot + \ddiv(\dot{m} \mu))$.
    \end{enumerate}
\end{proposition}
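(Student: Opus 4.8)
The plan is to prove both statements at once by determining the $\g_\mu$-geometry of the subspace $\calS_\mu$ of shift perturbations. For $c(x,y)=\norm{x-y}^2$ I expect $v\mapsto-\ddiv(v\mu)$ to be an \emph{isometry} of $(\R^d,\abs{\cdot})$ onto $(\calS_\mu,\g_\mu)$, with the $\g_\mu$-orthogonal projection onto $\calS_\mu$ extracting the mean; part~1 then follows from the variational characterization of the projection and part~2 from the Pythagorean theorem. Everything reduces to computing $H_\mu[-\ddiv(v\mu)]$ for $v\in\R^d$, which in turn rests on a formula for $\nabla\fmumu$. For the latter, note that since $c$ is smooth the fixed-point relation $\fmumu=T_\varepsilon(\fmumu,\mu)$ exhibits $\fmumu$ as (the restriction to $X$ of) a smooth function, so I may differentiate the marginal identity $\int_X k_\mu(x,y)\diff\mu(x)=1$ from~\eqref{eq:kernel_slice_int_1} in $y$; using $\partial_{y_i}k_\mu(x,y)=\frac{1}{\varepsilon}\bigl(\partial_i\fmumu(y)-2(y_i-x_i)\bigr)k_\mu(x,y)$ and $\int_X k_\mu(x,y)\diff\mu(x)=1$ again gives $\partial_i\fmumu(y)=2y_i-2(K_\mu[\id])_i(y)$, i.e.\ $\nabla\fmumu=2(\id-K_\mu)[\id]$ with $\id$ the map $x\mapsto x$ and $K_\mu$ acting componentwise, equivalently $\nabla\fmumu(x)-2x=-2K_\mu[\id](x)$.

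Next, for $v\in\R^d$ one has $-\ddiv(v\mu)\in\Hil_{\mu,0}^*$ by Remark~\ref{remark:admissible_Hmustar}, acting on $\Cont^1$ functions by $\pair{-\ddiv(v\mu)}{\phi}=\int_X\nabla\phi(x)\cdot v\diff\mu(x)$. Evaluating $H_\mu$ via its pointwise formula $H_\mu[-\ddiv(v\mu)](y)=\pair{-\ddiv(v\mu)}{k_\mu(\vdot,y)}$, inserting $\nabla_x k_\mu(x,y)=\frac{1}{\varepsilon}\bigl(\nabla\fmumu(x)-2(x-y)\bigr)k_\mu(x,y)$, substituting the identity of the previous step, and using $\int_X k_\mu(x,y)\diff\mu(x)=1$, I expect to arrive at
\begin{equation*}
H_\mu[-\ddiv(v\mu)]=\frac{2}{\varepsilon}\,(\id-K_\mu^2)[\id\cdot v],
\end{equation*}
where $\id\cdot v$ denotes the scalar function $x\mapsto x\cdot v$. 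The left-hand side is the Riesz representative of a bounded functional on $\Hil_\mu$ and hence lies in $\Hil_\mu/\R$, while $K_\mu^2[\id\cdot v]\in\Hil_\mu$ by Proposition~\ref{prop:Hmu_compact_Hmu}; therefore $\id\cdot v$ lies in $\Hil_\mu$ modulo constants, so each coordinate function represents an element of $\Hil_\mu/\R$. Consequently, for \emph{every} $\mudot\in\Hil_{\mu,0}^*$ the mean $\dot m\assign\pair{\mudot}{\id}\in\R^d$ (pairing componentwise) is well defined, which is the existence claim in part~1; and applying $(\id-K_\mu^2)\inv$ from Theorem~\ref{theorem:inverse_operators_Hmu} yields $(\id-K_\mu^2)\inv H_\mu[-\ddiv(v\mu)]=\frac{2}{\varepsilon}\,\id\cdot v$ in $\Hil_\mu/\R$.

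Feeding this into the formula~\eqref{eq:metric_tensor_Hmu} for $\g_\mu$, which is valid on the completion $\Hil_{\mu,0}^*$, gives for every $\mudot\in\Hil_{\mu,0}^*$ and $v\in\R^d$
\begin{equation*}
\g_\mu(\mudot,-\ddiv(v\mu))=\frac{\varepsilon}{2}\bpair{\mudot}{\tfrac{2}{\varepsilon}\,\id\cdot v}=\pair{\mudot}{\id\cdot v}=\dot m\cdot v,
\end{equation*}
and, specializing $\mudot=-\ddiv(u\mu)$, $\g_\mu(-\ddiv(u\mu),-\ddiv(v\mu))=\pair{-\ddiv(u\mu)}{\id\cdot v}=\int_X v\cdot u\diff\mu(x)=u\cdot v$, so that $v\mapsto-\ddiv(v\mu)$ is the claimed isometry onto $\calS_\mu$. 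Hence $\mudot+\ddiv(\dot m\mu)$ is $\g_\mu$-orthogonal to $\calS_\mu$, because $\g_\mu(\mudot+\ddiv(\dot m\mu),-\ddiv(v\mu))=\dot m\cdot v-\dot m\cdot v=0$ for all $v\in\R^d$; since $-\ddiv(\dot m\mu)\in\calS_\mu$, this exhibits $P_{\calS_\mu}(\mudot)=-\ddiv(\dot m\mu)$, finishing part~1, and part~2 is the Pythagorean identity for the splitting $\mudot=\bigl(-\ddiv(\dot m\mu)\bigr)+\bigl(\mudot+\ddiv(\dot m\mu)\bigr)$ together with $\g_\mu(-\ddiv(\dot m\mu),-\ddiv(\dot m\mu))=\norm{\dot m}^2$. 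I expect the main obstacle to be the explicit evaluation of $H_\mu[-\ddiv(v\mu)]$ and the attendant bookkeeping of whether each object is an honest function, an element of $\Cont(X)/\R$, or of $\Hil_\mu/\R$ — in particular, the fact that the coordinate functions represent elements of $\Hil_\mu/\R$, which is exactly what makes $\pair{\mudot}{\id}$ meaningful for abstract $\mudot$ in the completion and is not obvious a priori; the differentiability of $\fmumu$, on the other hand, causes no difficulty since $c$ is smooth.
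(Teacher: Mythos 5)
Your proposal is correct and takes essentially the same approach as the paper: differentiate the marginal identity to obtain $\nabla\fmumu = 2\id - 2K_\mu[\id]$, compute $H_\mu[-\ddiv(v\mu)] = \frac{2}{\varepsilon}(\id - K_\mu^2)[\langle v,\id\rangle]$, conclude that the coordinate functions lie in $\Hil_\mu/\R$ (so $\dot m=\pair{\mudot}{\id}$ is well-defined on all of $\Hil_{\mu,0}^*$), and finish with the Pythagorean identity. The only stylistic difference is that you substitute the formula for $\nabla\fmumu$ into $\nabla_x k_\mu$ before integrating, whereas the paper carries $\nabla\fmumu$ symbolically through the $H_\mu$ computation and substitutes afterwards — the same algebra in a slightly different order.
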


\begin{proof}
    For \(v \in \R^d\) denote \(\mudot^v = - \ddiv(v\mu)\). As a distribution of order one it belongs to $\Hil_{\mu}^*$ (see Remark~\ref{rmk:comp_Hmu_Cm}). 
    We compute
    \begin{align}
    \begin{split} \label{eq:proof:gmu_mean:Hmu}
        H_\mu [\mudot^v] (x) & = \int_X \pair{v}{\nabla_y k_\mu(x,y)} \diff \mu(y) \\
        & = \frac{1}{\varepsilon} \int_X \pair{v}{\nabla \fmumu(y) - 2y + 2x} k_\mu(x,y) \diff \mu(y) \\
        & = \frac{1}{\varepsilon} K_\mu[\pair{v}{\nabla \fmumu - 2 \id}](x) + \frac{2}{\varepsilon} \pair{v}{x}.
    \end{split}
    \end{align}
    Differentiating the relation \(\fmumu(y) = T_\varepsilon(\fmumu,\mu)(y)\) in \(y\) yields 
    \begin{equation} \label{eq:gradient_fmumu}
        2y - \nabla \fmumu(y) = \int_X k_\mu(y,z) 2 z \diff \mu(z) = 2 K_\mu[\id](y). 
    \end{equation}
    Thus $\pair{v}{\nabla \fmumu - 2 \id} = - 2 K_\mu[\pair{v}{\id}]$. Applying this in~\eqref{eq:proof:gmu_mean:Hmu}, we deduce 
    the identity $\frac{\varepsilon}{2} (\id - K_\mu^2)\inv H_\mu[\mudot^v] = \pair{v}{\id}$ in \(\Hil_{\mu} / \R\).
    In particular, the function $\pair{v}{\id}$ belongs to $\Hil_{\mu} / \R$ for any $v \in \R^d$. Since every coordinate function $x \mapsto x_i$ belongs to $\Hil_\mu / \R$, the mean $\dot{m} = \pair{\mudot}{\id}$ is well defined as an element of $\R^d$. 
    
    Given the expression of the metric tensor (Definition~\ref{def:metric_tensor_measure}), we see that 
    \begin{equation*}
        \g_\mu(\mudot,\mudot^v) = \pair{\mudot}{\pair{v}{\id}} = \pair{v}{\dot{m}} = \g_\mu(\mudot^{\dot{m}},\mudot^v)
    \end{equation*}
    Therefore \(\mudot^{\dot{m}}\) is the orthogonal projection of \(\mudot\) to $\calS_\mu$ with respect to the metric tensor \(\g_\mu\). The second point of the proposition is a direct consequence of the Pythagorean identity for the inner product $\g_\mu$. 
\end{proof}

\begin{remark}
    For $\Cont^m$-perturbations (Definition~\ref{def:Cm_perturbations}) for which the metric tensor is linked to the Sinkhorn divergence via Theorem~\ref{theorem:hessian_sinkhorn}, the proof of Proposition~\ref{prop:gmu_mean} can be done directly as an application of Proposition~\ref{prop:Seps_bound_mean}.    
\end{remark}

The orthogonal decomposition at the level of the metric tensor from Proposition~\ref{prop:gmu_mean} can be integrated and extended to the level of the distance \(\dS\).

\begin{theorem}
\label{theorem:Pythagoras_distance}
    On \(\R^d\) let \(c(x,y) = \norm{x-y}^2\). Let \(\mu_0,\mu_1 \in \prm(\R^d)\) with bounded support and means \(m_0, m_1 \in \R^d\) and centered measures $\bar{\mu}_0$, $\bar{\mu}_1$. Then 
    \begin{equation*}
        \dS(\mu_0,\mu_1)^2 = \norm{m_1 - m_0}^2 + \dS(\bar{\mu}_0,\bar{\mu}_1)^2.
    \end{equation*}
    In particular $\dS(\mu_0,\mu_1) \geq \norm{m_1 - m_0}$ with equality if and only if \(\mu_1\) is a translation of \(\mu_0\). In this case \(\mu_t = (\id + t(m_1 - m_0))_\# \mu_0\) is the unique geodesic between them.
\end{theorem}

\noindent Before diving into the proof of the theorem, we need an additional technical lemma. Whereas Proposition~\ref{prop:gmu_mean} is concerned with what is happening at the level of the metric tensor, we need to check additionally that the map $\mu \mapsto \bar{\mu} = (\id - m)_\# \mu$ with $m = \pair{\mu}{\id}$ preserves the class of admissible curves.

\begin{lemma} \label{lemma:mt_barmu_t_smooth}
    Let $(\mu_t)_t \in \Pall$ with support in some bounded set and write $m_t$ and $\bar{\mu}_t$ for the mean and centered part of $\mu_t$. Then $(\bar{\mu}_t) \in \Pall$ (with support in some possibly larger bounded set) and $(m_t) \in \Hil^1((0,1);\R^d)$.
\end{lemma}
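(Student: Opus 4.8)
The plan is to work entirely in the $\beta$-coordinates, since admissibility of $(\mu_t)_t$ means $(\beta_t)_t = (B(\mu_t))_t \in \Hil^1((0,1);\Hil_c)$. Write $m_t = \pair{\mu_t}{\id}$; the first task is to show $(m_t)_t \in \Hil^1((0,1);\R^d)$. The coordinate functions $x \mapsto x_i$, restricted to the common bounded set containing all supports, lie in $\Cont(X) \subset \Hil_c^*$ (using that the injection $\Hil_c \hookrightarrow \Cont(X)$ is continuous, so its adjoint maps $\Cont(X)^* = \calM(X)$, hence also the coordinate functionals, into bounded functionals tested against $\beta$ via $\pair{\mu_t}{\phi} = \pair{H_c[\ef_{\mu_t}^{-1}\mu_t\cdot\,]}{\ldots}$ — more simply, $m_t^{(i)} = \pair{\mu_t}{x_i}$ and $\mu_t = \alpha_t\beta_t$ with $\alpha_t = \A(\mu_t)$). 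The cleanest route is: $m_t^{(i)} = \pair{\alpha_t \beta_t}{x_i} = \pair{\alpha_t}{x_i \beta_t}$ where $\alpha_t = \ef_{\mu_t}\mu_t$; but it is even more direct to use $m_t^{(i)} = \pair{\mu_t}{x_i}$ together with Lemma~\ref{lemma:mudot_by_betadot_in_RKHS}, which for a.e.\ $t$ gives $\dd{}{t}\pair{\mu_t}{\phi} = \pair{\betadot_t}{\ef_{\mu_t}^{-1}(\id+K_{\mu_t})[\phi]}_{\Hil_c}$ for $\phi \in \Hil_{\mu_t}$. Since $X$ is a bounded subset of $\R^d$ and $c \in \Cont^2$ (quadratic), the coordinate functions belong to $\Hil_\mu$ by Remark~\ref{remark:admissible_Hmustar}; the function $\ef_{\mu_t}^{-1}(\id+K_{\mu_t})[x_i]$ is uniformly bounded in $\Hil_c$-norm (using the contraction estimate of Proposition~\ref{prop:Kmu_contraction} and the bounds on $\fmumu$ from \cite[Thm.~1.2]{DiMarino2019}), so $|\dot m_t^{(i)}| \le C\,\norm{\betadot_t}_{\Hil_c}$, giving $\dot m \in L^2((0,1);\R^d)$ and hence $(m_t)_t \in \Hil^1((0,1);\R^d)$ after checking absolute continuity via the fundamental theorem of calculus (Theorem~\ref{theorem:H1_fundamental_theorem}).

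Next, to show $(\bar\mu_t)_t \in \Pall$, I would compute $\bar\beta_t \assign B(\bar\mu_t)$ and show it lies in $\Hil^1((0,1);\Hil_c)$. Here I use the translation covariance established in the proof of Proposition~\ref{prop:Seps_bound_mean}: the Schrödinger potential satisfies $f_{\bar\mu_t,\bar\mu_t}(x) = f_{\mu_t,\mu_t}(x+m_t) - \norm{m_t}^2$ (up to the usual constant shift; one checks this from the Schrödinger system \eqref{eq:Schroedinger_system} using $c(x,y) = \norm{x-y}^2$ and the shift structure of the self-transport kernel). Consequently $\bar\beta_t = \exp(-f_{\bar\mu_t,\bar\mu_t}/\varepsilon) = e^{\norm{m_t}^2/\varepsilon}\,(\beta_t \circ (\cdot + m_t))$. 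So $\bar\beta_t$ is obtained from $\beta_t$ by a scalar factor depending smoothly on $m_t$ and a translation of the argument by $m_t$. The regularity of $(\bar\beta_t)_t$ then follows from: (i) $(\beta_t)_t \in \Hil^1((0,1);\Hil_c)$; (ii) $(m_t)_t \in \Hil^1((0,1);\R^d)$ with bounded image, just proved; (iii) the map $(\beta,v) \mapsto e^{\norm v^2/\varepsilon}\beta(\cdot+v)$ being locally Lipschitz from $\Hil_c \times B_R(0) \to \Hil_{c'}$ for a slightly larger kernel $c'$ — equivalently, staying inside $\calB$ (the image of $B$), which is automatic since $\bar\beta_t = B(\bar\mu_t) \in \calB$. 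The composition-of-$\Hil^1$-with-Lipschitz argument is standard for vector-valued Sobolev functions; one writes difference quotients $\frac{\bar\beta_t - \bar\beta_s}{t-s}$ and estimates them in $\Hil_c$ using the Lipschitz bounds, obtaining an $L^2$-in-time bound.

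I expect the main obstacle to be step (iii): controlling the $\Hil_c$-norm of the translated functions $\beta_t(\cdot + m_t)$ and their difference quotients in time. Translation is \emph{not} in general a bounded operation on a generic RKHS $\Hil_c$, so one cannot simply invoke a translation-operator norm bound. The resolution is to avoid abstract translation estimates and instead exploit that $\bar\beta_t = B(\bar\mu_t)$ is by construction an element of $\calB \subset \Hil_c$: since all $\bar\mu_t$ have support in a fixed bounded set (as $m_t$ ranges over a bounded set), one works with the kernel mean embedding formula $\bar\beta_t = H_c[\A(\bar\mu_t)] = \int \exp(\tfrac1\varepsilon(f_{\bar\mu_t,\bar\mu_t}(y) - c(\cdot,y)))\diff\bar\mu_t(y)$ and differentiates \emph{this} expression in $t$, using the chain rule together with the already-known differentiability of $t \mapsto m_t$ and $t \mapsto \beta_t$, plus the uniform continuity/boundedness estimates on the potentials from \cite[Lem.~3.1]{Nutz2022} and \cite[Thm.~1.2]{DiMarino2019}. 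This reduces everything to manipulations with the explicit Gaussian-type kernel $k_c$ and its $m_t$-translates, for which the required $\Hil_c$-norm bounds follow from $\norm{\phi}_\infty \le \norm{\phi}_{\Hil_c}$ and the inner-product formula \eqref{eq:def_inner_product_RKHS}, so no genuine obstruction remains — only a somewhat lengthy but routine verification that the difference quotients converge in $\Hil_c$ and are dominated in $L^2$ in time.
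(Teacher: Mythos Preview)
Your overall architecture matches the paper's: first bound $|m_t-m_s|$ by $\|\beta_t-\beta_s\|_{\Hil_c}$ (the paper does this via the difference-quotient identity \eqref{eq:proof:mudot_by_betadot:difference_quotient} applied with $\phi=\pair{v}{\id}$, after establishing the preliminary bound $\|\pair{v}{\id}\|_{\Hil_\mu}\le C\|v\|$ from Proposition~\ref{prop:gmu_mean}), then identify $\bar\beta_t$ with a translate of $\beta_t$ and estimate $\|\bar\beta_t-\bar\beta_s\|_{\Hil_c}$. Two corrections: your formula $f_{\bar\mu_t,\bar\mu_t}(x)=f_{\mu_t,\mu_t}(x+m_t)-\|m_t\|^2$ is off by the constant; since the cost $\|x-y\|^2$ is itself translation-invariant, the Schr\"odinger system gives exactly $f_{\bar\mu_t,\bar\mu_t}(x)=f_{\mu_t,\mu_t}(x+m_t)$ under the convention $\fmumu=\gmumu$, hence $\bar\beta_t=\beta_t(\cdot+m_t)$ with no scalar factor.

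The real gap is your worry that ``translation is not in general a bounded operation on a generic RKHS $\Hil_c$''. For the quadratic cost the kernel $k_c(x,y)=\exp(-\|x-y\|^2/\varepsilon)$ is translation-invariant, so translation is an \emph{isometry} of $\Hil_c$; the paper uses exactly this to get $\|\beta_t(\cdot+m_t)-\beta_s(\cdot+m_t)\|_{\Hil_c}=\|\beta_t-\beta_s\|_{\Hil_c}$ for free. The remaining piece $\|\beta_s(\cdot+m_t)-\beta_s(\cdot+m_s)\|_{\Hil_c}$ is handled not by your proposed workaround through the kernel-mean-embedding integral (which would require differentiability of $\alpha_t$ that you do not have directly), but by writing $\beta_s(\cdot+m_t)-\beta_s(\cdot+m_s)=\int_s^t \dot m_r\cdot\nabla\beta_s(\cdot+m_r)\diff r$ and checking that $\nabla\beta_s\in\Hil_c$ with controlled norm. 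This last point uses the explicit formula \eqref{eq:gradient_fmumu} for $\nabla\fmumu$ together with the isometry $\iota_\mu$ of Proposition~\ref{prop:RKHS_isometry}: one finds $\dot m_r\cdot\nabla\beta_s=\tfrac{2}{\varepsilon}(K_{\mu_s}[\pair{\dot m_r}{\id}]-\pair{\dot m_r}{\id})\beta_s$, whose $\Hil_c$-norm is bounded by $C\|\dot m_r\|$ via the preliminary estimate and the contraction of $K_{\mu_s}$. This is the missing idea in your step (iii); once you use translation-invariance of $k_c$ the ``lengthy but routine verification'' you anticipate collapses to this short computation.
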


\begin{proof}
    We start with a preliminary remark. We proved in Proposition~\ref{prop:gmu_mean} that for any $v \in \R^d$ and any $\mu$ we have $\pair{v}{\id} = \frac{\varepsilon}{2} (\id - K_{\mu}^2)\inv H_{\mu}[- \ddiv(\mu v)]$.
    Thus with the help of Theorem~\ref{theorem:inverse_operators_Hmu} and the estimate~\eqref{eq:comp_Hmu_Cm} we have $\pair{v}{\id} \in \Hil_\mu$ and 
    \begin{equation} \label{eq:aux_lemma_H1}
        \| \pair{v}{\id} \|_{\Hil_\mu} \leq  C_1\| \ddiv(\mu v) \|_{\Hil_\mu^*}  \leq C_2 \| \ddiv(\mu v) \|_{\Cont^{1,*}}  \leq C_2 \norm{v}
    \end{equation}
    for constants $C_1,C_2$ which only depends on the radius of the support of $\mu \in \prm(X)$.

    We first prove that $m_t \in \Hil^1([0,1],\R^d)$. It is enough to prove that $\pair{v}{m_t} \in \Hil^1([0,1],\R)$ for any \(v \in \R^d\). Recall from reformulating~\eqref{eq:proof:mudot_by_betadot:mu_difference_quotient} that for any \(\phi \in \Hil_{\mu_s}\) and any $t,s$,
    \begin{equation*}
        \pair{\mu_{t} - \mu_s}{\phi} = \bpair{\beta_t - \beta_s}{\ef_{\mu_s}\inv \phi}_{\Hil_c} + \bpair{\beta_t - \beta_s}{H_c[\alpha_t \phi]}_{\Hil_c}.    
    \end{equation*}
    For \(\phi = \pair{v}{\id}\), with the help of the Cauchy--Schwarz inequality and using that multiplication by $\ef_{\mu_s}\inv$ is an isometry from $\Hil_{\mu_s}$ to \(\Hil_c\), this gives 
    \begin{equation*}
        |\pair{v}{m_t - m_s}| \leq \norm{\beta_t - \beta_s}_{\Hil_c} \left( \norm{ \pair{v}{\id} }_{\Hil_{\mu_s}} + \norm{H_c[\alpha_t \pair{v}{\id}]}_{\Hil_c} \right).
    \end{equation*}
    Given that $(\beta_t)_t \in \Hil([0,1]; \Hil_c)$, we need only to prove that $\norm{ \pair{v}{\id} }_{\Hil_{\mu_s}} + \norm{H_c[\alpha_t \phi]}_{\Hil_c}$ can be bounded independently on $t,s$ to conclude with Theorem~\ref{theorem:H1_reformulation}. This is a consequence of~\eqref{eq:aux_lemma_H1} for the first term while for the second term it is enough to notice that $\alpha_t$ is bounded in total variation uniformly in $t$.
    
    We now proceed to prove that $\bar{\mu}_t \in \Pall$. By uniqueness of the Schrödinger potentials we see that $f_{\bar{\mu}_t, \bar{\mu}_t}(x) = f_{\mu_t, \mu_t}(x + m_t)$. Writing $\bar{\beta}_t = B(\bar{\mu}_t)$, this gives $\bar{\beta}_t(x) = \beta_t(x + m_t)$. Thus for any $t,s$, 
    \begin{align*}
        \norm{\bar{\beta}_t - \bar{\beta}_s}_{\Hil_c} & = \norm{\beta_t(\vdot + m_t) - \beta_s(\vdot + m_s)}_{\Hil_c} \\
        & \leq \norm{\beta_t(\vdot + m_t) - \beta_s(\vdot + m_t)}_{\Hil_c} + \norm{\beta_s(\vdot + m_t) - \beta_s(\vdot + m_s)}_{\Hil_c} .
    \end{align*}
    Using that the norm on $\Hil_c$ is invariant by translation because so is the kernel, the first term coincides with $\norm{\beta_t - \beta_s}_{\Hil_c}$.
    For the second term, we can write 
    \begin{equation*}
        \beta_s(x - m_t) - \beta_s(x - m_s) = \int_{s}^t \dot{m}_r \cdot \nabla \beta_s(x - m_r) \diff r,
    \end{equation*}
    and so we only need to check that $\nabla \beta_s(\cdot - m_r) \in \Hil_c$ with a controlled norm to justify that the equality above holds as equality of functions in $\Hil_c$. Recalling $\beta = \exp(- \fmumu / \varepsilon)$ and recalling that we have already computed $\nabla \fmumu$ in~\eqref{eq:gradient_fmumu}, we find   
    \begin{equation*}
        \dot{m}_r \cdot \nabla \beta_s = \frac{2}{\varepsilon} \left( K_{\mu_s}[\pair{\dot{m}_r}{\id}] -  \pair{\dot{m}_r}{\id}  \right) \beta_s.
    \end{equation*}
    Using invariance by translation of the norm in $\Hil_c$, and that multiplication by $\beta_s = \ef_{\mu_s}\inv$ is an isometry between $\Hil_{\mu_s}$ and $\Hil_c$,
    \begin{equation*}
        \norm{\dot{m}_r \cdot \nabla \beta_s(\vdot - m_s)}_{\Hil_c} = \norm{\dot{m}_r \cdot \nabla \beta_s}_{\Hil_c} \leq \frac{2}{\varepsilon} \left( \norm{ K_{\mu_s}[\pair{\dot{m}_r}{\id}]}_{\Hil_{\mu_s}} + \norm{\pair{\dot{m}_r}{\id}}_{\Hil_{\mu_s}} \right).
    \end{equation*}
    We use that $K_{\mu_s}$ is a contraction (see Proposition~\ref{prop:Hmu_compact_Hmu}), and again~\eqref{eq:aux_lemma_H1} to conclude that the right-hand side can be bounded by $C \cdot \norm{\dot{m}_r}$ for some \(C > 0\) independently on $s$. Thus
    \begin{equation*}
        \norm{\bar{\beta}_t - \bar{\beta}_s}_{\Hil_c} \leq  \norm{\beta_t - \beta_s}_{\Hil_c} +  C \int_t^s \norm{\dot{m}_r} \diff r.   
    \end{equation*}
    From Theorem~\ref{theorem:H1_reformulation} we conclude that $(\bar{\beta}_t)_t \in \Hil^1([0,1];\Hil_c)$ so that $(\bar{\mu}_t)_t \in \Pall$.
\end{proof}

\begin{proof}[\textbf{Proof of Theorem~\ref{theorem:Pythagoras_distance}}]
    If \((\mu_t)_t \in \Pall(\mu_0, \mu_1)\), then by Lemma~\ref{lemma:mt_barmu_t_smooth} $(B(\bar{\mu}_t))_t \in \Pall$ and $(m_t)_t$ exists in $\Hil^1((0,1);\R^d)$.
    Writing $\dot{\bar{\mu}}_t$ for the derivative of the curve $t \mapsto \bar{\mu}_t$ and $\bar{\dot{\mu}}_t$ for the translation of the perturbation $\dot{\mu}_t$ by $-m_t$, it easy to check that $\dot{\bar{\mu}}_t = \bar{\mudot}_t + \ddiv(\dot{m}_t \bar{\mu}_t)$ holds in $\Hil^*_{\mu_t,0}$ for any $t$ when all three derivatives exists. In particular, translation invariance of the metric tensor yields 
    \begin{equation*}
    \g_{\mu_t}(\mudot + \ddiv(\dot{m} \mu), \mudot_t + \ddiv(\dot{m}_t \mu_t)) = \g_{\bar{\mu}_t}(\bar{\mudot}_t + \ddiv(\dot{m}_t \bar{\mu}_t), \bar{\mudot}_t + \ddiv(\dot{m}_t \bar{\mu}_t)) =  \g_{\bar{\mu}_t}(\dot{\bar{\mu}}_t, \dot{\bar{\mu}}_t).
    \end{equation*}
    From Proposition~\ref{prop:gmu_mean} we deduce that 
    \begin{align*}
        \int_0^1 \g_{\mu_t}(\mudot_t,\mudot_t) \diff t 
        & = \int_0^1 \norm{\dot{m}_t}^2 \diff t + \int_0^1 \g_{\mu_t}(\mudot + \ddiv(\dot{m} \mu), \mudot_t + \ddiv(\dot{m}_t \mu_t)) \diff t \\
        & =  \int_0^1 \norm{\dot{m}_t}^2 \diff t + \int_0^1 \g_{\bar{\mu}_t}(\dot{\bar{\mu}}_t, \dot{\bar{\mu}}_t) \diff t.
    \end{align*}
    Conversely, from a curve of centered measures $(\bar{\mu}_t)_t$ and a path $(m_t)_t \in \Hil^1([0,1],\R^d)$ of means we can reconstruct $\mu_t = (\id + m_t)_\# \bar{\mu}_t$. We get the decomposition of the distance by minimizing over $(\mu_t)_t$. 
    
    The inequality follows, and if we have equality, then necessarily $(\bar{\mu}_t)_t$ is a constant curve and $(m_t)_t$ is a geodesic in $\R^d$. 
\end{proof}

\begin{corollary}[Energy of a travelling Dirac measure]\label{cor:example:traveling_dirac}
Let \(c(x,y) = \norm{x - y}^2\). 
Let \((x_t)_{t} \in \Hil^1((0,1);\R^d)\) and set \(\mu_t = \delta_{x_t}\). 
Then 
\begin{equation*}
    E((\mu_t)_t) = \int_0^1 \norm{\dot{x}_t}^2 \diff t.
\end{equation*}
In particular, 
\(\dS(\delta_{x_0},\delta_{x_1}) = \norm{x_0 - x_1}\) agrees with the metric on \(\R^d\) and the unique geodesic is \((\delta_{y_t})_{t \in [0,1]}\) with \(y_t = (1-t) x_0 + t x_1\). 
\end{corollary}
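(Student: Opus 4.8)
The plan is to reduce everything to Theorem~\ref{theorem:Pythagora_distance} by recognizing a travelling Dirac as a ``pure translation'' whose centered part is frozen. The first step is to identify the $\beta$-variable along the path. For $\mu=\delta_x$ the self-transport normalization~\eqref{eq:kernel_slice_int_1}, evaluated at the only point $x\in\spt\mu$ and using $c(x,x)=0$, forces $\fmumu(x)=0$; then the Schrödinger fixed-point equation $\fmumu=T_\varepsilon(\fmumu,\delta_x)$ gives $\fmumu(\vdot)=c(x,\vdot)$, so that $B(\delta_x)=\exp(-\fmumu/\varepsilon)=k_c(x,\vdot)$, which indeed satisfies $\norm{k_c(x,\vdot)}_{\Hil_c}^2=k_c(x,x)=1$, consistently with Theorem~\ref{theorem:alpha_beta_homeo}. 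Hence $\beta_t=B(\mu_t)=k_c(x_t,\vdot)$.

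The second step is to check admissibility. Since $(x_t)_t\in\Hil^1((0,1);\R^d)$ it is absolutely continuous with $x_t=x_0+\int_0^t\dot x_r\diff r$, so the $x_t$ stay in a fixed ball and $(\mu_t)_t$ has bounded support. From the reproducing property, $\norm{k_c(x,\vdot)-k_c(x',\vdot)}_{\Hil_c}^2 = 2\bigl(1-e^{-\norm{x-x'}^2/\varepsilon}\bigr)\le\frac{2}{\varepsilon}\norm{x-x'}^2$, which together with $\norm{x_t-x_s}\le\int_s^t\norm{\dot x_r}\diff r$ gives $\norm{\beta_t-\beta_s}_{\Hil_c}\le\sqrt{2/\varepsilon}\int_s^t\norm{\dot x_r}\diff r$. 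By the characterization of $\Hil^1$ in Theorem~\ref{theorem:H1_reformulation} this yields $(\beta_t)_t\in\Hil^1((0,1);\Hil_c)$, i.e.\ $(\mu_t)_t\in\Pall(\delta_{x_0},\delta_{x_1})$.

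The third step computes the energy via the decomposition established inside the proof of Theorem~\ref{theorem:Pythagora_distance}. The mean of $\delta_{x_t}$ is $m_t=x_t$ and its centered part is $\bar\mu_t=(\id-m_t)_\#\delta_{x_t}=\delta_0$, which is constant in $t$, so $\dot{\bar\mu}_t=0$ and $\g_{\delta_0}(0,0)=0$; the identity $\int_0^1\g_{\mu_t}(\mudot_t,\mudot_t)\diff t=\int_0^1\norm{\dot m_t}^2\diff t+\int_0^1\g_{\bar\mu_t}(\dot{\bar\mu}_t,\dot{\bar\mu}_t)\diff t$ therefore collapses to $E((\mu_t)_t)=\int_0^1\norm{\dot x_t}^2\diff t$. (As a cross-check one can differentiate $\beta_t=k_c(x_t,\vdot)$ directly: $\betadot_t=\sum_i\dot x_{t,i}\,\partial_{x_i}k_c(x_t,\vdot)$, whose squared $\Hil_c$-norm is $\frac{2}{\varepsilon}\norm{\dot x_t}^2$, while the $L^2(X,\delta_{x_t})$-term in $\tg_{\mu_t}$ vanishes because $\ef_{\mu_t}(x_t)=1$ and $\betadot_t(x_t)=0$; this again gives $\tg_{\mu_t}(\betadot_t,\betadot_t)=\norm{\dot x_t}^2$.) The remaining claims follow from Theorem~\ref{theorem:Pythagora_distance} with $\mu_0=\delta_{x_0}$, $\mu_1=\delta_{x_1}$: $\delta_{x_1}$ is the translate of $\delta_{x_0}$ by $x_1-x_0$ and both centered measures equal $\delta_0$, so $\newdistance(\delta_{x_0},\delta_{x_1})^2=\norm{x_1-x_0}^2$ with unique geodesic $\mu_t=(\id+t(x_1-x_0))_\#\delta_{x_0}=\delta_{y_t}$, $y_t=(1-t)x_0+tx_1$.

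The only genuinely delicate point is the $\Hil^1$-admissibility check in Step~2; everything else is a short computation or a direct invocation of Theorem~\ref{theorem:Pythagora_distance}. If instead one wants to run the direct differentiation of Step~3 rather than the decomposition, the mild technical obstacle there is justifying that the Gaussian feature map $x\mapsto k_c(x,\vdot)$ is continuously Fréchet differentiable into $\Hil_c$ with $\partial_{x_i}k_c(x,\vdot)\in\Hil_c$ (standard for smooth kernels, but it has to be cited); routing through the decomposition of Theorem~\ref{theorem:Pythagora_distance} sidesteps this entirely.
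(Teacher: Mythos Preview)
Your proof is correct and follows exactly the route the paper intends: the corollary is stated without proof precisely because it is an immediate specialization of Theorem~\ref{theorem:Pythagora_distance}, and you have correctly filled in the implicit details (the explicit form $B(\delta_x)=k_c(x,\vdot)$, the $\Hil^1$-admissibility via the Lipschitz bound on the feature map, and the collapse of the energy decomposition when $\bar\mu_t\equiv\delta_0$). Your optional cross-check via direct differentiation of $\beta_t$ is a pleasant sanity check but, as you note, unnecessary once the Pythagorean decomposition is available.
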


\section{Examples in the metric\texorpdfstring{ $\dS$}{}} \label{section:examples}

\subsection{Explicit formula for Gaussian measures} \label{section:sub:gaussians}

\begin{figure}[!ht]
\begin{center}

\includegraphics[scale=0.92]{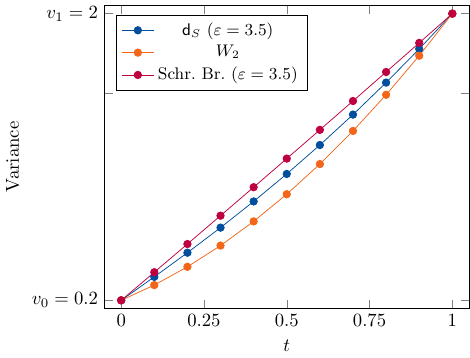}
\includegraphics[scale=0.9]{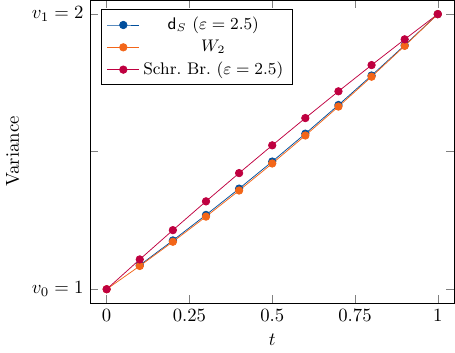}

\caption{\label{fig: Gauss}Representation of the Wasserstein and the (Gaussian-restricted) Sinkhorn-based geodesics for one-dimensional centered Gaussian distributions parametrized by their variance $v_t$,  $t \in [0,1]$, for different choices of $v_0$, $v_1$ and $\varepsilon$. For comparison we further show the $\varepsilon$-Schr\"odinger bridge.}
\end{center}
\end{figure}

Throughout this section we fix \(c(x,y) = \abs{x-y}^2\) as the squared Euclidean distance on \(\R\). We derive the metric tensor as well as an explicit formula for the geodesic restriction of the distance $\dS$ to the family of Gaussian measures in dimension one. By virtue of Theorem~\ref{theorem:Pythagoras_distance} we can restrict ourselves to centered measures. Strictly speaking, our theory a priori only applies to compactly supported measures, but we can still evaluate the metric tensor on Gaussians. We parametrize Gaussian measures by their variance $v$ rather than their standard deviation $\sigma = \sqrt{v}$. Writing $\mathcal{N}(0,v)$ for the centered Gaussian measure with variance $v$, we recall from~\cite{Gelbrich1990,Takatsu2011} that
\begin{equation*}
    \OT_0(\mathcal{N}(0,v_0),\mathcal{N}(0,v_1)) = \left( \sqrt{v_1} - \sqrt{v_0} \right)^2,
\end{equation*}
which corresponds to the unregularized metric tensor $\g^0_{\mathcal{N}(0,v)} (\dot{v},\dot{v}) = \frac{1}{4v} \dot{v}^2$.
As discussed in the introduction, page \pageref{paragraph:difference_bridges}, geodesics with respect to $\dS$ differ from a $\varepsilon$-Schr\"odinger bridge, for which an explicit formula for Gaussian measures can be found in~\cite{Bunne2022}: the result is displayed in Figure~\ref{fig: Gauss}.
With our metric we obtain the following:

\begin{theorem} \label{theorem:gaussians_gmu}
    Consider a path $(v_t)_t$ in \((0,\infty)\) that is differentiable around \(t=0\) and denote by  $\dot v_t$ its derivative at $t$. Identifying the Gaussian distribution $\mathcal{N}(0,v_t)$ with its variance \(v_t\) and writing \(v = v_0\), \(\dot{v} = \dot{v}_0\), the metric tensor reads
    \begin{equation}
    \label{eq:met-tens-gaus}
        \g_{v}(\dot{v}, \dot{v}) = g(v) \dot{v}^2 \quad \text{with} \quad g(v) = \frac{1}{4\sqrt{ \frac{\varepsilon^2}{16} + v^2 }}.
    \end{equation}
\end{theorem}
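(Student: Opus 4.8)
The plan is to read off $\g_v(\dot v,\dot v)$ from the second-order expansion of $S_\varepsilon$ along the Gaussian path, using the known closed form for $\OT_\varepsilon$ between Gaussians. Write $H(a,b) \assign \OT_\varepsilon(\mathcal{N}(0,a),\mathcal{N}(0,b))$; the explicit formula makes $H$ a $\Cont^\infty$ and symmetric function of the two variances on $(0,\infty)^2$. By the definition of $S_\varepsilon$ and symmetry, $S_\varepsilon(\mathcal{N}(0,v),\mathcal{N}(0,v_t)) = H(v,v_t) - \tfrac12 H(v,v) - \tfrac12 H(v_t,v_t)$, so a Taylor expansion in $v_t - v = t\dot v + o(t)$, using $\partial_1 H = \partial_2 H$ and $\partial_1^2 H = \partial_2^2 H$ on the diagonal, shows that all contributions cancel except $-\tfrac12 \partial_1\partial_2 H(v,v)\,(v_t-v)^2$ (this is the content of Lemma~\ref{lemma:calculus}). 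Dividing by $t^2$, letting $t\to 0$, and using that this limit is the metric tensor $\g_v(\dot v,\dot v)$ consistently with~\eqref{eq:intro:hessian_sinkhorn}, the whole statement reduces to the single identity $\partial_1\partial_2 H(v,v) = -\bigl(4v^2+\varepsilon^2/4\bigr)^{-1/2}$, which is exactly $-2\,g(v)$.

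I would then insert the explicit expression from~\cite{Mallasto2021}: in dimension one, abbreviating $\Phi(a,b)\assign\sqrt{4ab+\varepsilon^2/4}$, it reads $H(a,b) = a + b - \Phi(a,b) + \tfrac{\varepsilon}{2}\log\!\bigl(\Phi(a,b)+\tfrac{\varepsilon}{2}\bigr) + r(a) + r(b) + \const$ for a one-variable function $r$; the terms $r(a)$, $r(b)$ and the constant come from the reference measure $\mu\otimes\nu$ in~\eqref{eq:OTeps} and differential-entropy contributions, but they are annihilated by $\partial_1\partial_2$ and play no role. Note that $\tfrac12\Phi(v,v) = \sqrt{v^2+\varepsilon^2/16}$, so the target reformulates as $\partial_1\partial_2 H(v,v) = -1/\Phi(v,v)$.

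The remaining computation is short. From $\Phi^2 = 4ab+\varepsilon^2/4$ one gets $\partial_2\Phi = 2a/\Phi$, hence $\partial_1\partial_2\Phi = (2\Phi^2-4ab)/\Phi^3 = (\Phi^2+\varepsilon^2/4)/\Phi^3$. Differentiating the logarithmic term likewise and simplifying on the diagonal $a=b=v$ using $4v^2 = (\Phi-\varepsilon/2)(\Phi+\varepsilon/2)$ collapses it to $\partial_1\partial_2\bigl[\tfrac{\varepsilon}{2}\log(\Phi+\varepsilon/2)\bigr]\big|_{a=b=v} = \varepsilon^2/(4\Phi^3)$. Subtracting the two contributions, $\partial_1\partial_2 H(v,v) = -(\Phi^2+\varepsilon^2/4)/\Phi^3 + \varepsilon^2/(4\Phi^3) = -1/\Phi(v,v)$, and therefore $\g_v(\dot v,\dot v) = \tfrac{1}{2\Phi(v,v)}\dot v^2 = \tfrac{1}{2\sqrt{4v^2+\varepsilon^2/4}}\dot v^2 = \tfrac{1}{4\sqrt{v^2+\varepsilon^2/16}}\dot v^2$, which is~\eqref{eq:met-tens-gaus}.

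The genuine difficulty is bookkeeping rather than mathematics: one must transcribe the formula of~\cite{Mallasto2021} in the normalisation matching our $\OT_\varepsilon$ (penalty $\varepsilon\,\KL(\pi\,|\,\mu\otimes\nu)$), and, since Gaussians are not compactly supported and hence not literally covered by the framework of Sections~\ref{section:sinkhorn_hessian}--\ref{section:path_metric}, one simply takes $\lim_{t\to0}S_\varepsilon(\mathcal{N}(0,v),\mathcal{N}(0,v_t))/t^2$ as the definition of $\g_v$ on this family (the same computation shows it is consistent with~\eqref{eq:intro:hessian_sinkhorn}). Once these points are settled the algebra is elementary, the one trick being to eliminate $4ab$ systematically through $\Phi^2 = 4ab+\varepsilon^2/4$.
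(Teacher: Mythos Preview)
Your proof is correct and follows essentially the same approach as the paper: both reduce to computing the mixed partial $-\tfrac12\,\partial_1\partial_2\OT_\varepsilon$ on the diagonal from the explicit Gaussian formula of~\cite{Mallasto2021}. Your choice of the auxiliary variable $\Phi(a,b)=\sqrt{4ab+\varepsilon^2/4}$ in place of the paper's $\kappa=1+\sqrt{1+16\varepsilon^{-2}ab}$ (related by $\Phi=\tfrac{\varepsilon}{2}(\kappa-1)$) streamlines the algebra somewhat, but the two computations are equivalent.
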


\noindent
Clearly as $\varepsilon \to 0$ the metric tensor $\g_{v}(\dot{v}, \dot{v})$ converges to $\g^0_{\mathcal{N}(0,v)} (\dot{v},\dot{v})$. For $\varepsilon \to \infty$ it degenerates and no longer penalizes changes in the variance: this is consistent with the claims of Section~\ref{sec:sub:epsilon_limits}.

\begin{proof}
    We identify the Gaussian \(\mu_t=\mathcal{N}(0,v_t)\) with its variance \(v_t\). As shown in~\cite[Cor.~1]{Mallasto2021}, the entropic optimal transport cost between 1-dimensional Gaussian distributions with variance $v_t, v_s$ is explicitly given by
    \begin{equation}\label{eq:gaussian-OTeps}
        \OT_\varepsilon(v_t,v_s) = v_t + v_s - \frac{\varepsilon}{2} \left( \kappa_{ts} - \log \kappa_{ts} + \log 2 - 2 \right), \qquad \kappa_{ts} \assign 1 + \sqrt{1+ \frac{16}{\varepsilon^2} v_t v_s}.
    \end{equation}
    Let \(v_0 = v\), \(\dot{v}_0 = \dot{v}\).
    Let us introduce $f(x) = 1+\sqrt{1+ 16 \varepsilon^{-2}x}$, so that $\kappa_{ts} = f(v_t v_s)$. We have
    \[
        \partial_t \kappa_{ts} = \dot{v}_t v_s f'(v_t v_s), \qquad \partial_s\partial_t \kappa_{ts} = \dot{v}_s \dot{v}_t  f'(v_t v_s) + \dot{v}_s \dot{v}_t v_s v_t f''(v_t v_s).
    \]
    As in the proof of Theorem~\ref{theorem:hessian_sinkhorn}, we now compute $\g_v(\dot{v},\dot{v}) = - \frac{1}{2} \partial_t|_{t=0} \partial_s|_{s=0} \OT_\varepsilon(v_t,v_s)$. To this end, observe that
    \begin{equation*} 
        \partial_t \partial_s \OT_\varepsilon(v_t,v_s) = \frac{\varepsilon}{2} \left( - \partial_t \partial_s \kappa_{ts} + \frac{\partial_t \partial_s \kappa_{ts}}{\kappa_{ts}} - \frac{\partial_t \kappa_{ts} \partial_s \kappa_{ts}}{\kappa_{ts}^2} \right).    
    \end{equation*}
    In the new notation this gives
    \begin{equation*}
        \partial_t |_{t=0} \partial_s |_{s=0} \OT_\varepsilon(v_t,v_s) = \frac{\varepsilon}{2} \dot{v}^2 \left( ( f'(v^2) + v^2 f''(v^2) ) \left( \frac{1}{f(v^2)} -1 \right) - v^2 \frac{f'(v^2)^2}{f(v^2)^2} \right).
    \end{equation*}
    The metric tensor can thus be rewritten as
    \[
        \mathbf{g}_v(\dot{v}, \dot{v}) = g(v) \dot{v}^2, \qquad
        g(v) =  - \frac{\varepsilon}{4} \left\{ v^2 \left( f''(v^2) \frac{1-f(v^2)}{f(v^2)} - \frac{f'(v^2)^2}{f(v^2)^2} \right) + f'(v^2) \frac{1-f(v^2)}{f(v^2)} \right\}.
    \]
    It remains to simplify the expression for $g$. To this end, let us compute the first and second derivatives of $f$:
    \[
        f'(x) = \frac{8}{\varepsilon^2} \left( 1 + \frac{16}{\varepsilon^2} x \right)^{-1/2}, \qquad f''(x) = - \frac{64}{\varepsilon^4} \left( 1 + \frac{16}{\varepsilon^2} x \right)^{-3/2}.
    \]
    To simplify notations introduce the variable $u = 1+16 \varepsilon^{-2} v^2$ in such a way that 
    \[
        f(v^2) = 1 + \sqrt{u}, \quad f'(v^2) = \frac{8}{\varepsilon^2 \sqrt{u}}, \quad f''(v^2) = - \frac{64}{\varepsilon^4 u^{3/2}},
    \]
    and in particular
    \[
        \frac{1-f(v^2)}{f(v^2)} = - \frac{\sqrt{u}}{1+\sqrt{u}}.
    \]
    Plugging this into the expression for $g$ yields 
    \[
    \begin{split}
        g(v) & = \frac{\varepsilon}{4} \left\{ - v^2 \left( \frac{64}{\varepsilon^4} \frac{1}{u(1+\sqrt{u})} - \frac{64}{\varepsilon^4} \frac{1}{u(1+ \sqrt{u})^2} \right) + \frac{8}{\varepsilon^2 } \frac{1}{1+\sqrt{u}} \right\}  \\
        & = - \frac{16 v^2}{\varepsilon^3} \frac{1}{\sqrt{u}(1+ \sqrt{u})^2}  + \frac{2}{\varepsilon} \frac{1}{1+\sqrt{u}}.
    \end{split}
    \]
    To lighten the notation further, use $\frac{16}{\varepsilon^2} v^2 = u - 1$ to get
    \begin{equation*}
        g(v) = - \frac{1}{\varepsilon} \frac{u - 1}{\sqrt{u} ( 1 + \sqrt{u} )^2} + \frac{2}{\varepsilon} \frac{1}{1 + \sqrt{u}}
        = \frac{1}{\varepsilon} \frac{1 + 2\sqrt{u} + u}{\sqrt{u} (1 + \sqrt{u})^2}
        = \frac{1}{\varepsilon \sqrt{u}}.
    \end{equation*}
    The claim follows when unpacking $u =  1 + \frac{16}{\varepsilon^2} v^2$. 
\end{proof}

We now turn to the metric on $(0,\infty)$ induced by the tensor $\g$ of Theorem~\ref{theorem:gaussians_gmu}. Said differently, we look at the geodesic restriction of $\dS$ to Gaussian measures. At this point we do not know whether the set of Gaussian measures is in fact geodesically convex.

\begin{theorem} \label{theorem:gaussians_dE}
    Let $F$ be  the antiderivative of the function $x \mapsto (1+x^2)^{-1/4}$. Let $v_0, v_1 \geqslant 0$. Then the geodesic restriction $\widehat{\dS}$ of \(\dS\) to Gaussians can be expressed as
    \[
        \widehat{\dS}(v_0, v_1) = \frac{\sqrt{\varepsilon}}{4} \left| F \left( \frac{4}{\varepsilon} v_1 \right) - F \left( \frac{4}{\varepsilon} v_0 \right) \right|.
    \]
    The geodesic $(v_t)_{t \in [0,1]}$ joining $v_0$ to $v_1$ is given by
    \[
        v_t = \frac{\varepsilon}{4} F^{-1} \left[ (1-t) F \left( \frac{4}{\varepsilon} v_0 \right) + t F \left( \frac{4}{\varepsilon} v_1 \right)  \right]
    \]
    and the map $t \mapsto v_t$ is convex.
\end{theorem}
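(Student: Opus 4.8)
The plan is to read off $\widehat{\newdistance}$ as a genuine one-dimensional Riemannian distance. By definition, $\widehat{\newdistance}(v_0,v_1)^2$ is the infimum of $\int_0^1 g(v_t)\,\dot v_t^2\,\mathrm{d}t$ over curves $(v_t)_{t\in[0,1]}$ in $[0,\infty)$ joining $v_0$ to $v_1$, where $g(v)=\tfrac14(\varepsilon^2/16+v^2)^{-1/2}$ is the tensor from Theorem~\ref{theorem:gaussians_gmu}. Since $g$ is continuous and strictly positive on all of $[0,\infty)$ (in particular $g(0)=1/\varepsilon<\infty$, so the length integral is unproblematic at the boundary point $v=0$), the classical one-dimensional argument applies: Cauchy--Schwarz gives $\int_0^1 g(v_t)\dot v_t^2\,\mathrm{d}t\ge\bigl(\int_0^1\sqrt{g(v_t)}\,\lvert\dot v_t\rvert\,\mathrm{d}t\bigr)^2\ge\bigl\lvert\int_{v_0}^{v_1}\sqrt{g(w)}\,\mathrm{d}w\bigr\rvert^2$, with equality precisely for monotone, constant-speed reparametrizations. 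Hence $\widehat{\newdistance}(v_0,v_1)=\lvert\Phi(v_1)-\Phi(v_0)\rvert$ where $\Phi$ is any antiderivative of $\sqrt g$, and the minimizing curve is characterized by $\Phi(v_t)=(1-t)\Phi(v_0)+t\Phi(v_1)$.

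The second step is to identify $\Phi$ explicitly. One has $\sqrt{g(v)}=\tfrac12(\varepsilon^2/16+v^2)^{-1/4}$; the substitution $v=\tfrac\varepsilon4 x$ turns $\varepsilon^2/16+v^2$ into $\tfrac{\varepsilon^2}{16}(1+x^2)$, and after bookkeeping the constants one gets $\int\sqrt{g(v)}\,\mathrm{d}v=\tfrac{\sqrt\varepsilon}{4}F(4v/\varepsilon)$, i.e.\ $\Phi(v)=\tfrac{\sqrt\varepsilon}{4}F(4v/\varepsilon)$ with $F$ the antiderivative of $x\mapsto(1+x^2)^{-1/4}$ from the statement. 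Substituting into $\widehat{\newdistance}(v_0,v_1)=\lvert\Phi(v_1)-\Phi(v_0)\rvert$ yields the distance formula. Solving $\Phi(v_t)=(1-t)\Phi(v_0)+t\Phi(v_1)$ for $v_t$ gives the geodesic: here one uses that $F\colon\R\to\R$ is a strictly increasing bijection — its derivative is everywhere positive, and $(1+x^2)^{-1/4}\sim\lvert x\rvert^{-1/2}$ is not integrable at $\pm\infty$ so $F$ is onto — hence $F^{-1}$ is globally defined and the stated formula for $v_t$ makes sense.

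For the convexity of $t\mapsto v_t$, write $v_t=\tfrac\varepsilon4\,(F^{-1}\circ\ell)(t)$ with $\ell(t)=(1-t)F(4v_0/\varepsilon)+tF(4v_1/\varepsilon)$ affine. Since $v_0,v_1\ge0$ we have $4v_i/\varepsilon\ge0$, so $\ell$ takes values in the interval $F([0,\infty))$. On $[0,\infty)$ the function $F$ is increasing and concave, because $F''(x)=-\tfrac x2(1+x^2)^{-5/4}\le0$ there; an increasing concave bijection has a convex inverse, so $F^{-1}$ is convex on $F([0,\infty))$. Composing the convex function $F^{-1}$ with the affine map $\ell$ and multiplying by the positive constant $\varepsilon/4$ preserves convexity, which is the claim.

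The computation is essentially routine; the points needing care are the constant bookkeeping in the substitution $v=\tfrac\varepsilon4 x$, verifying that $F$ is a bijection of $\R$ so that $F^{-1}$ is everywhere defined, and — the only genuine subtlety — making sure in the convexity argument that \emph{both} endpoints $4v_i/\varepsilon$ lie in the region $[0,\infty)$ where $F$ is concave, which is exactly where the hypothesis $v_0,v_1\ge0$ enters. (The endpoint value $v=0$ causes no trouble elsewhere: $g$ is bounded there, $F^{-1}$ is defined at $F(0)$, and $\widehat{\newdistance}$ extends continuously, so the formulas remain valid at the boundary.)
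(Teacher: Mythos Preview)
Your proof is correct and takes a different, somewhat more elementary route than the paper. The paper works via calculus of variations: it writes down the Euler--Lagrange equation $g(v_t)\ddot v_t = -\tfrac12 g'(v_t)\dot v_t^2$, reads off convexity directly from $g>0$, $g'\le 0$, observes that the energy density $g(v_t)\dot v_t^2$ is a conserved quantity, and then integrates the resulting first-order separable ODE to arrive at the formula involving $F$. You instead use the standard metric-geometry shortcut: Cauchy--Schwarz plus a change of variables immediately gives $\widehat{\newdistance}(v_0,v_1)=\lvert\Phi(v_1)-\Phi(v_0)\rvert$ with $\Phi'=\sqrt g$, the geodesic is the pullback of the straight line under $\Phi$, and convexity follows from the concavity of $F$ on $[0,\infty)$ together with the standard fact that an increasing concave bijection has a convex inverse. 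Your argument bypasses the Euler--Lagrange machinery entirely and packages the existence and identification of the minimizer in a single inequality-plus-equality step; the paper's ODE route is slightly longer but makes the convexity of the geodesic visible already at the level of the second-order equation, before the explicit formula is known. Both approaches are standard for one-dimensional Riemannian metrics, and your bookkeeping (the substitution $v=\tfrac{\varepsilon}{4}x$, the global bijectivity of $F$, and the restriction to $[0,\infty)$ for the concavity of $F$) is all handled correctly.
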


\begin{proof}
    Given the metric tensor~\eqref{eq:met-tens-gaus} on \((0,\infty)\), geodesics correspond to minimizers of the energy 
    \[
        E((v_t)_t) = \int_0^1  g(v_t) \dot{v}_t^2 \diff t.
    \]
    It is easy to see that geodesics must be monotone and since $g$ is smooth on the interval $[v_0,v_1]$ we can use standard tools from smooth calculus of variations on one-dimensional curves to obtain the minimizer.
    Let us call $q_t = \dot{v}_t$, and $L(v,q) = g(v) q^2$ the Lagrangian. Recall that the Euler--Lagrange equation is given by 
    \[
        \dd{}{t} \left( \frac{\partial L}{\partial q}(v_t, \dot{v_t}) \right) = \frac{\partial L}{\partial v}(v_t, \dot{v}_t).
    \]
    Expanding it yields
    \[
        g(v_t) \ddot{v}_t = - \frac{g'(v_t)}{2} \dot{v}_t^2.
    \]
    A simple inspection reveals that $g$ is non-negative and decreasing. Thus $g' \leqslant 0$ and the equation tells us that $\ddot{v} \geqslant 0$, which is equivalent to the fact that $(v_t)_{t} $ is convex in time.
    Note that if $v_0 \neq v_1$, then $\dot{v}$ cannot vanish: if it were the case, by the unique solvability of the above ODE, $v$ should be a constant function. Thus $(v_t)_t$ is monotone, and without loss of generality we focus on $v_0 \leqslant v_1$. In this case $(v_t)_t$ is increasing in time.
    For solutions of the above ODE one has that
    \(
    C =  g(v_t) \dot{v}_t^2 
    \)
    is constant in time (as usual for geodesics) and non-negative and thus $v$ satisfies the first-order equation 
    \[
        \dot{v}_t = \sqrt{ \frac{C}{g(v_t)} } = 2 \sqrt{C} \left( \frac{\varepsilon^2}{16} + v_t^2 \right)^{1/4} =\sqrt{C\varepsilon} \left(1 + \left(\frac{4v_t}{\varepsilon}\right)^2\right)^{1/4},  
    \]
    This equation is separable and thus can be integrated. Recalling the definition of $F$ above, the ODE yields
    \begin{equation} \label{eq:proof:gaussians_dE:ddtF}
        \dd{}{t} F \left( \frac{4}{\varepsilon} v_t \right) = 4 \sqrt{\frac{C}{\varepsilon}}, 
    \end{equation}
    and thus integrating between $0$ and $1$ gives
    \[
        C = \frac{\varepsilon}{16} \left[ F \left( \frac{4}{\varepsilon} v_1 \right) - F \left( \frac{4}{\varepsilon} v_0 \right) \right]^2. 
    \]
    Now \(\widehat{\dS}(v_0,v_1) = \sqrt{C}\) and the formula for \(v_t\) follows from~\eqref{eq:proof:gaussians_dE:ddtF}.
\end{proof}

\subsection{The two-point space} \label{section:sub:vertical_two_points}

In this section, we analyze the Riemannian tensor \(\g_\mu\) and the metric \(\dS\) for a two-point space $X=\{x_1,x_2\}$ where probability measures are fully specified by the mass $m \in [0,1]$ located at $x_1$ and the metric is fully characterized by the distance $d(x_1,x_2)$.
The precise result is stated in Proposition \ref{prop:example:two_point} below.
When $d(x_1,x_2)$ is much smaller than the blur scale $\sqrt{\varepsilon}$ we find that $\dS(\delta_{x_1},\delta_{x_2}) \approx d(x_1,x_2)$, which is the same as one would get on an interval $[x_1,x_2] \subset \R$ of length $d(x_1,x_2)$. Thus, it seems that the metric $\dS$ may `gloss over' the discreteness of $X$ at very small length scales.
Conversely, for $d(x_1,x_2) \gg \sqrt{\varepsilon}$ the diameter of the space $(\prm(X),\dS)$ grows at least like $\exp(d(x_1,x_2)^2/(2\varepsilon))$. As $d(x_1,x_2)$ increases, the blur in the entropic optimal transport underlying the construction of $\dS$ decreases. It is well known that $\prm(X)$ equipped with the quadratic Monge--Kantorovich distance is not a length space~\cite[Remark 2.1]{MaasDiscreteMCFlow2011}. Thus this exponential divergence of the diameter is consistent with the conjecture that $\dS$ converges to the unregularized quadratic Monge--Kantorovich distance as $\varepsilon \to 0$.
See Section \ref{sec:Outlook} for a slightly longer discussion and some related references.

\begin{proposition} \label{prop:example:two_point}
    Let \(X = \{ x_1, x_2 \}\). For a parameter \(r \in (0, \infty)\) let \(d(x_1,x_2) = r\), \(c = d^2\). 
    \begin{enumerate}
        \item \label{item:prop:two_point} For \(m \in (0,1)\), \(\mu = m \delta_{x_1} + (1-m) \delta_{x_2}\), \(\dot{m} \in \R\), \(\mudot = \dot{m} (\delta_{x_1} - \delta_{x_2})\), denote by \(p\) the amount of mass moving from \(x_1\) to \(x_2\) in the optimal self-transport plan from \(\mu\) to itself (see equation~\eqref{eq:proof:two_point:p} below).  Then the metric tensor is given by
        \begin{equation*} 
            \g_{\mu}(\mudot,\mudot) = \frac{\varepsilon}{2} \dot{m}^2 \frac{m(1-m) - p}{p ( 2 m(1-m) - p )}.
        \end{equation*} 
        \item For \(m \colon [0,1] \to [0,1]\) continuously differentiable set \(\mu_t = m_t \delta_{x_1} + (1 - m_t) \delta_{x_2}\) for \(t \in [0,1]\). Then, as $r \to 0$,
        \begin{equation} \label{eq:two_point:near}
            E((\mu_t)_t) = r^2 \cdot \int_0^1 \dot{m}_t^2 \diff t + \Oh \left( \frac{1}{\varepsilon} \norm{\dot{m}}_\infty^2 r^4 \right).
        \end{equation}
        \item Let \(C > 0\). In the setting of point \ref{item:prop:two_point}, when \(\sqrt{m (1-m)} \geq C \cdot \exp(-2r^2/\varepsilon)\) it holds
        \begin{equation} \label{eq:two_point:far}
            \g_{\mu}(\mudot,\mudot) = \frac{\varepsilon}{2} \dot{m}^2 \cdot \left( \frac{\exp( r^2 / \varepsilon )}{2 \sqrt{m (1-m)}} + \Oh( ( m (1 - m) )\inv ) ) \right)
        \end{equation}
        with the constant in the \(\Oh\)-term only depending on \(C\).
    \end{enumerate}
\end{proposition}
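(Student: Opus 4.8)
The plan is to exploit that on a two-point space everything reduces to explicit $2\times 2$ computations: I compute the metric tensor in closed form and then read off the two asymptotic regimes. Throughout write $\rho = \exp(-r^2/\varepsilon) \in (0,1)$ and $\sigma = m(1-m)$, and identify $\Cont(X) \cong \R^2$, $\calM(X) \cong \R^2$ (so $\Cont(X)/\R$ and $\calM_0(X)$ are one-dimensional). First I would work out the self-transport of $\mu = m\delta_{x_1} + (1-m)\delta_{x_2}$. Encoding the potential $\fmumu$ by $a_i = \exp(\fmumu(x_i)/\varepsilon)$, formula~\eqref{eq:formula_coupling_optimal} says the optimal self-coupling is the symmetric matrix with entries $a_ia_j\exp(-c(x_i,x_j)/\varepsilon)\mu_i\mu_j = k_\mu(x_i,x_j)\mu_i\mu_j$; since both its marginals equal $(m,1-m)$ it must equal $\left(\begin{smallmatrix} m-p & p \\ p & 1-m-p\end{smallmatrix}\right)$ with $p$ as in the statement, giving $a_1^2 = (m-p)/m^2$, $a_2^2 = (1-m-p)/(1-m)^2$ and $a_1a_2\rho = p/\sigma$. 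The marginal constraints are exactly the Schrödinger equations~\eqref{eq:kernel_slice_int_1}, and eliminating the $a_i$ (via $a_1^2a_2^2\rho^2 = (a_1a_2\rho)^2$) yields the defining quadratic $(1-\rho^2)p^2 + \rho^2 p - \rho^2\sigma = 0$, whose positive root is $p = \frac{\rho}{2(1-\rho^2)}\big(\sqrt{\rho^2 + 4(1-\rho^2)\sigma}-\rho\big) \in (0,\sigma)$.

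Next I would identify the operators. From the above, the matrix of $K_\mu$ on $\R^2$ is $\left(\begin{smallmatrix}(m-p)/m & p/m\\ p/(1-m) & (1-m-p)/(1-m)\end{smallmatrix}\right)$; its rows sum to $1$, consistently with $K_\mu\ones_X = \ones_X$, and its trace is $2 - p/\sigma$, so its second eigenvalue is $\lambda = 1 - p/\sigma \in [0,q]$ (cf.~Proposition~\ref{prop:Hmu_compact_Hmu}). Hence on $\Cont(X)/\R$ the operators $K_\mu$ and $(\id - K_\mu^2)^{-1}$ act as multiplication by $\lambda$ and by $(1-\lambda^2)^{-1} = \sigma^2/(p(2\sigma-p))$. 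For $\mudot = \dot m(\delta_{x_1} - \delta_{x_2})$ one has $H_\mu[\mudot] = \dot m(k_\mu(x_1,\cdot) - k_\mu(x_2,\cdot))$, so $\langle\mudot, H_\mu[\mudot]\rangle = \dot m^2(a_1^2 - 2a_1a_2\rho + a_2^2)$; substituting the values above and using the algebraic identity $(m-p)(1-m)^2 + (1-m-p)m^2 - 2pm(1-m) = \sigma - p$ gives $a_1^2 - 2a_1a_2\rho + a_2^2 = (\sigma-p)/\sigma^2$. Plugging this into Definition~\ref{def:metric_tensor_measure} proves part~1:
\[
\g_\mu(\mudot,\mudot) = \frac{\varepsilon}{2}\,\frac{\langle\mudot,H_\mu[\mudot]\rangle}{1-\lambda^2} = \frac{\varepsilon}{2}\,\dot m^2\,\frac{\sigma-p}{p(2\sigma-p)}.
\]
Then, using the quadratic to write $\sigma - p = (1-\rho^2)p^2/\rho^2$ and $2\sigma - p = \sigma + (\sigma-p)$, everything collapses (via $2p(1-\rho^2)+\rho^2 = \rho\sqrt{\rho^2+4(1-\rho^2)\sigma}$) to the closed form
\[
\g_\mu(\mudot,\mudot) = \frac{\varepsilon}{2}\,\dot m^2\,\frac{1-\rho^2}{2p(1-\rho^2)+\rho^2} = \frac{\varepsilon}{2}\,\dot m^2\,\frac{1-\rho^2}{\rho\sqrt{\rho^2 + 4(1-\rho^2)\,m(1-m)}},
\]
which one checks directly remains valid for $m \in \{0,1\}$ as well.

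From this closed form both regimes follow by elementary estimates. For part~2, as $r \to 0$ we have $1-\rho^2 = 2r^2/\varepsilon + O(r^4/\varepsilon^2)$ and $\rho^2 + 4(1-\rho^2)\sigma = \rho^2(1 + O(r^2/\varepsilon))$ \emph{uniformly in $m \in [0,1]$}, hence $\g_\mu(\mudot,\mudot) = \dot m^2 r^2 + O(\dot m^2 r^4/\varepsilon)$ uniformly in $m$; since a $\Cont^1$ curve $m(t)$ produces an admissible path (the associated $\beta_t = \exp(-f_{\mu_t,\mu_t}/\varepsilon)$ is $\Cont^1$ in the finite-dimensional $\Hil_c$), Definition~\ref{def:distance} together with integration over $t$ gives~\eqref{eq:two_point:near}. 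For part~3, set $B = \sqrt{\rho^2 + 4(1-\rho^2)\sigma}$, so $\g_\mu(\mudot,\mudot) = \tfrac{\varepsilon}{2}\dot m^2(1-\rho^2)/(\rho B)$; subtracting the claimed main term, $\tfrac{1-\rho^2}{\rho B} - \tfrac{1}{2\rho\sqrt\sigma} = \tfrac{2\sqrt\sigma(1-\rho^2) - B}{2\rho\sqrt\sigma B}$, and rationalizing the numerator via $(2\sqrt\sigma(1-\rho^2))^2 - B^2 = -\rho^2(1 + 4\sigma(1-\rho^2))$ yields $\big|\tfrac{1-\rho^2}{\rho B} - \tfrac{1}{2\rho\sqrt\sigma}\big| = \tfrac{\rho(1+4\sigma(1-\rho^2))}{2\sqrt\sigma\,B\,(2\sqrt\sigma(1-\rho^2)+B)}$. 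Bounding $1 + 4\sigma(1-\rho^2) \le 2$ (as $\sigma \le \tfrac14$), $B \ge \rho$, and $2\sqrt\sigma(1-\rho^2) + B \ge \sqrt{2\sigma}$ when $\rho^2 \le \tfrac12$ bounds this by $O(1/\sigma)$; when $\rho^2 > \tfrac12$ the hypothesis $\sqrt\sigma \ge C\rho^2$ keeps all quantities comparable and gives the same $O(1/\sigma)$ bound with a $C$-dependent constant, proving~\eqref{eq:two_point:far}.

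The computations are routine, so the main obstacle is essentially bookkeeping: getting the collapsed closed form of the metric tensor right — the cancellation relies on the numerator identity $(m-p)(1-m)^2 + (1-m-p)m^2 - 2pm(1-m) = \sigma - p$ and on $2p(1-\rho^2)+\rho^2 = \rho\sqrt{\rho^2 + 4(1-\rho^2)\sigma}$, which together are what make both asymptotics transparent — and then controlling the remainder \emph{uniformly}: uniformly in $m$ for part~2, and with a constant depending only on $C$ across the whole range of $\rho$ for part~3.
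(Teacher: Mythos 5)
Your proposal is correct, and the setup matches the paper's: you identify the optimal self-coupling through $p$, solve the same quadratic, identify the second eigenvalue $\lambda_2 = 1 - p/\sigma$ of $K_\mu$, and compute the pairing with $H_\mu[\mudot]$ to obtain part~1. The genuine departure is that you then collapse $\frac{\sigma-p}{p(2\sigma-p)}$ to the closed form
\[
\g_\mu(\mudot,\mudot) = \frac{\varepsilon}{2}\,\dot m^2\,\frac{1-\rho^2}{\rho\sqrt{\rho^2 + 4(1-\rho^2)\,m(1-m)}}, \qquad \rho = \exp(-r^2/\varepsilon),
\]
and read off both asymptotics from this single expression, whereas the paper Taylor-expands $p$ and $\lambda_2$ separately in each regime and substitutes into $\frac{\varepsilon}{2}\dot m^2\frac{M^2-p}{p(2M^2-p)}$. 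Your route buys two things. For part~2 the uniformity over $m\in[0,1]$ (needed because $m(t)$ can hit $\{0,1\}$ along the path) is immediate from $\rho \le \sqrt{\rho^2 + 4(1-\rho^2)\sigma} \le 1$, and you verify that the closed form extends continuously to $\sigma=0$; the paper's intermediate expansion $p = M^2 - \tau M^4 + \calO(\tau^2)$ is less transparent to read uniformly near $M=0$. For part~3 your estimate genuinely runs under the hypothesis as stated, $\sqrt{m(1-m)} \ge C\exp(-2r^2/\varepsilon)$, while the paper's proof introduces $\varrho = \exp(-r^2/\varepsilon)$ and works under the stronger assumption $M \ge C\varrho$; that discrepancy between statement and proof looks like a slip, and your closed-form argument resolves it in favour of the statement's weaker condition.
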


In particular, for small distances, \(\dot{m} = - 1\) in~\eqref{eq:two_point:near} leads to \(E(((1 - t) \delta_{x_1} + t \delta_{x_2})_t) = r^2 + \Oh \left( \frac{1}{\varepsilon} r^4 \right)\).

\begin{proof}
    Let \(m \in (0,1)\), \(\mu = m \delta_{x_1} + (1-m) \delta_{x_2}\), \(M = \sqrt{m(1-m)}\). We compute the optimal self-transport plan \(\pi\) and obtain \(k_\mu\) as the density of \(\pi\) with respect to \(\mu \otimes \mu\). Since \(X\) only contains two points, we may use the matrix vector notation of the discrete setting. To avoid ambiguity, we denote the entry-wise multiplication of two matrices \(A,B\) by \(A \odot B\). Denoting \(a_\mu = \exp(\fmumu/\varepsilon)\) as a vector \(a=(a_1,a_2)^\Transpose\), we see that $\pi = k_\mu \odot \mu \otimes \mu = (a \cdot a^\Transpose) \odot k_c \odot \mu \otimes \mu$. The set of admissible transport plans is described entirely by one parameter \(p\) describing the exchange of mass between the two points. With $\exp( - 2 d(x_1, x_2)^2 / \varepsilon ) \assignRe b \in (0,1)$, 
    \begin{equation*} 
        \pi = 
        \begin{pmatrix}
            m - p & p \\
            p & (1 - m) - p 
        \end{pmatrix}, 
        \qquad
        k_c \odot \mu \otimes \mu = 
        \begin{pmatrix}
            m^2 & M^2 \sqrt{b} \\
            M^2 \sqrt{b} & (1-m)^2 
        \end{pmatrix}.
    \end{equation*} 
    We have \(p^2 / ((m-p)(1-m-p)) = (a_1 a_2 M^2 \sqrt{b})^2 / ( a_1^2 m^2 a_2^2 (1-m)^2 ) = b.\)
    This is a quadratic equation whose solution is 
    \begin{equation}
    \label{eq:proof:two_point:p}
        p = \frac{1}{2(1-b)} \left[ - b + \sqrt{b^2 + 4 M^2 b (1 - b)} \right]
    \end{equation}
    We can express the matrix representing $K_\mu$ as a scaling of $\pi$: 
    \begin{align*} 
        K_\mu & = \begin{pmatrix} m^{-1} & 0 \\ 0 & (1-m)^{-1} \end{pmatrix} \cdot \pi = 
        \begin{pmatrix}
            \frac{m - p}{m} & \frac{p}{m} \\
            \frac{p}{1-m} & \frac{1-m-p}{1-m} 
        \end{pmatrix}. 
    \end{align*}
    Without suprise $(1,1)^\Transpose$ is an eigenvector associated to the eigenvalue $\lambda_1 = 1$. We know that the other eigenvector of \(K_\mu\) should be orthogonal to $(1,1)^\Transpose$ in the inner product of $L^2(X,\mu)$, so it must be $(1-m,-m)^\Transpose$. The corresponding eigenvalue is computed via
    \begin{equation} \label{eq:proof:two_point:second_eigenvalue}
        K_\mu \begin{pmatrix} 1-m \\ - m \end{pmatrix} = ( 1 - {p}/{M^2} ) \begin{pmatrix} 1-m \\ - m \end{pmatrix}. 
    \end{equation}
    In particular, on this eigenspace $(\id - K_\mu^2)\inv$ acts as multiplication by $(1-\lambda_2^2)\inv$ with \(\lambda_2 = 1 - p / M^2\). 
    To compute the metric tensor \(\g_\mu(\mudot, \mudot) = \pair{\mudot}{(\id - K_\mu^2)\inv H_\mu[\mudot]}\) for some vertical perturbation \(\mudot = \dot{m} \cdot (1, -1)^\Transpose\), we need to compute $H_\mu[\mudot] = K_\mu \left[ {\mathrm{d}\mudot}/{\mathrm{d}\mu} \right]$. As ${\mathrm{d}\mudot}/{\mathrm{d}\mu} = \dot{m}/M^2 (1-m, -m)^\Transpose$, by the previous computation
    \begin{equation} \label{eq:proof:two_point:Hmu}
    H_\mu \begin{pmatrix} 1 \\ -1 \end{pmatrix} = \frac{1 - {p}/{M^2}}{M^2} \begin{pmatrix}  1-m \\ -  m \end{pmatrix}. 
    \end{equation}
    The formulas \eqref{eq:proof:two_point:second_eigenvalue} and \eqref{eq:proof:two_point:Hmu} lead to
    \begin{equation} \label{eq:proof:two_point:gmu}
        \g_\mu(\mudot,\mudot) = \frac{\varepsilon}{2} \frac{\dot{m}^2 \lambda_2}{M^2 (1 - \lambda_2^2)} 
        = \frac{\varepsilon}{2} \dot{m}^2 \frac{M^2 - p}{p ( 2 M^2 - p )}.
    \end{equation}
    
    \underline{Small $r$ asymptotic:}
    The small parameter in the expansion~\eqref{eq:proof:two_point:p} is \(\tau = 2 d(x_1,x_2)^2 / \varepsilon\), so that we have \((1 - b) = \tau + \Oh(\tau^2)\). A second-order Taylor expansion of the square root leads to \(p = M^2 - \tau M^4 + \Oh(\tau^2)\), which gives \(\lambda_2 = \tau M^2 + \Oh(\tau^2)\).
    Thus \((1 - \lambda_2^2)\inv = 1 + \Oh( \tau^2 )\) and \eqref{eq:proof:two_point:gmu} gives
    \begin{equation*}
        \g_\mu(\mudot, \mudot) 
        = \frac{\varepsilon}{2} ( \tau \dot{m}^2 + \Oh(\tau^2 \dot{m}^2) )
        = \dot{m}^2 \cdot d(x_1, x_2)^2 + \Oh \left( \frac{1}{\varepsilon} \dot{m}^2 d(x_1, x_2)^4 \right).
    \end{equation*}
    
    \underline{Big $r$ asymptotic:}
    We set \(\varrho \assign \sqrt{b} = \exp( - d(x_1,x_2)^2 / \varepsilon )\). Provided that \(M \geq C \varrho\) for some \(C > 0\), rewriting~\eqref{eq:proof:two_point:p} gives
    \begin{equation*} 
        p = \frac{1}{2 ( 1 - \varrho^2 )} \left( - \varrho^2 + 2 M \varrho \sqrt{1 + \varrho^2 ( 1 - 4 M^2) / (4 M ^2)} \right) 
        = M \varrho + \Oh( \varrho^2 )
    \end{equation*}
    with the constant in the \(\Oh\)-term only depending on \(C\). 
    This gives
    \begin{align*}
        \lambda_2 = 1 - \frac{p}{M^2} = 1 - \frac{\varrho}{M} + \Oh((\varrho / M)^2
    \end{align*}
    and using~\eqref{eq:proof:two_point:gmu} it leads to
    \begin{align*}
        \g_\mu(\mudot,\mudot) 
        &= \frac{\varepsilon}{2} \dot{m}^2 \cdot \left( \frac{1}{2 M \varrho} + \Oh( 1 / M^2 ) \right)
        = \frac{\varepsilon}{2} \dot{m}^2 \cdot \left( \frac{\exp( r^2 / \varepsilon )}{2 M} + \Oh( 1 / M^2) ) \right). \qedhere
    \end{align*}
\end{proof}

For small \(M / \varrho\) one can use analogous methods to find \(p = M^2 + \Oh(M^4 / \varrho^2)\), which leads to \(\lambda_2 = \Oh(1/\varrho^2)\) and \(\g_\mu(\mudot,\mudot) = \Oh(\varepsilon \dot{m}^2 / \varrho^2)\).

\section{Negative results on the Sinkhorn divergence and the metric tensor} \label{section:negative_examples}

\subsection{\texorpdfstring{$\sqrt{S_\varepsilon}$}{The square root of the Sinkhorn divergence} violates the triangle inequality} \label{subsection:Seps_triangle}

One of the main goals of this paper is to find a ``Riemannian'' distance associated with the Sinkhorn divergence $S_\varepsilon$. The reader may wonder why one could not directly use $S_\varepsilon$ as a distance, since it enjoys several nice features, as recalled in Theorem~\ref{theorem:FeydyEtAl}. However, despite being positive definite, already for the squared Euclidean distance as the cost function the Sinkhorn divergence, or rather $\sqrt{S_\varepsilon}$ (since $S_\varepsilon$ converges to the square of the Monge-Kantorovich distance), is not itself a distance! Although this fact is well known in the community, we have not been able to find an explicit statement of this property. For this reason, we provide an elementary proof, which will actually exclude any convex power of $\sqrt{S_\varepsilon}$.

\begin{theorem} \label{theorem:Seps_triangle}
    On \(\R\) let \(c(x,y) = \abs{x-y}^2\). Then \(S_\varepsilon^\alpha\) fails to satisfy the triangle inequality for any \(\alpha \geq \frac{1}{2}\).
\end{theorem}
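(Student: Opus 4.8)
\emph{Reduction to $\alpha=\tfrac12$.} First I would observe that it suffices to treat the case $\alpha=\tfrac12$, i.e.\ to exhibit three measures violating the triangle inequality for $\sqrt{S_\varepsilon}$. Indeed $S_\varepsilon\geq 0$ by Theorem~\ref{theorem:FeydyEtAl}, and for $\alpha\geq\tfrac12$ the map $t\mapsto t^{1/(2\alpha)}$ is nondecreasing, concave on $[0,\infty)$ and vanishes at $0$, hence subadditive; applying it to the triangle inequality for $S_\varepsilon^\alpha$ would yield the triangle inequality for $(S_\varepsilon^\alpha)^{1/(2\alpha)}=\sqrt{S_\varepsilon}$. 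So a single violating triple for $\sqrt{S_\varepsilon}$ rules out every $\alpha\geq\tfrac12$ simultaneously.

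\emph{A closed form for $S_\varepsilon$ on Gaussians.} Next I would restrict to one-dimensional centered Gaussians $\mathcal N(0,v)$, $v\geq 0$, with $\mathcal N(0,0)=\delta_0$. Although these are not compactly supported, $S_\varepsilon$ is finite and \textweakstar~continuous on measures with bounded second moment, exactly as used in Section~\ref{section:sub:gaussians} (and if one insists, each $\mathcal N(0,v)$ may be replaced by a compactly supported approximant without spoiling a strict inequality). Plugging the explicit formula~\eqref{eq:gaussian-OTeps} for $\OT_\varepsilon$ into the definition~\eqref{eq:intro:def_sinkhorn_div} of $S_\varepsilon$, a direct computation — the additive constants $\log2-2$ cancel and the logarithm splits across the three terms — gives, with $x_i\assign 4v_i/\varepsilon$ and $Q(z)\assign\sqrt{1+z}-\log\bigl(1+\sqrt{1+z}\bigr)$,
\[
S_\varepsilon\bigl(\mathcal N(0,v_0),\mathcal N(0,v_1)\bigr)=\frac{\varepsilon}{4}\Bigl(Q(x_0^2)+Q(x_1^2)-2\,Q(x_0x_1)\Bigr).
\]
Here $Q'(z)=\tfrac{1}{2(1+\sqrt{1+z})}>0$ and $Q''<0$, so $Q$ is increasing and strictly concave; consistently with Theorem~\ref{theorem:FeydyEtAl}, the right-hand side is $\geq 0$ and vanishes iff $v_0=v_1$. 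Moreover $\sqrt{S_\varepsilon(\delta_0,\mathcal N(0,v))}=\tfrac{\sqrt\varepsilon}{2}\sqrt{Q(x^2)-Q(0)}$ grows like $x/2$ as $x\to0$ but only like $\sqrt x$ as $x\to\infty$: it is eventually concave in $v$, which is precisely why an intermediate Gaussian can act as a ``shortcut''.

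\emph{The counterexample.} Now take $\mu_0=\delta_0$, $\mu_1=\mathcal N(0,\varepsilon/4)$, $\mu_2=\mathcal N(0,\varepsilon)$, so $(x_0,x_1,x_2)=(0,1,4)$. From the displayed formula and $x_0=0$ one gets the algebraic identity $S_\varepsilon(\mu_0,\mu_2)-S_\varepsilon(\mu_0,\mu_1)-S_\varepsilon(\mu_1,\mu_2)=\tfrac{\varepsilon}{2}\bigl(Q(4)-Q(1)\bigr)>0$, so (both sides of the triangle inequality being nonnegative) squaring shows that $\sqrt{S_\varepsilon(\mu_0,\mu_2)}\leq\sqrt{S_\varepsilon(\mu_0,\mu_1)}+\sqrt{S_\varepsilon(\mu_1,\mu_2)}$ is equivalent to
\[
Q(4)-Q(1)\;\leq\;\sqrt{\bigl(Q(1)-Q(0)\bigr)\bigl(Q(1)+Q(16)-2Q(4)\bigr)},
\]
equivalently $\bigl(Q(4)-Q(1)\bigr)^2\leq\bigl(Q(1)-Q(0)\bigr)\bigl(Q(1)+Q(16)-2Q(4)\bigr)$, where strict concavity of $Q$ guarantees the right factor is positive. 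Substituting $Q(0)=1-\log2$, $Q(1)=\sqrt2-\log(1+\sqrt2)$, $Q(4)=\sqrt5-\log(1+\sqrt5)$, $Q(16)=\sqrt{17}-\log(1+\sqrt{17})$ and bounding the square roots and logarithms by elementary rational estimates, the left-hand side exceeds $0.279$ while the right-hand side stays below $0.204$. Thus the triangle inequality fails for $\sqrt{S_\varepsilon}$, and by the reduction for $S_\varepsilon^\alpha$ for every $\alpha\geq\tfrac12$.

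\emph{Where the work lies.} There is no conceptual obstacle here; the only genuine steps are the algebraic reduction of $S_\varepsilon$ on Gaussians to the scalar function $Q$ and the certification of the closing numerical inequality, both elementary. If one prefers an argument free of numerical estimates, one can instead keep $\mu_0=\delta_0$, $\mu_1=\mathcal N(0,\varepsilon/4)$ and let $\mu_2=\mathcal N(0,v_2)$ with $v_2\to\infty$: using $Q(z)=\sqrt z-\log\sqrt z+o(1)$ one checks $\sqrt{S_\varepsilon(\mu_0,\mu_2)}-\sqrt{S_\varepsilon(\mu_1,\mu_2)}\to\tfrac{\sqrt\varepsilon}{2}$, whereas $\sqrt{S_\varepsilon(\mu_0,\mu_1)}=\tfrac{\sqrt\varepsilon}{2}\sqrt{Q(1)-Q(0)}<\tfrac{\sqrt\varepsilon}{2}$ since $Q(1)-Q(0)<1$; this already forces a strict violation for all large enough $v_2$.
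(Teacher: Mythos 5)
Your proof is correct, and its core strategy — reduce to centered one\nobreakdash-dimensional Gaussians, use the explicit $\OT_\varepsilon$ formula from Mallasto et al.~\eqref{eq:gaussian-OTeps}, and look at the regime where a nearly concentrated measure meets a much more spread-out one — is the same as the paper's. The algebraic reduction to $Q(z)=\sqrt{1+z}-\log\bigl(1+\sqrt{1+z}\bigr)$ and the identity $S_\varepsilon=\tfrac{\varepsilon}{4}\bigl(Q(x_0^2)+Q(x_1^2)-2Q(x_0x_1)\bigr)$ are just a cleaner packaging of the paper's~\eqref{eq:gaussian-sinkhorn}; I checked the identity $S_\varepsilon(\mu_0,\mu_2)-S_\varepsilon(\mu_0,\mu_1)-S_\varepsilon(\mu_1,\mu_2)=\tfrac{\varepsilon}{2}\bigl(Q(4)-Q(1)\bigr)$ and the closing numerical bounds $(Q(4)-Q(1))^2\approx 0.279 > 0.203\approx(Q(1)-Q(0))(Q(1)+Q(16)-2Q(4))$, and they hold. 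Two genuine improvements over the paper's argument: (i) your observation that subadditivity of $t\mapsto t^{1/(2\alpha)}$ collapses the whole range $\alpha\geq\tfrac12$ onto the single case $\alpha=\tfrac12$ is slicker than the paper's separate treatment of $\alpha>\tfrac12$ via Dirac masses; (ii) instead of the paper's soft argument (showing $\Psi(0)=0$ and $\Psi'(0)<0$ after a first-order Taylor expansion, which requires computing and bounding $\Psi'$ in the limit $v\to\infty$), you exhibit a single explicit triple $(x_0,x_1,x_2)=(0,1,4)$, which is more concrete and more easily verifiable. Your numerics-free variant (fix $\mu_1=\mathcal N(0,\varepsilon/4)$, send $v_2\to\infty$, and use $Q(1)-Q(0)<Q'(0)=\tfrac14<1$ by concavity) is closest in spirit to what the paper actually does, and provides a clean fallback if one distrusts the decimal estimates. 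One small caveat worth keeping in mind: as you note, Gaussians are not compactly supported, so strictly speaking this argument lives slightly outside the stated hypotheses; the paper has the same issue and your remark about replacing each Gaussian with a compactly supported approximant (or appealing to the measures $\mu_r=\tfrac12\delta_r+\tfrac12\delta_{-r}$ from Remark~\ref{rmk:triangle_inequality_dirac}) addresses it adequately.
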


The case \(0 < \alpha < \frac{1}{2}\) is unsuitable for our purposes too, as \(S_\varepsilon^\alpha (\delta_x, \delta_y) = \norm{x-y}^{2\alpha}\) is a concave function of the Euclidean distance. Hence if $S_\varepsilon^\alpha$ were a metric at all, it would induce a Finslerian rather than Riemannian geometry.

\begin{proof}
    It is readily verified that $S_\varepsilon^\alpha$ cannot be a metric for any $\alpha > 1/2$, as it is sufficient to consider $\mu_i = \delta_{m_i}$ and observe that
    \(S_\varepsilon^\alpha(\mu_0,\mu_1) = \|m_0-m_1\|^{2\alpha}\)
    does not satisfy the triangle inequality. 
    
    Finally, $\sqrt{S_\varepsilon}$ is not a distance either.
    The idea is to look at the Sinkhorn divergence between 1-dimensional Gaussian distributions, for which an explicit formula is available as a direct consequence of~\eqref{eq:gaussian-OTeps} (see~\cite[Cor.~1]{Mallasto2021}). Indeed, given $\mu_i = \mathcal{N}(m_i,v_i)$ with $m_i \in \mathbb{R}$, $v_i \geq 0$ (for $i=0,1$), the Sinkhorn divergence between them is explicitly given by
    \begin{equation}\label{eq:gaussian-sinkhorn}
        S_\varepsilon(\mu_0,\mu_1) = \|m_0-m_1\|^2 + \frac{\varepsilon}{4}\bigg(\kappa_{00} - 2\kappa_{01} + \kappa_{11} + \log\Big(\frac{\kappa_{01}^2}{\kappa_{00} \kappa_{11}} \Big) \bigg),
    \end{equation}
    where $\kappa_{ij} \assign 1+(1+16\varepsilon^{-2}v_i v_j)^{1/2}$, for $i,j \in \{0,1\}$, as in~\eqref{eq:gaussian-OTeps}.
    Choose $\mu_0 = \delta_0$, $\mu_1 = \mathcal{N}(0,\frac{\varepsilon}{4}x)$ and $\mu_2 = \mathcal{N}(0,\frac{\varepsilon}{4}v)$, for a fixed $v>0$: we claim that
    \begin{equation}\label{eq:triangle-false}
        \sqrt{S_\varepsilon(\mu_0,\mu_2)} > \sqrt{S_\varepsilon(\mu_0,\mu_1)} + \sqrt{S_\varepsilon(\mu_1,\mu_2)}
    \end{equation}
    when $x$ is sufficiently small and $v$ sufficiently large. To prove the claim, observe that
    \[
        4\varepsilon^{-1}S_\varepsilon(\mu_0,\mu_2) = \sqrt{1+v^2}-\log(1+\sqrt{1+v^2})+\log 2-1.
    \]
    To lighten the notation, let us introduce
    \[
    \begin{split}
        \zeta(x) \assign 4\varepsilon^{-1}S_\varepsilon(\mu_0,\mu_1) & = \sqrt{1+x^2}-\log(1+\sqrt{1+x^2})+\log 2-1 \\
        \xi(x) \assign 4\varepsilon^{-1}S_\varepsilon(\mu_1,\mu_2) & = \sqrt{1+v^2} - 2\sqrt{1+v x} + 2\log(1+\sqrt{1+v x}) \\
        & \qquad -\log(1+\sqrt{1+v^2})-\log 2+1+\zeta(x).
    \end{split}
    \]
    Squaring both sides, and after algebraic manipulations,~\eqref{eq:triangle-false} is thus equivalent to
    \[
        \Psi(x) \assign 2\log(1+\sqrt{1+v x}) - 2\sqrt{1+v x} - 2\log 2 + 2 + 2\zeta(x) + 2\sqrt{\zeta(x)\xi(x)} < 0
    \]
    for $x$ sufficiently close to 0. To prove this inequality, let us compute
    \[
    \begin{split}
        \Psi'(x) = & -\frac{v}{1+\sqrt{1+v x}} + \frac{2x}{1+\sqrt{1+x^2}} + \left(\frac{\xi(x)}{\zeta(x)}\right)^{1/2}\frac{x}{1+\sqrt{1+x^2}} \\
        & + \left(\frac{\zeta(x)}{\xi(x)}\right)^{1/2}\left(-\frac{v}{1+\sqrt{1+v x}} + \frac{x}{1+\sqrt{1+x^2}}\right)
    \end{split}
    \]
    and note that $\zeta(0)=0$, so that $\Psi(0)=0$ and
    \[
    \lim_{x \to 0}\Psi'(x) = -\frac{v}{2} + \frac{\sqrt{\xi(0)}}{2}\lim_{x \to 0}\frac{x}{\sqrt{\zeta(x)}}
    \]
    with $\xi(0) = \sqrt{1 + v^2} - 1 + \log 2 - \log(1 + \sqrt{1 + v^2}) > 0$. 
    It is now sufficient to observe that $\zeta'(0)=0$ and $\zeta''(0)=1/2$: by a Taylor expansion of $\zeta(x)$ up to order 2 we finally get
    \[
    \Psi'(0) = -\frac{v}{2} + (\sqrt{1 + v^2} - 1 + \log 2 - \log(1 + \sqrt{1 + v^2}))^{1/2},
    \]
    which is negative provided $v$ is sufficiently large. We have thus shown that $\Psi(0)=0$ and $\Psi'(0) < 0$, hence the conclusion.
\end{proof}

\begin{remark}
\label{rmk:triangle_inequality_dirac}
    One can also use the measures \(\mu_r = \frac{1}{2} \delta_r + \frac{1}{2} \delta_{-r}\) from Section~\ref{section:sub:nonconvexity} to show that the square root of the Sinkhorn divergence does not satisfy the triangle inequality:
    Choose \(r = \sqrt{\varepsilon}\). Then Lemma~\ref{lemma:explicit_OT-S} gives
    \begin{align*}
        &\frac{1}{\varepsilon} S_\varepsilon (\mu_0,\mu_r) 
        = 1 - \frac{1}{2} \log 2 + \frac{1}{2} \log \left( 1 + \frac{1}{e^4} \right), \\
        &\frac{1}{\varepsilon} S_\varepsilon (\mu_r,\mu_{2r}) 
        = 1 - \log \left( 1 + \frac{1}{e^8} \right) + \frac{1}{2} \log \left( 1 + \frac{1}{e^4} \right) + \frac{1}{2} \log \left( 1 + \frac{1}{e^{16}} \right), \\
        &\frac{1}{\varepsilon} S_\varepsilon (\mu_0,\mu_{2r}) 
        = 4 - \frac{1}{2} \log 2 + \frac{1}{2} \log \left( 1 + \frac{1}{e^{16}} \right).
    \end{align*}
    This yields \(\frac{1}{\sqrt{\varepsilon}} \cdot \left( \sqrt{S_\varepsilon(\mu_0,\mu_{2r})} - \sqrt{S_\varepsilon(\mu_0,\mu_{r})} - \sqrt{S_\varepsilon(\mu_r,\mu_{2r})} \right) \approx 0.093 > 0\), which breaks the triangle inequality.
\end{remark}

\begin{figure}
    \centering
    \includegraphics[scale=0.54]{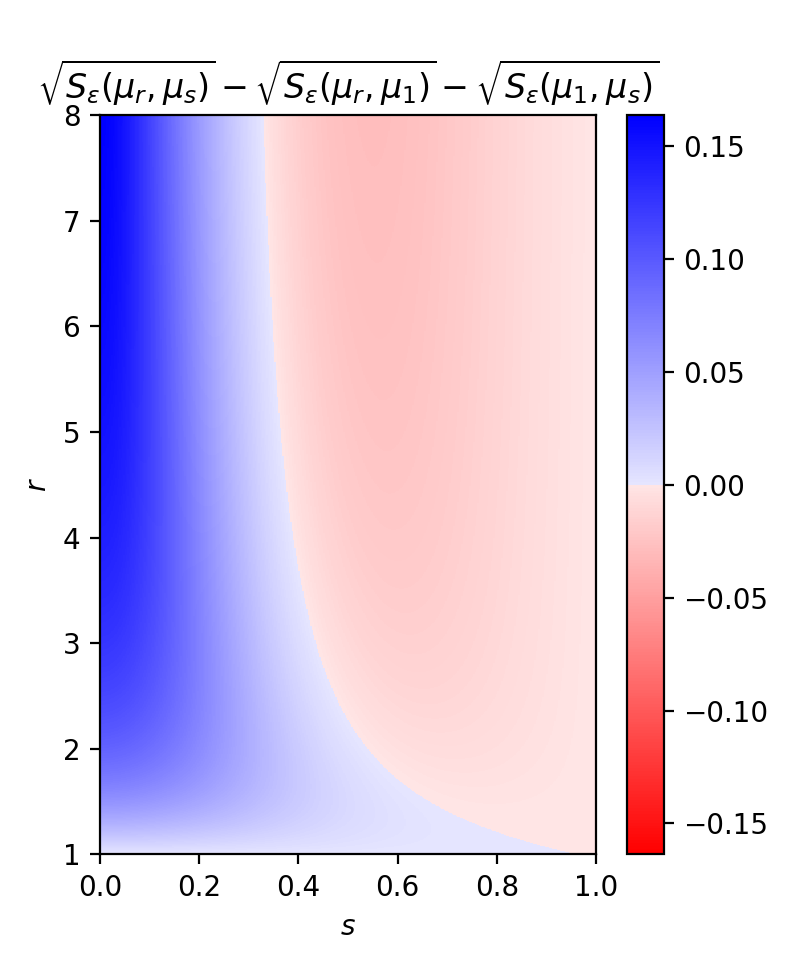}
    \includegraphics[scale=0.54]{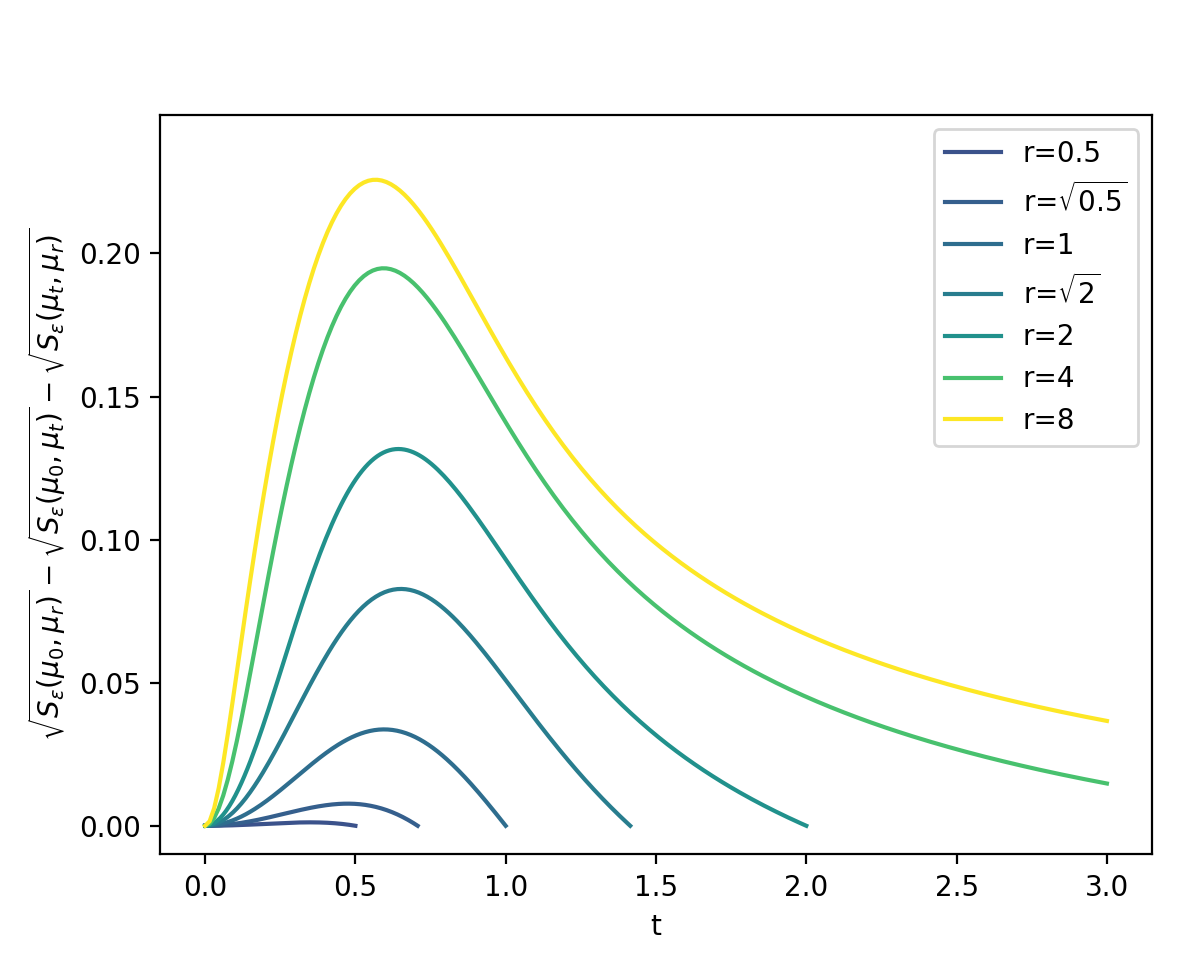}
    \caption{Gap in the triangle inequality for \(\sqrt{S_\varepsilon}\) on the measures \(\mu_r = \frac{1}{2} \delta_r + \frac{1}{2} \delta_{-r}\), discussed in Remark~\ref{rmk:triangle_inequality_dirac}, for \(\varepsilon=1\). Left: The gap in the triangle inequality when going from \(\mu_s\) to \(\mu_r\) via \(\mu_1\). The triangle inequality breaks in the blue region, where \(s \ll 1 = \sqrt{\varepsilon} \ll r\). Right: Gap for \(s=0\) and varying intermediate position \(0 \leq t \leq r\) for different values of \(r\).}
    \label{figure:triangle_Seps}
\end{figure}

\begin{remark}
    We observe that in the proof of Theorem~\ref{theorem:Seps_triangle} and in Figure~\ref{figure:triangle_Seps}, the triangle inequality for \(\sqrt{S_\varepsilon}\) breaks when the initial and intermediate measures are concentrated below the blur scale and the target measure is more spread out.
\end{remark}

\subsection{The Sinkhorn divergence is not jointly convex} \label{section:sub:nonconvexity}

The second negative result about the Sinkhorn divergence deals with joint convexity. The Sinkhorn divergence $S_\varepsilon$ interpolates between squared MMD and Wasserstein distances which are both jointly convex functions of their inputs, and moreover $S_\varepsilon$ is coordinate-wise convex as recalled in Theorem~\ref{theorem:FeydyEtAl}. Thus it is natural to investigate the convexity properties of $S_\varepsilon$ as a function on $\prm(X) \times \prm(X)$, and we show that the answer is negative.

Throughout this section let \(c(x,y) = \abs{x-y}^2\) be the quadratic cost.

\begin{theorem}
\label{theorem:Sinkhorn_nonconvex}
    Let $X \subseteq \R$ be the closure of an open set with \(\diam(X) > \sqrt{2\varepsilon}\). Then the Sinkhorn divergence $S_\varepsilon$ is not jointly convex over $\prm(X) \times \prm(X)$. 
\end{theorem}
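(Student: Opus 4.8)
The plan is to reduce the statement to the two-point space and then to an explicit one-parameter computation. Since $\diam(X) > \sqrt{2\varepsilon}$ and $X$ is the closure of an open set, I can fix two points $x_1, x_2 \in X$ with $\ell := d(x_1,x_2)$ satisfying $\ell^2 > 2\varepsilon$. The probability measures supported on $\{x_1,x_2\}$ form a convex subset of $\prm(X)$, which I identify with $[0,1]$ via the mass $m$ placed at $x_1$; since the restriction of a jointly convex function to a convex subset is again jointly convex, it suffices to show that $S_\varepsilon$, viewed as a function of two such masses $(m,m')\in[0,1]^2$, fails to be convex.

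On the two-point space the entropic transport problem becomes scalar: a coupling of the measures with masses $(m,1-m)$ and $(m',1-m')$ is parametrized by the single number $p$ of mass sent from $x_1$ to $x_2$, and the optimality condition reads $\pi_{12}\pi_{21}/(\pi_{11}\pi_{22}) = \exp(-2\ell^2/\varepsilon)$, a quadratic equation for $p$ — exactly the computation already carried out in the proof of Proposition~\ref{prop:example:two_point}. Solving it and substituting into the objective yields a closed form for $\OT_\varepsilon(m,m')$, hence for $S_\varepsilon(m,m') = \OT_\varepsilon(m,m') - \tfrac12\OT_\varepsilon(m,m) - \tfrac12\OT_\varepsilon(m',m')$; formulas of this type are recorded in Lemma~\ref{lemma:explicit_OT-S}.

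I would then probe convexity along the segment $\theta \mapsto (m,m') = (1-\theta,\theta)$, which joins $(\delta_{x_1},\delta_{x_2})$ to $(\delta_{x_2},\delta_{x_1})$, and set $\phi(\theta) := S_\varepsilon(1-\theta,\theta)$. By the $x_1 \leftrightarrow x_2$ relabelling symmetry, $\phi(\theta) = \phi(1-\theta)$; moreover $\phi(0) = \phi(1) = S_\varepsilon(\delta_{x_1},\delta_{x_2}) = \ell^2$ while $\phi(\tfrac12) = 0$. Differentiating the Schrödinger potentials as in the proof of Theorem~\ref{theorem:hessian_sinkhorn} and Proposition~\ref{prop:time_derivative_of_potentials} (at the endpoints $\theta \in \{0,1\}$ an atom carries no mass, so those statements do not literally apply, but one passes to the limit using continuity of the explicit formula) gives $\phi'(0) = -4\ell^2$ and $\phi''(0) = 4\varepsilon\sinh(2\ell^2/\varepsilon)$. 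The crux is to use the closed-form formula to compute $\phi''$ on all of $(0,1)$ and to verify that it becomes negative on a subinterval of $(0,\tfrac12)$ precisely when $\ell^2 > 2\varepsilon$; this contradicts joint convexity of $S_\varepsilon$. The heuristic behind the threshold: for $\ell^2 > 2\varepsilon$ the initial curvature $\phi''(0)$ is so large that $\phi$ flattens out almost immediately and lingers near the value $\ell^2$, yet it must descend to $0$ at $\theta = \tfrac12$, which is impossible for a convex function; the sharp cut-off $\ell^2 = 2\varepsilon$ (equivalently $\exp(-2\ell^2/\varepsilon) < e^{-4}$) falls out of the algebra. Note that the diagonal entries of the Hessian of $S_\varepsilon$ are nonnegative by the coordinate-wise convexity in Theorem~\ref{theorem:FeydyEtAl}, so the obstruction necessarily sits in the off-diagonal (``mixed'') term, which is exactly what this segment exposes.

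The main obstacle is this last computation: obtaining the closed form on the two-point space is routine but lengthy bookkeeping, and the sign analysis of $\phi''$ — in particular pinning the transition from convexity to non-convexity to the exact value $\ell^2 = 2\varepsilon$ — requires care. The secondary technical point, that one atom has zero mass at $\theta \in \{0,1\}$, is handled by working on the open interval $(0,1)$ and taking limits in the explicit formula.
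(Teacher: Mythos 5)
Your approach is genuinely different from the paper's, but as written it has a gap that would need to be closed before it could count as a proof.

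The paper's argument works with the one-parameter family $\mu_r := \tfrac{1}{2}\delta_r + \tfrac{1}{2}\delta_{-r}$ on $\R$ (so that four points $\pm r, \pm(r+t)$ are in play). It observes that $(\mu_r,\mu_{r+t})$ is the midpoint of the two Dirac pairs $(\delta_r,\delta_{r+t})$ and $(\delta_{-r},\delta_{-(r+t)})$, for which $S_\varepsilon(\delta_r,\delta_{r+t}) = S_\varepsilon(\delta_{-r},\delta_{-(r+t)}) = t^2$ is trivially known. Thus it suffices to Taylor expand $S_\varepsilon(\mu_r,\mu_{r+t})$ to second order in $t$ using the explicit formula of Lemma~\ref{lemma:explicit_OT-S} and check that it exceeds $t^2$ when $r>\sqrt{\varepsilon/2}$. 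All the work is confined to a single Taylor coefficient at the midpoint, which is what makes the argument close cleanly.

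Your proposal instead restricts to a fixed two-point support, parametrizes by mass, and probes along the anti-diagonal $\theta\mapsto(1-\theta,\theta)$. This is a legitimate strategy in principle, but the core step — showing that $\phi(\theta):=S_\varepsilon(1-\theta,\theta)$ has $\phi''<0$ on a subinterval of $(0,\tfrac12)$ exactly when $\ell^2>2\varepsilon$ — is asserted, not carried out. The data you do supply ($\phi(0)=\phi(1)=\ell^2$, $\phi(1/2)=0$, symmetry, $\phi'(0)=-4\ell^2$, $\phi''(0)=4\varepsilon\sinh(2\ell^2/\varepsilon)>0$) is all compatible with a convex $\phi$: the tangent at $\theta=0$ already undershoots $0$ well before $\theta=1/2$, so nothing in the stated boundary behaviour forces non-convexity; the entire weight of the argument rests on the unperformed interior sign analysis. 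Moreover, the lemmas you cite do not supply what you need: Lemma~\ref{lemma:explicit_OT-S} computes $S_\varepsilon(\mu_r,\mu_s)$ for equal-mass symmetric measures on two \emph{different} two-point supports $\{\pm r\}$ and $\{\pm s\}$, not for two measures with arbitrary masses on the \emph{same} two points; and Proposition~\ref{prop:example:two_point} gives only the Hessian on the diagonal (the metric tensor $\g_\mu$), not $S_\varepsilon$ off the diagonal. You would therefore have to derive the two-point closed form from scratch (solve the quadratic in the off-diagonal coupling entry $p$, substitute into $\OT_\varepsilon$, repeat for the two self-transports, and assemble $S_\varepsilon$), and then carry out a global sign analysis of $\phi''$. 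Until that computation is done and checked, this remains a plan rather than a proof; the matching threshold $\ell^2>2\varepsilon$ is encouraging, but it is not evidence that the interior sign analysis comes out the way you hope.
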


We first examine the measures that will form the counterexample.

\begin{lemma}\label{lemma:explicit_OT-S}
    For \(r \in [0,\infty)\) set \(\mu_r \assign \frac{1}{2}\delta_r + \frac{1}{2}\delta_{-r}\). Then for \(r,s \in [0,\infty)\) it holds
    \begin{align*}
        &\frac{1}{\varepsilon} \OT_\varepsilon(\mu_r,\mu_s) = \log 2 - \log(k_c(r,s) + k_c(r,-s)), \\
        &\frac{1}{\varepsilon} S_\varepsilon(\mu_r,\mu_s) = - \log(k_c(r,s)+k_c(r,-s)) + \frac{1}{2} \log(1+k_c(r,-r)) + \frac{1}{2} \log(1+k_c(s,-s)).
    \end{align*}
\end{lemma}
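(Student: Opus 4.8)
The plan is to use the reflection symmetry $x \mapsto -x$, under which both $\mu_r$ and $\mu_s$ are invariant, to reduce the entropic dual problem~\eqref{eq:entropic_dual} to a single scalar equation, then to read off $\OT_\varepsilon$ from the value~\eqref{eq:entropic_dual_max} of the dual maximizers, and finally to assemble $S_\varepsilon$ from its definition~\eqref{eq:intro:def_sinkhorn_div}.

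First I would record the elementary identities $c(r,s) = c(-r,-s) = (r-s)^2$ and $c(r,-s) = c(-r,s) = (r+s)^2$, so that $k_c(r,y) + k_c(-r,y) = k_c(r,s) + k_c(r,-s)$ for both $y = s$ and $y = -s$. Motivated by the reflection symmetry, I would look for Schrödinger potentials for the pair $(\mu_r,\mu_s)$ that are constant on the supports, say $f_{\mu_r,\mu_s} \equiv a$ on $\{r,-r\}$ and $g_{\mu_r,\mu_s} \equiv b$ on $\{s,-s\}$. Plugging constants into~\eqref{eq:def:T_eps} and using the identity just recorded, the fixed-point equation $g_{\mu_r,\mu_s} = T_\varepsilon(f_{\mu_r,\mu_s},\mu_r)$ collapses, for every $y$ in the support of $\mu_s$, to the single scalar relation
\[
a + b = \varepsilon\big(\log 2 - \log(k_c(r,s) + k_c(r,-s))\big),
\]
and the other equation of the Schrödinger system~\eqref{eq:Schroedinger_system} yields the same relation; hence constant potentials are admissible and this fixes $a+b$. (Alternatively, to avoid invoking the symmetry at all, one can take any $a,b$ with the above sum, form the Gibbs plan $\pi = \exp\big(\tfrac1\varepsilon(a+b-c)\big)(\mu_r \otimes \mu_s)$, check directly that it has marginals $\mu_r,\mu_s$, note it is the optimal entropic coupling because of its Gibbs shape by~\cite[Thm.~2.1]{Nutz:IntroEOT}, and compute $\OT_\varepsilon(\mu_r,\mu_s) = \int c\,\diff\pi + \varepsilon\KL(\pi\,|\,\mu_r\otimes\mu_s) = a+b$, the $\KL$ term simplifying to $a+b-\int c\,\diff\pi$ since $\pi$ has unit mass.)

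Next, by~\eqref{eq:entropic_dual_max} we obtain $\OT_\varepsilon(\mu_r,\mu_s) = \pair{\mu_r}{f_{\mu_r,\mu_s}} + \pair{\mu_s}{g_{\mu_r,\mu_s}} = a+b = \varepsilon(\log 2 - \log(k_c(r,s)+k_c(r,-s)))$, which is the first claimed identity. Specializing to $s=r$ and using $k_c(r,r) = e^{-c(r,r)/\varepsilon} = 1$ gives $\OT_\varepsilon(\mu_r,\mu_r) = \varepsilon(\log 2 - \log(1+k_c(r,-r)))$, and likewise $\OT_\varepsilon(\mu_s,\mu_s) = \varepsilon(\log 2 - \log(1+k_c(s,-s)))$. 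Substituting these three expressions into the definition~\eqref{eq:intro:def_sinkhorn_div} of $S_\varepsilon$, the $\varepsilon\log 2$ terms cancel and, after rearranging, the second identity drops out.

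The computation has no real obstacle; the only point needing a word of care is the reduction to constant potentials, which is why I listed the two alternative justifications above. The degenerate cases $r = 0$ or $s = 0$, where $\mu_0 = \delta_0$, require no modification: the corresponding support is a single point, so the ``constant potential'' claim is vacuous there, and all formulas remain valid (for instance $\OT_\varepsilon(\delta_0,\delta_0) = 0$ is recovered since $k_c(0,0)=1$).
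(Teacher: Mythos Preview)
Your proof is correct and follows essentially the same approach as the paper: exploit the reflection symmetry to see that the Schr\"odinger potentials are constant on the two-point supports, reduce the Schr\"odinger system to the single scalar relation $a+b = \varepsilon(\log 2 - \log(k_c(r,s)+k_c(r,-s)))$, and read off $\OT_\varepsilon$ from~\eqref{eq:entropic_dual_max}. Your treatment is slightly more careful (explicitly checking both fixed-point equations, offering the alternative via the Gibbs plan, and noting the degenerate case $r=0$), but the argument is the same.
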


\begin{proof}
    We first compute the Schrödinger potentials for $(\mu_r,\mu_s)$. As \(\mu_r\) and \(\mu_s\) are symmetric around \(0\), we have \(f_{\mu_r,\mu_s}(r) = f_{\mu_r,\mu_s}(-r) \assignRe f\), \(g_{\mu_r,\mu_s}(s) = g_{\mu_r,\mu_s}(-s) \assignRe g\). The Schrödinger system~\eqref{eq:Schroedinger_system} then reduces to 
    \begin{equation*}
        \exp(-f/\varepsilon) = \frac{1}{2} k_c(r,s) \exp(g/\varepsilon) + \frac{1}{2} k_c(r,-s) \exp(g/\varepsilon),
    \end{equation*}    
    i.e.~\(\frac{1}{\varepsilon} (f + g) = \log 2 - \log(k_c(r,s) + k_c(r,-s))\).
    Now the claim follows from~\eqref{eq:entropic_dual_max}.
\end{proof}

With these explicit expressions at hand, the proof of Theorem~\ref{theorem:Sinkhorn_nonconvex} comes down to an appropriate choice of parameters.

\begin{proof}[\textbf{Proof of Theorem~\ref{theorem:Sinkhorn_nonconvex}}]
    Use \(\mu_r \assign \frac{1}{2}\delta_r + \frac{1}{2}\delta_{-r}\) as in Lemma~\ref{lemma:explicit_OT-S}. For lighter notation, we assume that \(\pm r \in X\) and we set \(\kappa = k_c(r,-r)\). We compute the Taylor expansion of \(S_\varepsilon(\mu_r,\mu_{r+t})\) for \(r \in[0,\infty)\).
    With the explicit formula for the Sinkhorn divergence from Lemma~\ref{lemma:explicit_OT-S} and the second-order Taylor expansions
    \[
    \begin{split}
    & k_c(r,r+x) = 1 - \frac{1}{\varepsilon}x^2 + \Oh(x^3), \\
    & k_c(r,-(r+x)) = \kappa - \kappa \frac{4r}{\varepsilon} x - \kappa \left( \frac{1}{\varepsilon} - \frac{8 r^2}{\varepsilon^2} \right) x^2 + \Oh(x^3), \\
    & \log(1+\kappa + x) = \log(1+\kappa) + \frac{1}{1+\kappa} x - \frac{1}{2(1+\kappa)^2} x^2 + \Oh(x^3)
    \end{split}
    \]
    we obtain
    \begin{align}
    \begin{split} \label{eq:proof:nonconvexity_example}
        \frac{1}{\varepsilon} S_\varepsilon(\mu_r,\mu_{r+t}) 
        &= - \log \left( 1 + \kappa - \frac{1}{\varepsilon} t^2 - \kappa \left( \frac{4r}{\varepsilon} t + \left(\frac{1}{\varepsilon} - \frac{8r^2}{\varepsilon^2} \right) t^2 \right) + \Oh(t^3) \right) + \frac{1}{2} \log(1+\kappa) \\
        &\quad + \frac{1}{2} \log \left( 1+\kappa - \kappa \left( \frac{4r}{\varepsilon} \cdot 2t + \left( \frac{1}{\varepsilon} - \frac{8r^2}{\varepsilon^2} \right) \cdot 4t^2 \right) + \Oh(t^3) \right) \\
        &= \frac{1}{\varepsilon} t^2 + t^2 \frac{\kappa}{1+\kappa} \left( \frac{1}{1+\kappa}\frac{8r^2}{\varepsilon^2} - \frac{2}{\varepsilon} \right) + \Oh(t^3).
    \end{split}
    \end{align}
    As $\kappa \leq 1$, when \(r > \sqrt{\varepsilon / 2}\) this is greater than \(\frac{1}{\varepsilon} S_\varepsilon(\delta_{\pm r}, \delta_{\pm(r+t)}) = \frac{1}{\varepsilon} t^2\) for small \(t\). Thus in this case $S_\varepsilon(\mu_r,\mu_{r+t}) > \frac{1}{2} S_\varepsilon(\delta_r,\delta_{r+t}) + \frac{1}{2} S_\varepsilon(\delta_{-r},\delta_{-(r+t)})$, disproving convexity.
    
    When \(\diam(X) > \sqrt{2\varepsilon}\) we can always translate \(X\) to contain points \(\pm r\) with \(r > \sqrt{\varepsilon/2}\), hence disproving joint convexity of \(S_\varepsilon\).
\end{proof}

The non-convexity of $S_\varepsilon$ descends to the level of the metric tensor.

\begin{corollary} \label{cor:nonconvexity_tensor}
    Let $X \subseteq \R$ be the closure of an open set with \(\diam(X) > \sqrt{2\varepsilon}\). Then the map \((\mu,\mudot) \mapsto \g_\mu(\mudot,\mudot)\) is not jointly convex on $\prm(X) \times \Cont^1(X)^*_0$.
\end{corollary}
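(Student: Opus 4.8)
The plan is to descend the non-convexity of $S_\varepsilon$ established in Theorem~\ref{theorem:Sinkhorn_nonconvex} down to the metric tensor by a $t\to 0$ limit, exploiting the fact that the counterexample measures $\mu_r = \tfrac12\delta_r+\tfrac12\delta_{-r}$ used there are themselves midpoints in $\prm(X)$. Note first that the set $\setgiven{(\mu,\mudot)\in\prm(X)\times\Cont^1(X)^*}{\pair{\mudot}{\ones_X}=0}$ is convex, so joint convexity of $(\mu,\mudot)\mapsto\g_\mu(\mudot,\mudot)$ is a meaningful statement; assume it holds, aiming for a contradiction.

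Since $\diam(X)>\sqrt{2\varepsilon}$ is an open condition, I would translate $X$ so that it contains two \emph{interior} points $\pm r$ with $r>\sqrt{\varepsilon/2}$, and pick a smooth vector field $w$ on $X$, compactly supported in the interior away from $0$, with $w(r)=1$ and $w(-r)=-1$. For small $\abs{t}$ the flow $\Psi(t,x)=x+tw(x)$ is a diffeomorphism of $X$ onto itself and $\Psi(t,\cdot)_\#\mu_r=\mu_{r+t}$, so $t\mapsto\mu_{r+t}$ is a horizontal perturbation in the sense of Remark~\ref{remark:Hessian_general_paths} (here $c=\norm{x-y}^2\in\Cont^3$). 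By Theorem~\ref{theorem:hessian_sinkhorn} extended to horizontal perturbations (Remark~\ref{remark:extend_proof_derivatives_potentials_distributions}) one gets $\g_{\mu_r}(\mudot_r,\mudot_r)=\lim_{t\to0}S_\varepsilon(\mu_r,\mu_{r+t})/t^2$, where the tangent vector is the first-order distribution $\mudot_r\in\Cont^1(X)^*$ acting by $\pair{\mudot_r}{\phi}=\tfrac12\phi'(r)-\tfrac12\phi'(-r)$, i.e.\ $\mudot_r=-\ddiv(w\mu_r)$. The second-order expansion~\eqref{eq:proof:nonconvexity_example} then gives the explicit value
\[
\g_{\mu_r}(\mudot_r,\mudot_r)=1+\varepsilon\,\frac{\kappa}{1+\kappa}\left(\frac{8r^2}{\varepsilon^2(1+\kappa)}-\frac{2}{\varepsilon}\right),\qquad \kappa=\exp(-4r^2/\varepsilon)\le 1,
\]
and since $4r^2>2\varepsilon\ge\varepsilon(1+\kappa)$ the correction term is strictly positive, so $\g_{\mu_r}(\mudot_r,\mudot_r)>1$.

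Next I would record the two endpoint values. The single travelling Diracs $t\mapsto\delta_{r+t}$ and $t\mapsto\delta_{-(r+t)}$ are horizontal perturbations of $\delta_r$ and $\delta_{-r}$ with tangent vectors $\mudot^+,\mudot^-\in\Cont^1(X)^*$ given by $\pair{\mudot^+}{\phi}=\phi'(r)$ and $\pair{\mudot^-}{\phi}=-\phi'(-r)$; both satisfy $\pair{\mudot^\pm}{\ones_X}=0$. Using $S_\varepsilon(\delta_x,\delta_y)=\norm{x-y}^2$ (equivalently Corollary~\ref{cor:example:traveling_dirac}) together with Theorem~\ref{theorem:hessian_sinkhorn} for horizontal perturbations, $\g_{\delta_r}(\mudot^+,\mudot^+)=\g_{\delta_{-r}}(\mudot^-,\mudot^-)=1$. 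The crucial algebraic observation is that, as elements of $\prm(X)\times\Cont^1(X)^*$,
\[
(\mu_r,\mudot_r)=\tfrac12\,(\delta_r,\mudot^+)+\tfrac12\,(\delta_{-r},\mudot^-),
\]
which one checks on the measure component directly and on the tangent component from $\pair{\tfrac12\mudot^++\tfrac12\mudot^-}{\phi}=\tfrac12\phi'(r)-\tfrac12\phi'(-r)=\pair{\mudot_r}{\phi}$.

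Finally, joint convexity applied at this midpoint would force $\g_{\mu_r}(\mudot_r,\mudot_r)\le\tfrac12\g_{\delta_r}(\mudot^+,\mudot^+)+\tfrac12\g_{\delta_{-r}}(\mudot^-,\mudot^-)=1$, contradicting $\g_{\mu_r}(\mudot_r,\mudot_r)>1$; hence $\g$ is not jointly convex. The main point that needs care is the justification that $t\mapsto\mu_{r+t}$ genuinely qualifies as a horizontal perturbation to which Theorem~\ref{theorem:hessian_sinkhorn} (through Remarks~\ref{remark:Hessian_general_paths} and~\ref{remark:extend_proof_derivatives_potentials_distributions}) applies, so that $\g_{\mu_r}(\mudot_r,\mudot_r)$ really equals $\lim_{t\to0}S_\varepsilon(\mu_r,\mu_{r+t})/t^2$ with the value computed in~\eqref{eq:proof:nonconvexity_example}; the sign bookkeeping for $\delta_{\pm r}'$ and the verification of the convex-combination identity are routine and only need to be written out cleanly.
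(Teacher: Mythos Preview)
Your proposal is correct and follows essentially the same approach as the paper's proof: you use the same family $\mu_r=\tfrac12\delta_r+\tfrac12\delta_{-r}$, invoke Theorem~\ref{theorem:hessian_sinkhorn} for the horizontal perturbation $t\mapsto\mu_{r+t}$ via Remark~\ref{remark:extend_proof_derivatives_potentials_distributions}, read off the value of $\g_{\mu_r}(\mudot_r,\mudot_r)$ from the expansion~\eqref{eq:proof:nonconvexity_example}, and compare with the value $1$ at the two Diracs. Your version is slightly more explicit in two places (constructing a compactly supported field $w$ so that $\Psi(t,\cdot)$ genuinely maps $X$ to $X$, and spelling out the midpoint identity $(\mu_r,\mudot_r)=\tfrac12(\delta_r,\mudot^+)+\tfrac12(\delta_{-r},\mudot^-)$), but these are elaborations of the same argument rather than a different route.
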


\begin{proof}
    We follow the proof of Theorem~\ref{theorem:Sinkhorn_nonconvex}.  
    Denote \(\mu = \mu_r\), \(\mudot = \frac{1}{2}\delta_r' - \frac{1}{2}\delta_{-r}'\). Theorem~\ref{theorem:hessian_sinkhorn} and~\eqref{eq:proof:nonconvexity_example} give
    \begin{align} \label{eq:proof:nonconvexity_tensor}
        \g_{\mu}(\mudot,\mudot)  
        = \lim_{t \to 0} \frac{S_\varepsilon(\mu_r,\mu_{r+t})}{t^2} 
        = 1 + \frac{\kappa}{1+\kappa} \left( \frac{1}{1+\kappa}\frac{8r^2}{\varepsilon} - 2 \right),
    \end{align}
    which is greater than \(\g_{\delta_{\pm r}}(\pm \delta_{\pm r}',\pm \delta_{\pm r}') = 1\) when \(r > \sqrt{\varepsilon / 2}\). 
\end{proof}

As discussed in the introduction, if the metric tensor were jointly convex, then existence of geodesic (Theorem~\ref{theorem:dE_minimizers_existence}) could be obtained with arguments of convex analysis. Corollary~\ref{cor:nonconvexity_tensor} shows that some more involved arguments  such as the ones  given in Section~\ref{section:sub:geodesics} are necessary. 
Joint convexity of the metric tensor \(\tg_\mu(\betadot,\betadot)\) in \(\mu\) and \(\betadot\) is ill defined, because the compatibility assumption \(\pair{\B(\mu)}{\betadot} = 0\) can fail at a convex combination. 

\begin{figure}[!ht]
\begin{center}
    \includegraphics[scale=0.6]{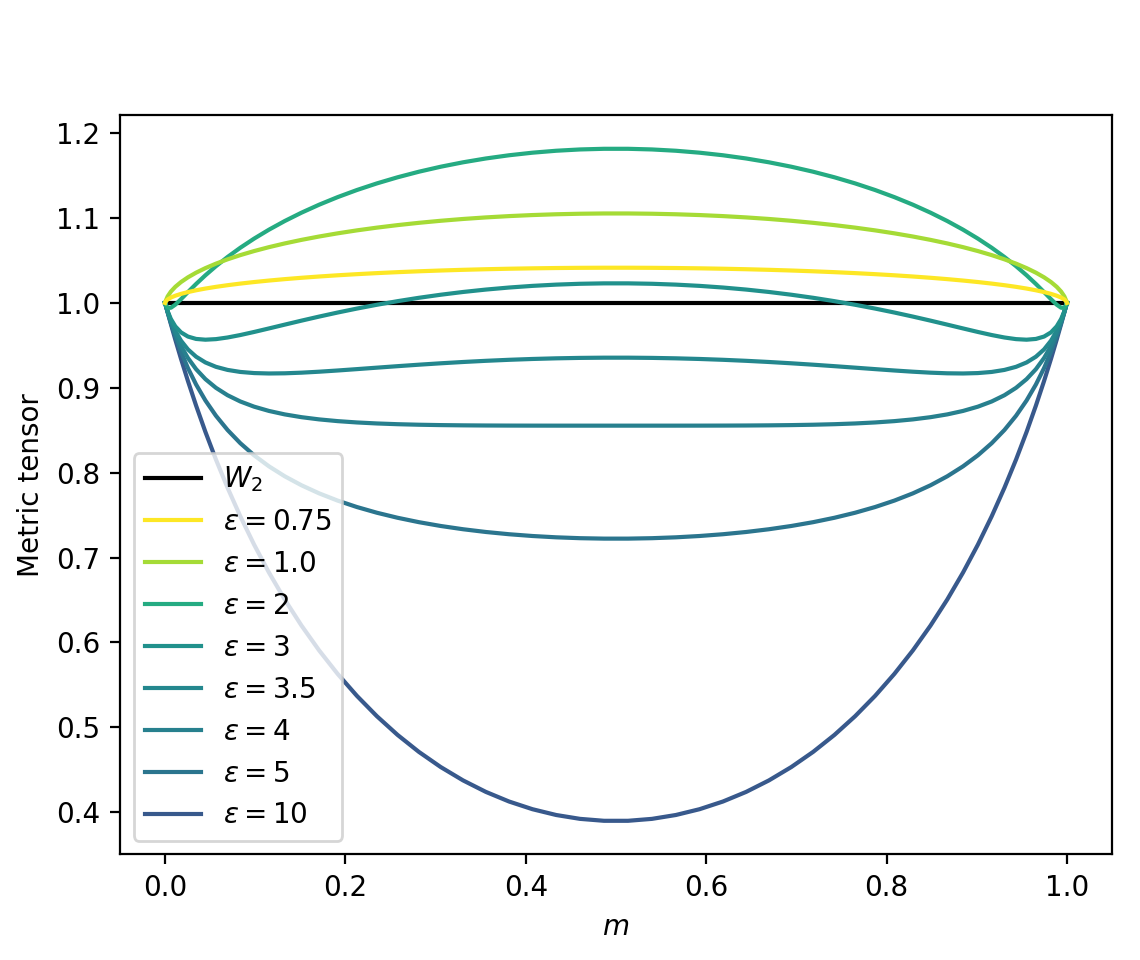}
    \includegraphics[scale=0.6]{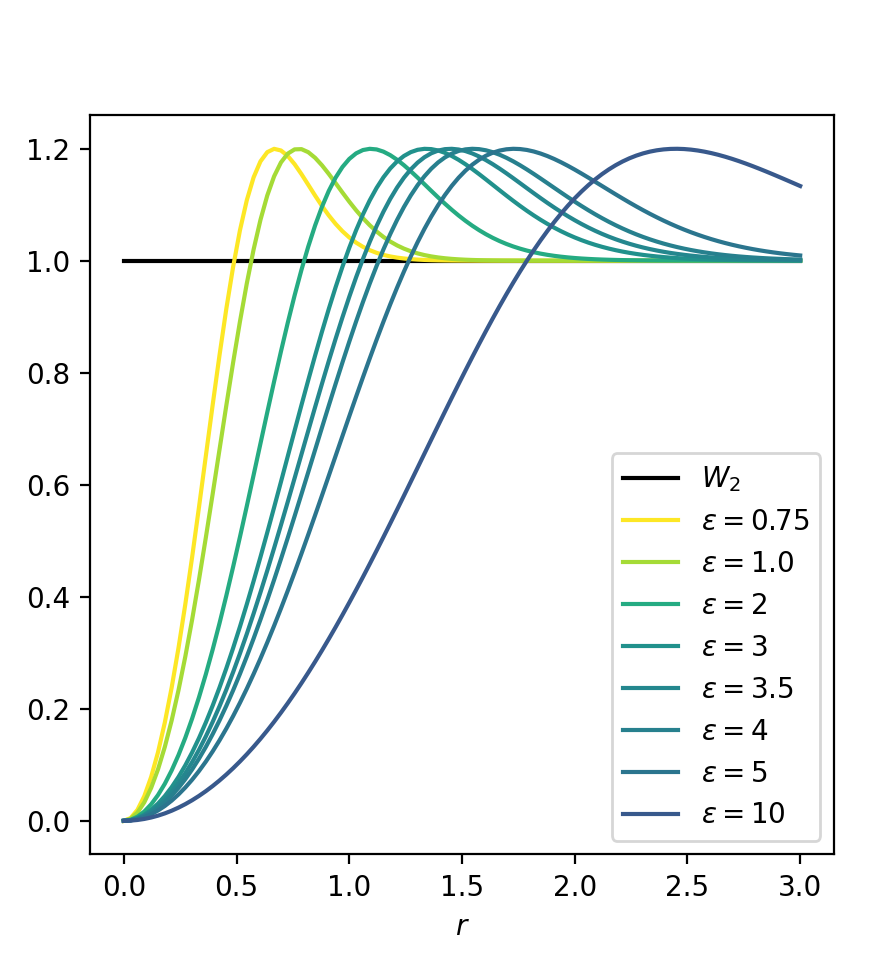}
    \caption{Numerically computed values of the metric tensor of two point masses moving apart with \(\mu = m \delta_{-r} + (1-m) \delta_r\), \(\mudot = - m \delta_{-r}' + (1-m) \delta_r'\) for different values of \(\varepsilon\). This is the linear interpolation between a point mass at \(-r\) moving to the left and a point mass at \(r\) moving to the right ($m=1$ and $m=0$ respectively). Left: fixed \(r=1\), varying mass \(m\). Joint convexity of the metric tensor fails, as the graphs sometimes exceed the value one. Right: fixed \(m=\frac{1}{2}\), varying position \(r\). This is the term~\eqref{eq:proof:nonconvexity_tensor} computed in the proof of Corollary~\ref{cor:nonconvexity_tensor}, which only depends on \({r}/{\sqrt{\varepsilon}}\). In both plots, the black line shows \(\norm{v}^2_{L^2(X,\mu)} = \g^0_\mu(\mudot,\mudot) = 1\) for the metric tensor of classical optimal transport~\eqref{eq:intro_gmu_OT}.} 
    \label{figure:nonconvexity_gmu}
\end{center}
\end{figure}

\begin{remark}
    Although very explicit, the previous computations do not shed light on the reason why $S_\varepsilon$ fails to be jointly convex. Instead, we present also a more abstract argument, which is originally how we proved Theorem~\ref{theorem:Sinkhorn_nonconvex}. It proceeds by contradiction, by combining three results already present in the literature: 
    \begin{enumerate}[(a)]
        \item A very insightful remark of Savaré and Sodini~\cite{Savare2022}, which says that $\OT \assign \OT_0$ (i.e.\ the unregularized transport cost induced by $c$ from \eqref{eq:intro_def_OT}) is the largest convex and \textweakstar-lower semicontinuous function which coincides with $c$ on Dirac masses. Since the set of couplings between two Dirac masses contains only one element, with $c(x,y) = |x-y|^2$ we have $S_\varepsilon(\delta_x, \delta_y) = c(x,y)$ for any $x,y \in X$. As a consequence, if $S_\varepsilon$ were jointly convex, as it is in addition \textweakstar-continuous, then \(S_\varepsilon (\mu_0, \mu_1) \leq \OT (\mu_0, \mu_1)\) for all \(\mu_0, \mu_1 \in \prm_2(\R)\). So rather than negating the convexity of $S_\varepsilon$, it is enough to find measures $\mu_0, \mu_1$ such that $S_\varepsilon(\mu_0, \mu_1) > \OT(\mu_0,\mu_1)$.  
        
        \item We then use a small-\(\varepsilon\) asymptotics of \(S_\varepsilon\)~\cite{Conforti2019, Chizat2020}. Introducing the Fisher information of a measure $\mu$ w.r.t.\ the Lebesgue measure as
        \begin{equation*}
            I(\mu) \coloneqq 4 \int_{\R} \left| \nabla \sqrt{\rho} \right|^2 \diff x, \qquad \textrm{if } \mu = \rho\diff x \textrm{ and } \sqrt{\rho} \in H^1(\R, \diff x),
        \end{equation*}
        and $+ \infty$ otherwise, this result reads as follows: Let $\mu_0, \mu_1 \in \prm(\R)$ have bounded densities and supports and denote by $(\mu_t)_{t \in [0,1]}$ the Wasserstein-2 geodesic connecting them. If $I(\mu_0)$, $I(\mu_1)$, and $\int_0^1 I(\mu_t)\diff t$ are finite, then it holds
        \begin{equation}
        \label{eq:lower-bound-sinkhorn}
            S_\varepsilon(\mu_0, \mu_1) - \OT_0(\mu_0, \mu_1) = \frac{\varepsilon^2}{8} \left( \int_0^1 I(\mu_t) \diff t - \frac{I(\mu_0) + I(\mu_1)}{2} \right) + o(\varepsilon^2).
        \end{equation}
        Thus, given (a), it is enough to find measures $\mu_0$, $\mu_1$ such that 
        \begin{equation}
        \label{eq:opposite-convexity}    
            \int_0^1 I(\mu_t) \diff t  > \frac{I(\mu_0) + I(\mu_1)}{2},
        \end{equation}
        and this will disprove joint convexity of $S_\varepsilon$ for $\varepsilon$ small enough.

        \item  Eventually, we use the property that the Fisher information is not geodesically convex~\cite{Carrillo2009}. Specifically, by~\cite[Ex.~2 in Sec.~3.2]{Carrillo2009} there exists a Wasserstein geodesic $(\tilde{\mu}_t)_{t \in [0,1]}$ on $\R$ such that the function $t \mapsto I(\tilde{\mu}_t)$ is not convex. This implies that we can find $t_0, t_1$ such that 
        \begin{equation*}
            \frac{1}{t_1 - t_0}\int_{t_0}^{t_1} I(\tilde{\mu}_t) \diff t  > \frac{I(\tilde{\mu}_{t_0}) + I(\tilde{\mu}_{t_1})}{2},
        \end{equation*}
        because if the equality were in the other direction for any $t_0, t_1$, it would imply convexity of $t \mapsto I(\tilde{\mu}_t)$. Defining $\mu_t \assign \tilde\mu_{t_0 + t(t_1-t_0)}$ for $t \in [0,1]$, we obtain~\eqref{eq:opposite-convexity} and conclude.
    \end{enumerate}
    
    A scaling argument then extends the lack of convexity to any $\varepsilon > 0$. Indeed, it suffices to define $L^\varepsilon(x) \coloneqq \varepsilon^{-1/2} x$ and observe that $S_\varepsilon(\mu,\nu) = \varepsilon S_1(L^\varepsilon_\# \mu, L^\varepsilon_\# \nu)$, as already mentioned in~\eqref{eq:S_rescaled}, so that non-convexity of $S_\varepsilon$ for one $\varepsilon > 0$ is equivalent to non-convexity for any $\varepsilon > 0$.
    
    \medskip
    
    At the technical level, however, there is a slight gap in the argument we have just outlined, since \eqref{eq:lower-bound-sinkhorn} requires $\mu_0,\mu_1$ to have bounded support, while the counterexample built by Carrillo and Slep\v cev \cite[Ex.~2 in Sec.~3.2]{Carrillo2009} has unbounded support. This difficulty can be overcome by a standard truncation procedure, but the resulting argument would be much heavier to read than the straightforward computation of Proposition \ref{lemma:explicit_OT-S}. 
\end{remark}

\section{Outlook}
\label{sec:Outlook}
In this article we have introduced a Riemannian distance associated with entropic optimal transport and established its most fundamental properties, such as its connection to RKHS spaces, the existence of geodesics, its metrization of the \textweakstar~topology, as well as some instructive examples, such as translations, Gaussians, and the two-point space.
This Riemannian perspective provides further theoretical underpinning for the widespread application of the Sinkhorn divergence in machine learning.
After this foundation has been established, there are various potential applications and more advanced intriguing directions for further study that we comment now.

\medskip

\emph{Convergence when $\varepsilon \to 0$}.
One immediate question is, as one sends $\varepsilon$ to $0$, whether the limit of $\dS$ is the quadratic Monge--Kantorovich distance, and if there is a corresponding metric convergence in the sense of Gromov--Hausdorff.
In Section~\ref{sec:sub:epsilon_limits} we claim that we should have at least convergence of the metric tensor, but a rigorous proof is missing.

\emph{Homogeneization when the metric space $X$ is refined.}
The new Sinkhorn distance is a geodesic distance on any compact metric space $(X,d)$ when $c=d^2$ induces a positive definite universal kernel, in particular also in cases where $(X,d)$ is not a length space itself, such as the two-point space. This is in stark contrast to the quadratic Monge--Kantorovich distance, for which there are no geodesics when $X$ is discrete. A distance on probability measures over discrete graphs that resembles the Monge--Kantorovich distance has been introduced in \cite{MaasDiscreteMCFlow2011} with non-trivial homogenization results given, for instance, in \cite{GigliMaas-GromovHausdorff2013,GlKoMa2018}. For the distance $\dS$ for measures on the two-point space, a consequence of Proposition \ref{prop:example:two_point} is that $\dS(\delta_{x_1},\delta_{x_2}) \approx d(x_1,x_2)$ when $d(x_1,x_2) \ll \sqrt{\varepsilon}$. Based on this one may wonder, whether the metric space $(\prm(X),\dS)$ can be approximated by $(\prm(X_n),\dS)$ where $(X_n)_{n \in \N}$ is a sequence of increasingly fine discrete approximations of $X$ with respect to the ground metric $d$. In other words, does $\dS$ over $\prm(X)$ arise as the homogenization of itself?
In combination with the previous question one might even ask, whether it is possible to decrease $\varepsilon$ to zero at a suitable rate, while $X_n$ is refined, such that the joint limit becomes the quadratic Monge--Kantorovich distance over $\prm(X)$.
 This provides a novel way to define geodesic distances on probability measures over discrete spaces with a potentially consistent continuum limit.

\emph{Numerical approximation.}
A more practical concern is the efficient numerical approximation of $\dS$ and its geodesics. As the Riemannian tensor of $\dS$ was chosen to be the Hessian of the Sinkhorn divergence $S_\varepsilon$, one might conjecture that
\begin{equation*}
    \dS(\mu,\nu)^2 = \lim_{N \to \infty} \inf \setgiven{ N \cdot \sum_{n=0}^{N-1} S_\varepsilon(\rho_n,\rho_{n+1}) }{ \rho_0,\ldots,\rho_N \in \prm(X), \, \rho_0=\mu,\, \rho_N = \nu }
\end{equation*}
in the spirit of time-discrete variational geodesic calculus \cite{RumpfWirth2015}. For this it seems that one requires a more uniform version of the expansion of Theorem \ref{theorem:hessian_sinkhorn}.

\emph{Sample complexity.}
From a statistical perspective an interesting question is whether $\dS$ inherits the favourable sample complexity of the Sinkhorn divergence \cite{Genevay:Samples,MenaWeed}.

\emph{Linearization.}
For the quadratic Monge--Kantorovich distance local approximation via linearization has been established as a useful tool for data analysis applications \cite{Wang2013} which reduces the computational complexity and allows to combine the geometry of optimal transport with standard data analysis tools on Hilbert spaces. 
If this can be adapted to the new metric $\dS$ it may provide a practically relevant application for geometric data analysis.

\emph{Gradient flows.}
Gradient flow with respect to the Monge-Kantorovich distance is a well-studied topic with important consequences at the theoretical and practical level. It would be important to understand how the gradient flows with respect to the metric $\dS$ behave. They should be recovered as a scaling limit if ones tries to use the minimizing movemement scheme for gradient flows, but with the Sinkhorn divergence $S_\varepsilon$ in place of the more classical squared Monge-Kantorovich distance, for a fixed $\varepsilon > 0$.  This may lead to a new class of statistically robust interacting particle methods in high dimensions with potential applications in machine learning, similar to \cite{Sander22a,agarwal2025langevin}, see also \cite{Agarwal2024Schrodinger}.

\emph{Extension to non-compact base spaces.}
In this article we have restricted the analysis to compact base spaces $(X,d)$ and the extension to the non-compact setting will be an interesting question. For this, we expect several challenges. The uniform contraction estimate of Proposition \ref{prop:Kmu_contraction} will degenerate and so bounded invertibility of $\id-K_\mu^2$ is no longer implied. The self-transport potentials $f_{\mu,\mu}$ may become unbounded and thus the measures $A(\mu)$ may no longer be finite. This poses challenges in the RKHS analysis, including the regularity of the maps $A$, $B$, the relation between the tensors $\g_\mu$ and $\tilde{\g}_\mu$, and their equivalence with the norm.

\medskip

Of course this list is far from complete and in particular more questions will emerge over time, as our understanding of the new distance deepens.

\section*{Acknowledgements}

The research leading to this work started during a visit of BS to HL and LT in the Department of Decision Sciences of Bocconi University, which is warmly acknowledged for the hospitality.
We also thank the two anonymous referees for their many suggestions which improved the quality and scope of this article.
HL and LT acknowledge the support of the MUR-Prin 2022-202244A7YL “Gradient Flows and
Non-Smooth Geometric Structures with Applications to Optimization and Machine Learning”, funded by the European Union - Next Generation EU. LT is a also member of INdAM and the GNAMPA group.
GM gratefully acknowledges funding of the DFG within CRC1456-A04.
BS gratefully acknowledges funding of the DFG within the Emmy Noether programme and in project CRC1456-A03.

\appendix

\section{Some results of functional analysis} \label{section:appendix_convergence}

\begin{lemma}\label{lemma:abstract_weak_unif}
    Let \(Y\) be a Banach space, X a compact topological space. For \(n\in\N\) let \(\omega_n\in Y^*\) with \(\omega_n \to \omega \in Y^*\) in the \textweakstar~topology. For \(x \in X\) let \(k_x \in Y\), such that \(x \mapsto k_x\) is continuous with respect to the norm-topology on \(Y\). Then 
    \[\sup_{x \in X} \abs{\pair{\omega_n}{k_x} - \pair{\omega}{k_x}} \to 0,\]
    so \(x \mapsto \pair{\omega_n}{k_x}\) converges uniformly on \(X\) as \(n \to \infty\).
\end{lemma}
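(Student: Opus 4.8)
The statement is a standard ``pointwise weak-\(\ast\) convergence plus equi-continuity implies uniform convergence'' lemma, so the plan is to reduce it to a compactness argument via Arzelà--Ascoli (or, equivalently, to argue directly by a covering argument). First I would observe that the functions \(\phi_n \colon x \mapsto \pair{\omega_n}{k_x}\) and \(\phi \colon x \mapsto \pair{\omega}{k_x}\) are continuous on \(X\): indeed, since \(x \mapsto k_x\) is norm-continuous and \(\omega_n, \omega \in Y^*\) are bounded, each \(\phi_n\) and \(\phi\) is a composition of a continuous map with a Lipschitz functional. Next, by the Banach--Steinhaus theorem the weak-\(\ast\) convergent sequence \((\omega_n)_n\) is bounded, say \(\norm{\omega_n}_{Y^*} \leq M\) for all \(n\); together with the (norm-)continuity of \(x \mapsto k_x\) on the compact space \(X\), which makes \(\setgiven{k_x}{x \in X}\) a norm-compact, hence bounded and ``uniformly continuous'', subset of \(Y\), this yields that the family \((\phi_n)_n\) is equi-bounded and equi-continuous: for \(x, x'\) with \(\norm{k_x - k_{x'}}_Y\) small we have \(\abs{\phi_n(x) - \phi_n(x')} \leq M \norm{k_x - k_{x'}}_Y\) uniformly in \(n\).

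With equi-boundedness and equi-continuity in hand, the Arzelà--Ascoli theorem gives that \((\phi_n)_n\) is relatively compact in \(\Cont(X)\) with the supremum norm. To conclude uniform convergence it then suffices to show that \(\phi\) is the only possible uniform cluster point. But if a subsequence \(\phi_{n_k}\) converges uniformly to some \(\psi \in \Cont(X)\), then in particular it converges pointwise, and for each fixed \(x \in X\) we have \(\phi_{n_k}(x) = \pair{\omega_{n_k}}{k_x} \to \pair{\omega}{k_x} = \phi(x)\) by the definition of weak-\(\ast\) convergence; hence \(\psi = \phi\). A standard subsequence argument (every subsequence has a further subsequence converging uniformly to \(\phi\)) then shows the whole sequence converges uniformly to \(\phi\), which is exactly the claim \(\sup_{x \in X} \abs{\phi_n(x) - \phi(x)} \to 0\).

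There is no serious obstacle here; the only point requiring a little care is the verification of equi-continuity, which hinges on using the norm-continuity of \(x \mapsto k_x\) on a compact domain (so that it is automatically uniformly continuous in the appropriate sense) in combination with the uniform bound on \(\norm{\omega_n}_{Y^*}\) coming from Banach--Steinhaus. Alternatively, one could avoid Arzelà--Ascoli entirely: fix \(\delta > 0\), use uniform continuity of \(x \mapsto k_x\) to cover \(X\) by finitely many balls on which \(k_x\) varies by at most \(\delta/(3M)\), pick a center in each ball, use pointwise weak-\(\ast\) convergence at these finitely many centers to make \(\abs{\phi_n - \phi}\) small there for \(n\) large, and combine the three estimates; this gives a self-contained proof of the same conclusion. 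I would present whichever of these two is shorter, most likely the direct finite-cover argument.
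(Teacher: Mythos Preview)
Your proposal is correct and follows essentially the same approach as the paper: define $\phi_n(x) = \pair{\omega_n}{k_x}$, use boundedness of the weak-$\ast$ convergent sequence $(\omega_n)_n$ together with the (uniform) norm-continuity of $x \mapsto k_x$ to obtain equi-continuity, and conclude via Arzel\`a--Ascoli and pointwise convergence. The paper's proof is slightly terser (it does not spell out the subsequence argument), but the substance is identical.
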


\noindent We usually apply this with \(k_x = k(x,\vdot)\) for some suitable \(k \colon X \times X \to \R\) and $Y = \calM(X)$.

\begin{proof}
    Let \(\phi_n(x) = \pair{\omega_n}{k_x}\), \(\phi(x) = \pair{\omega}{k_x}\). By assumption \(\phi, \phi_n \in \Cont(X)\), \(\phi_n \to \phi\) pointwise. The functions \(\phi_n\) are equi-continuous, as for any \(n \in \N\)
    \begin{equation*}
        \abs{ \phi_n(x) - \phi_n(y) } \leq \norm{\omega_n}_{Y^*} \cdot \norm{k_x - k_y}_Y,
    \end{equation*}
    the \textweakstar-convergent sequence \((\omega_n)_n\) is bounded, and the map \(x \mapsto k_x\) is uniformly continuous. Uniform convergence now follows from the Arzelà--Ascoli Theorem.
\end{proof}

\begin{lemma}[{\cite[Proposition 3.5 (iv)]{Brezis2011}}]
    \label{lemma:pair_strong_weak}
    Let \(Y\) be a Banach space. For \(n\in\N\) let \(\omega_n\in Y^*\) with \(\omega_n\to\omega\in Y^*\) in the \textweakstar~topology. Let \(y_n \in Y\) with \(y_n \to y \in Y\) in norm. Then 
    \[\lim_{n\to\infty} \pair{\omega_n}{y_n}=\pair{\omega}{y}.\]
\end{lemma}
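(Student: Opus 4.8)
The plan is to reduce the statement to two elementary estimates after first securing a uniform bound on the norms \(\norm{\omega_n}_{Y^*}\). The point where completeness of \(Y\) enters is precisely here: since \((\omega_n)_n\) converges \textweakstar, for every \(y \in Y\) the scalar sequence \((\pair{\omega_n}{y})_n\) is convergent, hence bounded; the Banach--Steinhaus theorem (uniform boundedness principle) then yields a constant \(M < \infty\) with \(\norm{\omega_n}_{Y^*} \leq M\) for all \(n\). This is the only nontrivial ingredient, and it is the step I would flag as the ``main obstacle'' in the sense that it is the only place the hypotheses are used in a non-bookkeeping way.

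Given the uniform bound, I would write, for each \(n\),
\begin{equation*}
\pair{\omega_n}{y_n} - \pair{\omega}{y} = \pair{\omega_n}{y_n - y} + \pair{\omega_n - \omega}{y}.
\end{equation*}
The first term is estimated by \(\abs{\pair{\omega_n}{y_n - y}} \leq \norm{\omega_n}_{Y^*} \norm{y_n - y}_Y \leq M \norm{y_n - y}_Y \to 0\), using the norm convergence \(y_n \to y\). The second term tends to \(0\) directly by the definition of \textweakstar~convergence applied to the fixed vector \(y\). Adding the two bounds and letting \(n \to \infty\) gives \(\pair{\omega_n}{y_n} \to \pair{\omega}{y}\), which is the claim. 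No further calculation is needed, and since this is exactly \cite[Proposition 3.5 (iv)]{Brezis2011} one could alternatively just cite it; I would include the short argument for self-containedness.
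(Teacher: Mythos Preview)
Your argument is correct and is the standard proof; the paper itself does not supply a proof for this lemma but simply cites \cite[Proposition 3.5 (iv)]{Brezis2011}, so there is nothing to compare against beyond noting that your short argument is precisely the one given in that reference.
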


We also state the following easy lemma which is used in Section~\ref{section:sinkhorn_hessian}, while refinements with stronger assumptions on $k$ can be found in Appendix~\ref{section:appendix_RKHS} below.

\begin{lemma}
    \label{lemma:H_k_Cm}
    If $k \in \Cont^{(m,m)}(X \times X)$ and $H_k[\nu](y) = \langle \nu, k(\cdot,y) \rangle$, then $H_k$ is defined on $\Cont^m(X)^*$ and valued in $\Cont^m(X)$. It is \textweakstar{}-to-norm continuous, and norm-to-norm continuous and compact.
\end{lemma}

\begin{proof}
    We can prove easily $\partial^\alpha_y H_k[\nu](y) = \langle \nu, \partial^\alpha_y k(\cdot,y) \rangle$ for any multi-index $\alpha$ with $|\alpha| \leq m$ as the function $y \mapsto k(\cdot,y)$ is of class $\Cont^m$ seen as valued in the Banach space $\Cont^m(X)$. From this, if $\nu_n \to \nu$ \textweakstar{} and $y_n \to y$ then $\partial^\alpha_y H_k[\nu_n](y_n) \to \partial^\alpha_y H_k[\nu](y)$, and the convergence can be made uniform with Lemma~\ref{lemma:abstract_weak_unif}. Thus $H_k$ is \textweakstar{}-to-norm continuous, and norm-to-norm compactness follows as the unit ball of $\Cont^m(X)$ is \textweakstar{} compact.
\end{proof}

\section{Additional results on Reproducing Kernel Hilbert Spaces} \label{section:appendix_RKHS}

Recall the construction of the RKHS \(\Hil_k\) corresponding to a positive definite universal kernel \(k \in \Cont(X \times X)\) from Section~\ref{section:sub:RKHS_introduction} on a compact Hausdorff space \(X\). A detailed analysis of universal kernels can be found in~\cite{Sriperumbudur2011} and~\cite{Micchelli2006}. 
For all \(\phi \in \Hil_k\) and all \(x \in X\) it holds \(\phi(x) = \pair{\phi}{k(x,\vdot)}_{\Hil_k}\). Using the Cauchy--Schwarz inequality we find
\begin{equation} \label{eq:norm_infty_bound_RKHS}
    \norm{\phi}_\infty = \sup_{x \in X} \, \abs{\pair{\phi}{k(x,\vdot)}_{\Hil_k}} \leq \norm{\phi}_{\Hil_k} \cdot \sup_{x \in X} \, k(x,x) 
\end{equation}
for any \(\phi \in \Hil_k\).

We now fix $m \geq 0$ an integer. If \(m > 0\), we further impose the analogous version of Assumption~\ref{asp:diff_m} for \(k\). That is, we assume that \(X\) is the closure of a bounded open set in \(\R^d\) and \(k \in \Cont^{(m,m)}(X \times X)\). The case $m=0$ stands for $X$ compact metric space and $k \in \Cont^{(0,0)}(X \times X) = \Cont(X \times X)$ as above. In this case we have \(\Hil_k \hookrightarrow \Cont^m(X)\) and \(\Cont^m(X)^* \hookrightarrow \Hil^*_k\) and the embeddings are continuous \cite[Cor.~4]{SimonGabriel2018}. In this work we need a quantitative estimate on these embeddings, which we present now.

\begin{lemma}
\label{lemma:injection_Hk_Cm_quantitative}
    We have $\partial^\alpha_x k(x,\vdot) \in \Hil_k$ for any \(x \in X\) and any \(\alpha \in \N_0^d\) with \(\abs{\alpha} \leq m\). Moreover, 
    if \(\phi \in \Hil_k\) and \(\nu \in \Cont^m(X)^*\), then 
    \begin{equation*} 
        \norm{\phi}_{\Cont^m} \leq \norm{\phi}_{\Hil_k} \cdot \norm{k}_{\Cont^{(m,m)}}^\frac{1}{2}, \qquad \norm{\nu}_{\Hil^*_k} \leq \norm{\nu}_{\Cont^{m,*}} \cdot \norm{k}_{\Cont^{(m,m)}}^\frac{1}{2}.
    \end{equation*}
\end{lemma}

\begin{proof}
    Using that $H_k$ is defined on $\Hil_k^*$ and valued in $\Hil_k$ as well as \(\Cont^m(X)^* \hookrightarrow \Hil^*_k\), we have \(H_k[\partial^\alpha|_x](y) = \pair{\partial^\alpha|_x}{k(\vdot,y)} = \partial^\alpha_x k(x,y) \in \Hil_k\) with \(\norm{H_k[\partial^\alpha |_x]}_{\Hil_k}^2 = \partial^{(\alpha,\alpha)} k(x,x) \leq \norm{k}_{\Cont^{(m,m)}}\). We deduce the norm bounds from $\abs{\partial^\alpha \phi(x)} = \abs{\langle\partial^\alpha |_x, \phi \rangle} = \abs{\langle H_k[\partial^\alpha |_x], \phi \rangle_{\Hil_k}} \leq \norm{H_k[\partial^\alpha |_x]}_{\Hil_k} \norm{\phi}_{\Hil_k}$, and the one on $\norm{\nu}_{\Hil^*_k}$ by duality. 
\end{proof}

\begin{lemma}
\label{lemma:h_k_Cm_compactness}
    We have the following weak-to-norm injections.
    \begin{enumerate}
        \item The injection of $\Hil_k \hookrightarrow \Cont^m(X)$ is weak-to-norm continuous, hence norm-to-norm compact. 
        \item The operator \(H_k \colon \Cont^m(X)^* \to \Hil_k\) is \textweakstar-to-norm continuous, hence norm-to-norm compact.
    \end{enumerate}
\end{lemma}

\begin{proof}
    For the first point, let \(\phi_n \in \Hil_k\) converge weakly to \(\phi \in \Hil_k\). Then from the proof of Lemma~\ref{lemma:injection_Hk_Cm_quantitative} \(\partial^\alpha \phi_n(x) = \pair{\phi_n}{\partial_x^\alpha k(x,\vdot)}_{\Hil_k} \to \pair{\phi}{\partial_x^\alpha k(x,\vdot)}_{\Hil_k} = \partial^\alpha \phi(x)\) for all \(x \in X\). Uniform convergence for any allowed \(\alpha\) follows from Lemma~\ref{lemma:abstract_weak_unif}.
    
    For the second point let \(\nu_n \to \nu\) \textweakstar~in \(\Cont^m(X)^*\). From the continuity of the injection \( \Cont^m(X)^* \hookrightarrow \Hil^*_k\) we obtain \(H_k[\nu_n] \to H_k[\nu]\) weakly in \(\Hil_k\), which by the first point implies norm convergence of \(H_k[\nu_n]\) to \(H_k[\nu]\) in \(\Cont^m(X)\). Thus Lemma~\ref{lemma:pair_strong_weak} applied to the duality pairing between $\Cont^m(X)$ and $\Cont^m(X)^*$ shows that $\norm{H_k[\nu_n] - H_k[\nu]}_{\Hil_k}^2 = \pair{\nu_n - \nu}{H_k[\nu_n] - H_k[\nu]} \to 0$
    
    Eventually the claims of compactness follow from the Banach--Alaoglu theorem.
\end{proof}

To finish this section, we briefly show that the universality of \(k_c\) implies universality of \(k_\mu\) for all \(\mu \in \prm(X)\): 
Note that \(\exp(\fmumu/\varepsilon)\) is bounded from above and away from zero. Now for any \(\phi \in \Cont(X)\) an approximation \(\norm{\sum_{i=1}^n a_i k_c(x_i,.) - \phi/\exp(\fmumu/\varepsilon)}_\infty \leq \delta\) yields \(\norm{\sum_{i=1}^n \frac{a_i}{\exp(\fmumu(x_i)/\varepsilon)} k_\mu(x_i,.) - \phi}_\infty \leq \delta \cdot \norm{\exp(\fmumu/\varepsilon)}_\infty\).

\section{The Sobolev space of Hilbert space valued functions} \label{section:appendix_sobolev}

We briefly summarize a few results from~\cite[Sec.~3.2]{Kreuter2015}. Let \(\Hil\) be a Hilbert space. We write \(L^2((0,1);\Hil)\) for the space of measurable functions \(u \colon (0,1) \to \Hil\) for which \(\norm{u(\vdot)}_\Hil\) is 2-integrable, up to equality almost everywhere. It carries the obvious inner product. We say that a function \(u \in L^2((0,1);\Hil)\) has the distributional derivative \(v \in L^2((0,1);\Hil)\) and write \(v = u'\) if for all \(\phi \in \Contc^\infty((0,1);\R)\) it holds 
\begin{equation} \label{eq:def_H1_weakdiff}
    \int_0^1 u(t) \phi'(t) \diff t = - \int_0^1 v(t) \phi(t) \diff t
\end{equation}
as Bochner integrals. The first Sobolev space is given by 
\begin{equation*}
    \Hil^1((0,1);\Hil) = \setgiven{u \colon (0,1) \to \Hil}{u \textup{ has a distr.~derivative } u' \textup{ and } u, u' \in L^2((0,1);\Hil)}.
\end{equation*}
It is a Hilbert space with the inner product \(\pair{u_1}{u_2}_{\Hil^1} = \pair{u_1}{u_2}_{L^2} + \pair{u_1'}{u_2'}_{L^2}\).

\begin{theorem}[{\cite[Thm.~2.5]{Kreuter2015}}] \label{theorem:H1_norm_estimate}
    A measurable function \(u \colon (0,1) \to \Hil\) is Bochner integrable if and only if \(\norm{u(\vdot)}_\Hil\) is Lebesgue-integrable. In this case \(\int_0^1 u(t) \diff t \in \Hil\) and it holds
    \begin{equation*}
        \bnorm{\int_0^1 u(t) \diff t}_\Hil \leq \int_0^1 \norm{u(t)}_\Hil \diff t.
    \end{equation*}
\end{theorem}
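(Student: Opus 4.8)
The plan is to reduce this to the classical Bochner integrability criterion and to prove that criterion along the way. Throughout, \emph{measurable} is understood in the strong sense, i.e.~as an almost everywhere pointwise limit of $\Hil$-valued simple functions; in the application of interest $\Hil = \Hil_c$ is separable, being the closed linear span of the norm-continuous curve $x \mapsto k_c(x,\vdot)$ over the separable compact metric space $X$, so by Pettis' theorem strong and weak measurability coincide here.

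First I would establish the equivalence. If $u$ is Bochner integrable, pick simple functions $s_n$ with $s_n \to u$ a.e.~and $\int_0^1 \norm{s_n(t) - u(t)}_\Hil \diff t \to 0$; then $\norm{u(\vdot)}_\Hil$ is the a.e.~limit of the measurable scalar functions $\norm{s_n(\vdot)}_\Hil$, hence measurable, and $\int_0^1 \norm{u(t)}_\Hil \diff t \leq \int_0^1 \norm{u(t) - s_n(t)}_\Hil \diff t + \int_0^1 \norm{s_n(t)}_\Hil \diff t < \infty$ once $n$ is large. Conversely, suppose $u$ is strongly measurable with $\int_0^1 \norm{u(t)}_\Hil \diff t < \infty$. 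Fixing a countable dense subset of $\Hil$ and a sequence $\sigma_n$ of simple functions with $\sigma_n \to u$ a.e., one constructs simple functions $s_n$ — obtained from $\sigma_n$ by snapping each value to the dense set and truncating in norm — so that $s_n \to u$ a.e.~and $\norm{s_n(t)}_\Hil \leq 2 \norm{u(t)}_\Hil$; then $\norm{s_n(t) - u(t)}_\Hil \leq 3 \norm{u(t)}_\Hil \in L^1(0,1)$, and dominated convergence gives $\int_0^1 \norm{s_n(t) - u(t)}_\Hil \diff t \to 0$. Thus $u$ is Bochner integrable, and $\int_0^1 u(t)\diff t := \lim_n \int_0^1 s_n(t)\diff t \in \Hil$ is well defined, independently of the approximating sequence.

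For the norm bound, note that for a simple function $s = \sum_{i=1}^m x_i \ones_{A_i}$ with pairwise disjoint measurable $A_i \subseteq (0,1)$ the triangle inequality in $\Hil$ yields
\[
\bnorm{\int_0^1 s(t)\diff t}_\Hil = \bnorm{\sum_{i=1}^m \lvert A_i\rvert\, x_i}_\Hil \leq \sum_{i=1}^m \lvert A_i\rvert\, \norm{x_i}_\Hil = \int_0^1 \norm{s(t)}_\Hil \diff t.
\]
Applying this to the sequence $s_n$ built above and letting $n \to \infty$ — using $\bnorm{\int_0^1 s_n(t)\diff t - \int_0^1 u(t)\diff t}_\Hil \leq \int_0^1 \norm{s_n(t) - u(t)}_\Hil \diff t \to 0$ on the left and $\lvert \int_0^1 \norm{s_n(t)}_\Hil \diff t - \int_0^1 \norm{u(t)}_\Hil \diff t \rvert \leq \int_0^1 \norm{s_n(t) - u(t)}_\Hil \diff t \to 0$ on the right — gives the claimed inequality $\bnorm{\int_0^1 u(t)\diff t}_\Hil \leq \int_0^1 \norm{u(t)}_\Hil \diff t$.

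The only genuinely delicate point is the construction of the dominated approximating simple functions $s_n$ in the converse direction; everything else is the triangle inequality together with a dominated convergence argument. In the separable Hilbert space setting this construction is entirely standard, so in practice one simply invokes it from \cite{Kreuter2015} (or from a classical reference on vector-valued integration); the sketch above is included only for completeness.
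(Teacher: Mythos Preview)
The paper does not provide its own proof of this statement: it is stated in the appendix as a quotation of \cite[Thm.~2.5]{Kreuter2015} and used as a black box. Your argument is the standard proof of Bochner's integrability criterion together with the norm estimate, and it is correct; there is nothing to compare against in the paper itself.
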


\begin{theorem}
\label{theorem:H1_reformulation} 
    Let \(u \in L^2((0,1);\Hil)\). 
    Then the following are equivalent:
    \begin{enumerate}
        \item \label{item:H1_reformulation:H1} 
        \(u \in \Hil^1((0,1);\Hil)\).
        \item 
        \(u \in \AC^2((0,1);\Hil)\) in the sense of Definition~\ref{def:AC_curves}, 
        i.e.~there exists a representation of \(u\) and a function \(m \in L^2((0,1);\R)\), such that \(\norm{u(t) - u(s)}_\Hil \leq \abs{\int_s^t m(\tau) \diff \tau}\) for all \(t,s \in (0,1)\). 
        \item \label{item:H1_reformulation:u'_L2_compatible} 
        \(u\) is differentiable a.e., \(u' \in L^2((0,1);\Hil)\) and there is a \(t_0 \in (0,1)\), such that for almost all \(t\) it holds
            \begin{equation*}
               u(t) = u(t_0) + \int_{t_0}^t u'(\tau) \diff \tau.
            \end{equation*}
    \end{enumerate}
    If \(u \in \Hil^1((0,1);\Hil)\) then $m(\tau) = \norm{u'(\tau)}_\Hil$ is a possible choice for point 2. 
\end{theorem}

\begin{proof}
\underline{1 $\Rightarrow$ 3:} 
\cite[Thm.~3.7,~Prop.~3.8]{Kreuter2015}. 
\underline{3 $\Rightarrow$ 2:} 
Theorem~\ref{theorem:H1_norm_estimate}. 
\underline{2 $\Rightarrow$ 1:} 
\cite[Thm.~3.13]{Kreuter2015}
\end{proof}

As a consequence of absolute continuity, every \(u \in \Hil^1((0,1);\Hil)\) continuously extends to the boundary points \(\{0,1\}\) of the interval \((0,1)\).

\begin{lemma} \label{lemma:H1_weakdiff_H}
    Let \(\Hil\) be separable, \(u, u' \in L^2((0,1);\Hil)\). 
    Then \(u \in \Hil^1((0,1);\Hil)\) with distributional derivative \(u'\) in the sense of~\eqref{eq:def_H1_weakdiff} if and only if
    \begin{equation} \label{eq:lemma:H1_weakdiff_H}
        \int_0^1 \pair{u(t)}{\phi'(t)}_\Hil \diff t = - \int_0^1 \pair{u'(t)}{\phi(t)}_\Hil \diff t
    \end{equation}
    for all \(\phi \in \Contc^\infty((0,1);\Hil)\).
\end{lemma}

\begin{proof}
    Let \(\psi_1, \psi_2, \ldots \in \Hil\) be an orthonormal basis.
    By~\cite[Prop.~3.8]{Kreuter2015}, we have \(u \in \Hil^1((0,1);\Hil)\) with derivative \(u'\) if and only if \(\pair{u}{\psi_i}_{\Hil} \in \AC^2((0,1);\R) = \Hil^1((0,1);\R)\) with \(\partial_t \pair{u(t)}{\psi_i}_\Hil = \pair{u'(t)}{\psi_i}_\Hil\) for all \(i \in \N\). We first show that~\eqref{eq:lemma:H1_weakdiff_H} holds under the original definition. For any \(\omega_i \in \Contc^\infty((0,1);\R)\) we get
    \begin{align}
    \begin{split} \label{eq:proof:lemma:H1_weakdiff}
        \int_0^1 \pair{u(t)}{\partial_t(\omega_i(t) \psi_i)}_\Hil \diff t 
        &= \int_0^1 \omega_i'(t) \pair{u(t)}{\psi_i}_\Hil \diff t \\
        &= - \int_0^1 \omega_i(t) \pair{u'(t)}{\psi_i}_\Hil \diff t
        = - \int_0^1 \pair{u'(t)}{\omega_i(t) \psi_i}_\Hil \diff t.
    \end{split}
    \end{align}
    Thus~\eqref{eq:lemma:H1_weakdiff_H} holds for functions of the form \(\phi(t) = \sum_{i=1}^N \omega_i(t) \psi_i\). Such functions are dense in \(\Contc^\infty((0,1);\Hil)\) with respect to the norm on the larger space \(\Hil^1((0,1);\Hil)\).
    As the left and right-hand side of~\eqref{eq:lemma:H1_weakdiff_H} are bounded by \(\norm{u}_{L^2((0,1);\Hil)} \cdot \norm{\phi}_{\Hil^1((0,1);\Hil)}\) and \(\norm{u'}_{L^2((0,1);\Hil)} \cdot \norm{\phi}_{\Cont((0,1);\Hil)}\) respectively, the claim follows from an approximation argument. 
    
    The converse statement is obtained by the reverse argument, where we obtain equality of the middle terms in~\eqref{eq:proof:lemma:H1_weakdiff} from equality of the outer terms.
\end{proof}

\bibliographystyle{abbrv}
{\small
\bibliography{references_geometry}}

\end{document}